\theoremstyle{plain}
\newtheorem{theorem}{Theorem}[section]
\newtheorem{lemma}[theorem]{Lemma}
\newtheorem{cor}[theorem]{Corollary}
\theoremstyle{definition}
\numberwithin{equation}{section}
\def\be{\begin{equation}}
\def\ee{\end{equation}}
\begin{document}

\title[The Loewner-Nirenberg Problem in Cones]
{The Loewner-Nirenberg Problem in Cones}
\author[Han]{Qing Han}
\address{Department of Mathematics\\
University of Notre Dame\\
Notre Dame, IN 46556, USA} \email{qhan@nd.edu}
\author[Jiang]{Xumin Jiang}
\address{Department of Mathematics\\
Fordham University\\
Bronx, NY 10458, USA}  \email{xjiang77@fordham.edu}
\author[Shen]{Weiming Shen}
\address{School of Mathematical Sciences\\
Capital Normal University\\
Beijing, 100048, China}
\email{wmshen@pku.edu.cn}

\begin{abstract}
We study asymptotic behaviors of solutions to the 
Loewner-Nirenberg problem in finite cones and establish optimal 
asymptotic expansions in terms of the corresponding solutions in infinite cones. 
The spherical domains over which cones are formed are allowed to have singularities. 
%We allow cones to have singularity other than their vertices. 
An elliptic operator on such spherical domains with coefficients singular on boundary 
play an important role. Due to the singularity of the spherical domains, extra cares are needed 
for the study of the global regularity of the eigenfunctions and solutions of the 
associated singular Dirichlet problem. 
\end{abstract}

%\thanks{The third author acknowledges the partial support by NSFC Grant 11901405.}

%\date{\today}
\maketitle

%\tableofcontents

\section{Introduction}\label{sec-Intro}

In a pioneering work,
Loewner and Nirenberg \cite{Loewner&Nirenberg1974} studied the following %eponymous 
problem which bears their names: 
\begin{align}\label{eq-LN-eq}
\Delta u&=\frac{1}{4}n(n-2)u^{\frac{n+2}{n-2}}\quad \text{in } \Omega,\\
\label{eq-LN-boundary}u&=\infty\quad\text{on }\partial\Omega,
\end{align}
where $\Omega$ is a bounded domain in $\mathbb{R}^n$, for $n\geq 3$. 
Under the condition that $\Omega$ is a $C^2$-domain, they proved the existence of 
a unique positive solution $u$ of \eqref{eq-LN-eq}-\eqref{eq-LN-boundary}
and established 
an estimate %involving the leading term 
of $d_\Omega^{\frac{n-2}{2}}u$, where $d_\Omega$ is the distance function 
in $\Omega$ to $\partial\Omega$. Specifically, 
they proved,  for $d_\Omega$ sufficiently small,
\begin{equation}\label{eq-EstimateDegree1}|d_\Omega^{\frac{n-2}{2}}u-1|\le Cd_\Omega, \end{equation}
where $C$ is a positive constant depending only on certain geometric quantities of $\partial\Omega$.
The Loewner-Nirenberg problem
has a geometric interpretation. 
%Geometrically,  $u^{\frac{4}{n-2}}dx^2$ is
Its solution implies the existence of a complete conformal metric on $\Omega$ with the constant scalar curvature $-n(n-1)$.

The Loewner-Nirenberg problem has many generalizations. 
Aviles and McOwen \cite{AM1988DUKE} proved the existence of complete conformal metrics 
with a negative constant scalar curvature on compact manifolds with boundary. 
Guan \cite{Guan2008} and Gursky, Streets, and Warren \cite{M.Gursky1}
proved the existence of complete conformal metrics of negative Ricci
curvature on compact manifolds with boundary. 
Del Mar Gonzalez, Li, and Nguyen \cite{Gonzalez2018} studied the 
existence and uniqueness to a fully nonlinear version of the Loewner-Nirenberg problem.

The estimate \eqref{eq-EstimateDegree1} stimulated more studies of asymptotic behaviors of solutions $u$
of \eqref{eq-LN-eq}-\eqref{eq-LN-boundary}.  
Kichenassamy
\cite{Kichenassamy2005JFA} expanded further if $\Omega$ has a $C^{2,\alpha}$-boundary.
If $\Omega$ has a smooth boundary, 
Andersson, Chru\'sciel and Friedrich \cite{ACF1982CMP} and Mazzeo \cite{Mazzeo1991} established 
a polyhomogeneous expansion, 
an estimate up to an arbitrarily finite order.
%In fact, they proved that solutions of \eqref{eq-LN-eq}-\eqref{eq-LN-boundary}
%are polyhomogeneous. 
%Among many applications of the polyhomogeneous expansions, Gursky and Han \cite{GurskyHan2017}
%established the non-existence of Poincar\'e-Einstein  manifolds with prescribed conformal infinity. 
All these results require $\partial\Omega$ to have
some degree of regularity. 
If $\Omega$ is a Lipschitz domain, Han and Shen \cite{hanshen2} studied 
asymptotic behaviors of solutions %of \eqref{eq-LN-eq}-\eqref{eq-LN-boundary}
near singular points on $\partial\Omega$, and proved an estimate similar as 
\eqref{eq-EstimateDegree1},  under appropriate conditions 
of the domain near singular points. 

In this paper, we study a more basic question and investigate asymptotic behaviors of solutions of 
\eqref{eq-LN-eq}-\eqref{eq-LN-boundary} if $\Omega$ is a finite cone. 
Let  $V$ be an infinite cone over some spherical domain $\Sigma\subsetneq \mathbb{S}^{n-1}$. 
We always assume that the boundary $\partial\Sigma$ is $(n-2)$-dimensional. 
We consider \eqref{eq-LN-eq}-\eqref{eq-LN-boundary} in truncated cones. 
Let $u$ be a positive solution of 
\begin{align}\label{eq-LN-cone-truncated-eq}
\Delta u&=\frac{1}{4}n(n-2)u^{\frac{n+2}{n-2}}\quad \text{in } V\cap B_1,\\
\label{eq-LN-cone-truncated-boundary}u&=\infty\quad\text{on }\partial V\cap B_1.
\end{align}
We will study asymptotic behaviors of $u$ near the vertex of the cone $V$. 
To this end, we  introduce the corresponding solution in the infinite cone. 
Consider 
\begin{align}\label{eq-LN-cone-infinite-eq}
\Delta u_V&=\frac{1}{4}n(n-2)u_V^{\frac{n+2}{n-2}}\quad \text{in } V,\\
\label{eq-LN-cone-infinite-boundary}u_V&=\infty\quad\text{on }\partial V.
\end{align}
According to \cite{hanshen2}, under appropriate assumptions on $\Sigma$, 
there exists a unique positive solution $u_V$ of
\eqref{eq-LN-cone-infinite-eq}-\eqref{eq-LN-cone-infinite-boundary}.
Moreover, in the polar coordinates $x=r\theta$ with $r=|x|$ and $\theta=x/|x|$, 
$u_V$ has the form 
\begin{equation}\label{eq-relation-uV-u}u_V(x)=|x|^{-\frac{n-2}{2}}\xi_\Sigma(\theta),\end{equation} 
where  $\xi_\Sigma$ is a smooth function on $\Sigma$, with 
$\xi_\Sigma=\infty$ on $\partial \Sigma$.

Our primary goal in this paper is to investigate 
how a solution $u$ of \eqref{eq-LN-cone-truncated-eq}-\eqref{eq-LN-cone-truncated-boundary} 
in $V\cap B_1$  is approximated by the corresponding solution $u_V$ of 
\eqref{eq-LN-cone-infinite-eq}-\eqref{eq-LN-cone-infinite-boundary} restricted to $V\cap B_1$. 
We point out that both $u$ and $u_V$ are infinite on $\partial V\cap B_1$. It is not clear 
whether the quotient $u/u_V$ should remain bounded. In the first result, we prove that 
$u/u_V$ is not only bounded, but has an expansion near $\partial V\cap B_{1/2}$ with 
the constant leading term 1.

%We now consider \eqref{eq-LN-eq}-\eqref{eq-LN-boundary}  in certain unbounded domains. 

\begin{theorem}\label{thrm-main1} For $n\geq 3$, let $V$ be an infinite Euclidean cone over 
some Lipschitz domain $\Sigma\subsetneq\mathbb{S}^{n-1}$, and 
$u_V\in C^\infty(V)$ be the positive solution of 
\eqref{eq-LN-cone-infinite-eq}-\eqref{eq-LN-cone-infinite-boundary}.
Then, there exist a positive constant $\tau$ and  an increasing sequence of positive constants $\{\mu_i\}$, 
with $\mu_i\to\infty$, such that, for any positive solution $u\in {C}^\infty(V\cap B_1)$ of 
\eqref{eq-LN-cone-truncated-eq}-\eqref{eq-LN-cone-truncated-boundary} and 
any integer $m\ge 0$, 
\begin{equation}\label{expansion-negative}
%\begin{split}
   \Big|(u_V^{-1}u)(x)-1-\sum_{i=1}^{m}\sum_{j=0}^{i-1}c_{ij}(\theta)|x|^{\mu_i}(-\ln{|x|})^j\Big |   
     \leq Cd_\Sigma^{\tau}|x|^{\mu_{m+1}}(-\ln{|x|})^m,
%\end{split}
\end{equation}
where $C$ is a positive constant, $d_\Sigma$ is the distance function on $\Sigma$ to $\partial \Sigma$, 
and each $c_{ij}$ is a bounded smooth function on $\Sigma$, with
$c_{ij}=O(d_\Sigma^{\tau})$. 
\end{theorem}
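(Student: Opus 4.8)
The plan is to work directly with the quotient $w:=u_V^{-1}u$ on $V\cap B_1$. Writing $u=wu_V$ in \eqref{eq-LN-cone-truncated-eq} and subtracting $w$ times \eqref{eq-LN-cone-infinite-eq}, one finds that $w$ solves
\[
\Delta w+2\nabla(\log u_V)\cdot\nabla w=\tfrac14 n(n-2)\,u_V^{\frac4{n-2}}\big(w^{\frac{n+2}{n-2}}-w\big)\qquad\text{in }V\cap B_1 .
\]
The nonlinearity $t\mapsto t^{\frac{n+2}{n-2}}-t$ vanishes at $t=1$ with derivative $\frac4{n-2}$, so setting $v:=w-1$ gives a perturbed linear equation
\[
\Delta v+2\nabla(\log u_V)\cdot\nabla v-n\,u_V^{\frac4{n-2}}v=Q(v),\qquad Q(v)=O\!\big(u_V^{\frac4{n-2}}v^2\big).
\]
Inserting the homogeneity \eqref{eq-relation-uV-u}, $u_V=|x|^{-\frac{n-2}2}\xi_\Sigma(\theta)$, and using polar coordinates $x=r\theta$, the linear part separates exactly:
\[
\Delta v+2\nabla(\log u_V)\cdot\nabla v-n\,u_V^{\frac4{n-2}}v
=\partial_r^2 v+\tfrac1r\partial_r v+\tfrac1{r^2}\mathcal L_\Sigma v ,
\]
where $\mathcal L_\Sigma v=\xi_\Sigma^{-2}\,\mathrm{div}_{\mathbb S^{n-1}}\!\big(\xi_\Sigma^2\nabla_{\mathbb S^{n-1}}v\big)-n\,\xi_\Sigma^{\frac4{n-2}}v$ is precisely the singular elliptic operator on $\Sigma$ highlighted in the abstract; it is self-adjoint and strictly positive in $L^2(\Sigma,\xi_\Sigma^2\,d\theta)$, so $-\mathcal L_\Sigma$ has discrete spectrum $0<\lambda_1\le\lambda_2\le\cdots\to\infty$. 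Thus $v$ satisfies the Fuchsian equation $r^2\partial_r^2 v+r\partial_r v+\mathcal L_\Sigma v=r^2Q(v)$, with $r^2Q(v)=O(\xi_\Sigma^{\frac4{n-2}}v^2)$ and a regular singular point at the vertex.

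Before expanding I would first establish the crude facts $1\le w\le C$ on $V\cap B_{1/2}$ and $w\to1$ at the vertex. The lower bound $u\ge u_V$ is domain monotonicity of the Loewner--Nirenberg solution; the upper bound and $w\to1$ follow from a blow-up argument: the rescalings $u_k(y)=|x_k|^{\frac{n-2}2}u(|x_k|y)$ solve the same equation, are trapped between $u_V$ and the universal barrier $Cd_{\partial V}^{-\frac{n-2}2}$, hence converge in $C^\infty_{\mathrm{loc}}(V)$ to a solution of \eqref{eq-LN-cone-infinite-eq}--\eqref{eq-LN-cone-infinite-boundary}, which by the uniqueness of $u_V$ must be $u_V$ itself. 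Comparing the boundary expansions of $u$ near $\partial V\cap B_1$ and of $u_V$ near $\partial V$ — which agree to the order determined by the common local geometry of $\partial V$ — upgrades this to $|v(x)|\le Cd_\Sigma^\tau$ near $\partial V$ for some $\tau>0$. Finally, a Caccioppoli/energy estimate on the dyadic annuli $\{2^{-k-1}\le|x|\le2^{-k}\}$, together with the spectral gap $\lambda_1>0$, turns $v\to0$ into an initial quantitative rate $|v(x)|\le Cd_\Sigma^\tau|x|^{\mu_1}$ with $\mu_1=\sqrt{\lambda_1}$, which is the $m=0$ case of \eqref{expansion-negative}.

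The heart of the proof is then an iteration in the eigenfunction expansion $v=\sum_k a_k(r)\phi_k(\theta)$, $-\mathcal L_\Sigma\phi_k=\lambda_k\phi_k$. Each coefficient obeys an Euler equation $r^2a_k''+ra_k'-\lambda_k a_k=(\text{forcing})_k$, whose homogeneous solution that decays at $r=0$ is $r^{\sqrt{\lambda_k}}$, while a forcing term $O(r^\beta)$ contributes a particular solution $O(r^\beta)$ unless $\beta=\sqrt{\lambda_k}$, in which case a factor $\ln r$ appears. Feeding the quadratic term $r^2Q(v)$ back into the forcing produces exponents that are sums $\mu_i+\mu_j$ of exponents produced at earlier stages; hence the exponents that occur form the ordered additive semigroup $\{\mu_i\}$ generated by $\{\sqrt{\lambda_k}\}_{k\ge1}$, and repeated resonances generate the logarithmic factors $(-\ln|x|)^j$, the degree bound $j\le i-1$ following from the number of times a resonance can be compounded below level $\mu_i$. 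Tracking the $\partial\Sigma$-behavior at every step — where the global regularity of the $\phi_k$ and of the Green's operator of the singular Dirichlet problem for $\mathcal L_\Sigma$ is used — keeps all coefficients in the weighted class, giving $c_{ij}=O(d_\Sigma^\tau)$ and the remainder in \eqref{expansion-negative}; uniformity of $C$, $\tau$, $\{\mu_i\}$ in the solution $u$ holds because $u$ enters only through the Cauchy data of the $a_k$ at a fixed radius.

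The step I expect to be the main obstacle is the weighted regularity theory for $\mathcal L_\Sigma$ on the merely Lipschitz domain $\Sigma$: establishing discreteness of the spectrum, the precise $d_\Sigma$-asymptotics and boundary regularity of the eigenfunctions, and the solvability and weighted estimates for the associated singular Dirichlet problem, including identifying an admissible range for $\tau$ from the indicial equation of $\mathcal L_\Sigma$ at $\partial\Sigma$. Everything else — the blow-up normalization, the dyadic energy estimates, and the bookkeeping of resonances and log powers — follows the now-standard pattern of polyhomogeneous expansions (as in \cite{ACF1982CMP,Mazzeo1991,hanshen2}); the genuinely delicate input is this weighted analysis on the singular spherical domain, which is exactly what forces the $d_\Sigma^\tau$ factors throughout the statement.
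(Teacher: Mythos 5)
Your framework is, up to a conjugation, the one the paper uses: your $v=u_V^{-1}u-1$ equals $\xi_\Sigma^{-1}$ times the paper's unknown $|x|^{\frac{n-2}2}u-\xi_\Sigma$, your weighted operator $\xi_\Sigma^{-2}\mathrm{div}(\xi_\Sigma^2\nabla_\theta\cdot)-n\xi_\Sigma^{\frac4{n-2}}$ is the conjugate by $\xi_\Sigma$ of $L_\Sigma-\beta^2$ from \eqref{eq-def-mathcal-L}, and your indicial roots $\sqrt{\lambda_k}$ are exactly the paper's $\gamma_k=\sqrt{\lambda_k+\beta^2}$ in \eqref{eq-exponential-decay-order}; the resonance/log bookkeeping matches Lemmas \ref{lemma-decay-sol-Ljw=f-ODE}--\ref{lemma-particular-solutions} and Theorem \ref{thrm-main-v-Lip}. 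However, two steps are genuinely wrong or missing. First, the claim $u\ge u_V$ ``by domain monotonicity'' is false: $u$ is prescribed infinite only on the lateral boundary $\partial V\cap B_1$, with no data on the cap $V\cap\partial B_1$, and the minimal admissible solution lies \emph{below} $u_V$ there; the correct starting point is the additive bound $|u-u_V|\le C$ of Lemma \ref{lemma-estimate-bound}, obtained by adding the ball solutions $u_{1,0}$, $v_{r,0}$ as barriers. Likewise, ``comparing the boundary expansions of $u$ and $u_V$ near $\partial V$'' is not available: for a merely Lipschitz $\Sigma$ there is no boundary expansion, $d_\Sigma$ is not $C^2$ and cannot be used to build barriers, so the initial weighted decay $|v|\le Cd_\Sigma^{\tau}$ has to be manufactured from barriers built out of $\xi_\Sigma$ itself (i.e.\ $\rho=\xi_\Sigma^{-\frac2{n-2}}$ and the equation \eqref{eq-LN-domain-eq-rho-new}), which is the content of Theorem \ref{thrm-regularity-boundary-Lip-pre} and Lemma \ref{lemma-estimate-cylinderical-boundary}, together with the gradient bounds of Section \ref{sec-gradient}.

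Second, the block you defer as ``the main obstacle'' is not a verification to be filled in later; it is the bulk of the proof, and without it your dyadic energy step and eigenfunction iteration cannot be run. Self-adjointness and positivity in $L^2(\Sigma,\xi_\Sigma^2d\theta)$ do not by themselves give discrete spectrum with the weight $\xi_\Sigma^2\sim d_\Sigma^{-(n-2)}$ degenerating at $\partial\Sigma$; the paper gets compact resolvent by working with the unweighted $H^1_0$ formulation and Hardy's inequality (Lemma \ref{lemma-singular-existnece-weak}, Theorem \ref{thrm-singular-eigenvalue}). More seriously, your spectral projections and annulus/Caccioppoli identities require integrating by parts on slices $\{t\}\times\Sigma$, while the solution is a priori only H\"older up to $\partial\Sigma$ with a possibly tiny exponent $\nu$; the paper has to prove $\partial_t v,\partial_{tt}v$ decay like $\rho^\nu$ (Lemma \ref{lemma-regularity-v-derivatives-boundary}) precisely in order to show $v(t,\cdot)\in H^1_0(\Sigma)$ (Corollary \ref{cor-regularity-v-H10}) and legitimize those integrations. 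Finally, the quadratic forcing $F(v)=O(\rho^{\beta-2+2\nu}e^{-2\gamma t})$ is unbounded for $n\le 5$, and feeding it into the linear theory (Lemma \ref{lemma-nonhomogeneous-linearized-eq-Lip}) requires $F(v)(t,\cdot)\in L^p$ with $p>\max\{2,n/2\}$, which for $n=3$ hinges on the quantitative lower bound $\nu>1/2$ coming from the universal gradient bound for $\rho$ on $\mathbb S^2$ (Lemmas \ref{lemma-n3-rho-upper-bound}--\ref{lemma-n3-gradient}, Corollary \ref{cor-F(v)-integrability}). These weighted estimates are exactly what produce the exponent $\tau$ in \eqref{expansion-negative} and the bound $c_{ij}=O(d_\Sigma^\tau)$, so as written your argument establishes the shape of the expansion but not the theorem.
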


%Here, $\Sigma$ is {\it star-shaped} with a center $e\in \Sigma$ if for any $\theta\in \Sigma$, the shortest geodesic 
%on $\mathbb S^{n-1}$ connecting $\theta$ and $e$ lies in $\Sigma$. 
We point out that the sequence $\{\mu_i\}$ in Theorem \ref{thrm-main1} is determined only by the cone $V$ or 
the spherical domain $\Sigma$, independent of the specific solution $u$ of 
\eqref{eq-LN-cone-truncated-eq}-\eqref{eq-LN-cone-truncated-boundary}. 
The growth rate $\tau$ near $\partial\Sigma$ is also determined only by $\Sigma$ 
and will be proved to satisfy $\tau>\frac{n-2}2$. We can improve $\tau$ under appropriate enhanced 
assumptions of $\Sigma$. 

\begin{theorem}\label{thrm-main2} For $n\geq 3$, let $V$ be an infinite Euclidean cone over 
some domain $\Sigma\subsetneq\mathbb{S}^{n-1}$. 
Assume that  $\Sigma$
is either a  $C^{1,\alpha}$-domain for some $\alpha\in (0,1)$ or 
a Lipschitz domain  
which is the union of an increasing sequence of $C^{3}$-domains 
$\{\Sigma_i\}\subset\mathbb{S}^{n-1}$ such that 
each $\Sigma_i$ has a nonnegative mean curvature with respect to the inner unit normal. 
Then, $\tau$ in Theorem \ref{thrm-main1} is given by $\tau=n$. 
\end{theorem}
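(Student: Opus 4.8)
The plan is to revisit the proof of Theorem \ref{thrm-main1} and locate exactly where the exponent $\tau$ is produced, and then to show that under the enhanced hypotheses on $\Sigma$ it may be taken to be $n$. Writing $u=u_V(1+v)$ and substituting $u_V=|x|^{-(n-2)/2}\xi_\Sigma(\theta)$, the function $v$ solves a semilinear equation whose linearization, after separation of variables $v=|x|^{\mu}w(\theta)$, reduces on $\Sigma$ to the weighted operator
\[ \mathcal{M}w \;=\; \xi_\Sigma^{-2}\,\mathrm{div}_{\mathbb{S}^{n-1}}\!\bigl(\xi_\Sigma^{2}\,\nabla_{\mathbb{S}^{n-1}}w\bigr)\;-\;n\,\xi_\Sigma^{4/(n-2)}\,w , \]
in the sense that $|x|^{\mu}w$ solves the linearized equation precisely when $-\mathcal{M}w=\mu^{2}w$. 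Thus the $\mu_i$ are the positive square roots of the eigenvalues of the singular Dirichlet problem for $-\mathcal{M}$ (which is self-adjoint in $L^{2}(\Sigma,\xi_\Sigma^{2}\,dV_{\mathbb{S}^{n-1}})$), and each coefficient $c_{ij}$ in \eqref{expansion-negative} arises by solving an inhomogeneous problem of the form $(\mathcal{M}+\mu_i^{2})c_{ij}=f_{ij}$ on $\Sigma$, where $f_{ij}$ is built from products of lower-order coefficients times $\xi_\Sigma^{4/(n-2)}$ coming from the Taylor expansion of $(1+v)^{(n+2)/(n-2)}-(1+v)$. The number $\tau$ in Theorem \ref{thrm-main1} is nothing but the rate of vanishing at $\partial\Sigma$ that one can guarantee for solutions of these singular Dirichlet problems; Theorem \ref{thrm-main2} asserts that this rate is the optimal one dictated by the indicial roots of $\mathcal{M}$ at $\partial\Sigma$.

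The first step is the indicial analysis at $\partial\Sigma$. Writing $d=d_\Sigma$ and using $\xi_\Sigma=d^{-(n-2)/2}(1+o(1))$, so that $\xi_\Sigma^{2}\sim d^{-(n-2)}$ and $\xi_\Sigma^{4/(n-2)}=d^{-2}(1+o(1))$, the operator $\mathcal{M}$ takes the model form $\partial_{d}^{2}-\tfrac{n-2}{d}\partial_{d}-\tfrac{n}{d^{2}}$ modulo tangential and lower-order terms, with indicial equation
\[ s(s-1)-(n-2)s-n \;=\; (s-n)(s+1) \;=\; 0 , \]
whose roots are $s=n$ and $s=-1$. Since the energy carries the weight $\xi_\Sigma^{2}\sim d^{-(n-2)}$, a nontrivial $d^{-1}$ component has infinite energy, so every eigenfunction and every solution of the inhomogeneous problem with admissible right-hand side is forced onto the branch $w=O(d^{\,n})$ — provided one has enough quantitative control of $\xi_\Sigma$ near $\partial\Sigma$ to compare $\mathcal{M}$ with this model. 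Under only the Lipschitz hypothesis of Theorem \ref{thrm-main1}, $\partial\Sigma$ may itself be singular and $\xi_\Sigma$ cannot be pinned down to leading order; there one only extracts some $\tau>\tfrac{n-2}{2}$ from a Hardy-type inequality.

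The heart of the proof is therefore to establish the sharp decay $w=O(d^{\,n})$ under each enhanced hypothesis. If $\Sigma$ is a $C^{1,\alpha}$-domain, $\partial\Sigma$ is regular enough that, after flattening the boundary and freezing coefficients, one can build explicit super- and subsolutions of the form $d^{\,n}\pm C d^{\,n+\alpha'}$ for the model operator (together with a $d^{-1}$ barrier controlling the singular branch) and conclude by the maximum principle and weighted Schauder estimates near $\partial\Sigma$; this simultaneously upgrades $\xi_\Sigma$ to $\xi_\Sigma=d^{-(n-2)/2}(1+O(d^{\alpha'}))$, which legitimizes the model. If instead $\Sigma=\bigcup_i\Sigma_i$ with each $\Sigma_i$ a $C^{3}$-domain of nonnegative mean curvature with respect to the inner normal, one runs the barrier argument on each smooth piece $\Sigma_i$, where the geometric constants are controlled, and uses the sign of the mean curvature to obtain one-sided monotone comparisons for $\xi_{\Sigma_i}$ and for the corresponding eigenfunctions, with bounds uniform in $i$; passing to the limit along the increasing exhaustion then yields $\xi_\Sigma=d^{-(n-2)/2}(1+o(1))$ with the needed quantitative remainder and the decay $w=O(d^{\,n})$ on $\Sigma$. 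This last step, in the spirit of the approximation arguments of \cite{hanshen2}, is the main obstacle: without pointwise regularity of $\partial\Sigma$ one must extract the optimal indicial rate purely from the sign of the mean curvature and the monotonicity of the exhaustion, while keeping all estimates uniform.

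Once $c_{ij}=O(d^{\,n})$ is available for the leading eigenfunctions, it propagates through the entire construction. Each $f_{ij}$ is a finite sum of products $c_{i'j'}c_{i''j''}\cdots$ multiplied by $\xi_\Sigma^{4/(n-2)}\sim d^{-2}$; since every factor is $O(d^{\,n})$ and $n\ge 3$, such a product is $O(d^{\,2n-2})=O(d^{\,n})$, so $(\mathcal{M}+\mu_i^{2})c_{ij}=f_{ij}$ with $f_{ij}=O(d^{\,n})$, and the solution operator for the singular Dirichlet problem — which preserves the class $O(d^{\,n})$ by the same barrier argument (the $\mu_i^{2}$ term being lower order at $\partial\Sigma$) — returns $c_{ij}=O(d^{\,n})$; one then iterates on $i$. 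Finally, the remainder in \eqref{expansion-negative} satisfies a linear equation with right-hand side controlled by $d^{\,n}|x|^{\mu_{m+1}}(-\ln|x|)^{m}$, and the weighted energy estimates of Theorem \ref{thrm-main1}, now run with the improved weight exponent, give \eqref{expansion-negative} with $\tau=n$. Optimality of $\tau=n$ follows from the boundary expansions of $u$ and $u_V$ near $\partial V$: their locally determined coefficients (including the logarithmic one) agree through order $d^{\,n-1}$, while their order-$d^{\,n}$ coefficients differ for generic $u$.
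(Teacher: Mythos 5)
Your reduction to the weighted operator $\mathcal{M}$ and the indicial roots $s=n$, $s=-1$ is consistent with the paper (in the paper's unweighted variable the root is $s=\frac{n+2}{2}$ for the operator $-\Delta_\theta+\kappa\rho^{-2}$, and dividing by $\xi_\Sigma\sim d^{-\frac{n-2}{2}}$ returns your $d^{\,n}$, i.e.\ $\tau=\beta+s=n$). But the decisive step — actually proving the sharp decay $O(d^{\,n})$ at $\partial\Sigma$ under the two stated hypotheses — is exactly what you leave open, and you say so yourself (``this last step \dots is the main obstacle''). In the mean-convex exhaustion case your sketch (``one-sided monotone comparisons for $\xi_{\Sigma_i}$ \dots with bounds uniform in $i$'') does not identify any mechanism: domain monotonicity of $\xi_{\Sigma_i}$ holds regardless of curvature, and the intermediate claim that the limit yields $\xi_\Sigma=d^{-\frac{n-2}{2}}(1+o(1))$ with a quantitative remainder is in general false for merely Lipschitz $\Sigma$ (e.g.\ near a corner of $\partial\Sigma$) and is not what is needed. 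The paper's actual mechanism is different: the enhanced hypotheses are used only to prove a gradient bound for $\rho=\xi_\Sigma^{-2/(n-2)}$, namely $|\nabla_\theta\rho|\le 1+c\rho$ (Lemma \ref{lemma-Lip-negative-Laplace}, via the Bochner identity and a maximum principle applied to $\Delta_\theta\rho+S\rho$ on each smooth $\Sigma_i$, where the sign of the mean curvature controls the boundary values), respectively $\rho\in C^{1,\alpha}(\bar\Sigma)$ with $|\nabla_\theta\rho|=1$ on $\partial\Sigma$ (Lemma \ref{lemma-C1alpha-gradient}); this is condition \eqref{eq-condition-gradient}, which then feeds into barriers built from powers of $\rho$ itself, $\rho^{a}$ and $\rho^{s}-\rho^{\tau}$, in Theorem \ref{thrm-regularity-boundary-Lip}, where the equation \eqref{eq-LN-domain-eq-rho-new} is used to eliminate $\Delta_\theta\rho$ so that only $|\nabla_\theta\rho|$ enters the indicial computation. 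This is what produces the exponent $s=\frac{n+2}{2}$ and, after propagation through Lemma \ref{lemma-estimate-cylinderical-boundary-improved} and Theorem \ref{thrm-main-v-Lip}, the value $\tau=n$. None of this appears in your proposal.

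A second, related problem is your $C^{1,\alpha}$ argument: you propose super- and subsolutions of the form $d^{\,n}\pm Cd^{\,n+\alpha'}$ after flattening, but for a $C^{1,\alpha}$ boundary the distance function has no useful second-order regularity (this is precisely the obstruction the paper emphasizes in the introduction), so these functions cannot be plugged into an operator with a $d^{-2}$ potential without an additional device; the paper's device is to replace $d$ by the solution $\rho$ (and, in Lemma \ref{lemma-C1alpha-gradient}, by a regularized defining function $\eta$ with $\eta^{1-\alpha}|\nabla^2_\theta\eta|\le C$). Your propagation and optimality remarks are fine in spirit (the products decay like $d^{\,2n-2}$, strictly better than the indicial rate $d^{\,n}$, so no resonance occurs there), but as it stands the proposal assumes, rather than proves, the sharp boundary decay that constitutes the content of Theorem \ref{thrm-main2}.
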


We point out that $\tau=n$ provides an optimal estimate near $\partial\Sigma$. 

For $m=0$ and $m=1$, 
\eqref{expansion-negative} reduces to 
\begin{equation}\label{expansion-negative-0}
\big|(u_V^{-1}u)(x)-1\big |   \leq Cd_\Sigma^{\tau}|x|^{\mu_1},
\end{equation}
and, for some $\mu> \mu_1$,  
\begin{equation}\label{expansion-negative-1}
\big|(u_V^{-1}u)(x)-1-c_1(\theta)|x|^{\mu_1}\big |   
\leq Cd_\Sigma^{\tau}|x|^{\mu},
\end{equation}
where $\mu_1$ is related to the first eigenvalue of a singular elliptic operator on $\Sigma$ 
associated with the function $\xi_\Sigma$ in \eqref{eq-relation-uV-u}, and can be computed explicitly. 
The estimate \eqref{expansion-negative-0} with $\tau=n$ is optimal, in terms of both powers  $\mu_1$ and $n$. 
In the case $V=\mathbb R_+^n$ (i.e., $\Sigma=\mathbb S_+^{n-1}$), we can prove 
that $\mu_1=n$. Then, \eqref{expansion-negative-0} with $\tau=n$ reduces to a more familiar form 
\begin{equation*}
\big|x_n^{\frac{n-2}2}u(x)-1\big |   \leq Cx_n^{n}.
\end{equation*}

Jiang \cite{jiang} proved Theorem \ref{thrm-main1}  with $\tau=n$ for the case that 
$\Sigma$ is a star-shaped smooth spherical domain, and also discussed 
asymptotic expansions near $\partial \Sigma$. 
If $\Sigma$ is at least $C^2$,  the distance function $d_\Sigma$ is at least $C^2$ near $\partial\Sigma$
and hence can be used to construct 
various barrier functions. This is the strategy adopted in \cite{jiang}. 

Under the weakened assumption  that $\Sigma$ is  Lipschitz,
the distance function $d_\Sigma$ is not $C^2$ near $\partial\Sigma$ 
anymore and cannot be used to construct barrier functions. 
We will construct barrier functions with the help of $\xi_\Sigma$, 
the smooth function  introduced in \eqref{eq-relation-uV-u}. 
%However, extra assumptions on $\Sigma$ will be needed. 

To prove Theorems \ref{thrm-main1} and  \ref{thrm-main2}, consider 
$$v=|x|^{\frac{n-2}{2}}(u-u_V)=|x|^{\frac{n-2}{2}}u-\xi_\Sigma(\theta),$$ 
where $\xi_\Sigma$ is the function  on $\Sigma$ as in \eqref{eq-relation-uV-u}. 
We will linearize the equation of $|x|^{\frac{n-2}{2}}u$ at $|x|^{\frac{n-2}{2}}u_V$ 
and study its kernels. 
The corresponding linearized operator $\mathcal L$ 
is given by 
\begin{equation}\label{linearizedoperator-negative}
\mathcal Lw=r^2\partial_{rr}w+r\partial_rw+\Delta_{\theta}w-\frac{1}{4}n(n+2)\xi_\Sigma^{\frac{4}{n-2}}w
-\frac14(n-2)^2w.
\end{equation}
We need to discuss the eigenvalue problem for the linear operator 
\begin{align}\label{eq-linear-spherical}
    L_\Sigma w= \Delta_\theta w-\frac{1}{4}n(n+2)\xi_\Sigma^{\frac{4}{n-2}}w\quad\text{in }\Sigma.
\end{align}
Since $\xi_\Sigma$ blows up on boundary $\partial\Sigma$, $L_\Sigma$ has a singularity on $\partial\Sigma$. 
%The constant $\lambda_1$ in \eqref{eq-eigenvalue-1} is the first eigenvalue of $-L_\Sigma$ in $H_0^1(\Sigma)$. 
%, and $\phi_1$ in Theorem \ref{thrm-main1} is a corresponding eigenfunction. 
%We also establish a Fredholm alternative for $L_\Sigma$ and, as a consequence, construct special solutions 
%to linear equations for $\mathcal L$, avoiding discussions of infinite series 
%based on a growth estimate of eigenvalues of $L_\Sigma$ as in \cite{jiang}. 
The singularity of some coefficient of $L_\Sigma$, coupled with the singularity of boundary of Lipschitz domains, 
makes the study of the asymptotic expansions a tricky issue.

If $\Sigma$ is merely Lipschitz, %the eigenfunctions of 
solutions related to the operator $\mathcal L$ in \eqref{linearizedoperator-negative} 
which are continuous up to boundary 
are proved to be H\"older continuous, but with a small H\"older index. 
Such smallness causes two serious issues. 
It does not allow us 
to perform some important computations such as integration by parts,   
%Extra cares are needed to achieve these goals. 
and it causes some 
singular terms to possess less desirable integrability, especially in low dimensions. 
To overcome the first difficulty, we prove that derivatives of solutions with respect to $r$ are also H\"older continuous,
and then demonstrate that solutions restricted to each slice are in appropriate Sobolev spaces. 
%In other words, we treat the first two terms in $\mathcal L$ as nonhomogeneous terms. 
For the second issue, we derive a 
universal lower bound estimate of 
the H\"older index in the case $n=3$, and hence improve the integrability to the desired level. 
Refer to Corollaries \ref{cor-regularity-v-H10} and \ref{cor-F(v)-integrability} for details, 
respectively.

We point out that Theorem \ref{thrm-main1} is not a prerequisite of Theorem  \ref{thrm-main2}, 
which can be proved directly and more easily. 
Moreover, both Theorem \ref{thrm-main1} and Theorem  \ref{thrm-main2} hold for $V=\mathbb R^{n-k}\times V_k$,  
where $V_k$ is a cone over an appropriate domain $\Sigma_{k-1}\subsetneq \mathbb S^{k-1}$ in $\mathbb R^k$. 

\smallskip 

We now compare results in this paper with corresponding results for the singular Yamabe equation
corresponding to positive scalar curvatures. We consider 
positive solutions of the Yamabe equation of the form
\begin{equation}\label{eq-Yamabe} 
-\Delta u=\frac14n(n-2)u^{\frac{n+2}{n-2}},\end{equation}
which are defined in the punctured ball $B_1\setminus\{0\}$, with a nonremovable singularity at the origin. 
%Geometrically, for any positive solution $u$ of the equation \eqref{eq-Yamabe}, 
%the corresponding conformal metric 
%$g=u^{\frac4{n-2}}dx^2$
%has a constant  scalar curvature $n(n-1)$. 

Let $u$ be a positive solution of \eqref{eq-Yamabe} in $B_1\setminus \{0\}$, 
with a nonremovable singularity at the origin. 
In a pioneering paper \cite{CaffarelliGS1989}, Caffarelli,  Gidas,  and Spruck 
proved that $u$ is asymptotic to a radial singular solution of \eqref{eq-Yamabe} in $\mathbb R^n\setminus\{0\}$;  
namely, 
\begin{equation}\label{eq-estimate-u-0}
|x|^{\frac{n-2}2}u(x)-\psi(-\ln|x|)\to 0\quad\text{as }x\to 0,\end{equation}
where $|x|^{\frac{2-n}2}\psi(-\ln|x|)$ is a positive radial solution of \eqref{eq-Yamabe} 
in $\mathbb R^n\setminus\{0\}$, with a nonremovable singularity at the origin. In fact, 
$\psi$ is a positive periodic function in $\mathbb R$. 
Subsequently in \cite{KorevaarMPS1999}, Korevaar, Mazzeo, Pacard, and Schoen 
studied refined asymptotics and expanded solutions to the next order in the following form: 
for some constant $\alpha\in (1,2]$, 
\begin{equation}\label{eq-estimate-u-1}
\big||x|^{\frac{n-2}2}u(x)-\psi(-\ln|x|)-\phi(-\ln|x|)P_1(x)\big|\le C|x|^{\alpha}\quad\text{in }B_{1/2},\end{equation}
where $\psi$ is the function as in \eqref{eq-estimate-u-0}, 
$P_1$ is a linear function, and $\phi$ is a function given by 
$$\phi=-\psi'+\frac{n-2}2\psi.$$
In \cite{Han-Li-Li}, Han, Li, and Li established an expansion of $|x|^{\frac{n-2}2}u(x)$ up to arbitrary orders. 
Specifically, there exists a 
positive sequence $\{\mu_i\}$, strictly increasing, divergent to $\infty$ and with $\mu_1=1$, 
such that, for any positive integer $m$ 
and any $x\in B_{1/2}\setminus\{0\}$, 
\begin{align}\label{eq-estimate-u-k}%\begin{split}
\Big||x|^{\frac{n-2}2}u(x)-\psi(-\ln|x|)-\sum_{i=1}^m\sum_{j=0}^{i-1} 
c_{ij}(x)|x|^{\mu_i}(-\ln|x|)^j\Big|%\\&\qquad\qquad
\le C|x|^{\mu_{m+1}}(-\ln|x|)^m,%\end{split}
\end{align}
where  $\psi$ is the function as in \eqref{eq-estimate-u-0}, and
$c_{ij}$ is a bounded smooth function in $B_{1/2}\setminus\{0\}$, 
for each $i=1, \cdots, m$ and $j=0,\cdots, i-1$. We point out that the sequence 
$\{\mu_i\}$ here is determined by the leading term $\psi$, and hence by the solution $u$. 

Compare \eqref{expansion-negative-0}, \eqref{expansion-negative-1}, and \eqref{expansion-negative}
with \eqref{eq-estimate-u-0}, \eqref{eq-estimate-u-1}, and \eqref{eq-estimate-u-k}, respectively. 

\smallskip

The paper is organized as follows. 
In Section \ref{sec-Existence-Cones}, we discuss some basic properties of solutions in cones. 
In Section \ref{sec-gradient}, we derive some necessary gradient estimates. 
In Section \ref{sec-spherical-operators}, we study the eigenvalue problem 
of the elliptic operator $-L_\Sigma$ introduced in \eqref{eq-linear-spherical}. 
%and focus on boundary behaviors of eigenfunctions. 
In Section \ref{sec-boundary-regularity}, 
we discuss the regularity of solutions of the Yamabe equation 
near boundary of cylinders. 
%on cylinders and, 
%in particular, the regularity near the boundary. 
In Section \ref{section-optimal-t}, we derive an optimal estimate along the $t$-direction. 
In Section \ref{sec-iterations}, we discuss asymptotic expansions for large $t$
and prove Theorems \ref{thrm-main1} and \ref{thrm-main2}. 
%In Section \ref{sec-improving}, we improve decay estimates near boundary under appropriate assumptions on $\Sigma$. 
%In Section \ref{sec-cylindrical-coordinates}, we study the linearized elliptic equations in cylindrical coordinates.
%In Section \ref{sec-iterations},  we prove Theorems \ref{thrm-main1} and \ref{thrm-main2}. 

W. Shen acknowledges the partial support by NSFC Grant 11901405.

\section{Solutions in Cones}\label{sec-Existence-Cones}

In this section,
we discuss the existence of solutions of the Loewner-Nirenberg problem in infinite cones, 
and compare solutions in infinite cones and in finite cones. 

First, we quote a well-known result.

\begin{theorem}\label{thrm-Existence}
Let $ \Omega $ be a bounded Lipschitz domain in $ \mathbb{R}^{n}$.
Then, there exists a unique positive solution  $ u \in C^{ \infty}(\Omega)$
of \eqref{eq-LN-eq}-\eqref{eq-LN-boundary}.
\end{theorem}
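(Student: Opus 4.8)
The plan is to prove Theorem \ref{thrm-Existence} by the classical Loewner--Nirenberg method of monotone approximation from below combined with barrier constructions, adapted to the Lipschitz setting where no smooth distance function is available. First I would fix a large radius $R$ so that $\Omega\subset B_R$, and for each large integer $k$ solve the finite-boundary-value problem $\Delta u_k=\tfrac14 n(n-2)u_k^{(n+2)/(n-2)}$ in $\Omega$ with $u_k=k$ on $\partial\Omega$. Existence and uniqueness of a positive $u_k\in C^\infty(\Omega)\cap C(\overline\Omega)$ here follow from the standard monotone iteration / sub- and supersolution method for this semilinear equation: the constant $0$ is a subsolution (after a standard trick one works with $u_k=k+w$, $w\ge 0$), and one obtains a supersolution from the explicit one-dimensional solutions. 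By the comparison principle (which applies because $t\mapsto t^{(n+2)/(n-2)}$ is increasing), $\{u_k\}$ is monotone increasing in $k$. The key \emph{a priori} upper bound independent of $k$ comes from the explicit solution on a ball or half-space: for any $x_0\in\Omega$ let $\rho=d_\Omega(x_0)=\mathrm{dist}(x_0,\partial\Omega)$; comparing $u_k$ on $B_\rho(x_0)\subset\Omega$ with the radial solution on $B_\rho(x_0)$ that blows up on $\partial B_\rho(x_0)$ gives $u_k(x_0)\le C\,d_\Omega(x_0)^{-(n-2)/2}$, with $C$ depending only on $n$. This bound is uniform in $k$, so $u:=\lim_{k\to\infty}u_k$ is well-defined and finite in $\Omega$.

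Next I would upgrade this pointwise limit to a genuine $C^\infty$ solution. Since the $u_k$ are locally uniformly bounded above (and below away from $\partial\Omega$) on compact subsets of $\Omega$, interior elliptic (Schauder) estimates applied to the equation give locally uniform $C^{2,\alpha}$ and then $C^\infty$ bounds; a diagonal/Arzel\`a--Ascoli argument shows $u_k\to u$ in $C^\infty_{\mathrm{loc}}(\Omega)$, so $u$ solves \eqref{eq-LN-eq} classically and $u\in C^\infty(\Omega)$ by bootstrapping. To verify the boundary condition $u=\infty$ on $\partial\Omega$, I would use an interior lower barrier: near any boundary point, inscribe a small ball $B$ with $\overline B\cap\partial\Omega\ne\emptyset$ but $B\subset\Omega$, and compare $u$ from below with the solution on $B$ that blows up on $\partial B$; letting the ball approach the boundary forces $u(x)\to\infty$ as $d_\Omega(x)\to 0$. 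Here the Lipschitz hypothesis is used only to guarantee such inscribed balls exist and that $\partial\Omega$ has positive reach locally from inside in a weak enough sense — in fact for the lower bound any domain works, and the Lipschitz condition is genuinely needed only for the matching upper barrier near the boundary, which is not required for mere existence but will be for finer estimates. So existence follows with $\Omega$ merely open and bounded for the construction of $u$, and Lipschitz regularity is invoked to ensure boundedness of the approximants is not vacuous and to make $u$ continuous up to where it is finite.

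For uniqueness I would argue as in \cite{Loewner&Nirenberg1974}: suppose $u,\tilde u$ are two positive solutions blowing up on $\partial\Omega$. The ratio $u/\tilde u$ should tend to $1$ near $\partial\Omega$; more precisely, using the already-established two-sided estimate $c\,d_\Omega^{-(n-2)/2}\le u,\tilde u\le C\,d_\Omega^{-(n-2)/2}$ together with the refined first-order asymptotics $d_\Omega^{(n-2)/2}u\to 1$ (which itself follows from sandwiching between ball barriers, using the Lipschitz/inscribed-ball structure), one gets $u/\tilde u\to 1$ uniformly as $x\to\partial\Omega$. Then for any $\varepsilon>0$, $(1+\varepsilon)\tilde u\ge u$ on a neighborhood of $\partial\Omega$, and since $(1+\varepsilon)\tilde u$ is a supersolution (because $(1+\varepsilon)^{(n+2)/(n-2)}\ge(1+\varepsilon)$) while $u$ is a solution, the maximum principle on the remaining compact region (where both functions are smooth and bounded, and $u$ is bounded there) yields $(1+\varepsilon)\tilde u\ge u$ in all of $\Omega$; letting $\varepsilon\to 0$ and reversing the roles gives $u=\tilde u$.

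The main obstacle is precisely the boundary behavior in the Lipschitz category: establishing the uniform \emph{upper} bound $u\le C d_\Omega^{-(n-2)/2}$ and the first-order boundary asymptotics $d_\Omega^{(n-2)/2}u\to 1$ needed for uniqueness requires exterior-type barriers, and a general Lipschitz domain need not satisfy a uniform exterior ball condition. The resolution — which is why the paper's later analysis is delicate — is to replace smooth distance-function barriers by barriers built from the homogeneous model solutions $u_V$ in cones, i.e.\ to localize near each boundary point to a cone and use the explicit conical solution \eqref{eq-relation-uV-u} as the comparison function. For the present theorem, however, this is quoted as "well-known," so I would simply cite \cite{Loewner&Nirenberg1974} for the $C^2$ case and \cite{AM1988DUKE, hanshen2} (and the standard fact that bounded Lipschitz domains are exhaustible by smooth domains with comparable geometry, or admit the needed weak barriers) for the extension to bounded Lipschitz $\Omega$, noting that existence is the easy direction and that the interior regularity $u\in C^\infty(\Omega)$ is immediate from interior Schauder theory once local bounds are in hand.
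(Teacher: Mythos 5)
The paper itself does not prove this theorem: it is quoted as well known, with \cite{Loewner&Nirenberg1974} cited for $C^2$-domains and \cite{LazerMcKenna1993} for the general case, so your sketch has to stand on its own. The existence half is essentially the standard and correct route (solve with boundary data $k$, get monotonicity in $k$, get the uniform interior bound $u_k(x_0)\le (2/d_\Omega(x_0))^{\frac{n-2}{2}}$ from the explicit ball solution, pass to the limit by interior Schauder estimates), but your verification of $u=\infty$ on $\partial\Omega$ is backwards: the blow-up solution on an inscribed ball $B\subset\Omega$ is the \emph{maximal} solution on $B$ and therefore dominates $u$ there, so it is an upper barrier, not a lower one. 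The correct (and simpler) argument is that $u\ge u_k$ in $\Omega$ and $u_k\in C(\bar\Omega)$ with $u_k=k$ on $\partial\Omega$ (boundary continuity is where the Lipschitz/regular-boundary hypothesis enters), whence $\liminf_{x\to\partial\Omega}u\ge k$ for every $k$.

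The genuine gap is in uniqueness. Your argument hinges on the refined asymptotics $d_\Omega^{\frac{n-2}{2}}u\to 1$ at $\partial\Omega$, but this is false for general bounded Lipschitz domains: at a conical boundary point the solution behaves like $|x|^{-\frac{n-2}{2}}\xi_\Sigma(\theta)$, so $d_\Omega^{\frac{n-2}{2}}u$ stays bounded between positive constants without tending to $1$ --- this is exactly the phenomenon the present paper (and \cite{hanshen2}) is about, and the exterior-ball barriers needed to prove such asymptotics are unavailable in the Lipschitz category, as you yourself note. What the $(1+\varepsilon)\tilde u$ comparison actually requires is $u/\tilde u\to 1$ uniformly at $\partial\Omega$, and you supply no valid route to it; the two-sided bounds $c_1 d_\Omega^{-\frac{n-2}{2}}\le u,\tilde u\le c_2 d_\Omega^{-\frac{n-2}{2}}$ only yield $u\le (c_2/c_1)\tilde u$, and the constant cannot be sent to $1$. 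Uniqueness in the non-smooth setting is proved by genuinely different means --- e.g.\ the argument of \cite{LazerMcKenna1993} cited by the paper, dilation/translation comparisons exploiting the local graph structure of a Lipschitz boundary (or star-shapedness), or showing the minimal and maximal blow-up solutions coincide; compare also the cone case in Theorem \ref{thrm-LN-existence-cone}, where uniqueness is obtained by adding the explicit corrector solutions \eqref{eq-SolutionInside}--\eqref{eq-SolutionOutside} rather than via boundary asymptotics. As written, the uniqueness step would fail, so the proposal does not prove the theorem in the stated generality.
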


Loewner and Nirenberg \cite{Loewner&Nirenberg1974} proved the existence and uniqueness
for $C^2$-domains. 
%In fact, the uniqueness holds for Lipchitz domains as stated in Theorem \ref{thrm-Existence}. 
Refer to \cite{LazerMcKenna1993} for the general case.

Now, we state a  basic result which will be needed later.

\begin{lemma}\label{lemma-supersution}
Let $ \Omega $ be a domain in $ \mathbb{R}^{n}$,
and $u$ and $v$ be two nonnegative solutions of  \eqref{eq-LN-eq}.
Then, $u+v$ is a
nonnegative supersolution of  \eqref{eq-LN-eq}.
\end{lemma}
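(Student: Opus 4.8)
The plan is to reduce the assertion to an elementary pointwise inequality for the nonlinear term. Set $p=\frac{n+2}{n-2}$, and note that $p>1$ since $n\ge 3$. Because $u$ and $v$ both solve \eqref{eq-LN-eq} and the Laplacian is linear, we have in $\Omega$
\[
\Delta(u+v)=\Delta u+\Delta v=\tfrac14 n(n-2)\big(u^{p}+v^{p}\big).
\]
Hence, to conclude that $u+v$ is a supersolution of \eqref{eq-LN-eq}, i.e.\ that $\Delta(u+v)\le \tfrac14 n(n-2)(u+v)^{p}$, it suffices to establish the pointwise bound $u^{p}+v^{p}\le (u+v)^{p}$; nonnegativity of $u+v$ is immediate from that of $u$ and $v$.

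For the pointwise bound I would invoke the superadditivity of the power function: for all $a,b\ge 0$ and $p\ge 1$ one has $a^{p}+b^{p}\le (a+b)^{p}$. This follows by normalization: if $a+b=0$ it is trivial, and otherwise, setting $t=a/(a+b)\in[0,1]$, we get $a^{p}+b^{p}=(a+b)^{p}\big(t^{p}+(1-t)^{p}\big)\le (a+b)^{p}\big(t+(1-t)\big)=(a+b)^{p}$, where we used $s^{p}\le s$ for $s\in[0,1]$ and $p\ge 1$. Applying this with $a=u(x)$ and $b=v(x)$ at each $x\in\Omega$ completes the argument.

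There is essentially no analytic obstacle here: the solutions of \eqref{eq-LN-eq} are classical (indeed $C^\infty$, cf.\ Theorem \ref{thrm-Existence}), so the identity for $\Delta(u+v)$ and its insertion into the inequality are purely pointwise and need no approximation. The only items deserving a moment's care are bookkeeping the sign convention---since the nonlinearity in \eqref{eq-LN-eq} carries a $+$ sign, "supersolution" means $\Delta(u+v)$ is bounded \emph{above} by the nonlinearity evaluated at $u+v$, which is exactly the direction in which the power inequality runs---and checking that $p=\frac{n+2}{n-2}\ge 1$, which holds for all $n\ge 3$. (Should one wish to allow $u,v$ that solve \eqref{eq-LN-eq} only in a weak or viscosity sense, the same computation applies verbatim, as the power inequality is used only pointwise.)
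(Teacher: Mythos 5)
Your proof is correct, and it is exactly the ``straightforward calculation'' the paper alludes to when it omits the proof: linearity of $\Delta$ plus the pointwise superadditivity $u^{p}+v^{p}\le (u+v)^{p}$ for $p=\frac{n+2}{n-2}\ge 1$, with the supersolution inequality $\Delta(u+v)\le \tfrac14 n(n-2)(u+v)^{p}$ in the direction needed for the comparison arguments later in Section 2. Nothing further is required.
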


We omit the proof as it is based on a straightforward calculation.

\smallskip

Next, we discuss \eqref{eq-LN-eq}-\eqref{eq-LN-boundary} 
in infinite cones. Throughout this paper, cones are always solid.
%We first provide a simple computation. 
Let $(r,\theta)$ be the polar coordinates in $\mathbb R^n$. Then,
\begin{equation}\label{eq-Delta-identity}\Delta=
\partial_{rr}+\frac{n-1}{r}\partial_r+\frac{1}{r^{2}}\Delta_{\theta},\end{equation}
where $\Delta_{\theta}$ is the  Laplace-Beltrami operator on the unit
sphere $  \mathbb S ^{n-1}$. 

Suppose $u$ is a positive function and set 
$$\widehat u(x)=|x|^{\frac{n-2}2}u(x).$$
Then, $u$ is a solution of \eqref{eq-LN-eq} if and only if 
\begin{equation}\label{eq-LN-polar} 
r\partial_r(r\partial_r\widehat u)
+\Delta_\theta \widehat u-\frac14(n-2)^2\widehat u
=\frac14n(n-2)\widehat u^{\frac{n+2}{n-2}}.\end{equation}

\begin{theorem}\label{thrm-LN-existence-cone} 
Let ${V}$ be an infinite cone in $\mathbb R ^{n}$ over a 
Lipschitz domain $\Sigma\subsetneq\mathbb S^{n-1}$.
Then, there exists a unique positive solution $u_V\in C^\infty(V)$ of
\eqref{eq-LN-cone-infinite-eq}-\eqref{eq-LN-cone-infinite-boundary}. 
\end{theorem}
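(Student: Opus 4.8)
The plan is to realize $u_V$ as the decreasing limit of the solutions on an exhaustion of $V$ by bounded Lipschitz domains, and then to get uniqueness by first showing that the extremal solutions are self-similar and reducing to a comparison on $\Sigma$. For $k\ge 2$ let $\Omega_k=V\cap(B_k\setminus\overline{B_{1/k}})$, a bounded Lipschitz domain, and let $u_k\in C^\infty(\Omega_k)$ be the solution of \eqref{eq-LN-eq}--\eqref{eq-LN-boundary} in $\Omega_k$ furnished by Theorem \ref{thrm-Existence}. The $\Omega_k$ increase to $V\setminus\{0\}$, and for $j>k$ the spherical parts $(\partial B_k\cup\partial B_{1/k})\cap V$ of $\partial\Omega_k$ lie in the interior of $\Omega_j$ while $u_j=u_k=\infty$ on $\partial V\cap\overline{\Omega_k}$; since $\partial\Omega_k$ stays a positive distance from the vertex, both $u_j$ and $u_k$ blow up at the same Loewner--Nirenberg rate there, and the comparison principle for infinite boundary data (cf.\ \cite{LazerMcKenna1993}) gives $u_j\le u_k$ on $\Omega_k$. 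Thus $\{u_k\}$ decreases pointwise on $V\setminus\{0\}$, and the same comparison shows $v\le u_k$ on $\Omega_k$ for every $k$ and every solution $v$ of \eqref{eq-LN-cone-infinite-eq}--\eqref{eq-LN-cone-infinite-boundary}.

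\emph{Uniform bounds and the limit.} For an upper bound, fix $x\in V\setminus\{0\}$, set $\rho=\tfrac12 d_{\partial V}(x)$, and for $k$ large compare $u_k$ with the explicit radial solution of \eqref{eq-LN-eq}--\eqref{eq-LN-boundary} on $B(x,\rho)\subset\Omega_k$ to get $u_k(x)\le C\rho^{-(n-2)/2}$, locally uniform in $k$ on $V\setminus\partial V$. For a lower bound, near the lateral surface $\partial V\setminus\{0\}$, which is locally a Lipschitz graph, the Loewner--Nirenberg lower estimate for Lipschitz domains (cf.\ \cite{hanshen2}) gives $u_k\ge c\,d_{\partial V}^{-(n-2)/2}$ with $c$ independent of $k$; writing \eqref{eq-LN-eq} as $\Delta u_k-c_k u_k=0$ with $0\le c_k=\tfrac14 n(n-2)u_k^{4/(n-2)}$, the equation is invariant under $x\mapsto\rho x$ (whereupon $|x|^2 c_k$ has unit order), so a scale-invariant Harnack inequality propagates this to $u_k\ge c(K)>0$ on every compact $K\subset V$, uniformly in $k$. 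Hence $u_V:=\lim_k u_k$ is positive and finite; monotone convergence with the local bounds and interior Schauder estimates gives convergence in $C^\infty$ on compact subsets, so $u_V\in C^\infty(V)$ solves \eqref{eq-LN-cone-infinite-eq}, and \eqref{eq-LN-cone-infinite-boundary} holds on $\partial V\setminus\{0\}$ by the lower bound. At the vertex one argues by scaling: for $x\to0$ pick $x'$ with $|x'|=|x|$ close to $\partial\Sigma$, apply the lateral lower bound at $x'$, and use the scale-invariant Harnack on $\{|y|=|x|\}\cap V$ to conclude $u_V(x)\ge c|x|^{-(n-2)/2}\to\infty$. By construction $u_V$ dominates every solution.

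\emph{Uniqueness.} The map $T_\lambda w(x)=\lambda^{(n-2)/2}w(\lambda x)$ carries solutions of \eqref{eq-LN-cone-infinite-eq}--\eqref{eq-LN-cone-infinite-boundary} to solutions, preserves order, and satisfies $T_\lambda T_\mu=T_{\lambda\mu}$; maximality of $u_V$ gives $T_\lambda u_V\le u_V$, and applying $T_{1/\lambda}$ gives $u_V\le T_{1/\lambda}u_V\le u_V$, so $T_\lambda u_V=u_V$ for all $\lambda>0$. Therefore $u_V(x)=|x|^{-(n-2)/2}\xi_\Sigma(x/|x|)$, and by \eqref{eq-LN-polar} the positive function $\xi_\Sigma\in C^\infty(\Sigma)$ (interior regularity) solves $\Delta_\theta\xi_\Sigma-\tfrac14(n-2)^2\xi_\Sigma=\tfrac14 n(n-2)\xi_\Sigma^{(n+2)/(n-2)}$ in $\Sigma$ with $\xi_\Sigma=\infty$ on $\partial\Sigma$. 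Running the exhaustion with zero data on the spherical parts produces, as an increasing limit, a minimal solution $\underline u_V\le u_V$, self-similar by the same scaling argument, with profile $\underline\xi_\Sigma\le\xi_\Sigma$ solving the same equation on $\Sigma$. It remains to see $\underline\xi_\Sigma=\xi_\Sigma$, i.e.\ uniqueness for this Loewner--Nirenberg--type problem on the Lipschitz domain $\Sigma\subset\mathbb S^{n-1}$: for $\sigma\in(0,1)$, since $\tfrac{n+2}{n-2}>1$ the function $\sigma\xi_\Sigma$ is a subsolution, and comparing it with $\underline\xi_\Sigma$ on $\{d_\Sigma>\delta\}$ using the Keller--Osserman boundary asymptotics $d_\Sigma^{(n-2)/2}\xi\to1$ along $\partial\Sigma$ gives $\sigma\xi_\Sigma\le\underline\xi_\Sigma$ there; letting $\delta\to0$ and $\sigma\to1$ gives $\xi_\Sigma\le\underline\xi_\Sigma$, hence equality. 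Since every solution of \eqref{eq-LN-cone-infinite-eq}--\eqref{eq-LN-cone-infinite-boundary} lies between $\underline u_V$ and $u_V$, it equals $u_V$.

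\emph{The main obstacle} is the interplay of the two singular features: the uniform-in-$k$ lower bound $u_k\gtrsim d_{\partial V}^{-(n-2)/2}$ near the merely Lipschitz lateral surface together with the blow-up condition at the cone vertex, and the boundary asymptotics $d_\Sigma^{(n-2)/2}\xi\to1$ near the singular set of $\partial\Sigma$ that drives the uniqueness step on $\Sigma$. These are precisely the regularity-near-singularities issues treated in \cite{hanshen2}, and this is where the hypothesis that $\Sigma$ is Lipschitz with $(n-2)$-dimensional boundary enters.
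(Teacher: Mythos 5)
Your overall architecture (exhaustion to build a maximal solution, dilation invariance $T_\lambda u_V=u_V$ to force self-similarity, a minimal solution by the same scaling trick, and reduction to the blow-up problem on $\Sigma$) is a legitimate alternative to the paper's route, but the step where you actually close the argument has a genuine gap. To prove $\underline\xi_\Sigma=\xi_\Sigma$ you compare $\sigma\xi_\Sigma$ with $\underline\xi_\Sigma$ on $\{d_\Sigma>\delta\}$ and take the boundary inequality on $\{d_\Sigma=\delta\}$ from the asymptotics $d_\Sigma^{(n-2)/2}\xi\to 1$ on $\partial\Sigma$. For a merely Lipschitz $\Sigma$ this asymptotic statement is not available and is in general false: at a conical or corner point of $\partial\Sigma$ the blow-up profile is governed by the corresponding lower-dimensional cone solution, so the limit of $d_\Sigma^{(n-2)/2}\xi$ differs from $1$ (this is precisely the phenomenon the present paper is about). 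What is known for Lipschitz domains is only the two-sided bound $c_1\le d_\Sigma^{(n-2)/2}\xi\le c_2$ of Lemma \ref{lemma-solutions-xi-spherical}, with $c_1<1<c_2$ in general, and that is not enough to get $\sigma\xi_\Sigma\le\underline\xi_\Sigma$ on $\{d_\Sigma=\delta\}$ for $\sigma$ close to $1$; nor is there any uniformity of the claimed convergence along the singular part of $\partial\Sigma$. So the uniqueness part of your proof does not go through as written. (A secondary, fixable looseness: the ordering $u_j\le u_k$ on $\Omega_k$, and likewise $v\le u_k$, cannot be deduced from "both blow up at the same rate" — with only two-sided Lipschitz-rate bounds a rate comparison gives $u_j\le Cu_k$; you need the maximality of the blow-up solution on the bounded Lipschitz domain $\Omega_k$, e.g.\ via shrunken-domain approximation or a corrector argument as in Lemma \ref{lemma-supersution}.)

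For comparison, the paper sidesteps refined boundary asymptotics entirely. Existence is by separation of variables, quoting existence and uniqueness for the blow-up problem on the bounded Lipschitz spherical domain $\Sigma$ (the analogue of Theorem \ref{thrm-Existence}, cf.\ \cite{LazerMcKenna1993}); uniqueness among all, not necessarily self-similar, solutions in $V$ is then proved by comparing $u$ and $u_V$ on annuli $V\cap(B_R\setminus\bar B_r)$ after adding the explicit radial correctors $u_{R,0}$ and $v_{r,0}$ of \eqref{eq-SolutionInside}--\eqref{eq-SolutionOutside}, using Lemma \ref{lemma-supersution} and interior approximating cones $V_k$; letting $R\to\infty$, $r\to 0$ gives $u=u_V$ with no information about boundary rates beyond \eqref{eq-estimate-xi-upper-lower}. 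Your reduction would become correct if, instead of the asymptotics-based comparison, you simply invoked uniqueness of the blow-up problem \eqref{eq-LN-spherical-domain-eq}--\eqref{eq-LN-spherical-domain-boundary} on the Lipschitz domain $\Sigma$ (as the paper does), or replaced your last step by a corrector argument of the above type; as it stands, the step you supply in its place is the one piece that fails precisely under the Lipschitz hypothesis of the theorem.
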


\begin{proof} The proof consists of three steps. 

{\it Step 1.} We  prove the existence. 
Set
\begin{equation}\label{eq-solution-u-product}u_V(x)=|x|^{-\frac{n-2}{2}} \xi(\theta).\end{equation}
By \eqref{eq-LN-polar}, $u_V$ satisfies 
\eqref{eq-LN-cone-infinite-eq}-\eqref{eq-LN-cone-infinite-boundary}
if 
\begin{align}\label{eq-LN-spherical-domain-eq}
    \Delta_{\theta}\xi-\frac{1}{4}(n-2)^2\xi&=\frac{1}{4}n(n-2)\xi^{\frac{n+2}{n-2}}\quad\text{in } \Sigma,\\
\label{eq-LN-spherical-domain-boundary}    \xi&=\infty\quad\text{on }\partial \Sigma.
\end{align}
By a similar method as the proof of Theorem \ref{thrm-Existence}, there exists a unique positive solution $\xi$ 
of \eqref{eq-LN-spherical-domain-eq}-\eqref{eq-LN-spherical-domain-boundary}. 
Hence, there exists a positive solution $u$ of 
\eqref{eq-LN-cone-infinite-eq}-\eqref{eq-LN-cone-infinite-boundary}. In fact, for each $i\ge 1$, we consider 
\begin{align}\label{eq-LN-spherical-domain-eq-i}
    \Delta_{\theta}\xi_i-\frac{1}{4}(n-2)^2\xi_i&=\frac{1}{4}n(n-2)\xi_i^{\frac{n+2}{n-2}}\quad\text{in } \Sigma,\\
\label{eq-LN-spherical-domain-boundary-i}    \xi&=i\quad\text{on }\partial \Sigma.
\end{align}
It is standard to prove that there exists a solution $\xi_i\in C^\infty(\Sigma)\cap C(\bar\Sigma)$ of 
\eqref{eq-LN-spherical-domain-eq-i}-\eqref{eq-LN-spherical-domain-boundary-i} and that 
$\{\xi_i\}$ is a monotone increasing sequence, and hence converges to some $\xi$, a solution of 
\eqref{eq-LN-spherical-domain-eq}-\eqref{eq-LN-spherical-domain-boundary}.
We point out that the uniqueness of $\xi$ 
shows that the solution of \eqref{eq-LN-cone-infinite-eq}-\eqref{eq-LN-cone-infinite-boundary} 
in the form \eqref{eq-solution-u-product} is unique.

{\it Step 2.} We introduce some notations.
Let $x_{0}\in \mathbb R^n$ be a point and $r, R>0$ be  constants.
Set, for any $x\in B_R(x_0)$,
\begin{equation}\label{eq-SolutionInside}
u_{R, x_0}(x)=\Big(\frac{2R}{R^{2}-|x-x_0|^{2}}\Big)^{\frac{n-2}{2}}.\end{equation}
Then, $u_{R,x_0}$ is the solution of
\eqref{eq-LN-eq}-\eqref{eq-LN-boundary}  in $B_R(x_0)$.
Set, for any  $x\in\mathbb R^n\setminus B_r(x_0)$,
\begin{equation}\label{eq-SolutionOutside}
v_{r,x_0}(x)=\Big(\frac{2r}{|x-x_0|^{2}-r^{2}}\Big)^{\frac{n-2}{2}}.\end{equation}
Then, $v_{r,x_0}$ is a solution of
\eqref{eq-LN-eq}-\eqref{eq-LN-boundary}  in $\mathbb R^n\setminus B_r(x_0)$.
%With $d(x)=|x-x_0|-r$, the distance to the sphere $\partial B_r(x_0)$ from
%$x\in \mathbb R^n\setminus B_r(x_0)$, we have
%$$v_{r,x_{0}}=d^{-\frac{n-2}{2}}\Big(1+\frac{d}{2r}\Big)^{-\frac{n-2}{2}}.$$
%These two solutions play an important role in this paper.

For any fixed $x$, we have $x\in B_R(x_0)$ for all large $R$, and $u_{R, x_0}(x) \to0$ as $R\to\infty$. 
Similarly, for any fixed $x\neq x_0$, we have $x\in \mathbb R^n\setminus B_r(x_0)$ for all small $r$, 
and $v_{r, x_0}(x) \to0$ as $r\to0$. 

{\it Step 3.} We now prove the uniqueness. Let $u_V$ be the solution of 
\eqref{eq-LN-cone-infinite-eq}-\eqref{eq-LN-cone-infinite-boundary} established in Step 1, given by 
\eqref{eq-solution-u-product}, and $u$ be an any other solution of 
\eqref{eq-LN-cone-infinite-eq}-\eqref{eq-LN-cone-infinite-boundary}. 

Take any $0<r<R$. By Lemma \ref{lemma-supersution} and the maximum principle, we have 
$$|x|^{-\frac{n-2}2}\xi_i\le u+u_{R,0}+v_{r, 0}\quad\text{in }V\cap (B_R\setminus \bar B_r),$$ 
where $\xi_i$ is the solution of \eqref{eq-LN-spherical-domain-eq-i}-\eqref{eq-LN-spherical-domain-boundary-i}, 
and $u_{R,0}$ and $v_{r,0}$ are given by \eqref{eq-SolutionInside} and \eqref{eq-SolutionOutside}, 
respectively, for $x_0=0$. Letting $i\to \infty$, we get 
\begin{equation}\label{eq-estimate-one-side}
u_V\le  u+u_{R,0}+v_{r, 0}\quad\text{in }V\cap (B_R\setminus \bar B_r).\end{equation} 
Next, we consider a sequence of Lipschitz domains $\Sigma_k\subset\subset\Sigma$ such that $\Sigma_k\to \Sigma$, 
and denote by $V_k$ the cone over $\Sigma_k$. Then, $V_k\subsetneq V$. 
For each $k$, let $\xi^{(k)}$ be the solution of 
\eqref{eq-LN-spherical-domain-eq}-\eqref{eq-LN-spherical-domain-boundary} in $\Sigma_k$. 
Then, $\xi^{(k)}\to \xi$ uniformly locally in $\Sigma$. 
Similarly, by Lemma \ref{lemma-supersution} and the maximum principle, we have 
$$ u\le |x|^{-\frac{n-2}2}\xi^{(k)}+u_{R,0}+v_{r, 0}\quad\text{in }V_k\cap (B_R\setminus \bar B_r).$$ 
Letting $k\to \infty$, we get 
\begin{equation}\label{eq-estimate-another-side} 
u\le u_V+u_{R,0}+v_{r, 0}\quad\text{in }V\cap (B_R\setminus \bar B_r).\end{equation}
By combining \eqref{eq-estimate-one-side} and \eqref{eq-estimate-another-side}, we obtain 
\begin{equation}\label{eq-estimate-both-sides} 
|u- u_V|\le u_{R,0}+v_{r, 0}\quad\text{in }V\cap (B_R\setminus \bar B_r).\end{equation}
For any fixed $x\in V$, take $r<|x|<R$, and then let $R\to\infty$ and $r\to 0$. 
We conclude that $u=u_V$ in $V$. 
\end{proof}

The proof is modified from \cite{hanshen2}, with a new proof of the uniqueness. 
In the present version, we removed the requirement in \cite{hanshen2} that $\Sigma$ is star-shaped. 
The function $\xi$ introduced in \eqref{eq-solution-u-product} plays an important role in this paper. 
We now present some of its properties. In the next result, denote by $d$  
the distance function on $\Sigma$ to $\partial \Sigma$.

\begin{lemma}\label{lemma-solutions-xi-spherical} 
Let $\Sigma\subsetneq\mathbb S^{n-1}$ be a 
Lipschitz domain.
Then, there exists a unique positive solution $\xi\in C^\infty(\Sigma)$ of
\eqref{eq-LN-spherical-domain-eq}-\eqref{eq-LN-spherical-domain-boundary}. 
Moreover, 
\begin{equation}\label{eq-estimate-xi-upper-lower} 
c_1\le d^{\frac{n-2}2}\xi\le c_2\quad\text{in }\Sigma,\end{equation}
and, for any $k\ge 0$,  
\begin{equation}\label{eq-estimate-xi-derivatives} 
d^{\frac{n-2}2+k}|\nabla_\theta^k \xi|\le C\quad\text{in }\Sigma,\end{equation}
where $c_1$, $c_2$, and $C$ are positive constants depending only on $n$, $k$ and $\Sigma$. 
If, in addition, $\Sigma$ is a $C^{1,\alpha}$-domain for some $\alpha\in (0,1]$, then 
\begin{equation}\label{eq-C1alpha-spherical}
|d^{\frac{n-2}2}\xi-1|\le Cd^\alpha\quad\text{in }\Sigma.\end{equation}
\end{lemma}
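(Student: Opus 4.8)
\textbf{Proof proposal for Lemma~\ref{lemma-solutions-xi-spherical}.}
The plan is to treat existence/uniqueness and the quantitative estimates separately, building everything on comparison with the half-ball barriers $u_{R,x_0}$ and $v_{r,x_0}$ from \eqref{eq-SolutionInside}--\eqref{eq-SolutionOutside} and the exhausting-sequence argument of Theorem~\ref{thrm-LN-existence-cone}. For existence and uniqueness of $\xi$ solving \eqref{eq-LN-spherical-domain-eq}--\eqref{eq-LN-spherical-domain-boundary}, I would observe that $\xi$ exists iff $u_V(x)=|x|^{-(n-2)/2}\xi(\theta)$ solves \eqref{eq-LN-cone-infinite-eq}--\eqref{eq-LN-cone-infinite-boundary}, which is exactly Theorem~\ref{thrm-LN-existence-cone} together with the final remark of Step~1 there that the product-form solution is unique; interior smoothness $\xi\in C^\infty(\Sigma)$ is standard elliptic regularity applied to the (subcritical, in fact the equation is an ODE-type semilinear equation on the compact manifold-with-boundary $\Sigma$) equation \eqref{eq-LN-spherical-domain-eq} away from $\partial\Sigma$.

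For the two-sided bound \eqref{eq-estimate-xi-upper-lower}, the idea is to localize near a boundary point $\theta_0\in\partial\Sigma$ and use the Lipschitz condition to trap $\Sigma$ between an interior and an exterior spherical cap (equivalently, an interior and exterior ball in $\mathbb{S}^{n-1}$ of radius comparable to a fixed constant). Solving \eqref{eq-LN-eq}--\eqref{eq-LN-boundary} on these caps gives, via the known estimate \eqref{eq-EstimateDegree1} applied on $\mathbb{S}^{n-1}$ (or directly via the explicit cap solutions analogous to \eqref{eq-SolutionInside}--\eqref{eq-SolutionOutside} after a conformal change), that the solution on the cap behaves like $d^{-(n-2)/2}$ near its boundary; the maximum principle then sandwiches $\xi$ between such multiples of $d^{-(n-2)/2}$, i.e. $c_1\le d^{(n-2)/2}\xi\le c_2$. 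Here I would be careful that the comparison solutions must dominate/be dominated on the relevant pieces of boundary, which forces the usual addition of a harmless interior barrier as in Lemma~\ref{lemma-supersution} and the maximum principle; the constants depend only on $n$ and the Lipschitz character of $\Sigma$.

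The derivative bounds \eqref{eq-estimate-xi-derivatives} would follow by interior rescaling: near a point $\theta$ with $d(\theta)=\delta$ small, rescale the $\delta$-neighborhood to unit size, so that the rescaled function $\tilde\xi=\delta^{(n-2)/2}\xi$ is, by \eqref{eq-estimate-xi-upper-lower}, bounded above and below by positive constants on a unit ball, solves a uniformly elliptic semilinear equation with bounded coefficients there, and hence has all derivatives bounded by interior Schauder/$W^{k,p}$ estimates; scaling back gives $d^{(n-2)/2+k}|\nabla_\theta^k\xi|\le C$. Finally, for the $C^{1,\alpha}$ refinement \eqref{eq-C1alpha-spherical}, I would import the Loewner--Nirenberg/Kichenassamy-type boundary expansion: when $\partial\Sigma$ is $C^{1,\alpha}$, the distance function $d$ is $C^{1,\alpha}$ up to the boundary and one shows $d^{(n-2)/2}\xi\to 1$ with rate $d^\alpha$ by constructing sub/supersolutions of the form $(d(1+O(d^\alpha)))^{-(n-2)/2}$ and invoking the maximum principle, exactly mirroring the proof of \eqref{eq-EstimateDegree1}--\eqref{eq-EstimateDegree1} in \cite{Loewner&Nirenberg1974,Kichenassamy2005JFA} transplanted to the sphere. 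The main obstacle I expect is the low boundary regularity in the Lipschitz case: without $C^2$ one cannot use $d$ itself as a smooth barrier, so the two-sided bound \eqref{eq-estimate-xi-upper-lower} must be obtained purely through interior/exterior cap comparison and a careful tracking that the Lipschitz constant controls the cap radii uniformly — this is the step where one must resist the temptation to differentiate $d$ and instead lean entirely on the maximum principle and the explicit model solutions.
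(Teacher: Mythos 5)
Your overall architecture (existence/uniqueness via the cone solution of Theorem \ref{thrm-LN-existence-cone}, two-sided bounds by comparison with explicit model solutions, derivative bounds \eqref{eq-estimate-xi-derivatives} by interior rescaling of the ``normalized'' function $d^{\frac{n-2}2}\xi$, and barriers for \eqref{eq-C1alpha-spherical}) is exactly the standard route the paper has in mind: its own proof consists of pointing out that existence and uniqueness of $\xi$ are already contained in Step 1 of the proof of Theorem \ref{thrm-LN-existence-cone}, and that \eqref{eq-estimate-xi-upper-lower}--\eqref{eq-C1alpha-spherical} follow by adapting the well-known arguments for \eqref{eq-LN-eq}--\eqref{eq-LN-boundary} from \cite{hanshen2}. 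Your interior-scaling argument for \eqref{eq-estimate-xi-derivatives} and the upper bound in \eqref{eq-estimate-xi-upper-lower} (interior cap of radius $d(\theta)$, which needs no boundary regularity) are correct as sketched.

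There is, however, a genuine gap in your lower bound in \eqref{eq-estimate-xi-upper-lower}: you propose to trap $\Sigma$ between an interior and an \emph{exterior} spherical cap ``of radius comparable to a fixed constant,'' but a Lipschitz domain does not satisfy a uniform exterior ball (cap) condition --- at a boundary point where the complement is only a thin cone there is no cap of fixed radius in the complement touching $\partial\Sigma$, equivalently no enclosing cap $B'\supset\Sigma$ with the nearest boundary point on $\partial B'$. What the Lipschitz hypothesis does give is a uniform exterior \emph{cone} condition, and the correct comparison for the lower bound is with the Loewner--Nirenberg solution of the complement of the exterior cone (a cone over a smooth spherical cap, for which the product-form solution and the positive lower bound of its angular part are available); this is the argument of \cite{hanshen2}, and the constant $c_1$ then depends on the cone aperture, i.e.\ on the Lipschitz character. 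The temptation you warn against at the end applies to this step as well. A related, smaller issue occurs in your proof of \eqref{eq-C1alpha-spherical}: for a $C^{1,\alpha}$ domain $d$ is only $C^{1,\alpha}$, so a barrier of the form $\big(d(1+O(d^\alpha))\big)^{-\frac{n-2}2}$ cannot be inserted into $\Delta_\theta$; one must replace $d$ by a regularized distance $\eta\in C^{1,\alpha}(\bar\Sigma)\cap C^2(\Sigma)$ with $|\nabla_\theta\eta|=1$ on $\partial\Sigma$ and $\eta^{1-\alpha}|\nabla_\theta^2\eta|\le C$, exactly as the paper does in the proof of Lemma \ref{lemma-C1alpha-gradient}, rather than transplant the $C^2$-boundary computations of Loewner--Nirenberg or Kichenassamy verbatim. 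Both defects are repairable by these standard substitutions, but as written the exterior-cap comparison and the $d$-based barrier are steps that would fail for the regularity classes assumed in the lemma.
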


The existence and the uniqueness of the solution $\xi$ are proved in the proof of 
Theorem \ref{thrm-LN-existence-cone}. 
Estimates similar as \eqref{eq-estimate-xi-upper-lower}-\eqref{eq-C1alpha-spherical} 
for solutions $u$ of \eqref{eq-LN-eq}-\eqref{eq-LN-boundary} are well-known, and proofs are standard. 
(Refer to \cite{hanshen2}.) 
These proofs can be modified easily 
to yield \eqref{eq-estimate-xi-upper-lower}-\eqref{eq-C1alpha-spherical}
for solutions $\xi$ of \eqref{eq-LN-spherical-domain-eq}-\eqref{eq-LN-spherical-domain-boundary}.

%Note that, for any $\lambda>0$ and any
%$x\in V$,
%\begin{equation}\label{eq-scaling}
%u_V(\lambda x)=\lambda^{-\frac{n-2}{2}}u_V(x).\end{equation}
%The scaling property \eqref{eq-scaling} will be used repeatedly later on.

\smallskip 

For later purposes, we present an equivalent form of 
\eqref{eq-LN-spherical-domain-eq}-\eqref{eq-LN-spherical-domain-boundary}. 
Consider
\begin{align}\label{eq-LN-domain-eq-rho-new}
    \rho\Delta_\theta \rho
    +S\rho^2&=\frac{n}{2}(|\nabla_\theta \rho|^2-1)\quad\text{in } \Sigma,\\
\label{eq-LN-domain-rho-boundary-new}    \rho&=0\quad\text{on }\partial \Sigma,
\end{align}
where $S$ is the constant given by 
\begin{align}\label{eq-constant-S}
S=\frac12(n-2).\end{align}
In the equation \eqref{eq-LN-domain-eq-rho-new}, we purposely introduce the constant $S$, 
which is related to the scalar curvature of $\mathbb S^{n-1}$. 
The equation in the Euclidean space corresponds to $S=0$. 

\begin{lemma}\label{lemma-solutions-rho-spherical} 
Let $\Sigma\subsetneq\mathbb S^{n-1}$ be a 
Lipschitz domain.
Then, there exists a unique $\rho\in C^\infty(\Sigma)\cap \mathrm{Lip}(\Sigma)$, 
positive in $\Sigma$ and satisfying \eqref{eq-LN-domain-eq-rho-new}-\eqref{eq-LN-domain-rho-boundary-new}. 
Moreover, 
\begin{equation}\label{eq-estimate-rho-upper-lower0} 
c_1\le \frac{\rho}{d}\le c_2\quad\text{in }\Sigma,\end{equation}
where $c_1$ and $c_2$ are positive constants depending only on $n$ and $\Sigma$. 
\end{lemma}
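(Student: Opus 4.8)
The plan is to reduce the degenerate problem \eqref{eq-LN-domain-eq-rho-new}-\eqref{eq-LN-domain-rho-boundary-new} to the boundary-blow-up problem \eqref{eq-LN-spherical-domain-eq}-\eqref{eq-LN-spherical-domain-boundary}, which is already solved in Lemma \ref{lemma-solutions-xi-spherical}, via the pointwise substitution $\rho=\xi^{-2/(n-2)}$, equivalently $\xi=\rho^{-(n-2)/2}$. First I would verify by a direct computation that this substitution intertwines the two equations: writing $\xi=\rho^{a}$ with $a=-\frac{n-2}{2}$, so that $\nabla_\theta\xi=a\rho^{a-1}\nabla_\theta\rho$ and $\Delta_\theta\xi=a\rho^{a-1}\Delta_\theta\rho+a(a-1)\rho^{a-2}|\nabla_\theta\rho|^2$, and using the identity $a\cdot\frac{n+2}{n-2}=a-2$, one substitutes into \eqref{eq-LN-spherical-domain-eq} and divides by $a\rho^{a-2}$ — legitimate since $\rho>0$ in $\Sigma$ and $n\ge 3$ — to land exactly on $\rho\Delta_\theta\rho+S\rho^2=\frac n2(|\nabla_\theta\rho|^2-1)$ with $S=\frac12(n-2)$. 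Thus $\rho:=\xi^{-2/(n-2)}$, with $\xi$ the solution from Lemma \ref{lemma-solutions-xi-spherical}, is positive in $\Sigma$ and solves \eqref{eq-LN-domain-eq-rho-new}; the two-sided bound \eqref{eq-estimate-xi-upper-lower} then produces positive constants $c_1,c_2$ with $c_1\le\rho/d\le c_2$, which is \eqref{eq-estimate-rho-upper-lower0}, and in particular $\rho$ extends continuously to $\bar\Sigma$ with $\rho=0$ on $\partial\Sigma$.

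Next I would establish the regularity. Interior smoothness $\rho\in C^\infty(\Sigma)$ is immediate because $\xi\in C^\infty(\Sigma)$ and $\xi>0$ there. For $\rho\in\mathrm{Lip}(\Sigma)$, differentiate the substitution to get $\nabla_\theta\rho=-\frac{2}{n-2}\,\xi^{-n/(n-2)}\nabla_\theta\xi$; combining the lower bound $\xi\ge c\,d^{-(n-2)/2}$ from \eqref{eq-estimate-xi-upper-lower} (hence $\xi^{-n/(n-2)}\le Cd^{n/2}$) with the gradient bound $|\nabla_\theta\xi|\le Cd^{-n/2}$ coming from \eqref{eq-estimate-xi-derivatives} with $k=1$, the powers of $d$ cancel and $|\nabla_\theta\rho|\le C$ uniformly in $\Sigma$. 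Hence $\rho$ is uniformly Lipschitz in $\Sigma$ and extends to a Lipschitz function on $\bar\Sigma$.

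For uniqueness I would run the substitution backwards. Let $\rho\in C^\infty(\Sigma)\cap\mathrm{Lip}(\Sigma)$ be any function positive in $\Sigma$ and solving \eqref{eq-LN-domain-eq-rho-new}-\eqref{eq-LN-domain-rho-boundary-new}, and set $\xi=\rho^{-(n-2)/2}$. Reading the computation of the first paragraph in reverse shows $\xi$ is a positive smooth solution of \eqref{eq-LN-spherical-domain-eq} in $\Sigma$; since $\rho$ is continuous up to $\bar\Sigma$ with $\rho=0$ on $\partial\Sigma$ and $\rho>0$ inside, we get $\xi(\theta)\to+\infty$ as $\theta\to\partial\Sigma$, so $\xi$ solves \eqref{eq-LN-spherical-domain-eq}-\eqref{eq-LN-spherical-domain-boundary}. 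By the uniqueness part of Lemma \ref{lemma-solutions-xi-spherical}, $\xi$ is the distinguished solution, hence $\rho=\xi^{-2/(n-2)}$ is uniquely determined.

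The step I expect to require the most care is the boundary matching in the uniqueness argument rather than any of the algebra: one must check that the hypotheses on $\rho$ (Lipschitz up to $\partial\Sigma$, positive inside, vanishing on $\partial\Sigma$) genuinely place $\xi=\rho^{-(n-2)/2}$ in the uniqueness class of Lemma \ref{lemma-solutions-xi-spherical}, i.e., that $\xi$ blows up at every boundary point and not merely that it is unbounded, and that excluding interior zeros and sign changes of $\rho$ loses no solution of the degenerate equation \eqref{eq-LN-domain-eq-rho-new}. Both are handled by the standing sign condition on $\rho$, but they should be recorded carefully precisely because \eqref{eq-LN-domain-eq-rho-new} degenerates exactly where $\rho$ vanishes.
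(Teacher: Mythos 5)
Your proposal is correct and follows essentially the same route as the paper: define $\rho=\xi^{-\frac{2}{n-2}}$ with $\xi$ from Lemma \ref{lemma-solutions-xi-spherical}, deduce \eqref{eq-estimate-rho-upper-lower0} from \eqref{eq-estimate-xi-upper-lower}, and get the Lipschitz bound from \eqref{eq-estimate-xi-derivatives} with $k=1$. Your explicit verification of the change of unknown and the reverse-substitution uniqueness argument merely spell out what the paper leaves implicit, and both steps are sound since $\rho>0$ in $\Sigma$ and the Lipschitz extension forces $\xi\to\infty$ at every boundary point.
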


\begin{proof} Let $\xi\in C^\infty(\Sigma)$ be 
the positive solution of 
\eqref{eq-LN-spherical-domain-eq}-\eqref{eq-LN-spherical-domain-boundary} as in 
Lemma \ref{lemma-solutions-xi-spherical}. Set
\begin{equation}\label{eq-relation-rho-xi}\rho=\xi^{-\frac2{n-2}}.\end{equation} 
Then, $\rho\in C^\infty(\Sigma)\cap C(\bar \Sigma)$ is a positive function in $\Sigma$ and 
satisfies \eqref{eq-LN-domain-eq-rho-new}-\eqref{eq-LN-domain-rho-boundary-new}. 
By \eqref{eq-estimate-xi-upper-lower}, $\rho$ satisfies 
\eqref{eq-estimate-rho-upper-lower0} for some positive constants 
$c_1$ and $c_2$  depending only on $n$ and $\Sigma$. 
Next, by \eqref{eq-estimate-xi-derivatives} with $k=1$, we have 
$|\nabla_\theta \rho|\le C$. Hence, $\rho$ is Lipschitz in $\Sigma$. 
\end{proof}

%\section{Basic Estimates}\label{sec-basic-cone}

We now return to the equation 
\eqref{eq-LN-eq} and prove a simple lemma comparing solutions in infinite cones and in truncated cones.

\begin{lemma}\label{lemma-estimate-bound} 
Let $V$ be an infinite cone in $\mathbb R^{n}$ over some
Lipschitz domain $\Sigma\subsetneq\mathbb{S}^{n-1}$.  
Suppose $u\in {C}^\infty(V\cap B_1)$ is a positive solution of 
\eqref{eq-LN-cone-truncated-eq}-\eqref{eq-LN-cone-truncated-boundary}
and $u_V\in {C}^\infty(V)$ is the positive solution 
of  \eqref{eq-LN-cone-infinite-eq}-\eqref{eq-LN-cone-infinite-boundary}. 
Then, 
\begin{equation}\label{eq-estimate-0-x}|u-u_V|\le C\quad\text{in }V\cap B_{1/2},\end{equation}
where $C$ is a positive constant depending only on $n$. 
\end{lemma}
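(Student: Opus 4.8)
The plan is to bound $u - u_V$ from both sides using the comparison functions $u_{R,x_0}$ and $v_{r,x_0}$ constructed in the proof of Theorem~\ref{thrm-LN-existence-cone}, exploiting that both $u$ and $u_V$ solve \eqref{eq-LN-eq} and blow up on $\partial V$. The key point is that on the \emph{spherical cap} $V\cap\partial B_{1/2}$ and on the portion of $\partial B_1\cap V$ (away from the vertex), only $u$ is defined; but on a slightly smaller region we can trap the difference. Concretely, fix $x\in V\cap B_{1/2}$ and compare on an annular region $V\cap(B_{3/4}\setminus\bar B_r)$ for small $r$.

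First I would establish the upper bound $u \le u_V + C$. Since $u_V=\infty$ on $\partial V$, by Lemma~\ref{lemma-supersution} the sum $u_V + v_{r,0} + u_{3/4,0}$ is a supersolution of \eqref{eq-LN-eq} on $V\cap(B_{3/4}\setminus\bar B_r)$ that dominates $u$ on the entire boundary of this region: it is $+\infty$ on $\partial V$, it is $+\infty$ on $\partial B_r$ (from $v_{r,0}$), and on $\partial B_{3/4}\cap V$ it is $+\infty$ (from $u_{3/4,0}$), while $u$ is finite there since $\overline{B_{3/4}}\cap V \subset B_1\cap V$ where $u$ is smooth up to the interior boundary. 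By the maximum principle for the Loewner--Nirenberg operator (monotonicity of $t\mapsto t^{(n+2)/(n-2)}$), $u \le u_V + v_{r,0} + u_{3/4,0}$ throughout. Letting $r\to 0$ and using $v_{r,0}(x)\to 0$ for fixed $x\ne 0$, and noting $u_{3/4,0}(x) \le C(n)$ for $x\in B_{1/2}$, we get $u(x) \le u_V(x) + C$. Symmetrically, for the reverse inequality $u_V \le u + C$, one approximates $\Sigma$ by $\Sigma_k\subset\subset\Sigma$ exactly as in Step~3 of the proof of Theorem~\ref{thrm-LN-existence-cone}, so that $u_V$ is replaced by the (finite-on-the-lateral-boundary) solution $|x|^{-(n-2)/2}\xi^{(k)}(\theta)$ in $V_k$, compares $u_V^{(k)}$ against $u + v_{r,0} + u_{3/4,0}$ on $V_k\cap(B_{3/4}\setminus\bar B_r)$, and passes to the limits $r\to 0$, $k\to\infty$.

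Combining the two bounds yields $|u - u_V|(x) \le u_{3/4,0}(x) \le C(n)$ for all $x\in V\cap B_{1/2}$, which is \eqref{eq-estimate-0-x}. The constant depends only on $n$ because $u_{3/4,0}$ does and the choice of radius $3/4$ is absolute.

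The main obstacle, though essentially routine here, is justifying the maximum principle on these noncompact (in the sense of touching $\partial V$) annular regions: one must check that the comparison $u \le (\text{supersolution})$ holds near $\partial V$, where both sides are infinite. This is handled by the standard device of first comparing on the retracted domains $V_\delta\cap(B_{3/4}\setminus\bar B_r)$, where $V_\delta=\{x\in V: d_{\partial V}(x)>\delta\}$, on which the supersolution is still $+\infty$ on the outer/inner spheres and $u$ is bounded, then letting $\delta\to 0$; the boundary behavior $u, u_V\to\infty$ at the same Loewner--Nirenberg rate $d^{-(n-2)/2}$ (Lemma~\ref{lemma-solutions-xi-spherical}) guarantees the inequality survives the limit. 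An alternative, cleaner route is to observe that \eqref{eq-estimate-0-x} is a special case of \eqref{eq-estimate-both-sides} in the proof of Theorem~\ref{thrm-LN-existence-cone} applied with the truncation radii $R=3/4$, $r\to0$, once one notes the argument there only used that $u$ is a solution on $V\cap B_1 \supset V\cap B_{3/4}$ and never used that $u$ extends past $B_1$.
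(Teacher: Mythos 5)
Your closing remark is the correct proof and is essentially the paper's own: Step 3 of the proof of Theorem \ref{thrm-LN-existence-cone} only uses that $u$ solves the equation in the comparison region and blows up on the lateral boundary, so \eqref{eq-estimate-both-sides} holds with $R=3/4$ (the paper takes $R=1$), and letting $r\to 0$ and bounding $u_{3/4,0}$ on $B_{1/2}$ gives \eqref{eq-estimate-0-x}. However, the detailed two-sided argument you present first does not work as written, because you have attached the two approximation devices to the wrong directions. For $u\le u_V+C$ you compare $u$ directly with $u_V+v_{r,0}+u_{3/4,0}$ and handle the lateral boundary, where both sides are infinite, by retracting to $V_\delta=\{d_{\partial V}>\delta\}$; but on the new lateral boundary $\{d_{\partial V}=\delta\}$ both $u$ and your supersolution are finite, and the ordering there is precisely what is to be proved, so the maximum principle cannot even be started. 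The appeal to blow-up at \emph{the same} Loewner--Nirenberg rate does not rescue this: for a merely Lipschitz $\Sigma$, Lemma \ref{lemma-solutions-xi-spherical} and the Keller--Osserman-type interior bound for $u$ give only two-sided estimates $c_1 d^{-\frac{n-2}{2}}\le u,\,u_V\le c_2 d^{-\frac{n-2}{2}}$ with $c_1<c_2$, not matching leading constants, and even matching rates would not yield the pointwise inequality on $\{d_{\partial V}=\delta\}$ needed to let $\delta\to 0$. Conversely, for $u_V\le u+C$ you propose comparing $|x|^{-\frac{n-2}{2}}\xi^{(k)}$ with $u+v_{r,0}+u_{3/4,0}$ on $V_k\cap(B_{3/4}\setminus\bar B_r)$; but $\xi^{(k)}=\infty$ on $\partial\Sigma_k$, so on $\partial V_k$ the left-hand side is $+\infty$ while the right-hand side is finite, and the required boundary ordering fails (this function is not finite on the relevant lateral boundary, contrary to your parenthetical). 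The correct pairing is the paper's: for $u_V\le u+\cdots$ approximate $u_V$ from below by $|x|^{-\frac{n-2}{2}}\xi_i$ with \emph{finite} boundary data $i$ on all of $\partial\Sigma$ and compare on the full region $V\cap(B_R\setminus\bar B_r)$, where $u=\infty$ on $\partial V\cap B_1$ dominates; for $u\le u_V+\cdots$ restrict to the interior cones $V_k$ and use the blow-up solutions $\xi^{(k)}$ on $\Sigma_k$, so that the supersolution is $+\infty$ on $\partial V_k$ while $u$ is finite there.

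On the positive side, your choice $R=3/4$ is a good one: $u$ is then bounded on the compact set $\partial B_{3/4}\cap\bar V_k\subset V\cap B_1$, so the comparison at the outer sphere is immediate and one need not worry about possible blow-up of $u$ near $\partial B_1\cap V$. With the devices paired correctly, the rest of your outline (letting $r\to 0$ so that $v_{r,0}\to 0$, then bounding $u_{3/4,0}$ by a dimensional constant on $B_{1/2}$) completes the proof exactly as in the paper.
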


\begin{proof} Proceeding similarly as in Step 3 in the proof of Theorem \ref{thrm-LN-existence-cone}, we have, 
for $r<1$,  
\begin{equation*} 
|u- u_V|\le u_{1,0}+v_{r, 0}\quad\text{in }V\cap (B_1\setminus \bar B_r).\end{equation*}
This is \eqref{eq-estimate-both-sides} with $R=1$. 
For any fixed $x\in V\cap B_1$, take $r<|x|$, and then let  $r\to 0$. 
We conclude 
\begin{equation*}
|u- u_V|<u_{1,0}\quad\text{in }V\cap B_1.\end{equation*}
This yields the desired result if we restrict to $V\cap B_{1/2}$. 
\end{proof}

Our goal in this paper is to improve the estimate \eqref{eq-estimate-0-x}. 
Let $u\in {C}^\infty(V\cap B_1)$ and $u_V\in {C}^\infty(V)$ be 
as in Lemma \ref{lemma-estimate-bound}.  
Set 
\begin{equation}\label{eq-def-v-polar}v=|x|^{\frac{n-2}2}(u-u_V)=|x|^{\frac{n-2}2}u-\xi,\end{equation}
where $\xi$ satisfies \eqref{eq-LN-spherical-domain-eq}-\eqref{eq-LN-spherical-domain-boundary}.  
Since both $|x|^{\frac{n-2}2}u$ and $|x|^{\frac{n-2}2}u_V$ satisfy the same equation  
\eqref{eq-LN-polar}, we can take a difference of these two equations and obtain 
\begin{align*} 
r\partial_r(r\partial_rv)
+\Delta_\theta v-\frac14(n-2)^2v
=\frac14n(n-2)\big[(v+\xi)^{\frac{n+2}{n-2}}-\xi^{\frac{n+2}{n-2}}\big].
\end{align*} 
For the right-hand side, a simple computation yields 
\begin{align*}\mathrm{RHS}&=\frac14n(n-2)\xi^{\frac{n+2}{n-2}}
\big[(1+\xi^{-1}v)^{\frac{n+2}{n-2}}-1\big]\\
&=\frac14n(n-2)\xi^{\frac{n+2}{n-2}}\big[(1+\xi^{-1}v)^{\frac{n+2}{n-2}}
-1-\frac{n+2}{n-2}\xi^{-1}v
+\frac{n+2}{n-2}\xi^{-1}v\big]\\
&=\xi^{-\frac{n-6}{n-2}}v^2h\big(\xi^{-1}v\big)+\frac14n(n+2)\xi^{\frac{4}{n-2}}v, 
\end{align*} 
%and hence 
%\begin{align}\label{eq-v-crude}\Delta v-\frac14n(n+2)u_V^{-1}v=
%\frac14n(n-2)u_V^{\frac{n+2}{n-2}}\Big[(1+u_V^{-1}v)^{\frac{n+2}{n-2}}-1-\frac{n+2}{n-2}u_V^{-1}v\Big].
%\end{align} 
where
$$h(s)=\frac14n(n-2)s^{-2}\Big[(1+s)^{\frac{n+2}{n-2}}-1-\frac{n+2}{n-2}s\Big].$$
%By \eqref{eq-Delta-identity},  \eqref{eq-relation-uV-u}, and multiplying \eqref{eq-v-crude} by $r^2$, 
%we have 
Hence, 
\begin{equation*}
r\partial_r(r\partial_rv)+\Delta_\theta v-\frac14n(n+2)\xi^{\frac{4}{n-2}}v
-\frac14(n-2)^2v=
\xi^{-\frac{n-6}{n-2}}v^2h\big(\xi^{-1}v\big).\end{equation*}
Let $\rho$ be given by \eqref{eq-relation-rho-xi}. Then, 
\begin{equation}\label{eq-Yamabe-equivalent} 
r\partial_r(r\partial_rv)+\Delta_\theta v-\frac14n(n+2)\frac{v}{\rho^2}
-\frac14(n-2)^2v=
\rho^{\frac{n-6}2}v^2h\big(\rho^{\frac{n-2}2}v\big).\end{equation}
We point out that the left-hand side of \eqref{eq-Yamabe-equivalent} is linear in $v$ 
and the right-hand side is nonlinear. 
We also note that  $h$ is a smooth function on $(-1,1)$ and $h(0)\neq 0$. 

In the rest of the paper, we study the equation \eqref{eq-Yamabe-equivalent} in cylindrical coordinates. 
For any $x\in\mathbb R^n\setminus\{0\}$, 
set $(t,\theta)\in \mathbb R\times \mathbb S^{n-1}$ by
\begin{equation}\label{eq-change-coordinates}
t=-\ln|x|,\quad \theta=\frac{x}{|x|}.\end{equation}
In cylindrical coordinates, we write \eqref{eq-def-v-polar} as
\begin{equation}\label{eq-def-v-spherical}v(t, \theta)=|x|^{\frac{n-2}2}\big(u(x)-u_V(x)\big)
=|x|^{\frac{n-2}2}u(x)-\xi(\theta)=|x|^{\frac{n-2}2}u(x)-\rho^{-\frac{n-2}2}(\theta).\end{equation}
%Studying asymptotic behaviors of $u-u_V$ as $x\to 0$ is equivalent to studying $v$ as $t\to\infty$. 
%We now rewrite the equation for $v$ on $\mathbb R_+\times\Sigma$. 
With the change of coordinates $r=e^{-t}$, we write \eqref{eq-Yamabe-equivalent} as 
\begin{equation}\label{eq-basic-equation-v}
\mathcal Lv=F(v),\end{equation} 
where 
\begin{equation}\label{eq-def-mathcal-L}
\mathcal Lv=\partial_{tt}v
+\Delta_\theta v-\frac14n(n+2)\frac{v}{\rho^2}-\frac14(n-2)^2v,\end{equation}
and 
\begin{equation}\label{eq-def-mathcal-F}
F(v)=\rho^{\frac{n-6}2}v^2
h\big(\rho^{\frac{n-2}2}v\big).\end{equation}
By Lemma \ref{lemma-estimate-bound} and \eqref{eq-def-v-spherical}, we have, 
for any $(t,\theta)\in (1, \infty)\times \Sigma$,  
\begin{equation}\label{eq-v1-0}|v(t,\theta)|\le Ce^{-\frac{n-2}{2} t},\end{equation}
where $C$ is a positive constant, % and $\beta$ is a constant in $(0,1]$, 
depending on $n$ and $\Sigma$.
By \eqref{eq-v1-0}, $v$ is bounded and decays to zero exponentially as $t\to\infty$. 

\section{Gradient Estimates}\label{sec-gradient} 

Gradient estimates of solutions $\rho$ of 
\eqref{eq-LN-domain-eq-rho-new}-\eqref{eq-LN-domain-rho-boundary-new} play an important role in this paper. 
In this section, we present some of these estimates which will be needed later on. 

In general, a gradient bound of  $\rho$ depends  on the dimension $n$ and 
the size of the exterior cone at each boundary point, the opening angle and the height. 
In the case $n=3$, there is a universal upper bound. 
For $n=3$, \eqref{eq-LN-domain-eq-rho-new} and \eqref{eq-LN-domain-rho-boundary-new} reduce to 
\begin{align}\label{eq-LN-domain-eq-rho-new-n3}
    \rho\Delta_{\mathbb S^2} \rho
    +\frac12\rho^2&=\frac{3}{2}(|\nabla_{\mathbb S^2} \rho|^2-1)\quad\text{in } \Sigma,\\
\label{eq-LN-domain-rho-boundary-new-n3}    \rho&=0\quad\text{on }\partial \Sigma.
\end{align}

\begin{lemma}\label{lemma-n3-rho-upper-bound}
Let $\Sigma\subsetneq\mathbb{S}^{2}$ be a  
Lipschitz domain, and $\rho\in C^\infty(\Sigma)\cap \mathrm{Lip}(\bar\Sigma)$ 
be the positive solution of  
\eqref{eq-LN-domain-eq-rho-new-n3}-\eqref{eq-LN-domain-rho-boundary-new-n3} in $\Sigma$. 
Then, $\rho< 4.2$ in $\Sigma$. 
\end{lemma}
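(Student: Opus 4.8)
The plan is to work with the smooth function $\xi$ of Lemma \ref{lemma-solutions-xi-spherical}, for which $\rho=\xi^{-2}$ when $n=3$; thus $\rho<4.2$ is equivalent to the lower bound $\xi>1/\sqrt{4.2}$ in $\Sigma$. Recall $\xi$ solves $\Delta_{\mathbb{S}^2}\xi-\frac14\xi=\frac34\xi^5$ in $\Sigma$, with $\xi=\infty$ on $\partial\Sigma$. Since the nonlinearity $t\mapsto\frac14 t+\frac34 t^5$ is increasing, the comparison principle applies: if $\underline\xi>0$ is a subsolution (i.e. $\Delta_{\mathbb{S}^2}\underline\xi-\frac14\underline\xi\ge\frac34\underline\xi^5$) which is finite in $\Sigma$, then $\underline\xi\le\xi$ automatically, because $\underline\xi<\infty=\xi$ on $\partial\Sigma$. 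Hence it suffices to exhibit one subsolution, finite on a domain containing every admissible $\Sigma$, with infimum bounded below by $1/\sqrt{4.2}$.

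The basic building block is an explicit exact solution. Writing $d=d_{\mathbb{S}^2}(\cdot,p)$ for a fixed $p\in\mathbb{S}^2$, a direct computation gives the identity $\Delta_{\mathbb{S}^2}\big[(\sin d)^{-1/2}\big]=\frac14\big[(\sin d)^{-1/2}+(\sin d)^{-5/2}\big]$, from which one checks that $\underline\xi:=3^{-1/4}(\sin d)^{-1/2}$ solves the $\xi$--equation exactly on $\mathbb{S}^2\setminus\{p,-p\}$, so that $\xi^{-2}=\sqrt3\,\sin d$ for that (doubly singular) solution. Now, since $\Sigma$ is a Lipschitz domain strictly inside $\mathbb{S}^2$, the open set $\mathbb{S}^2\setminus\bar\Sigma$ is nonempty. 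If one can choose $p$ in it with $-p$ also in it, then $\underline\xi$ is finite on $\Sigma$ and $<\infty=\xi$ on $\partial\Sigma$, so the comparison principle yields $\xi\ge\underline\xi$, i.e. $\rho\le\sqrt3\,\sin d<\sqrt3<4.2$. Thus the real content of the lemma lies in the extremal configurations where no such $p$ exists, i.e. where $-p\in\Sigma$ for every exterior point $p$ (equivalently $\Sigma$ is so large that its complement is ``antipodally degenerate'').

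For such a $\Sigma$, I would reduce to a rotationally symmetric model. Fixing any $p\in\mathbb{S}^2\setminus\bar\Sigma$, we have $\Sigma\subseteq\mathbb{S}^2\setminus\bar B_\delta(p)$ for $\delta>0$ small, and by monotonicity of solutions under domain inclusion (argued exactly as the uniqueness in Theorem \ref{thrm-LN-existence-cone}, via approximating domains and Lemma \ref{lemma-supersution}) it suffices to bound $\xi^{(\mathbb{S}^2\setminus\bar B_\delta(p))}$ from below uniformly in $\delta$; passing $\delta\to0$ gives a rotationally symmetric limit $\xi_\star$, a function of $d\in(0,\pi)$ solving $\xi_\star''+\cot d\,\xi_\star'=\frac14\xi_\star+\frac34\xi_\star^5$, with $\xi_\star\sim3^{-1/4}d^{-1/2}$ as $d\to0^+$ and smooth at $d=\pi$. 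Since the right--hand side is positive, $\xi_\star$ can have no interior local maximum, hence is strictly decreasing, so the whole problem reduces to showing $\xi_\star(\pi)>1/\sqrt{4.2}$. I would establish this either by constructing a genuine radial subsolution on $\mathbb{S}^2\setminus\{p\}$ that stays finite near $-p$ --- pasting the behavior $\sim3^{-1/4}(\sin d)^{-1/2}$ near $p$ onto a $C^2$, strictly convex arc carrying a positive quadratic minimum at $-p$ (so that $\Delta\underline\xi\ge\frac14\underline\xi+\frac34\underline\xi^5$ persists there) and tracking that minimum --- or by a quantitative shooting/phase--plane analysis of the ODE above, integrating outward from $d=\pi$. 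Either route gives $\rho\le\xi_\star^{-2}<4.2$.

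The main obstacle is precisely this last step. A subsolution of $\Delta\underline\xi\ge\frac34\underline\xi^5>0$ admits no interior maximum, so it is forced to blow up at $p$, and consequently its minimum sits near the antipode $-p$, where it must be a genuine ``sharp'' minimum with $\Delta\underline\xi$ strictly positive. The natural explicit candidates --- powers of $\sin d$ or of $1+\cos d$ --- either blow up at $-p$ as well, or else fail the nonlinear subsolution inequality exactly at $-p$. Producing an explicit subsolution finite and bounded below near $-p$, and controlling the size of its minimum sharply enough to reach the numerical constant $4.2$ (rather than the soft bound $\sqrt3$ available when the configuration is not extremal), is the delicate point, and is where the specific value $4.2$ gets determined.
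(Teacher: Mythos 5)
Your reduction ($\rho=\xi^{-2}$, comparison of $\xi$ with an explicit subsolution of \eqref{eq-LN-spherical-domain-eq-n3}, which automatically lies below $\xi$ since $\xi=\infty$ on $\partial\Sigma$) is exactly the paper's strategy, and your diagnosis of where the difficulty sits is accurate: one needs a subsolution that blows up at a single point $p\notin\Sigma$ but stays finite, and quantitatively bounded below, at the antipode $-p$, since the complement of $\Sigma$ need not contain an antipodal pair. But you stop precisely at that point: your only explicit object, $3^{-1/4}(\sin d)^{-1/2}$, is singular at both $p$ and $-p$, and your fallback (reduction to the rotationally symmetric domain $\mathbb S^2\setminus\{p\}$ plus a shooting/phase-plane or pasted-subsolution argument) is only sketched, with no construction and no number at the end --- so the extremal case, which is the whole content of the lemma, is not proved.

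The missing idea in the paper is to replace $\sin d$ by the half-angle function: with $\theta$ the polar angle from $p$, the function $\sin\frac\theta2$ vanishes only at $p$ and equals $1$ at $-p$, and one has the clean identity $\Delta_{\mathbb S^2}(\sin\frac\theta2)^\tau=\frac14\tau^2(\sin\frac\theta2)^{\tau-2}-\frac14\tau(\tau+2)(\sin\frac\theta2)^\tau$. Taking $c=12^{-1/4}$, the single power $c(\sin\frac\theta2)^{-1/2}$ matches the leading singular term of $\frac34\xi^5+\frac14\xi$ exactly but misses the subsolution inequality by a lower-order deficit $-\frac c{16}(\sin\frac\theta2)^{-1/2}$ (everywhere, not just at $-p$); the fix is the two-term ansatz $\eta=c\big[(\sin\frac\theta2)^{-1/2}-\alpha(\sin\frac\theta2)^{1/2}\big]$, where the condition $10\alpha^2+1<12\alpha$ (e.g.\ $\alpha=1/11$) makes $\Delta_{\mathbb S^2}\eta>\frac34\eta^5+\frac14\eta$ hold on all of $\mathbb S^2\setminus\{p\}$ while keeping $\eta$ bounded below at $\theta=\pi$. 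Comparison on exhausting subdomains then gives $\xi\ge\eta$ and hence $\rho\le\eta^{-2}\le\sqrt{12}\,(11/10)^2<4.2$, which is exactly where the constant $4.2$ comes from; no case distinction between ``antipodally degenerate'' and generic $\Sigma$, and no ODE analysis, is needed.
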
 

\begin{proof} Let $\xi$ be given by \eqref{eq-relation-rho-xi}, i.e., 
\begin{equation}\label{eq-relation-rho-xi-n3}\rho=\xi^{-2}.\end{equation} 
Then, \eqref{eq-LN-spherical-domain-eq} and \eqref{eq-LN-spherical-domain-boundary} reduce to 
\begin{align}\label{eq-LN-spherical-domain-eq-n3}
    \Delta_{\mathbb S^2}\xi-\frac{1}{4}\xi&=\frac{3}{4}\xi^{5}\quad\text{in } \Sigma,\\
\label{eq-LN-spherical-domain-boundary-n3}    \xi&=\infty\quad\text{on }\partial \Sigma.
\end{align}

We denote by $(\theta,\varphi)$ the spherical coordinates on $\mathbb S^2$, 
with $\theta=0$ and $\theta=\pi$ corresponding to the north pole
and the south pole, respectively. %The proof consists of several steps. 
We first construct a subsolution 
of \eqref{eq-LN-spherical-domain-eq-n3} in the 
compliment of the north pole. 
For any function $\eta=\eta(\theta)$, we have 
$$\Delta_{\mathbb S^2}\eta=\frac{1}{\sin\theta}\partial_\theta\big(\sin\theta\, \partial_\theta h\big).$$
Hence, for any constant $\tau$, we get 
$$\Delta_{\mathbb S^2}\big(\sin\frac\theta2\big)^\tau=\frac{1}{4}\tau^2\big(\sin\frac\theta2\big)^{\tau-2}
-\frac14\tau(\tau+2)\big(\sin\frac\theta2\big)^\tau.$$
With $c=(12)^{-\frac14}$, set 
$$\eta_1(\theta)=c\big(\sin\frac\theta2\big)^{-\frac12}, \quad \eta_2(\theta)=c\big(\sin\frac\theta2\big)^{\frac12}.$$
%Then, 
%$$\Delta_{\mathbb S^2}\eta_1= \frac34\eta_1^5+\frac{3c_1}{16}\big(\sin\frac\theta2\big)^{-\frac12},$$
%and 
%$$\Delta_{\mathbb S^2}\eta_2= \frac{c}{16}\big(\sin\frac\theta2\big)^{-\frac32}
%-\frac{5c}{16}\big(\sin\frac\theta2\big)^{\frac12}.$$
Take a positive constant $\alpha$ to be determined. 
Then, 
\begin{align}\label{eq-identity-eta12}
\Delta_{\mathbb S^2}(\eta_1-\alpha \eta_2)= \frac{c}{16}\Big[\big(\sin\frac\theta2\big)^{-\frac52}
-\alpha\big(\sin\frac\theta2\big)^{-\frac32}
+3\big(\sin\frac\theta2\big)^{-\frac12}
+5\alpha\big(\sin\frac\theta2\big)^{\frac12}\Big].\end{align}
We will find $\alpha$ such that 
\begin{align}\label{eq-subsolution-eta12}
\Delta_{\mathbb S^2}(\eta_1-\alpha \eta_2)> \frac34(\eta_1-\alpha \eta_2)^5
+\frac{1}{4}(\eta_1-\alpha \eta_2).\end{align}
To this end, we first note 
\begin{align*}
(\eta_1-\alpha \eta_2)^5&=c^5\big(\sin\frac\theta2\big)^{-\frac52}\Big[1-\alpha\sin\frac\theta2\Big]^5\\
&< c^5\big(\sin\frac\theta2\big)^{-\frac52}\Big[1-5\alpha\sin\frac\theta2+10\alpha^2\big(\sin\frac\theta2\big)^2\Big]\\
&=c^5\big(\sin\frac\theta2\big)^{-\frac52}-5c^5\alpha\big(\sin\frac\theta2\big)^{-\frac32}
+10c^5\alpha^2\big(\sin\frac\theta2\big)^{-\frac12},
\end{align*}
where we used the inequality $(1-x)^5<1-5x+10x^2$ for $x\in (0,1)$. 
By $c^4=(12)^{-1}$, we have 
\begin{align}\label{eq-subsolution-relation}\begin{split}
&\frac34(\eta_1-\alpha \eta_2)^5
+\frac{1}{4}(\eta_1-\alpha \eta_2)\\
&\qquad< \frac{c}{16}\Big[\big(\sin\frac\theta2\big)^{-\frac52}
-5\alpha\big(\sin\frac\theta2\big)^{-\frac32}
+(10\alpha^2+4)\big(\sin\frac\theta2\big)^{-\frac12}
-4\alpha\big(\sin\frac\theta2\big)^{\frac12}\Big].
\end{split}\end{align}
In order to have \eqref{eq-subsolution-eta12}, by comparing \eqref{eq-identity-eta12} and 
\eqref{eq-subsolution-relation}, we require 
$$4\alpha\big(\sin\frac\theta2\big)^{-\frac32}
-(10\alpha^2+1)\big(\sin\frac\theta2\big)^{-\frac12}
+9\alpha\big(\sin\frac\theta2\big)^{\frac12}\ge 0,$$
or 
$$4\alpha
-(10\alpha^2+1)\sin\frac\theta2
+9\alpha\big(\sin\frac\theta2\big)^2\ge 0.$$
To this end, we require 
$$10\alpha^2+1<12\alpha.$$
We can take $\alpha=1/11$. In summary,  set, for $\theta\neq 0$, 
%$$\eta=\eta_1-\alpha \eta_2.$$
\begin{equation}\label{eq-expression-subsolution}\eta(\theta)=\frac{1}{\sqrt[4]{12}}
\Big[\big(\sin\frac\theta2\big)^{-\frac12}-\frac1{11}\big(\sin\frac\theta2\big)^{\frac12}\Big]
=\frac{1}{\sqrt[4]{12}}
\big(\sin\frac\theta2\big)^{-\frac12}\Big[1-\frac1{11}\sin\frac\theta2\Big].\end{equation}
Then, 
$$\Delta_{\mathbb S^2}\eta> \frac34\eta^5+\frac14\eta.$$

Let $\rho$ and $\xi$ be the solution of 
\eqref{eq-LN-domain-eq-rho-new-n3}-\eqref{eq-LN-domain-rho-boundary-new-n3} and
\eqref{eq-LN-spherical-domain-eq-n3}-\eqref{eq-LN-spherical-domain-boundary-n3} 
on $\Sigma$, respectively. 
Denoting by $N$ the north pole on $\mathbb S^2$, we assume $N\notin \Sigma$. 
(Here, we allow $\Sigma=\mathbb S^2\setminus\{N\}$.) 
%We claim 
%\begin{equation}\label{eq-north-pole-lower-bound}
%\xi\ge \eta \quad\text{on }\Sigma.\end{equation}
%To this end, 
Consider a sequence of increasing domains $\{\Sigma_i\}\subset \mathbb S^2$ with 
$\cup \Sigma_i=\Sigma$, and let $\xi_i$ be the solution of 
\eqref{eq-LN-spherical-domain-eq-n3}-\eqref{eq-LN-spherical-domain-boundary-n3} on $\Sigma_i$. 
Then, $\xi_i\to \xi$ uniformly in any compact subset of $\Sigma$. 
By the maximum principle, we have $\xi_i\ge \eta$ on $\Sigma_i$, and hence $\xi\ge \eta$ on $\Sigma$. 
%obtain \eqref{eq-north-pole-lower-bound} by taking limit. 
%By combining \eqref{eq-relation-rho-xi-n3} and \eqref{eq-north-pole-lower-bound}, 
With \eqref{eq-relation-rho-xi-n3}, we have 
$$\rho\le \eta^{-2}\quad\text{on }\Sigma.$$
By \eqref{eq-expression-subsolution}, the maximum of $\eta^{-2}$ is attained at $\theta=\pi$ 
with a value $\sqrt{12}(11/10)^2<4.2$. 
\end{proof} 

\begin{lemma}\label{lemma-n3-gradient}
Let $\Sigma\subsetneq\mathbb{S}^{2}$ be a  
Lipschitz domain, and $\rho\in C^\infty(\Sigma)\cap \mathrm{Lip}(\bar\Sigma)$ 
be the positive solution of  
\eqref{eq-LN-domain-eq-rho-new-n3}-\eqref{eq-LN-domain-rho-boundary-new-n3} in $\Sigma$. 
Then, $|\nabla_\theta\rho|< 2.8$ in $\Sigma$. 
\end{lemma}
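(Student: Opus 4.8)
The plan is to prove the bound by a Bernstein–type maximum principle for $w:=|\nabla_\theta\rho|^2$, using the equation \eqref{eq-LN-domain-eq-rho-new-n3} to eliminate $\Delta_{\mathbb S^2}\rho$, the Bochner formula on $\mathbb S^2$, and the a priori bound $\rho<4.2$ from Lemma \ref{lemma-n3-rho-upper-bound}. First I record, from \eqref{eq-LN-domain-eq-rho-new-n3}, that $\Delta_{\mathbb S^2}\rho=\frac{3(w-1)}{2\rho}-\frac{\rho}{2}$ wherever $\rho>0$, equivalently $w-1=\frac23\rho\,\Delta_{\mathbb S^2}\rho+\frac13\rho^2$; in particular a bound on $w$ is the same as a one–sided bound on $\rho\,\Delta_{\mathbb S^2}\rho$. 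Since $\Sigma$ is only Lipschitz, I first reduce to smooth domains: take an exhaustion $\Sigma_i\uparrow\Sigma$ by smooth domains (as in the proof of Theorem \ref{thrm-LN-existence-cone}), let $\rho_i$ be the corresponding solutions, note $\rho_i\to\rho$ in $C^1_{\mathrm{loc}}(\Sigma)$ and $\rho_i<4.2$ by Lemma \ref{lemma-n3-rho-upper-bound}, and prove the bound for each $\Sigma_i$ with a constant independent of $i$. For a smooth $\Sigma_i$, the regularity theory for $\xi_i=\rho_i^{-1/2}$ near a smooth boundary (cf.\ \eqref{eq-C1alpha-spherical} and Lemma \ref{lemma-solutions-xi-spherical}) gives $\rho_i=d(1+O(d))$ together with $\nabla_\theta\rho_i\to\nabla_\theta d$ near $\partial\Sigma_i$, hence $w_i\to 1$ on $\partial\Sigma_i$.

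On a fixed smooth $\Sigma_i$ (I drop the subscript) I consider the auxiliary function $P=w+\lambda\rho^2$ with a constant $\lambda\ge 0$ to be chosen. Either $\sup_{\bar\Sigma}P\le 1$, in which case $w\le 1$ and we are done, or $\sup_{\bar\Sigma}P$ is attained at an interior point $\bar x$, since $P=w\to 1$ on $\partial\Sigma$. At $\bar x$ one has $\nabla_\theta P=0$, i.e.\ $\nabla_\theta w=-2\lambda\rho\,\nabla_\theta\rho$, and $\Delta_{\mathbb S^2}P\le 0$. Applying the Bochner formula on $\mathbb S^2$ (where $\mathrm{Ric}=g$), namely $\tfrac12\Delta_{\mathbb S^2}w=|\nabla^2_\theta\rho|^2+\langle\nabla_\theta\rho,\nabla_\theta\Delta_{\mathbb S^2}\rho\rangle+w$, substituting $\Delta_{\mathbb S^2}\rho$ and $\nabla_\theta\Delta_{\mathbb S^2}\rho$ from the equation, and using that at the critical point $\nabla_\theta\rho$ is an eigenvector of the Hessian $\nabla^2_\theta\rho$ — so that in two dimensions $|\nabla^2_\theta\rho|^2=\lambda^2\rho^2+(\Delta_{\mathbb S^2}\rho+\lambda\rho)^2$ exactly, not merely with $\ge$ — one is led to an algebraic inequality relating $w(\bar x)$, $\rho(\bar x)$ and $\lambda$. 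Here the identity $\Delta_{\mathbb S^2}\rho+\tfrac12\rho=\tfrac{3(w-1)}{2\rho}$ produces a convenient cancellation. Combined with $\rho(\bar x)<4.2$ and the relation $w(\bar x)=\sup P-\lambda\rho(\bar x)^2$, a suitable choice of $\lambda$ turns this into $\sup_\Sigma w=\sup_\Sigma P<7.84$, i.e.\ $|\nabla_\theta\rho|<2.8$; passing to the limit $i\to\infty$ then gives the statement for Lipschitz $\Sigma$.

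The main obstacle is the singular coefficient $1/\rho$ in the formula for $\Delta_{\mathbb S^2}\rho$, which after Bochner produces a term of size $\rho^{-2}$; controlling its sign is exactly what forces the computation through the identity above and the sharp (rather than $\tfrac12(\Delta_{\mathbb S^2}\rho)^2$) evaluation of $|\nabla^2_\theta\rho|^2$ at the critical point, and it is what makes the precise numerical constant delicate: one must balance the penalty $\lambda\rho^2$ against the bound $\rho<4.2$ from Lemma \ref{lemma-n3-rho-upper-bound}, the value $2.8$ reflecting the factor $\tfrac2n=\tfrac23$ attached to the term $\tfrac n2(|\nabla_{\mathbb S^2}\rho|^2-1)$ in \eqref{eq-LN-domain-eq-rho-new-n3}. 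A secondary technical point, caused by the Lipschitz regularity of $\partial\Sigma$, is that the argument has to be carried out on the smooth approximations $\Sigma_i$ with constants uniform in $i$, the boundary limit $w\to 1$ being justified there through the boundary regularity of $\xi$.
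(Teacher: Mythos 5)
Your setup (approximation by smooth domains with $w=|\nabla_\theta\rho|^2\to 1$ on the boundary, the penalized function $P=w+\lambda\rho^2$, critical-point analysis with Bochner, the exact two-dimensional evaluation of $|\nabla^2_\theta\rho|^2$ using that $\nabla_\theta\rho$ is an eigenvector of the Hessian at the critical point, and substitution of the equation and its first derivative) is exactly the paper's Step 1. The gap is in the last, purely numerical step: with these ingredients no single choice of $\lambda$ gives $\sup_\Sigma w<7.84$. Carrying out the computation you describe at an interior maximum $\bar x$ of $P$ with $\nabla_\theta\rho(\bar x)\neq 0$ yields (as in the paper) the factored inequality whose conclusion is $P(\bar x)\le 3+(1-\lambda)\rho(\bar x)^2$, hence at best $\sup P\le 3+(1-\lambda)\bar\rho^2$ with $\bar\rho=4.2$; to push this below $7.84$ you would need $\lambda>0.72$. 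But you must also handle the case where the maximum of $P$ occurs at a point with $\nabla_\theta\rho=0$: there the only available bound is $\sup P=\lambda\rho(\bar x)^2\le\lambda\bar\rho^2$, which for $\lambda>0.72$ exceeds $12$. Balancing the two cases forces $\lambda=\frac{3+\bar\rho^2}{2\bar\rho^2}\approx 0.585$ and the optimal single-pass bound $\sup_\Sigma w\le\frac{3+\bar\rho^2}{2}=10.32$, i.e.\ $|\nabla_\theta\rho|\lesssim 3.22$, not $2.8$. (Also, your assertion $\sup_\Sigma w=\sup_\Sigma P$ is only an inequality $\le$, though that is not the main problem.)

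What is missing is the paper's second pass. Having established the global bound $w+\frac{3+\bar\rho^2}{2\bar\rho^2}\rho^2\le\frac{3+\bar\rho^2}{2}$, one runs the maximum principle again on $w=|\nabla_\theta\rho|^2$ alone (no penalty). At an interior maximum $\theta_0$ of $w$ the same critical-point computation gives $w(\theta_0)\le 3+\rho(\theta_0)^2$, and now the Step 1 bound \emph{evaluated at the same point} controls $\rho(\theta_0)^2$ in terms of $w(\theta_0)$; eliminating $\rho(\theta_0)^2$ between the two inequalities gives $w\le\frac{(3+\bar\rho^2)^2}{3(1+\bar\rho^2)}\approx 7.62<7.84$, i.e.\ $|\nabla_\theta\rho|<2.8$. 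So the two-step bootstrap is essential to reach the stated constant; as written, your single penalized Bernstein argument proves only $|\nabla_\theta\rho|\le\sqrt{10.32}\approx 3.22$.
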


\begin{proof} For brevity, we write $\nabla$ and $\Delta$, instead of $\nabla_{\mathbb S^2}$ and 
$\Delta_{\mathbb S^2}$. We only consider the case that $\Sigma$ has a $C^2$-boundary, 
and obtain the general case by a simple approximation. 
Set $\bar\rho=4.2$, the universal upper bound established in Lemma \ref{lemma-n3-rho-upper-bound}. 

{\it Step 1.} We claim 
\begin{equation}\label{eq-gradient-claim1}
|\nabla\rho|^2+\frac{3+\bar\rho^2}{2\bar\rho^2}\rho^2\le \frac{3+\bar\rho^2}{2}\quad\text{on }\Sigma.\end{equation}
For some  constant $A\in [0,1]$ to be determined, set 
\begin{equation}\label{eq-definition-w}w=|\nabla \rho|^2+A\rho^2\quad\text{on }\Sigma.\end{equation} 
It is obvious that $w=1$ on $\partial\Sigma$. Assume $w$ attains its maximum at some $\theta_0\in\Sigma$. 
Then, $\nabla w=0$ and $\Delta w\le 0$ at $\theta_0$, i.e., 
\begin{equation}\label{eq-1-derivative-zero}\rho_i\rho_{ij}+A\rho\rho_j=0\quad\text{at }\theta_0,\end{equation}
and 
\begin{equation}\label{eq-2-derivative-negative}
|\nabla^2\rho|^2+\nabla \rho\cdot\nabla\Delta\rho+\mathrm{Ric}(\nabla\rho, \nabla\rho)
+A|\nabla\rho|^2+A\rho\Delta\rho\le 0
\quad\text{at }\theta_0.\end{equation}
Here and hereafter, we use normal coordinates $(\theta_1, \theta_2)$ at $\theta_0$. We consider two cases. 

First, assume $\rho_1=\rho_2=0$ at $\theta_0$. Then, $w\le A\bar\rho^2$ at $\theta_0$, and hence 
\begin{equation}\label{eq-case1}w\le \max\{1, A\bar\rho^2\}\quad\text{on }\Sigma.\end{equation}
Second, assume $\rho_1\neq 0$ and $\rho_2=0$ at $\theta_0$. Then, \eqref{eq-1-derivative-zero}
implies 
\begin{equation}\label{eq-relation-maximum1}\rho_{11}=-A\rho, 
\quad \rho_{12}=0\quad\text{at }\theta_0.\end{equation}
A simple evaluation of the equation \eqref{eq-LN-domain-eq-rho-new-n3} at $\theta_0$ yields 
\begin{equation}\label{eq-equation-another-form} 
\rho\rho_{22}=\frac32(\rho_1^2-1)+\big(A-\frac12\big)\rho^2\quad\text{at }\theta_0.\end{equation}
By differentiating \eqref{eq-LN-domain-eq-rho-new-n3} with respect to $\theta_1$ %and evaluating at $\theta_0$
and substituting \eqref{eq-relation-maximum1}, 
we have 
%$$\rho(\Delta\rho)_1+\rho_1\Delta\rho+\rho\rho_1=3\rho_j\rho_{1j},$$ 
%and hence 
\begin{equation}\label{eq-relation-maximum2}
\rho(\Delta\rho)_1=-\rho_1\rho_{22}-(2A+1)\rho\rho_1\quad\text{at }\theta_0.\end{equation}
By substituting \eqref{eq-relation-maximum1}, \eqref{eq-equation-another-form}, 
and \eqref{eq-relation-maximum2} in \eqref{eq-2-derivative-negative} and 
by a straightforward computation, we obtain
%$$(\rho_1^2-1)(\rho_1^2-3)+\frac23(5A-2)\rho^2\rho_1^2-2(3A-1)\rho^2+\frac13(2A-1)(4A-1)\rho^4\le 0 
%\quad\text{at }\theta_0,$$
%and hence 
$$\big[\rho_1^2-1+\frac13(4A-1)\rho^2\big]\big[\rho_1^2-3+(2A-1)\rho^2\big]\le 0 \quad\text{at }\theta_0.$$
By a simple rearrangement, we get 
$$\big[\rho_1^2+A\rho^2-\frac13[3+(1-A)\rho^2]\big]\big[\rho_1^2+A\rho^2-[3+(1-A)\rho^2]\big]\le 0 \quad\text{at }\theta_0.$$
Therefore, 
\begin{equation}\label{eq-case2-pre}\rho_1^2+A\rho^2\le 3+(1-A)\rho^2\quad\text{at }\theta_0,\end{equation} 
and hence 
\begin{equation}\label{eq-case2}w\le 3+(1-A)\bar\rho^2\quad\text{on }\Sigma.\end{equation}

By combining \eqref{eq-case1} and \eqref{eq-case2}, we obtain 
$$w\le \max\{A\bar\rho^2, 3+(1-A)\bar\rho^2\}\quad\text{on }\Sigma.$$ 
We now take $A$ such that 
$A\bar\rho^2=3+(1-A)\bar\rho^2.$ 
Then, 
$$A=\frac{3+\bar\rho^2}{2\bar\rho^2},$$ 
and hence 
$$w\le\frac{3+\bar\rho^2}{2}\quad\text{on }\Sigma.$$
This is \eqref{eq-gradient-claim1}. 

{\it Step 2.} We claim 
\begin{equation}\label{eq-gradient-claim2}
|\nabla\rho|\le \frac{3+\bar\rho^2}{\sqrt{3(1+\bar\rho^2)}}\quad\text{on }\Sigma.\end{equation}
Consider $w=|\nabla\rho|^2$; namely, we take $A=0$ in 
\eqref{eq-definition-w}. Assume $w$ attains its maximum at some $\theta_0\in\Sigma$. 
By repeating arguments in Step 1, we obtain, instead of \eqref{eq-case2-pre}, 
\begin{equation}\label{eq-case2-pre1}\rho_1^2\le 3+\rho^2\quad\text{at }\theta_0.\end{equation} 
Evaluating \eqref{eq-gradient-claim1} at $\theta_0$, we have 
\begin{equation}\label{eq-gradient-claim1-post}
\rho_1^2+\frac{3+\bar\rho^2}{2\bar\rho^2}\rho^2\le \frac{3+\bar\rho^2}{2}\quad\text{at }\theta_0.\end{equation}
Adding the $\frac{3+\bar\rho^2}{2\bar\rho^2}$ multiple of \eqref{eq-case2-pre1} to \eqref{eq-gradient-claim1-post}, 
we get 
\begin{equation*}
\rho_1^2\le \frac{(3+\bar\rho^2)^2}{3(1+\bar\rho^2)}\quad\text{at }\theta_0.\end{equation*}
This implies \eqref{eq-gradient-claim2}. 

With $\bar\rho=4.2$, the expression in the right-hand side of \eqref{eq-gradient-claim2} is less than 2.8.
\end{proof}

The bound $2.8$ in Lemma \ref{lemma-n3-gradient}  is by no means optimal, 
but is sufficient for applications later on. 
We also point out that the universal gradient 
bound is a special property for 2-dimensional spherical domains and do not hold for higher dimensions. 

%\smallskip 

In the next two results, we improve gradient bounds under strengthened 
assumptions on $\Sigma$ for arbitrary dimensions. 

\begin{lemma}\label{lemma-C1alpha-gradient}
Let $\Sigma\subsetneq\mathbb{S}^{n-1}$ be a  
$C^{1,\alpha}$-domain, for some $\alpha\in (0,1)$, and $\rho\in C^\infty(\Sigma)\cap \mathrm{Lip}(\bar\Sigma)$ 
be the positive solution of  
\eqref{eq-LN-domain-eq-rho-new}-\eqref{eq-LN-domain-rho-boundary-new} in $\Sigma$. 
Then, $\rho\in C^{1,\alpha}(\bar\Sigma)$, and $|\nabla_\theta\rho|=1$ on $\partial\Sigma$. 
\end{lemma}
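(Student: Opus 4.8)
The plan is to work with the equation \eqref{eq-LN-domain-eq-rho-new}, which is a quasilinear second-order equation that degenerates where $\rho=0$, i.e.\ exactly on $\partial\Sigma$. Since $\Sigma$ is a $C^{1,\alpha}$-domain, I would first flatten the boundary: near a point $p\in\partial\Sigma$, choose geodesic-type coordinates $(y', y_n)$ on $\mathbb S^{n-1}$ in which $\partial\Sigma$ becomes $\{y_n=0\}$ and $\Sigma$ becomes $\{y_n>0\}$, with the coordinate change of class $C^{1,\alpha}$. In these coordinates the metric coefficients are $C^{0,\alpha}$, and the equation retains the same structure: $\rho\,a^{ij}(y)\partial_{ij}\rho + (\text{lower order}) = \tfrac n2(|\nabla\rho|^2-1) + S\rho^2$, with $a^{ij}$ uniformly elliptic and Hölder continuous. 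The key soft input is that by Lemma \ref{lemma-solutions-rho-spherical} and \eqref{eq-estimate-rho-upper-lower0} we already know $c_1 d\le \rho\le c_2 d$, so $\rho$ vanishes exactly to first order; combined with the Lipschitz bound $|\nabla_\theta\rho|\le C$ this says $\rho$ behaves like a nondegenerate distance function up to constants.

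Next I would establish the boundary gradient identity $|\nabla_\theta\rho|=1$ on $\partial\Sigma$ first, since this is what makes the $C^{1,\alpha}$ claim plausible (it says $\rho$ looks like $d_\Sigma$ to leading order). The clean way: fix $p\in\partial\Sigma$, and use barriers built from the solution $\rho^{\pm}$ of the same equation on slightly larger/smaller $C^{1,\alpha}$ domains $\Sigma^{\pm}$ that touch $\partial\Sigma$ at $p$ from inside and outside with matching tangent plane; by the gradient estimate \eqref{eq-estimate-rho-upper-lower0} applied to $\rho^\pm$ and the $C^{1,\alpha}$ flatness, $|\nabla\rho^\pm(p)|\to 1$ as the domains converge. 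Alternatively — and I think more robustly — one evaluates the equation along the inner normal: writing $\rho(y', y_n)=y_n\,g(y', y_n)$ with $g\to g(y',0)>0$, plug into \eqref{eq-LN-domain-eq-rho-new}; the degenerate term $\rho\Delta_\theta\rho$ contributes $O(y_n)$, the term $S\rho^2=O(y_n^2)$, and $|\nabla_\theta\rho|^2\to g(y',0)^2|\nabla y_n|^2 = g(y',0)^2$ (since the coordinates are geodesic so $|\nabla y_n|=1$ on $\{y_n=0\}$), so the equation forces $\tfrac n2(g(y',0)^2-1)=0$, i.e.\ $g(y',0)\equiv 1$, which is precisely $|\nabla_\theta\rho|=1$ on $\partial\Sigma$.

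For the full $C^{1,\alpha}(\bar\Sigma)$ regularity I would run a boundary Schauder-type argument adapted to the degeneracy. Set $w = \nabla_\theta\rho$; differentiating the equation (or rather, using that $\rho/d$ and its first derivatives satisfy a uniformly elliptic equation once one divides through by $\rho$, which is comparable to $d$) one gets that $\rho$ solves, away from the boundary, a uniformly elliptic equation whose coefficients and inhomogeneous term are controlled in $C^{0,\alpha}$ on dyadic annuli $\{2^{-k-1}\le d\le 2^{-k}\}$ with estimates uniform in $k$ after rescaling. A standard scaled/iterated Schauder estimate on these shrinking regions, together with the already-known $|\nabla\rho|=1$ on $\partial\Sigma$ and the $C^{1,\alpha}$ flatness of the domain, yields a uniform modulus of continuity of $\nabla_\theta\rho$ up to $\partial\Sigma$ of Hölder type; one then patches the annuli together. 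Concretely this is the same mechanism as the classical proof that the solution of the Loewner–Nirenberg problem satisfies $|d^{(n-2)/2}u - 1|\le Cd^\alpha$ on $C^{1,\alpha}$ domains (quoted in \eqref{eq-C1alpha-spherical}), transplanted to $\rho=\xi^{-2/(n-2)}$.

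The main obstacle I expect is the low regularity of the coefficients combined with the degeneracy: the equation is only uniformly elliptic after dividing by $\rho\sim d$, so a naive Schauder estimate blows up near the boundary, and one must set up the rescaled estimates on dyadic annuli and check that the Hölder norms of the rescaled coefficients and right-hand side stay bounded — this is where the $C^{1,\alpha}$ (not merely Lipschitz) hypothesis on $\Sigma$ is essential, since it is exactly what controls the oscillation of the flattened metric and lets the iteration converge. A secondary technical point is justifying that $\rho\in C^{1}(\bar\Sigma)$ in the first place (so that $|\nabla_\theta\rho|$ has a boundary trace); this follows from the same barrier comparison with $\rho^\pm$ on perturbed domains together with \eqref{eq-estimate-rho-upper-lower0}, and should be recorded before the Hölder estimate for the gradient.
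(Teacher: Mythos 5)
There is a genuine gap, and it is circularity. Your argument for $|\nabla_\theta\rho|=1$ on $\partial\Sigma$ presupposes exactly the boundary regularity you are trying to prove: writing $\rho=y_n\,g(y',y_n)$ and asserting that $\rho\Delta_\theta\rho=O(y_n)$ and that $|\nabla_\theta\rho|^2\to g(y',0)^2$ requires $\nabla_\theta\rho$ to have a continuous trace on $\partial\Sigma$, which at this stage is unknown ($\rho$ is only Lipschitz). A priori the equation \eqref{eq-LN-domain-eq-rho-new} gives only $\rho\Delta_\theta\rho=O(1)$ near the boundary (since $|\nabla_\theta\rho|^2-1$ is merely bounded), so the equation does not ``force'' $g(y',0)=1$ without further input; the same objection applies to the barrier variant, and your closing remark that $C^1$ regularity up to the boundary ``follows from the same barrier comparison together with \eqref{eq-estimate-rho-upper-lower0}'' cannot work, because \eqref{eq-estimate-rho-upper-lower0} only gives $c_1\le\rho/d\le c_2$, not $\rho/d\to1$. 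Similarly, in your Schauder step, dividing the equation by $\rho$ produces the singular term $\frac{n}{2\rho}(|\nabla_\theta\rho|^2-1)$, and the decay $|\nabla_\theta\rho|^2-1=O(d^\alpha)$ needed to tame it is again the conclusion, not a hypothesis; without a quantitative smallness input, the rescaled estimates on dyadic annuli only reproduce the known Lipschitz bound and do not patch into a H\"older modulus for $\nabla_\theta\rho$ up to $\partial\Sigma$.

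The missing ingredient, which the paper uses as the very first line, is the estimate \eqref{eq-C1alpha-spherical}, i.e. $|\rho-d|\le Cd^{1+\alpha}$ on $C^{1,\alpha}$-domains. With this, the paper's proof is short and avoids both of your difficulties: take a regularized distance $\eta\in C^{1,\alpha}(\bar\Sigma)\cap C^2(\Sigma)$ with $\eta=0$, $|\nabla_\theta\eta|=1$ on $\partial\Sigma$ and $\eta^{1-\alpha}|\nabla_\theta^2\eta|\le C$ as in \eqref{eq-second-derivatives}, so that $|\rho-\eta|\le C\eta^{1+\alpha}$; then $\rho-\eta$ solves a linear equation $\Delta_\theta(\rho-\eta)+\mathbf b\cdot\nabla_\theta(\rho-\eta)+S(\rho-\eta)=f$ with $\eta\mathbf b$ bounded and $|\eta^2 f|\le C\eta^{1+\alpha}$, and the scaled interior $C^{1,\alpha}$-estimate on $B_{d(\theta)/2}(\theta)$ (this is your dyadic idea, correctly fed) yields $|\nabla_\theta(\rho-\eta)|\le C\eta^{\alpha}$ together with uniform local $C^\alpha$-seminorms of $\nabla_\theta(\rho-\eta)$. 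These two facts patch to give $\nabla_\theta\rho\in C^\alpha(\bar\Sigma)$, and the decay $|\nabla_\theta(\rho-\eta)|\le C\eta^\alpha$ delivers $|\nabla_\theta\rho|=|\nabla_\theta\eta|=1$ on $\partial\Sigma$ as a byproduct rather than as a separate preliminary step. In short: the order of logic must be reversed — the closeness $|\rho-d|\le Cd^{1+\alpha}$ comes first and drives everything; the boundary gradient identity comes last.
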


\begin{proof}%[Proof of Lemma \ref{lemma-C1alpha-gradient}] 
Let $d$ be the distance function in $\Sigma$ to $\partial\Sigma$. 
Then, $d$ is $C^{1,\alpha}$ near $\partial\Sigma$. 
By \eqref{eq-C1alpha-spherical}, we have 
$$|\rho-d|\le Cd^{1+\alpha}.$$
Take a function $\eta\in C^{1,\alpha}(\bar\Sigma)\cap C^2(\Sigma)$ such that 
$\eta>0$ in $\Sigma$, $\eta=0$ and $|\nabla_\theta\eta|=1$ on $\partial\Sigma$, and 
\begin{equation}\label{eq-second-derivatives}
\eta^{1-\alpha}|\nabla^2_\theta \eta|\le C\quad\text{in }\Sigma,\end{equation}
where $C$ is a positive constant depending only on $\Sigma$ and $\alpha$. 
Then, 
$$c_1\le \frac{\eta}{d}\le c_2, \quad |d-\eta|\le C\eta^{1+\alpha}\quad\text{in }\Sigma,$$ 
and hence
$$|\rho-\eta|\le C\eta^{1+\alpha}\quad\text{in }\Sigma.$$
With \eqref{eq-LN-domain-eq-rho-new}, a simple computation yields 
\begin{align*} \Delta_\theta(\rho-\eta)+{\mathbf b}\cdot \nabla_\theta(\rho-\eta)+S(\rho-\eta)
&=f\quad\text{in }\Sigma,\\
\rho-\eta&=0\quad\text{on }\partial\Sigma,
\end{align*}
where 
$${\mathbf b}=-\frac{n}{2\rho}\nabla_\theta(\rho+\eta),$$
and 
$$f=\frac{n}{2\rho}(|\nabla_\theta\eta|^2-1)-\Delta_\theta\eta-S\eta.$$
It is easy to see that $\eta {\mathbf b}$ and $\eta^2S$ are bounded in $\Sigma$ 
and that, by \eqref{eq-second-derivatives},  
$$|\eta^2f|\le C\eta^{1+\alpha}\quad\text{in }\Sigma.$$
For any $\theta\in \Sigma$, consider $B_{d(\theta)/2}(\theta)\subset \Sigma$. 
By the interior $C^{1,\alpha}$-estimate, we have 
\begin{align*}&\eta(\theta)|\nabla_\theta(\rho-\eta)(\theta)|
+\eta(\theta)^{1+\alpha}[\nabla_\theta(\rho-\eta)]_{C^\alpha(B_{d(\theta)/4}(\theta))}\\
&\qquad\le C\big\{|\rho-\eta|_{L^\infty(B_{d(\theta)/2}(\theta))}
+\eta(\theta)^2|f|_{L^\infty(B_{d(\theta)/2}(\theta))}\big\}\le C\eta(\theta)^{1+\alpha}.\end{align*}
Hence, 
$$|\nabla_\theta(\rho-\eta)|\le C\eta^{\alpha}\quad\text{in }\Sigma,$$ 
and, for any $\theta\in \Sigma$, 
\begin{align*}[\nabla_\theta(\rho-\eta)]_{C^\alpha(B_{d(\theta)/4}(\theta))}\le C.\end{align*}
Since $\nabla_\theta\eta\in C^\alpha(\bar\Sigma)$ with $|\nabla_\theta\eta|=1$ on $\partial\Sigma$, 
we conclude $\nabla_\theta\rho\in C^\alpha(\bar\Sigma)$ with $|\nabla_\theta\rho|=1$ on $\partial\Sigma$. 
\end{proof} 

For the next result, we denote by $H_{\partial\Sigma}$ the mean curvature of $\partial\Sigma$ 
with respect to the inner unit normal vector. 

\begin{lemma}\label{lemma-Lip-negative-Laplace}
Let $\Sigma\subsetneq\mathbb{S}^{n-1}$ be a  
Lipschitz domain and $\rho\in C^\infty(\Sigma)\cap \mathrm{Lip}(\bar\Sigma)$ 
be the positive solution of  
\eqref{eq-LN-domain-eq-rho-new}-\eqref{eq-LN-domain-rho-boundary-new} of $\Sigma$. 
Assume that $\Sigma$ is the union of an increasing sequence of $C^3$-domains 
$\{\Sigma_i\}$ such that $H_{\partial\Sigma_i}\ge -m$ for some nonnegative constant $m$.  Then, 
$|\nabla_\theta\rho|\le 1+c\rho$ in $\Sigma$, for some constant $c\ge 0$. 
\end{lemma}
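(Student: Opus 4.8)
The plan is to prove the gradient bound first on each smooth domain $\Sigma_i$ by a Bernstein-type argument, with constants independent of $i$, and then pass to the limit. Fix $i$ and work on $\Sigma_i$, whose boundary is $C^3$ with mean curvature $H_{\partial\Sigma_i}\ge -m$; let $\rho_i$ be the corresponding solution of \eqref{eq-LN-domain-eq-rho-new}-\eqref{eq-LN-domain-rho-boundary-new} on $\Sigma_i$. By Lemma~\ref{lemma-solutions-rho-spherical}, $c_1 d\le \rho_i\le c_2 d$ with $c_1,c_2$ depending only on $n$ and $\Sigma$ (hence uniform in $i$, since $\Sigma_i\subset\Sigma$), and $\rho_i$ is Lipschitz up to $\bar\Sigma_i$. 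Since $\partial\Sigma_i$ is $C^3$, a standard barrier/boundary-regularity argument (compare $\rho_i$ with the distance function $d_i$ to $\partial\Sigma_i$, using that $d_i$ is $C^2$ near $\partial\Sigma_i$ with $\Delta_\theta d_i=-H_{\partial\Sigma_i}+O(d_i)$) gives $\rho_i\in C^1(\bar\Sigma_i)$ with $|\nabla_\theta\rho_i|=1$ on $\partial\Sigma_i$; moreover it gives a one-sided Hessian control near the boundary that will feed the interior maximum-principle step.

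The core is a maximum principle applied to a modified gradient function. Consider
\[
w=\frac{|\nabla_\theta\rho_i|^2-1}{(1+c\rho_i)^2}\qquad\text{or, more simply,}\qquad w=|\nabla_\theta\rho_i|^2-(1+c\rho_i)^2,
\]
for a constant $c\ge 0$ to be chosen depending only on $n$, $m$, $c_1$, $c_2$; I expect the second, unweighted form to suffice. If $\sup_{\Sigma_i}w\le 0$ we are done, so suppose $w$ attains a positive maximum at an interior point $\theta_0\in\Sigma_i$ (it cannot be attained on $\partial\Sigma_i$, where $|\nabla_\theta\rho_i|=1$ forces $w\le 0$ there for any $c\ge0$). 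At $\theta_0$ we have $\nabla_\theta w=0$ and $\Delta_\theta w\le 0$. As in the proof of Lemma~\ref{lemma-n3-gradient}, use normal coordinates at $\theta_0$ aligned so that $\nabla_\theta\rho_i$ points along one axis, apply the Bochner identity $\tfrac12\Delta_\theta|\nabla_\theta\rho_i|^2=|\nabla^2_\theta\rho_i|^2+\nabla_\theta\rho_i\cdot\nabla_\theta\Delta_\theta\rho_i+\mathrm{Ric}(\nabla_\theta\rho_i,\nabla_\theta\rho_i)$ on $\mathbb S^{n-1}$ (where $\mathrm{Ric}=(n-2)g$), differentiate the equation \eqref{eq-LN-domain-eq-rho-new} once in the gradient direction to express $\nabla_\theta\Delta_\theta\rho_i$, and substitute the first-order condition $\nabla_\theta w=0$ (which relates $\nabla^2_\theta\rho_i\cdot\nabla_\theta\rho_i$ to $c(1+c\rho_i)\nabla_\theta\rho_i$) together with Cauchy--Schwarz $|\nabla^2_\theta\rho_i|^2\ge (n-1)^{-1}(\Delta_\theta\rho_i)^2$ and the equation itself to eliminate $\Delta_\theta\rho_i$. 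A computation parallel to the one in Lemma~\ref{lemma-n3-gradient} — but keeping the free constant $c$ — then yields an inequality of the form $(|\nabla_\theta\rho_i|^2-1)\cdot(\text{positive})\le C_1\rho_i+C_2 c\,\rho_i(\cdots)$ at $\theta_0$, which for $c$ large enough (depending only on $n$, the constant $S=(n-2)/2$, and the bound $\rho_i\le c_2\,\mathrm{diam}$) forces $w(\theta_0)\le 0$, a contradiction. Hence $w\le 0$ on $\Sigma_i$, i.e. $|\nabla_\theta\rho_i|\le 1+c\rho_i$ on $\Sigma_i$, with $c$ independent of $i$.

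Finally, since $\Sigma_i\uparrow\Sigma$ and $\xi^{(i)}\to\xi$ (equivalently $\rho_i\to\rho$) uniformly on compact subsets of $\Sigma$ — as in Step~3 of the proof of Theorem~\ref{thrm-LN-existence-cone} — interior elliptic estimates give $\nabla_\theta\rho_i\to\nabla_\theta\rho$ locally uniformly, so $|\nabla_\theta\rho|\le 1+c\rho$ on all of $\Sigma$. The main obstacle is the maximum-principle step: correctly tracking the constant $c$ through the Bochner computation so that the curvature lower bound $H_{\partial\Sigma_i}\ge -m$ (entering only through the boundary behavior, which rules out a boundary maximum and controls $\rho_i$ near $\partial\Sigma_i$) and the algebraic structure of \eqref{eq-LN-domain-eq-rho-new} combine to close the inequality; the uniformity of all constants in $i$ must be monitored throughout, but this follows because every bound used depends only on $n$, $m$, and the fixed outer domain $\Sigma$.
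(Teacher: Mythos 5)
Your overall scaffolding (prove the bound on each smooth $\Sigma_i$ with constants uniform in $i$, then pass to the limit using local $C^1$ convergence) matches the paper, but the central step --- the maximum principle applied to $w=|\nabla_\theta\rho_i|^2-(1+c\rho_i)^2$ --- is asserted rather than carried out, and as structured it cannot work. In your scheme the hypothesis $H_{\partial\Sigma_i}\ge -m$ enters only through the statement that $|\nabla_\theta\rho_i|=1$ on $\partial\Sigma_i$, which rules out a boundary maximum of $w$; but that fact holds for every $C^3$ domain, independently of any curvature bound. So if your interior Bochner computation really closed with a constant $c$ depending only on $n$, $c_1$, $c_2$ and $\sup\rho_i$ (nothing in the interior computation can see $m$, since the equation is the same everywhere), the lemma would hold with $c$ independent of the mean curvature. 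That is false: for a $C^3$ domain one has the boundary expansion $\rho=d-\tfrac{1}{2(n-1)}H_{\partial\Sigma}d^2+O(d^3)$, hence $|\nabla_\theta\rho|\approx 1-\tfrac{1}{n-1}H_{\partial\Sigma}\,d$ near $\partial\Sigma$, so any admissible $c$ must be at least of order $\max(-H_{\partial\Sigma})$; equivalently, for $c$ too small relative to $-H$, your $w$ is positive just inside the boundary and has an interior positive maximum, so no contradiction can be derived there. Concretely, when you differentiate \eqref{eq-LN-domain-eq-rho-new} in the form $\Delta_\theta\rho=\tfrac{n}{2\rho}(|\nabla_\theta\rho|^2-1)-S\rho$ and insert it into Bochner, you pick up the term $-\tfrac{n}{2\rho^2}(|\nabla_\theta\rho|^2-1)|\nabla_\theta\rho|^2$, which at a near-boundary maximum point is a large negative quantity of size $(|\nabla_\theta\rho|^2-1)|\nabla_\theta\rho|^2/\rho^2$; the positive terms you list ($|\nabla^2_\theta\rho|^2$ via the first-order condition and Cauchy--Schwarz, the Ricci term, the terms produced by $-(1+c\rho_i)^2$) do not dominate it uniformly, and your claimed inequality ``$(|\nabla_\theta\rho_i|^2-1)\cdot(\text{positive})\le C_1\rho_i+\cdots$'' is exactly the point that needs proof and is missing.

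For comparison, the paper avoids a Bernstein argument on the gradient altogether: applying $\Delta_\theta$ to $\rho(\Delta_\theta\rho+S\rho)=\tfrac n2(|\nabla_\theta\rho|^2-1)$ and using Bochner, one finds that $P:=\Delta_\theta\rho+S\rho$ satisfies $\rho\,\Delta_\theta P-2S\,\nabla_\theta\rho\cdot\nabla_\theta P\ge 0$ in $\Sigma_i$, so $P$ has no interior maximum exceeding its boundary values; on $\partial\Sigma_i$ the $C^3$ expansion gives $\Delta_\theta\rho_i=-\tfrac{n}{n-1}H_{\partial\Sigma_i}+O(d)$, so the curvature hypothesis enters precisely here and yields $P\le \tfrac{nm}{n-1}$; finally the equation itself converts this into $\tfrac n2(|\nabla_\theta\rho_i|^2-1)=\rho_i P\le\tfrac{nm}{n-1}\rho_i$, i.e.\ $|\nabla_\theta\rho_i|^2\le 1+\tfrac{2m}{n-1}\rho_i$. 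If you want to salvage your approach, you must make the curvature bound enter quantitatively (not merely to exclude a boundary maximum), for instance by building a boundary barrier of the form $1+c\rho_i$ with $c=c(n,m)$ from the $C^3$ expansion and only then running an interior argument; as written, the proposal has a genuine gap.
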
 

\begin{proof}%[Proof of Lemma \ref{lemma-Lip-negative-Laplace}] 
We first consider a special case that $\Sigma$ is a $C^3$-bounded domain such that 
$H_{\partial\Sigma}\ge -m$ for some nonnegative constant $m$. The Bochner identity yields 
$$\frac12\Delta_\theta(|\nabla_\theta\rho|^2)=|\nabla_\theta^2\rho|^2+\nabla_\theta\rho\cdot\nabla_\theta\Delta_\theta\rho
+\mathrm{Ric}(\nabla_\theta\rho, \nabla_\theta\rho).$$
On $\mathbb S^{n-1}$, $R_{ij}=(n-2)g_{ij}$. Hence, 
$$\frac12\Delta_\theta(|\nabla_\theta\rho|^2)=|\nabla_\theta^2\rho|^2+\nabla_\theta\rho\cdot\nabla_\theta\Delta_\theta\rho
+(n-2)|\nabla_\theta\rho|^2.$$
We write \eqref{eq-LN-domain-eq-rho-new} as 
$$\rho(\Delta_\theta \rho+S\rho)=\frac{n}{2}(|\nabla_\theta \rho|^2-1).$$
By applying  the Laplacian operator, we get
\begin{align*}&\rho\Delta_\theta(\Delta_\theta \rho+S\rho)
+2\nabla_\theta\rho\cdot\nabla_\theta(\Delta_\theta \rho+S\rho)
+\Delta_\theta\rho(\Delta_\theta \rho+S\rho)\\
&\qquad =
n\big(|\nabla_\theta^2\rho|^2+\nabla_\theta\rho\cdot\nabla_\theta\Delta_\theta\rho
+2S|\nabla_\theta\rho|^2\big).\end{align*}
For the last term in the left-hand side and the last two terms in the right-hand side, we write 
\begin{align*}
\Delta_\theta\rho(\Delta_\theta \rho+S\rho)
&=(\Delta_\theta\rho)^2+S\rho\Delta_\theta \rho\\
&=(\Delta_\theta\rho)^2+S\big[\frac{n}{2}|\nabla_\theta \rho|^2-S\rho^2-\frac{n}{2}\big],
\end{align*}
and 
$$\nabla_\theta\rho\cdot\nabla_\theta\Delta_\theta\rho
+2S|\nabla_\theta\rho|^2
=\nabla_\theta\rho\cdot\nabla_\theta\big(\Delta_\theta \rho+S\rho\big)
+S|\nabla_\theta\rho|^2.$$
By simple substitutions and rearrangements, we have 
\begin{align*}&\rho\Delta_\theta(\Delta_\theta \rho+S\rho)
-2S\nabla_\theta \rho \nabla_\theta\cdot(\Delta_\theta \rho+S\rho)\\
&\qquad= n|\nabla_\theta^2\rho|^2-(\Delta_\theta \rho)^2 
+\frac12S\big[n|\nabla_\theta\rho|^2+2S\rho^2+n\big]\ge 0.\end{align*}
Since $\Sigma$ has a $C^3$-boundary, we have 
\begin{equation*}\rho=d-\frac{1}{2(n-1)}H_{\partial\Sigma}d^2+O(d^3),\end{equation*}
and hence
$$\Delta_\theta \rho=-H_{\partial\Sigma}-\frac{1}{n-1}H_{\partial\Sigma}+O(d)
=-\frac{n}{n-1}H_{\partial\Sigma}+O(d).$$
By the assumption $H_{\partial \Sigma}\ge -m$ on $\partial\Sigma$, we have 
$$\Delta_\theta \rho+S\rho\le \frac{mn}{n-1}\quad\text{on }\partial \Sigma.$$
By the strong maximum principle, we obtain 
$$\Delta_\theta \rho+S\rho< \frac{mn}{n-1}\quad\text{in }\Sigma.$$
Therefore, by \eqref{eq-LN-domain-eq-rho-new}, 
$$|\nabla_\theta \rho|^2< 1+\frac{2m}{n-1}\rho\quad\text{in }\Sigma.$$

We next consider the general case that $\Sigma$ is the union of an increasing sequence of $C^3$-domains 
$\{\Sigma_i\}$ with $H_{\partial\Sigma_i}\ge -m$. Let $\rho_i$ be the positive solution of 
\eqref{eq-LN-domain-eq-rho-new}-\eqref{eq-LN-domain-rho-boundary-new} in $\Sigma_i$. By what 
we just proved in the special case, we have 
$$|\nabla_\theta\rho_i|^2\le 1+\frac{2m}{n-1}\rho\quad\text{in }\Sigma_i.$$
Note that $\rho_i\to \rho$ in $C^1(\Sigma')$ for any subdomain $\Sigma'\subset\subset\Sigma$. 
The desired result follows by a simple approximation. 
\end{proof} 

The proof of Lemma \ref{lemma-Lip-negative-Laplace} is modified from \cite{Han-Shen-20??Negative}. 

\section{Spherical Elliptic Operators}\label{sec-spherical-operators}

In this section, we discuss a class of elliptic operators over spherical domains 
with a certain singularity on the boundary. 
Our main focus is the regularity of solutions near boundary. 
Throughout this section, differentiation is always with respect to $\theta\in\mathbb{S}^{n-1}$.

Let $\Sigma\subsetneq\mathbb{S}^{n-1}$ be a 
Lipschitz domain and $d$ be the distance function in $\Sigma$ to $\partial \Sigma$. It is well-known that 
$d$ may not be $C^1$ even when $\Sigma$ has a smooth boundary. 
Take a function $\rho\in C^\infty(\Sigma)\cap C(\bar \Sigma)$ such that 
\begin{equation}\label{eq-estimate-rho-upper-lower} 
c_1\le \frac{\rho}{d}\le c_2\quad\text{on }\Sigma,\end{equation}
for some positive constants $c_1$ and $c_2$. In particular, 
$\rho$ is  positive  in $\Sigma$ 
and $\rho=0$ on $\partial\Sigma$.

For a given positive constant $\kappa$, set, for any $u\in C^2(\Sigma)$, 
\begin{equation}\label{eq-def-L-Sigma}
Lu=\Delta_\theta u-\frac{\kappa}{\rho^2}u.\end{equation}
This is a linear operator on $\Sigma$ with a singular coefficient on $\partial\Sigma$, 
since $\rho=0$ on $\partial\Sigma$. 

We first establish the existence of weak solutions of  $-Lu=f$. 

\begin{lemma}\label{lemma-singular-existnece-weak}
Let $\Sigma\subsetneq\mathbb{S}^{n-1}$ be a 
Lipschitz domain and $\rho\in C^\infty(\Sigma)\cap \mathrm{Lip}(\bar\Sigma)$ be a positive function in $\Sigma$ 
satisfying \eqref{eq-estimate-rho-upper-lower}. 
Then, for any $f\in L^2(\Sigma)$, there exists a unique weak solution $u\in H^1_0(\Sigma)$ 
of $Lu=-f$, and 
$$\|\nabla_\theta u\|_{L^2(\Sigma)}+\|\rho^{-1}u\|_{L^2(\Sigma)}\le C\|f\|_{L^2(\Sigma)}, $$
where $C$ is a positive constant depending only on $n$ and $\Sigma$.
Moreover, $u\in C^\infty(\Sigma)$. \end{lemma}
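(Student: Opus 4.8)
The plan is to obtain the weak solution by the Lax--Milgram theorem applied to the bilinear form
\[
B[u,v]=\int_\Sigma \nabla_\theta u\cdot\nabla_\theta v\,d\theta+\int_\Sigma \frac{\kappa}{\rho^2}uv\,d\theta
\]
on a suitable Hilbert space, and then to upgrade interior smoothness by standard elliptic regularity. The natural space is $H_0^1(\Sigma)$, but one must be careful: the weighted term $\int_\Sigma \rho^{-2}u^2$ is not a priori finite for all $u\in H_0^1(\Sigma)$ when $\Sigma$ is merely Lipschitz. The key analytic input is a Hardy-type inequality: since $\rho$ is comparable to the distance $d$ by \eqref{eq-estimate-rho-upper-lower}, and $\Sigma$ is a bounded Lipschitz domain on $\mathbb S^{n-1}$ (hence satisfies the classical Hardy inequality), there is a constant $C_H$ with
\[
\int_\Sigma \frac{u^2}{\rho^2}\,d\theta\le C_H\int_\Sigma |\nabla_\theta u|^2\,d\theta\qquad\text{for all }u\in C_c^\infty(\Sigma).
\]
This simultaneously shows that $B$ is well-defined and bounded on $H_0^1(\Sigma)$, that $B$ is coercive (in fact $B[u,u]\ge \|\nabla_\theta u\|_{L^2}^2$ trivially since $\kappa>0$), and — combined again with the Hardy inequality — that $B[u,u]\ge c(\|\nabla_\theta u\|_{L^2}^2+\|\rho^{-1}u\|_{L^2}^2)$. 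The functional $v\mapsto \int_\Sigma fv$ is bounded on $H_0^1(\Sigma)$ by Hardy (or just Poincar\'e), so Lax--Milgram produces a unique $u\in H_0^1(\Sigma)$ with $B[u,v]=\int_\Sigma fv$ for all $v\in H_0^1(\Sigma)$, i.e. a weak solution of $Lu=-f$; the coercivity estimate $B[u,u]=\int fu\le \|f\|_{L^2}\|u\|_{L^2}$ together with Hardy immediately yields the claimed bound $\|\nabla_\theta u\|_{L^2(\Sigma)}+\|\rho^{-1}u\|_{L^2(\Sigma)}\le C\|f\|_{L^2(\Sigma)}$.

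The hardest point to justify cleanly is the validity of the Hardy inequality with $\rho$ in place of $d$: one needs the classical Hardy inequality $\int_\Sigma u^2/d^2\le C\int_\Sigma|\nabla u|^2$ for bounded Lipschitz domains (a standard fact, via the cone condition / Davies' criterion), and then the comparison $c_1 d\le \rho\le c_2 d$ from \eqref{eq-estimate-rho-upper-lower}; care is needed because Hardy on Lipschitz domains holds for the Riemannian distance on $\mathbb S^{n-1}$ restricted to $\Sigma$, which is bi-Lipschitz equivalent to the intrinsic distance $d$ of $\Sigma$ to $\partial\Sigma$ on a small neighborhood, and away from the boundary the weight $\rho^{-2}$ is bounded so the estimate is trivial there. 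Once this is in hand, everything else is routine.

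For interior regularity, I would argue locally: on any ball $B\subset\subset\Sigma$, the coefficient $\kappa/\rho^2$ is smooth and bounded, so $u$ weakly solves $\Delta_\theta u=\kappa\rho^{-2}u-f$ with right-hand side in $L^2_{\mathrm{loc}}(\Sigma)$; elliptic $W^{2,2}$ regularity gives $u\in H^2_{\mathrm{loc}}(\Sigma)$, and then a bootstrap — using that $\rho$ and hence $\rho^{-2}$ is $C^\infty$ in the interior — promotes $u$ to $C^\infty(\Sigma)$ via Schauder estimates once $f$ is, or directly via difference-quotient iteration if one only keeps $f\in L^2$; in the latter case one concludes $u\in W^{k,2}_{\mathrm{loc}}$ for all $k$ only if $f$ is smooth, so I would phrase the smoothness conclusion for smooth $f$ and note the general weak-solution statement requires only $f\in L^2$. (In the actual usage of this lemma $f$ will be as regular as needed, so I will simply state $u\in C^\infty(\Sigma)$ under the implicit smoothness of $f$ where it is applied, or restrict the claim accordingly.) Uniqueness of the weak solution is part of Lax--Milgram and needs no separate argument. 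The main obstacle, then, is purely the Hardy inequality step; the rest is bookkeeping.
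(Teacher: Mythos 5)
Your proposal is correct and takes essentially the same route as the paper: the paper also invokes Hardy's inequality together with \eqref{eq-estimate-rho-upper-lower} to show that $(u,v)_\rho=\int_\Sigma(\nabla_\theta u\cdot\nabla_\theta v+\kappa\rho^{-2}uv)\,d\theta$ defines a norm equivalent to the $H_0^1(\Sigma)$ norm and then applies the Riesz representation theorem, which for this symmetric coercive form is the same mechanism as your Lax--Milgram argument, with interior smoothness following from standard elliptic regularity. Your caveat that the $C^\infty(\Sigma)$ conclusion really uses smoothness of $f$ is a fair reading; in the paper's subsequent applications $f$ is indeed taken in $C^\infty(\Sigma)$.
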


\begin{proof} By Hardy's inequality and \eqref{eq-estimate-rho-upper-lower}, we have, for any $u\in H_0^1(\Sigma)$, 
$$\|\rho^{-1}u\|_{L^2(\Sigma)}\le C\|\nabla_\theta u\|_{L^2(\Sigma)},$$ 
where $C$ is a positive constant depending only on $n$ and $\Sigma$. 
Set, for any $u, v\in H_0^1(\Sigma)$, 
\begin{align*}
(u,v)_\rho=\int_{\Omega}\big(\nabla_\theta u\cdot\nabla_\theta v
+\kappa\rho^{-2}uv\big)d\theta.\end{align*}
Denote by $\|\cdot\|_\rho$ the induced norm, i.e., for any $u\in H_0^1(\Sigma)$, 
\begin{align*}
\|u\|_\rho=\Big(\int_{\Sigma}\big(|\nabla_\theta u|^2+\kappa\rho^{-2}u^2\big)d\theta\Big)^{1/2}.\end{align*}
Then, $\|\cdot\|_\rho$ is equivalent to the standard norm in $H^1_0(\Sigma)$ and hence, 
$(H_0^1(\Sigma), (\cdot, \cdot)_{\rho})$ is a Hilbert space.
We can apply the Riesz representation theorem to conclude 
the desired result. 
\end{proof}

With the help of the compact embedding 
of $H_0^1(\Sigma)$ in $L^2(\Sigma)$, we have the following result 
concerning the eigenvalue problem for  $L$.

\begin{theorem}\label{thrm-singular-eigenvalue}
Let $\Sigma\subsetneq\mathbb{S}^{n-1}$ be a 
Lipschitz domain and $\rho\in C^\infty(\Sigma)\cap \mathrm{Lip}(\bar\Sigma)$ be a positive function in $\Sigma$ 
satisfying \eqref{eq-estimate-rho-upper-lower}. 
Then, there exist an increasing sequence of positive constants $\{\lambda_i\}_{i\ge 1}$, divergent to $\infty$, 
and an $L^2(\Sigma)$-orthonormal basis  
$\{\phi_i\}_{i\ge 1}$ 
such that, for $i\ge 1$,  $\phi_i\in C^\infty(\Sigma)\cap H_0^1(\Sigma)$ and 
$L \phi_i=-\lambda_i\phi_i$ weakly. 
\end{theorem}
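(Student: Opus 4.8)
The plan is to apply the standard spectral theory of compact self-adjoint operators to the solution operator of the singular Dirichlet problem established in Lemma \ref{lemma-singular-existnece-weak}. First I would define $T\colon L^2(\Sigma)\to L^2(\Sigma)$ by $Tf=u$, where $u\in H^1_0(\Sigma)$ is the unique weak solution of $-Lu=f$ provided by Lemma \ref{lemma-singular-existnece-weak}; the estimate there gives $\|u\|_{H^1_0(\Sigma)}\le C\|f\|_{L^2(\Sigma)}$, so $T$ is bounded from $L^2(\Sigma)$ into $H^1_0(\Sigma)$. Since $\Sigma$ is a bounded Lipschitz domain in $\mathbb S^{n-1}$, the embedding $H^1_0(\Sigma)\hookrightarrow L^2(\Sigma)$ is compact (Rellich--Kondrachov), so $T$, viewed as an operator on $L^2(\Sigma)$, is compact.

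Next I would verify that $T$ is self-adjoint and positive. Self-adjointness follows from the symmetry of the bilinear form $(u,v)_\rho=\int_\Sigma(\nabla_\theta u\cdot\nabla_\theta v+\kappa\rho^{-2}uv)\,d\theta$: for $f,g\in L^2(\Sigma)$ with $u=Tf$, $v=Tg$, we have $(Tf,g)_{L^2}=(u,g)_{L^2}=(u,v)_\rho=(f,v)_{L^2}=(f,Tg)_{L^2}$, using the weak formulation with test functions $v$ and $u$ respectively. Positivity is immediate: $(Tf,f)_{L^2}=(u,u)_\rho=\|u\|_\rho^2>0$ unless $f=0$. Hence by the spectral theorem for compact self-adjoint operators, $L^2(\Sigma)$ admits an orthonormal basis $\{\phi_i\}_{i\ge 1}$ of eigenfunctions of $T$ with positive eigenvalues $\nu_i\to 0$, which we may order decreasingly. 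Setting $\lambda_i=\nu_i^{-1}$ gives an increasing sequence of positive constants diverging to $\infty$, and $T\phi_i=\nu_i\phi_i$ translates to $-L\phi_i=\lambda_i\phi_i$ weakly, with $\phi_i\in H^1_0(\Sigma)$ since $\phi_i=\nu_i^{-1}T\phi_i\in\mathrm{Range}(T)\subset H^1_0(\Sigma)$.

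Finally, the interior smoothness $\phi_i\in C^\infty(\Sigma)$ is exactly the ``moreover'' part of Lemma \ref{lemma-singular-existnece-weak}: away from $\partial\Sigma$ the coefficient $\kappa/\rho^2$ is smooth (indeed real-analytic) since $\rho\in C^\infty(\Sigma)$ is positive there, so $-L\phi_i=\lambda_i\phi_i\in L^2$ bootstraps through elliptic regularity to give $\phi_i\in C^\infty(\Sigma)$.

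I do not expect any genuine obstacle here — this is the routine Riesz--Fredholm argument, and all the analytic content (the Hardy inequality, equivalence of norms, existence of the solution operator, interior regularity) has already been absorbed into Lemma \ref{lemma-singular-existnece-weak}. The only point requiring a word of care is confirming the compactness of $H^1_0(\Sigma)\hookrightarrow L^2(\Sigma)$ for a merely Lipschitz (indeed arbitrary bounded) domain, but for $H^1_0$ this holds with no boundary regularity assumption at all, so nothing extra is needed. The genuinely delicate regularity question — behavior of $\phi_i$ up to the singular boundary $\partial\Sigma$ — is deliberately not part of this statement and is deferred to later sections.
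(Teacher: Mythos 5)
Your argument is correct and is exactly the route the paper intends: the paper gives no separate proof, merely invoking Lemma \ref{lemma-singular-existnece-weak} together with the compact embedding $H_0^1(\Sigma)\hookrightarrow L^2(\Sigma)$, which is precisely your compact, self-adjoint, positive solution-operator argument followed by the spectral theorem and interior elliptic bootstrapping. No gaps; the self-adjointness computation via the bilinear form $(\cdot,\cdot)_\rho$ and the injectivity of $T$ (so that the eigenfunctions span all of $L^2(\Sigma)$) are handled correctly.
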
 

We point out that  the first eigenvalue $\lambda_1$ is of multiplicity 1. 
We also have the following Fredholm alternative. 

\begin{theorem}\label{thrm-singular-Fredholm}
Let $\Sigma\subsetneq\mathbb{S}^{n-1}$ be a 
Lipschitz domain and $\rho\in C^\infty(\Sigma)\cap \mathrm{Lip}(\bar\Sigma)$ 
be a positive function in $\Sigma$ 
satisfying \eqref{eq-estimate-rho-upper-lower}. 
Assume $\{\lambda_i\}_{i\ge 1}$ and
$\{\phi_i\}_{i\ge 1}$ are sequences of eigenvalues 
and eigenfunctions as in Theorem \ref{thrm-singular-eigenvalue}, 
respectively. 

$\mathrm{(i)}$ For any $\lambda\notin\{\lambda_i\}$ and any $f\in L^2(\Sigma)$, there exists a 
unique weak solution $u\in H^1_0(\Sigma)$ of the equation 
\begin{equation}\label{eq-weak-equation}
Lu+\lambda u=f\quad\text{in }\Sigma.\end{equation} 
Moreover, 
\begin{equation}\label{eq-H1-estimate}\|u\|_{H^1_0(\Sigma)}\le C\|f\|_{L^2(\Sigma)},\end{equation}
where $C$ is a positive constant depending only on $n$, $\lambda$, and $\Sigma$. 

$\mathrm{(ii)}$ For any $\lambda=\lambda_i$ for some $i$ 
and any $f\in L^2(\Sigma)$ with $(f, \phi_k)_{L^2(\Sigma)}=0$ 
for any eigenfunction $\phi_k$ corresponding to the eigenvalue $\lambda_i$, there exists a unique 
weak solution $u\in H^1_0(\Sigma)$ of the equation \eqref{eq-weak-equation}, satisfying 
$(u, \phi_k)_{L^2(\Sigma)}=0$ 
for any eigenfunction $\phi_k$ corresponding to the eigenvalue $\lambda_i$. Moreover, 
\eqref{eq-H1-estimate} holds. 
\end{theorem}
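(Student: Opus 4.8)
The plan is to treat this as a standard application of the spectral theory developed in Theorem \ref{thrm-singular-eigenvalue} combined with the solvability result of Lemma \ref{lemma-singular-existnece-weak}. The key structural observation is that the equation $Lu + \lambda u = f$ can be rewritten as $-Lu = \lambda u - f$, so a weak solution is a fixed point of the operator $u \mapsto (-L)^{-1}(\lambda u - f)$, where $(-L)^{-1}\colon L^2(\Sigma)\to H^1_0(\Sigma)$ is the solution operator furnished by Lemma \ref{lemma-singular-existnece-weak}. Because $H^1_0(\Sigma)$ embeds compactly into $L^2(\Sigma)$ (the Rellich--Kondrachov theorem applies to the bounded Lipschitz domain $\Sigma$), the composition $T = (-L)^{-1}\colon L^2(\Sigma)\to L^2(\Sigma)$ is a compact operator. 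Moreover $T$ is self-adjoint and positive: self-adjointness follows from the symmetry of the bilinear form $(u,v)_\rho$ used in the proof of Lemma \ref{lemma-singular-existnece-weak}, and positivity from $(Tf,f)_{L^2} = (u, -Lu)_{L^2} = \|u\|_\rho^2 \ge 0$. This is exactly the setup of the classical Fredholm alternative for compact self-adjoint operators, and the eigenvalues $\lambda_i$ of Theorem \ref{thrm-singular-eigenvalue} are the reciprocals of the eigenvalues of $T$.

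For part (i), I would argue as follows. Rewriting $Lu+\lambda u = f$ as $u - \lambda T u = -Tf$, i.e. $(I - \lambda T)u = -Tf$, the Fredholm alternative for the compact operator $\lambda T$ says that $I - \lambda T$ is invertible precisely when $1$ is not an eigenvalue of $\lambda T$, equivalently when $\lambda^{-1}$ is not an eigenvalue of $T$, equivalently when $\lambda \notin \{\lambda_i\}$. Under this hypothesis, existence and uniqueness of $u \in H^1_0(\Sigma)$ follow immediately. For the estimate \eqref{eq-H1-estimate}, one route is abstract: $(I-\lambda T)^{-1}$ is a bounded operator on $L^2(\Sigma)$ with norm depending only on the spectral gap $\mathrm{dist}(\lambda^{-1}, \mathrm{spec}(T))$, hence on $\lambda$ and $\Sigma$; combined with the $H^1_0$-bound $\|u\|_{H^1_0} = \|Tg\|_{H^1_0} \le C\|g\|_{L^2}$ from Lemma \ref{lemma-singular-existnece-weak} applied to $g = \lambda u - f$, we get $\|u\|_{H^1_0} \le C(\|u\|_{L^2} + \|f\|_{L^2}) \le C\|f\|_{L^2}$. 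A cleaner self-contained alternative is a contradiction/compactness argument: if \eqref{eq-H1-estimate} failed there would be $f_k$ with $\|f_k\|_{L^2}\to 0$ and $\|u_k\|_{H^1_0}=1$; passing to a subsequence converging weakly in $H^1_0$ and strongly in $L^2$ to some $u_\infty$, one checks $u_\infty$ is a weak solution with $f=0$, hence $u_\infty=0$ by uniqueness, while the equation forces $\|u_k\|_{H^1_0}\to \|u_\infty\|_{\text{something}}$... more precisely the $\rho$-inner-product norm of $u_k$ converges, contradicting $\|u_k\|=1$. Finally, $u \in C^\infty(\Sigma)$ follows from interior elliptic regularity away from $\partial\Sigma$, exactly as in Lemma \ref{lemma-singular-existnece-weak}.

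For part (ii), the resonant case $\lambda = \lambda_i$, I would work in the orthogonal complement. Let $E_i = \mathrm{span}\{\phi_k : L\phi_k = -\lambda_i\phi_k\}$ be the (finite-dimensional, since $T$ is compact) eigenspace, and let $E_i^\perp$ be its $L^2$-orthogonal complement. The hypothesis $(f,\phi_k)_{L^2}=0$ says $f \in E_i^\perp$. Both $E_i$ and $E_i^\perp$ are invariant under $T$ (self-adjointness), and on $E_i^\perp$ the operator $I - \lambda_i T$ is invertible with bounded inverse, because $\lambda_i^{-1}$ is not in the spectrum of the restriction $T|_{E_i^\perp}$ — there is a genuine gap to the rest of the spectrum. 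Solving $(I - \lambda_i T)u = -Tf$ within $E_i^\perp$ gives a solution orthogonal to every $\phi_k \in E_i$; uniqueness within $E_i^\perp$ is clear, and any two solutions of \eqref{eq-weak-equation} differ by an element of $E_i$, so imposing the orthogonality condition $(u,\phi_k)_{L^2}=0$ pins down a unique solution. The estimate \eqref{eq-H1-estimate} follows as in part (i), now using the gap in the spectrum of $T|_{E_i^\perp}$, or again via a compactness/contradiction argument (where one additionally uses that the limiting solution lies in $E_i^\perp \cap \ker(I - \lambda_i T) = \{0\}$). The smoothness $u \in C^\infty(\Sigma)$ is again interior regularity.

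The main obstacle, or rather the point requiring the most care, is not the abstract Fredholm machinery but verifying that the solution operator $T = (-L)^{-1}$ genuinely lands in $H^1_0(\Sigma)$ with the quantitative bound, and that it is compact — both of which hinge on Hardy's inequality and the equivalence of $\|\cdot\|_\rho$ with the $H^1_0$-norm established in Lemma \ref{lemma-singular-existnece-weak}. Once $T\colon L^2 \to L^2$ is known to be compact, self-adjoint, and positive, everything else is the textbook Fredholm alternative. The only genuinely paper-specific subtlety is that the coefficient $\kappa/\rho^2$ is singular at $\partial\Sigma$, so all integrations by parts and the self-adjointness computation must be justified by first working on $\Sigma' \subset\subset \Sigma$ and passing to the limit using the $\rho^{-1}u \in L^2$ control — but this was already handled when setting up the bilinear form, so it should not recur here. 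I would also remark that the multiplicity statement for $\lambda_1$ (noted after Theorem \ref{thrm-singular-eigenvalue}) is not needed for the Fredholm alternative and plays no role in this proof.
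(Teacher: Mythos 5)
Your proposal is correct and follows essentially the route the paper intends: the paper states this theorem without proof, treating it as the standard Fredholm alternative for the compact, self-adjoint, positive solution operator $T=(-L)^{-1}$ furnished by Lemma \ref{lemma-singular-existnece-weak} together with the compact embedding $H_0^1(\Sigma)\hookrightarrow L^2(\Sigma)$, which is exactly your setup. Your treatment of the resonant case via the invariant decomposition $L^2=E_i\oplus E_i^\perp$ and of the estimate \eqref{eq-H1-estimate} via $u=T(\lambda u-f)$ matches the standard argument the paper relies on.
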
 

Next, we study boundary regularity  of eigenfunctions $\phi_i$ in 
Theorem \ref{thrm-singular-eigenvalue} and solutions $u$ 
in Theorem \ref{thrm-singular-Fredholm}. 
In the following, we consider only a special function $\rho$, 
the unique positive solution of \eqref{eq-LN-domain-eq-rho-new}-\eqref{eq-LN-domain-rho-boundary-new},
and focus on the linear operator $L$ defined by \eqref{eq-def-L-Sigma} 
for such  $\rho$. 

%To this end, we need to impose extra assumptions 
%on the regularity of the domain $\Sigma$ and the function $\rho$. 
%The function 
%$\rho$ we are interested in 
%is not arbitrary and is associated with solutions of some nonlinear elliptic equations. 

%We first prove a result concerning boundary properties of eigenfunctions $\phi_i$ in 
%Theorem \ref{thrm-singular-eigenvalue} and solutions $u$ 
%in Theorem \ref{thrm-singular-Fredholm}. 
We prove a more general result for later purposes. 

\begin{theorem}\label{thrm-regularity-boundary-Lip-pre}
Let $\Sigma\subsetneq\mathbb{S}^{n-1}$ be a 
Lipschitz domain, %$\kappa$ be given by \eqref{eq-relation-kappa}, 
and $\rho\in C^\infty(\Sigma)\cap \mathrm{Lip}(\bar\Sigma)$ be the positive solution of 
\eqref{eq-LN-domain-eq-rho-new}-\eqref{eq-LN-domain-rho-boundary-new}. 
Assume that for some $\lambda\in \mathbb R$ and $f\in C^\infty(\Sigma)$, 
$u\in H^1_0(\Sigma)$ is a weak solution of \eqref{eq-weak-equation}. 
Then,  there exists a constant $\nu>0$ depending only on $n$ and $\Sigma$ such that 
if, for some constant $A>0$, 
\begin{equation}\label{eq-assumption-growth-f}
|f|\le A\rho^{\nu-2}\quad\text{in }\Sigma,\end{equation}
then
\begin{equation}\label{eq-estimate-decay-boundary-pre}|u|%+\rho^\tau[u]_{C^\tau(\bar\Sigma)}
\le C\big(\|u\|_{L^2(\Sigma)}+A)\rho^{\nu}\quad\text{in }\Sigma,\end{equation}
where 
$C$ is a positive constant depending only on $n$, $\kappa$, $\lambda$, and $\Sigma$.  
Moreover, $u\in C^\nu(\bar\Sigma)$ if $\nu<1$ and 
$u\in \mathrm{Lip}(\bar\Sigma)$ if $\nu\ge1$.
In particular, let $\phi_i$ be an eigenfunction as in Theorem \ref{thrm-singular-eigenvalue}.  Then, 
\begin{equation}\label{eq-eigenfunctions-Lip}|\phi_i|\le C\rho^{\nu}\quad\text{in }\Sigma,\end{equation}
where $C$ is a positive constant depending only on $n$, $\kappa$, $\lambda_i$, and $\Sigma$.  
For $n=3$, if $\kappa\ge 15/4$, then $\nu$ can be taken to satisfy 
$\nu>1/2$. 
\end{theorem}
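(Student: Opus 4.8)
\textbf{Proof proposal for Theorem \ref{thrm-regularity-boundary-Lip-pre}.}

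The plan is to reduce everything to a one-dimensional ODE analysis near $\partial\Sigma$ via a barrier construction built out of $\rho$ itself, since $d_\Sigma$ is not $C^2$ in the Lipschitz setting. First I would record that by Lemma \ref{lemma-solutions-xi-spherical} and Lemma \ref{lemma-solutions-rho-spherical} the function $\rho$ satisfies $c_1 d\le\rho\le c_2 d$ and is Lipschitz, so $|\nabla_\theta\rho|\le C$ in $\Sigma$, and by the gradient lemmas (Lemma \ref{lemma-n3-gradient}, Lemma \ref{lemma-C1alpha-gradient}, Lemma \ref{lemma-Lip-negative-Laplace}) one has quantitative control $|\nabla_\theta\rho|^2\le M$ for a constant $M$ depending only on $n$ and $\Sigma$ (with $M$ close to $1$ when $\Sigma$ is nicer). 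The key computation is to apply $L$ to a test function of the form $\rho^\gamma$: using $\rho(\Delta_\theta\rho+S\rho)=\tfrac n2(|\nabla_\theta\rho|^2-1)$ from \eqref{eq-LN-domain-eq-rho-new}, one gets
\begin{equation*}
L(\rho^\gamma)=\gamma(\gamma-1)\rho^{\gamma-2}|\nabla_\theta\rho|^2+\gamma\rho^{\gamma-1}\Delta_\theta\rho-\kappa\rho^{\gamma-2}
=\rho^{\gamma-2}\Big[\gamma(\gamma-1)|\nabla_\theta\rho|^2-\tfrac n2\gamma(|\nabla_\theta\rho|^2-1)-\tfrac n2 S\gamma\rho-\kappa\Big].
\end{equation*}
Near $\partial\Sigma$ the $\rho$-linear term is lower order, so the sign of $L(\rho^\gamma)+\lambda\rho^\gamma$ (again, $\lambda\rho^\gamma$ is lower order relative to $\rho^{\gamma-2}$) is governed by the indicial-type polynomial $p(\gamma)=\gamma(\gamma-1)|\nabla_\theta\rho|^2-\tfrac n2\gamma|\nabla_\theta\rho|^2+\tfrac n2\gamma-\kappa$. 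I would define $\nu$ to be the positive root of the corresponding polynomial with $|\nabla_\theta\rho|^2$ replaced by its controlling constant, chosen so that $p(\gamma)<0$ for $\gamma$ slightly less than $\nu$; this $\nu$ depends only on $n$, $\kappa$ (hence only on $n$ and $\Sigma$ since $\kappa=\tfrac14 n(n+2)$ is fixed), and the geometry of $\Sigma$ through the gradient bound.

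Next I would run the comparison argument. On a boundary collar $\{\rho<\delta\}$, the function $w_\pm=C(\|u\|_{L^2}+A)\rho^\nu\pm u$ should be a supersolution of $L+\lambda$ (i.e. $(L+\lambda)w_\pm\le$ the forcing, with the right inequality direction) once $\delta$ is small enough that the lower-order $\rho$-terms and $\lambda\rho^\nu$ are dominated by the strictly negative leading coefficient $p(\nu^-)\rho^{\nu-2}$, and once the constant $C$ is large enough to absorb $|f|\le A\rho^{\nu-2}$ via \eqref{eq-assumption-growth-f}. On the inner boundary $\{\rho=\delta\}$ one controls $u$ by interior estimates: $u\in C^\infty(\Sigma)$ by Lemma \ref{lemma-singular-existnece-weak}, and a standard interior $L^\infty$–$L^2$ bound on $\{\rho\ge\delta/2\}$ gives $|u|\le C(\delta)\|u\|_{L^2}$ there, which is $\le C(\|u\|_{L^2}+A)\delta^\nu$ after adjusting $C$. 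The maximum principle for $L+\lambda$ on the collar — valid because $-(L+\lambda)$ has a strictly positive coefficient $\kappa\rho^{-2}+\tfrac14(n-2)^2-\lambda$ near $\partial\Sigma$ after possibly shrinking $\delta$, or alternatively because one can test against $\rho^\nu$ as a positive supersolution and use the generalized maximum principle of Berestycki–Nirenberg–Varadhan type — then yields $|u|\le C(\|u\|_{L^2}+A)\rho^\nu$ on the collar, and combining with the interior bound gives \eqref{eq-estimate-decay-boundary-pre} on all of $\Sigma$. The Hölder/Lipschitz statement follows: once $|u|\le C\rho^\nu$, rescaled interior Schauder estimates on balls $B_{d(\theta)/2}(\theta)$ (on which the singular coefficient is comparable to $d(\theta)^{-2}$ and bounded after rescaling) give $|\nabla_\theta u|\le C\rho^{\nu-1}$, which integrates to $C^\nu$ regularity up to the boundary when $\nu<1$ and to Lipschitz regularity when $\nu\ge1$. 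Applying this with $\lambda=\lambda_i$, $f=0$, $A=0$ gives \eqref{eq-eigenfunctions-Lip}.

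For the final sentence ($n=3$, $\kappa\ge 15/4\Rightarrow\nu>1/2$): here I would use the \emph{universal} gradient bound $|\nabla_\theta\rho|<2.8$ from Lemma \ref{lemma-n3-gradient}, so $|\nabla_\theta\rho|^2< M_0$ with $M_0=7.84$. Plugging $n=3$ into the indicial polynomial, $p(\gamma)=\gamma(\gamma-1)|\nabla_\theta\rho|^2-\tfrac32\gamma|\nabla_\theta\rho|^2+\tfrac32\gamma-\kappa$, and evaluating the worst case (largest $|\nabla_\theta\rho|^2$) at $\gamma=1/2$ gives $p(1/2)=-\tfrac12\cdot M_0-\tfrac34 M_0+\tfrac34-\kappa=-\tfrac54 M_0+\tfrac34-\kappa$; with $M_0=7.84$ this is $<0$ for every $\kappa>0$, and more importantly the positive root of $\gamma\mapsto\gamma(\gamma-1)M_0-\tfrac32\gamma M_0+\tfrac32\gamma-\kappa$ exceeds $1/2$ precisely when $p(1/2)<0$ uniformly, which holds once $\kappa\ge 15/4$ (the threshold is arranged so that even allowing the $\rho$-linear and $\lambda$ corrections and the non-sharpness of $2.8$, the barrier $\rho^{1/2+\varepsilon}$ still works). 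So one takes $\nu>1/2$ in this regime, and the proof is identical otherwise.

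\textbf{Main obstacle.} The delicate point is that $|\nabla_\theta\rho|$ is only \emph{bounded}, not equal to $1$ on $\partial\Sigma$ (unlike the $C^{1,\alpha}$ case of Lemma \ref{lemma-C1alpha-gradient}), so the indicial equation has variable coefficients and $\nu$ cannot be read off cleanly; one must choose $\nu$ conservatively from the gradient bound and accept a possibly small exponent, and then separately verify that this same small $\nu$ is still enough to close the comparison (the lower-order $\lambda u$ and $\tfrac14(n-2)^2 u$ terms must genuinely be lower order, which forces the collar width $\delta$ to be chosen depending on $\lambda$). Establishing the maximum principle on the collar for possibly large $\lambda$ — where $L+\lambda$ need not have a sign — is the technical heart: the resolution is that the singular term $\kappa\rho^{-2}$ dominates near $\partial\Sigma$, restoring the maximum principle on a sufficiently thin collar regardless of $\lambda$.
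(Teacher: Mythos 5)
Your overall strategy is the same as the paper's: compute $L_\lambda\rho^\gamma$ using the equation \eqref{eq-LN-domain-eq-rho-new} to eliminate $\Delta_\theta\rho$, choose $\nu$ from the resulting indicial-type inequality and the gradient bound on $\rho$, compare $u$ with $M\rho^\nu$ on a thin collar, control the inner boundary by interior $L^\infty$ estimates, and get H\"older/Lipschitz regularity from rescaled interior estimates on $B_{d(\theta)/2}(\theta)$. However, there are two concrete problems.

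First, your substitution of $\Delta_\theta\rho$ carries a sign error. From \eqref{eq-LN-domain-eq-rho-new} one has $\rho\Delta_\theta\rho=\tfrac n2(|\nabla_\theta\rho|^2-1)-S\rho^2$, so
\begin{equation*}
L\rho^\gamma=\rho^{\gamma-2}\Big[\gamma\big(\gamma-1+\tfrac n2\big)|\nabla_\theta\rho|^2-\kappa-\tfrac n2\gamma\Big]-S\gamma\rho^{\gamma},
\end{equation*}
i.e.\ the term $\tfrac n2\gamma(|\nabla_\theta\rho|^2-1)$ enters with a plus sign, whereas you wrote it with a minus sign. This matters: with the correct sign the coefficient of $|\nabla_\theta\rho|^2$ is $\gamma(\gamma-1+\tfrac n2)>0$, so the relevant quantity is the \emph{upper} bound on $|\nabla_\theta\rho|$, and the admissibility condition on $\nu$ is exactly \eqref{eq-optimal-requirement-nu}. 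In particular, for $n=3$ the condition at $\nu=1/2$ reads $(2.8)^2\cdot\tfrac12\cdot 1-\tfrac34=3.17<\kappa$, and this is precisely where the hypothesis $\kappa\ge 15/4=3.75$ is used. Your (erroneous) polynomial gives $p(1/2)<0$ for \emph{every} $\kappa>0$ — as you yourself noticed — which should have been a warning sign: it makes the hypothesis $\kappa\ge 15/4$ look superfluous, and your verification of the final sentence of the theorem (the lower bound $\nu>1/2$ for $n=3$), which is the part of the statement that later drives Corollary \ref{cor-F(v)-integrability}, does not stand as written. The conclusion is recoverable once the sign is fixed, but the computation must be redone.

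Second, the comparison step on the collar is not justified for the class of solutions in the statement. The solution $u$ is only an $H_0^1(\Sigma)$ weak solution; it is not known a priori to be continuous up to $\partial\Sigma$ or to vanish pointwise there, so the pointwise inequality $u\le w$ on the outer boundary piece $\partial\Sigma$ of the collar, which your maximum-principle argument needs, is not available. Moreover $\rho^\nu$ need not belong to $H^1$ when $\nu$ is small, so one cannot simply test the inequality $L_\lambda(u-w)\ge 0$ against $(u-w)^+$ either. Your appeal to a generalized (Berestycki--Nirenberg--Varadhan type) maximum principle addresses a different difficulty — the sign of the zeroth-order coefficient, which is not the obstruction here since $\kappa\rho^{-2}$ dominates $\lambda$ on a thin collar, as you note. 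The paper closes this gap by an integral argument: it multiplies $L_\lambda(u-w)\ge 0$ by $[(u-w)^+]^{2p-1}$ with $p$ chosen large enough that $[(u-w)^+]^{p}\in H_0^1(\Sigma_0)$, integrates by parts, and concludes $(u-w)^+\equiv 0$ from the resulting nonnegative-definite identity. Some device of this kind (or a separate proof that $u$ is continuous up to $\partial\Sigma$ with zero boundary values) is needed before the barrier can be brought to bear; as written, this step is a genuine gap.
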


\begin{proof} We write \eqref{eq-weak-equation} as 
\begin{equation}\label{eq-linear-operator-lambda} 
L_\lambda u\equiv\Delta_\theta u-\frac{1}{\rho^2}(\kappa-\lambda\rho^2)u=f\quad\text{in }\Sigma.\end{equation}
Take a small positive constant $\rho_0$ such that 
$\kappa>\lambda\rho_0^2$  and set 
$$\Sigma_0\equiv\{0<\rho<\rho_0\}.$$ 
For some domain $\Sigma'$ with $\Sigma\setminus\Sigma_0\subset\subset \Sigma'\subset\subset\Sigma$,
by the interior $L^\infty$-estimates, we have
\begin{equation}\label{eq-interior-estimate-pre}
\sup_{\Sigma\setminus\Sigma_0}|u|\le C\big\{\|u\|_{L^2(\Sigma')}+\|f\|_{L^\infty(\Sigma')}\big\}
\le C\big\{\|u\|_{L^2(\Sigma)}+A\big\},\end{equation}
where $C$ is a positive constant depending only on $n$, $\kappa$, $\lambda$, 
$\Sigma_0$, $\Sigma'$, and $\Sigma$.  

Take any positive constant $\nu$. By a straightforward computation 
and \eqref{eq-LN-domain-eq-rho-new}, we have 
\begin{align*}
L_\lambda\rho^\nu&=\rho^{\nu-2}\big[\nu(\nu-1)|\nabla_\theta\rho|^2-\kappa
+\nu\rho\Delta_\theta\rho+\lambda\rho^2\big]\\
&=\rho^{\nu-2}\Big[\nu\big(\nu-1+\frac{n}{2}\big)|\nabla_\theta\rho|^2-\kappa-\frac12{n\nu}
+(\lambda-\nu S)\rho^2\Big].
\end{align*}
Note that the coefficient of $|\nabla_\theta\rho|^2$ is positive. 
Since $\rho$ is Lipschitz in $\Sigma$, we fix a positive constant $\nu$ such that 
\begin{equation}\label{eq-optimal-requirement-nu}
\nu\big(\nu-1+\frac12n\big)\sup_{\{0<\rho<\rho_0\}}|\nabla_\theta\rho|^2
-\frac12n\nu<\kappa.\end{equation}
By taking $\rho_0$ sufficiently small, we have 
\begin{align*}
L_\lambda\rho^\nu\le-c_0\kappa\rho^{\nu-2}\quad\text{in }\Sigma_0,\end{align*}
for some small positive constant $c_0$. 
In the following, we take 
$$w=M\rho^\nu.$$
For $M$ sufficiently large, we have 
$$L_\lambda w\le L_\lambda u\quad\text{in }\Sigma_0.$$
By \eqref{eq-interior-estimate-pre}, 
we choose $M>0$ large further such that 
$u<w$ on $\partial\Sigma_0\cap\Sigma$.

We first discuss the case that $u$ is continuous in $\bar\Sigma$ with $u=0$ on $\partial\Sigma$. 
Note that $u=w=0$ on $\partial\Sigma$ and $u<w$ on $\partial\Sigma_0\cap\Sigma$. 
By the maximum principle, we have $u\le w$ in $\Sigma_0$. Similarly, we have 
$u\ge -w$ in $\Sigma_0$, and hence $|u|\le w$ in $\Sigma_0$. 
We obtain the desired estimate of $u$ by combining with \eqref{eq-interior-estimate-pre}. 
We note that $M=C\big(\|u\|_{L^2(\Sigma)}+A)$ for sufficiently large $C$. 

We now consider the general case that $u\in H_0^1(\Sigma)$. We note that 
$\rho^\nu$ may not be in $H^1(\Omega)$ if $\nu$ is small. 
%In fact, $\rho^\nu$ is only $W^{1,q}$ for some $q>1$ close to 1 if $\nu$ is small. 
We take $p$ large so that $[(u-w)^+]^p\in H^1_0(\Sigma_0)$. 
By multiplying $L_\lambda(u-w)\ge 0$ by $[(u-w)^+]^{2p-1}$ and integrating by parts, we have 
\begin{equation*}%\label{eq-subsolution-pre}
\int_{\Sigma_0}\Big(\frac{2p-1}{p^2}\big|\nabla_\theta[(u-w)^+]^{p}\big|^2
+\rho^{-2}(\kappa-\lambda\rho^2) [(u-w)^+]^{2p}\Big)d\theta\le 0.\end{equation*}
We point out that all terms in the above integral makes sense. 
Then, $(u-w)^+=0$ in $\Sigma_0$, and hence $u\le w$ in $\Sigma_0$. Similarly, we have 
$u\ge -w$ in $\Sigma_0$, and hence $|u|\le w$ in $\Sigma_0$. 
We obtain the desired estimate of $u$ by combining with \eqref{eq-interior-estimate-pre}. 

For any $\theta\in \Sigma$, consider $B_{d(\theta)/2}(\theta)\subset \Sigma$. 
By the interior $C^{1}$-estimate, we have, for any $\alpha\in (0,1)$ and with $d=d(\theta)$, 
\begin{align*}d^{\alpha}[\rho]_{C^\alpha(B_{d/4}(\theta))}
+d|\nabla_\theta\rho(\theta)|
\le C\big\{|\rho|_{L^\infty(B_{d/2}(\theta))}
+d^2|f|_{L^\infty(B_{d/2}(\theta))}\big\}\le Cd^{\nu}.\end{align*}
If $\nu\in (0,1)$, we take $\alpha=\nu$ and get $[\rho]_{C^\nu(B_{d(\theta)/4}(\theta))}\le C$. Hence, 
$\rho\in C^\nu(\bar\Sigma)$, with the help of \eqref{eq-estimate-decay-boundary-pre}. 
If $\nu\ge 1$, we have $|\nabla_\theta\rho|\le C$. 
Hence, $\rho\in\mathrm{Lip}(\Sigma)$.

For $n=3$, in view of Lemma \ref{lemma-n3-gradient}, \eqref{eq-optimal-requirement-nu} reduces to 
\begin{equation}\label{eq-optimal-requirement-nu-n3}
(2.8)^2\nu\big(\nu+\frac12\big)
-\frac32\nu<\kappa.\end{equation}
If $\kappa\ge15/4$, we can take
some $\nu>1/2$ satisfying \eqref{eq-optimal-requirement-nu-n3}. 
\end{proof} 

We now make several remarks. 
%\begin{remark}\label{rmk-domain-regular}
First, if $\Sigma$ is a $C^2$-domain and $\rho\in C^2(\bar\Sigma)$, 
it is not necessary to assume that 
$\rho$ is a solution of 
\eqref{eq-LN-domain-eq-rho-new}-\eqref{eq-LN-domain-rho-boundary-new}, 
since there is no need to substitute 
$\Delta_\theta \rho$, which is already bounded. 
Second, the function $f$ in Theorem \ref{thrm-regularity-boundary-Lip-pre} 
may not be bounded nor $L^2$ in $\Sigma$. This is clear from the assumption \eqref{eq-assumption-growth-f}, 
if $\nu<2$. It is important for later applications that the estimate \eqref{eq-estimate-decay-boundary-pre}
does not depend on the integrals of $f$. 
Third, the assertion $\nu>1/2$ for $n=3$ plays an important role and allows us to improve 
the integrability of some singular terms to desired levels. Refer to Corollary \ref{cor-F(v)-integrability}.
%\end{remark}

In Theorem \ref{thrm-regularity-boundary-Lip-pre}, we proved that weak solutions 
are continuous up to boundary. In the next result, we prove the converse; namely, solutions 
continuous up to boundary are weak solutions. 

\begin{cor}\label{cor-regularity-boundary-Lip-weak-pre}
Let $\Sigma\subsetneq\mathbb{S}^{n-1}$ be a 
Lipschitz domain, %$\kappa$ be given by \eqref{eq-relation-kappa}, 
and $\rho\in C^\infty(\Sigma)\cap \mathrm{Lip}(\bar\Sigma)$ be the positive solution of 
\eqref{eq-LN-domain-eq-rho-new}-\eqref{eq-LN-domain-rho-boundary-new}. 
Assume that for some  $f\in C^\infty(\Sigma)\cap L^2(\Sigma)$ satisfying 
\eqref{eq-assumption-growth-f}, 
$u\in C(\bar\Sigma)\cap C^\infty(\Sigma)$ is a solution of $-Lu=f$ in $\Sigma$ 
and $u=0$ on $\partial\Sigma$. Then, $u\in H_0^1(\Sigma)$. 
\end{cor}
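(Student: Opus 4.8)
The plan is to avoid estimating $u$ near $\partial\Sigma$ directly and instead identify $u$ with the canonical weak solution furnished by the existence theory, using a maximum principle. The delicate integrability issues near the Lipschitz boundary have already been packaged into Theorem~\ref{thrm-regularity-boundary-Lip-pre}, so no new such work is needed here.

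First, since $f\in L^2(\Sigma)$, Lemma~\ref{lemma-singular-existnece-weak} provides a (unique) weak solution $\tilde u\in H^1_0(\Sigma)\cap C^\infty(\Sigma)$ of $-L\tilde u=f$ in $\Sigma$, where $L$ is the operator in \eqref{eq-def-L-Sigma} (equivalently, $\tilde u$ solves $L\tilde u=-f$). Because $f\in C^\infty(\Sigma)$ obeys the growth bound \eqref{eq-assumption-growth-f}, I would apply Theorem~\ref{thrm-regularity-boundary-Lip-pre} to $\tilde u$ with $\lambda=0$ (viewing $-L\tilde u=f$ as \eqref{eq-weak-equation} with $\lambda=0$ and $f$ replaced by $-f$, which still satisfies all hypotheses). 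This gives $|\tilde u|\le C(\|\tilde u\|_{L^2(\Sigma)}+A)\rho^{\nu}$ in $\Sigma$, so $\tilde u$ extends continuously to $\bar\Sigma$ with $\tilde u=0$ on $\partial\Sigma$.

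Next, set $v=u-\tilde u$. Both $u$ and $\tilde u$ lie in $C^\infty(\Sigma)\cap C(\bar\Sigma)$, vanish on $\partial\Sigma$, and solve $\Delta_\theta w-\kappa\rho^{-2}w=-f$ classically in $\Sigma$ (for $\tilde u$ this is interior elliptic regularity, valid since the coefficients of $L$ are smooth wherever $\rho>0$). Subtracting, $v\in C^\infty(\Sigma)\cap C(\bar\Sigma)$, $v=0$ on $\partial\Sigma$, and $\Delta_\theta v-\kappa\rho^{-2}v=0$ in $\Sigma$. Now run the maximum principle: since $\kappa\rho^{-2}>0$ in $\Sigma$ and $\bar\Sigma$ is compact, $v$ attains $\max_{\bar\Sigma}v$; were this value positive it would be attained at an interior $\theta_0\in\Sigma$ (as $v=0$ on $\partial\Sigma$), where $\Delta_\theta v(\theta_0)\le 0$ yet $\Delta_\theta v(\theta_0)=\kappa\rho^{-2}(\theta_0)\,v(\theta_0)>0$, a contradiction; hence $v\le 0$, and the same argument applied to $-v$ gives $v\ge 0$. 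Thus $v\equiv 0$, i.e.\ $u=\tilde u\in H^1_0(\Sigma)$.

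The only point of real content is the use of Theorem~\ref{thrm-regularity-boundary-Lip-pre} to know that the $H^1_0$ weak solution $\tilde u$ is continuous up to $\partial\Sigma$ and vanishes there; this is where the boundary analysis specific to Lipschitz $\Sigma$ with a singular zeroth-order coefficient is invoked. Everything else is the standard maximum principle for operators with a zeroth-order coefficient of the favorable sign, which applies verbatim because $\kappa>0$. One should only remark in passing that $u\in C(\bar\Sigma)$ forces $\|u\|_{L^2(\Sigma)}<\infty$, so all quantities above are finite.
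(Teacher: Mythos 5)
Your proposal is correct and follows essentially the same route as the paper: produce the weak solution from Lemma \ref{lemma-singular-existnece-weak}, use Theorem \ref{thrm-regularity-boundary-Lip-pre} to see it is continuous up to $\partial\Sigma$ and vanishes there, and then identify it with $u$ by the maximum principle (your explicit interior-maximum argument with the favorable sign of $\kappa\rho^{-2}$ is just the detail the paper leaves implicit).
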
 

\begin{proof} By Lemma \ref{lemma-singular-existnece-weak}, 
there exists a unique weak solution $w\in H^1_0(\Sigma)$ 
of $Lw=-f$. By Theorem \ref{thrm-regularity-boundary-Lip-pre}, 
we have $w\in C(\bar\Sigma)$ with $w=0$ on $\partial\Sigma$. 
Then, the maximum principle implies that $u=w$ and hence $u\in H_0^1(\Sigma)$. 
\end{proof} 

In Corollary \ref{cor-regularity-boundary-Lip-weak-pre}, $f$ is not necessarily bounded in $\Sigma$. 
%Hence, both Lemma \ref{lemma-singular-existnece-weak}
%and Theorem \ref{thrm-regularity-boundary-Lip-pre} hold. 
%In the rest of this section, we discuss the Dirichlet problem for the operator $-L$ in the classical sense. 
%For a function $h$ defined in $\Sigma$, consider 
%\begin{align} Lu&=\frac{h}{\rho^2}\quad\text{in }\Sigma,\\
%u&=0\quad\text{on }\partial\Sigma.\end{align}

\smallskip 

%In the rest of this section, we discuss optimal regularity of solutions. 
%As in Remark \ref{rmk-domain-regular}(iii), the central issue is to estimate the gradient $\nabla_\theta\rho$
%near boundary. %In the next result, we assume that $\rho$ is the positive solution of 
%\eqref{eq-LN-domain-eq-rho-new}-\eqref{eq-LN-domain-rho-boundary-new}. 
%If $\Sigma$ is not $C^2$, we need an extra assumption concerning the gradient of $\rho$. 

Next, we improve Theorem \ref{thrm-regularity-boundary-Lip-pre}. 
The constant $\nu$ in \eqref{eq-optimal-requirement-nu} is small in general, even with an explicit lower bound 
for $n=3$.
In order to improve $\nu$, we need to get 
a more precise estimate of $|\nabla_\theta\rho|^2$ near the boundary $\partial\Sigma$. 
If $|\nabla_\theta\rho|^2$ is close to $1$ near $\partial\Sigma$, 
then we can take $\nu$ such that
$$\nu(\nu-1)<\kappa.$$
The corresponding equality will provide the optimal $\nu$. 
%We will study cases that the optimal $\nu$ holds later in this section. 

\begin{theorem}\label{thrm-regularity-boundary-Lip}
Let $\Sigma\subsetneq\mathbb{S}^{n-1}$ be a 
Lipschitz domain, %$\kappa$ be given by \eqref{eq-relation-kappa}, 
and $\rho\in C^\infty(\Sigma)\cap \mathrm{Lip}(\bar\Sigma)$ be the positive solution of 
\eqref{eq-LN-domain-eq-rho-new}-\eqref{eq-LN-domain-rho-boundary-new} satisfying
\begin{equation}\label{eq-condition-gradient}|\nabla_\theta \rho|\le 1+c_0\rho^\alpha\quad\text{in }\Sigma,
\end{equation}
for some constants $c_0\ge 0$ and $\alpha\in (0,1]$. 
Assume that $s$ is the positive constant satisfying 
\begin{equation}\label{eq-definition-s}s(s-1)=\kappa,\end{equation} and that, 
for some $\lambda\in \mathbb R$ and $f\in C^\infty(\Sigma)$, 
$u\in H^1_0(\Sigma)$ is a weak solution of \eqref{eq-weak-equation}. 
If, for some $A>0$ and $a>0$ with $a\neq s$, 
$$|f|\le A\rho^{a-2}\quad\text{in }\Sigma,$$
then 
\begin{equation}\label{eq-estimate-decay-boundary}|u|+\rho|\nabla_\theta u|\le 
C\big(\|u\|_{L^2(\Sigma)}+A\big)\rho^{b}\quad\text{in }\Sigma,\end{equation}
where $b=\min\{a, s\}$, and 
$C$ is a positive constant depending only on $n$, $\alpha$, $c_0$, $\kappa$, $a$, $\lambda$, and $\Sigma$.  
Moreover, $u\in C^b(\bar\Sigma)$ if $b<1$ and $u\in \mathrm{Lip}(\bar\Sigma)$ if $b\ge 1$. 
In particular, let $\phi_i$ be an eigenfunction as in Theorem \ref{thrm-singular-eigenvalue}.  Then, 
$\phi_i\in \mathrm{Lip}(\bar\Sigma)$ and 
$$|\phi_i|+\rho|\nabla_\theta\phi_i|\le C\rho^{s}\quad\text{in }\Sigma,$$
where $C$ is a positive constant depending only on $n$, $\kappa$, $\alpha$, $c_0$, $\lambda_i$, and $\Sigma$.  
\end{theorem}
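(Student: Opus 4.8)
The plan is to refine the barrier argument in Theorem \ref{thrm-regularity-boundary-Lip-pre}, exploiting the sharper gradient bound \eqref{eq-condition-gradient} to push the admissible exponent up to the optimal value determined by \eqref{eq-definition-s}. First I would recompute $L_\lambda \rho^\nu$ using \eqref{eq-LN-domain-eq-rho-new}, exactly as in the previous proof, obtaining
\[
L_\lambda\rho^\nu=\rho^{\nu-2}\Big[\nu\big(\nu-1+\tfrac n2\big)|\nabla_\theta\rho|^2-\kappa-\tfrac12 n\nu+(\lambda-\nu S)\rho^2\Big].
\]
Now, instead of bounding $|\nabla_\theta\rho|^2$ by a constant near $\partial\Sigma$, I would substitute \eqref{eq-condition-gradient}, which gives $|\nabla_\theta\rho|^2\le 1+2c_0\rho^\alpha+c_0^2\rho^{2\alpha}=1+O(\rho^\alpha)$. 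The leading term of the bracket then becomes $\nu(\nu-1+\tfrac n2)-\kappa-\tfrac12 n\nu+O(\rho^{\min\{\alpha,2\}})=\nu(\nu-1)-\kappa+O(\rho^{\min\{\alpha,2\}})$. Choosing $\nu=b=\min\{a,s\}$, if $b<s$ the zeroth-order coefficient $\nu(\nu-1)-\kappa<0$ is strictly negative, so for $\rho_0$ small $L_\lambda\rho^b\le -c_0'\rho^{b-2}$ and the argument from Theorem \ref{thrm-regularity-boundary-Lip-pre} applies verbatim with the comparison function $w=M\rho^b$ (using $p$-th powers and integration by parts in the weak case, since $\rho^b$ need not lie in $H^1$).

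The borderline case $b=s$, i.e.\ when $a\ge s$, is more delicate because the leading coefficient $s(s-1)-\kappa$ vanishes by \eqref{eq-definition-s}. Here I would use a corrected barrier of the form $w=M(\rho^s+\rho^{s+\delta})$ for a small $\delta\in(0,\alpha)$ with $s+\delta< $ the next root: the extra term $\rho^{s+\delta}$ contributes $(s+\delta)(s+\delta-1)-\kappa=\delta(2s-1)+\delta^2>0$ times $\rho^{s+\delta-2}$ with a \emph{positive} sign, which unfortunately points the wrong way, so instead I would subtract, taking $w=M(\rho^s-\varepsilon\rho^{s+\delta})$ — wait, that destroys positivity near $\partial\Sigma$ is not an issue since near the boundary $\rho$ is tiny and $\rho^s$ dominates. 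More cleanly: I would absorb the $O(\rho^\alpha)$ error from the gradient bound together with the $\rho^2$ term from $\lambda$ into a term $C\rho^{s+\min\{\alpha,2\}}$ on the right side, and use the barrier $w=M\rho^s$ directly, noting that for the comparison $u\le w$ we only need $L_\lambda(u-w)\ge 0$; the error $L_\lambda\rho^s=O(\rho^{s-2+\min\{\alpha,2\}})=o(\rho^{s-2})$ is lower order, but has indefinite sign. To handle the sign I would instead use the two-sided barrier $w_\pm=M\rho^s\pm N\rho^{s+\delta}$ with $\delta=\min\{\alpha,2,a-s\}$ chosen small enough that $\delta(2s-1+\delta)>0$ dominates, giving $L_\lambda w_+\le -c\rho^{s+\delta-2}+(\text{error }O(\rho^{s-2+\delta}))$, and for $N$ large relative to $M$ the sign is controlled; symmetrically for $u\ge -w_+$.

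For the gradient estimate $\rho|\nabla_\theta u|\le C(\|u\|_{L^2}+A)\rho^b$, I would run the interior $C^1$-estimate exactly as at the end of the proof of Theorem \ref{thrm-regularity-boundary-Lip-pre}: for $\theta\in\Sigma$ apply the estimate on $B_{d(\theta)/2}(\theta)$, using $|u|_{L^\infty(B_{d/2}(\theta))}\le Cd^b$ from the just-proven pointwise bound and $|f|_{L^\infty(B_{d/2}(\theta))}\le Ad^{a-2}\le Cd^{b-2}$, to get $d|\nabla_\theta u(\theta)|\le C d^b$, which is the claim since $d\simeq\rho$. The regularity conclusions ($u\in C^b$ if $b<1$, Lipschitz if $b\ge 1$) follow as before from the interior $C^\alpha$-estimate on the gradient. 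The eigenfunction statement is the special case $f=-\lambda_i\phi_i\in L^2$, which after one bootstrap using $|\phi_i|\le C\rho^{b}$ for any $b<\min\{2,s\}$ feeds back as $|f|\le C\rho^{b}\le C\rho^{s-2}$ once $b\ge s-2$ (automatic when $s\le 2$, and obtained in finitely many steps otherwise), yielding $a$ effectively $\ge s$ and hence $|\phi_i|\le C\rho^s$. The main obstacle I anticipate is precisely the borderline case $b=s$: making the two-term barrier argument rigorous in the weak ($H^1_0$) setting, where one cannot simply invoke the classical maximum principle and must instead test $L_\lambda(u-w_+)\ge 0$ against $[(u-w_+)^+]^{2p-1}$ and verify every integral converges — this requires $p$ large enough that $[(u-w_+)^+]^p\in H^1_0(\Sigma_0)$ and careful tracking that the $\rho^{-2}$-weighted term retains its favorable sign.
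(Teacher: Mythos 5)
Your argument is in substance the same as the paper's: for $a<s$ the barrier $M\rho^a$ works because \eqref{eq-condition-gradient} reduces the leading coefficient to $a(a-1)-\kappa<0$ up to an $O(\rho^{\alpha})$ error, and for $a>s$ one corrects $\rho^s$ by a higher power whose contribution $-[\tau(\tau-1)-\kappa]\rho^{\tau-2}<0$ dominates that error; the gradient bound and the regularity conclusions then follow from interior $C^1$-estimates, exactly as in the paper. Two remarks. First, your final displayed barrier has the sign backwards: with $w_+=M\rho^s+N\rho^{s+\delta}$ one gets $L_\lambda w_+\ge c\,\rho^{s+\delta-2}>0$, not $\le -c\,\rho^{s+\delta-2}$; the correct choice is the \emph{subtracted} barrier $M\rho^s-N\rho^{s+\delta}$ (equivalently the paper's $w=M(\rho^s-\rho^\tau)$ with $\tau\in(s,s+\alpha)$, $\tau\le a$), which you had in fact identified earlier in the paragraph before switching notation, and whose positivity on $\Sigma_0$ is restored by shrinking $\rho_0$ after fixing $N/M$. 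Second, the obstacle you anticipate about running the comparison in the weak $H^1_0$ setting does not arise: Theorem \ref{thrm-regularity-boundary-Lip-pre} already gives $u\in C^\nu(\bar\Sigma)$ with $u=0$ on $\partial\Sigma$, so the classical maximum principle applies directly, which is precisely how the paper proceeds (your $p$-th power test-function route would also work, but is not needed). Finally, the eigenfunction statement is obtained more directly by taking $\lambda=\lambda_i$ and $f=0$ in \eqref{eq-weak-equation}, so that any $a>s$ is admissible and $b=s$ at once, rather than through your bootstrap with $f=-\lambda_i\phi_i$, though that iteration also terminates correctly.
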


\begin{proof} 
We will prove the estimate of $u$ itself in \eqref{eq-estimate-decay-boundary}. 
The estimate of the gradient $\nabla_\theta u$ follows from the interior $C^{1}$-estimate, as 
in the proof of Theorem \ref{thrm-regularity-boundary-Lip-pre}. 
Let $L_\lambda$ be the operator in \eqref{eq-linear-operator-lambda}. 
By Theorem \ref{thrm-regularity-boundary-Lip-pre}, $u\in C^\nu(\bar\Sigma)$ with $u=0$ on $\partial\Sigma$, 
for some $\nu>0$. 

We modify the proof of Theorem \ref{thrm-regularity-boundary-Lip-pre} 
and construct appropriate supersolutions. 
Take any constant $a$. By a straightforward computation and \eqref{eq-LN-domain-eq-rho-new}, we have 
\begin{align*}
L_\lambda\rho^a%&=\rho^{a-2}\big[a(a-1)|\nabla_\theta\rho|^2-\kappa+a\rho\Delta_\theta\rho+\lambda\rho^2\big]\\
%&
=\rho^{a-2}\Big[a\big(a-1+\frac{n}{2}\big)|\nabla_\theta\rho|^2-\kappa-\frac{na}{2}
+(\lambda-aS)\rho^2\Big].
\end{align*}
Note that the coefficient of $|\nabla_\theta\rho|^2$ is positive. 
By  \eqref{eq-condition-gradient}, we get 
\begin{align*}
L_\lambda\rho^a
&\le\rho^{a-2}\Big[a\big(a-1+\frac{n}{2}\big)(1+c_0\rho^\alpha)-\kappa-\frac{na}{2}+(\lambda-aS)\rho^2\Big]\\
&=\rho^{a-2}\Big[a(a-1)-\kappa+c_0a\big(a-1+\frac{n}{2}\big)\rho^\alpha+(\lambda-aS)\rho^2\Big].
\end{align*}
If $a<s$, then $a(a-1)<\kappa$. By taking $\rho_0$ sufficiently small, we have 
\begin{align*}
L_\lambda\rho^a\le-\frac12\big(\kappa-a(a-1)\big)\rho^{a-2}\quad\text{in }\Sigma_0.\end{align*}
For $a>s$, we take some fixed $\tau>s$. Then, 
\begin{align*}
L_\lambda(\rho^s-\rho^\tau)
%&=\rho^{s-2}\Big[s\big(s-1+\frac{n}{2}\big)|\nabla_\theta\rho|^2-\kappa-\frac{ns}{2}
%+(\lambda-sS)\rho^2\Big]\\
%&\qquad -\rho^{\tau-2}\Big[\tau\big(\tau-1+\frac{n}{2}\big)|\nabla_\theta\rho|^2-\kappa-\frac{n\tau}{2}
%+(\lambda-\tau S)\rho^2\Big]\\
&=\rho^{s-2}\Big[s\big(s-1+\frac{n}{2}\big)-\tau\rho^{\tau-s}\big(\tau-1+\frac{n}{2}\big)\Big]|\nabla_\theta\rho|^2\\
&\qquad+\rho^{s-2}\Big[-\kappa-\frac{ns}{2}+(\lambda-sS)\rho^2\Big]
-\rho^{\tau-2}\Big[-\kappa-\frac{n\tau}{2}+(\lambda-\tau S)\rho^2\Big].
\end{align*}
For $\rho$ sufficiently small, the coefficient of $|\nabla_\theta \rho|^2$ is nonnegative. 
By replacing $|\nabla_\theta \rho|^2$ 
by its upper bound $1+c_0\rho^\alpha$ due to \eqref{eq-condition-gradient}, we have 
\begin{align*}
L_\lambda(\rho^s-\rho^\tau)&\le 
%\rho^{s-2}\Big[s\big(s-1+\frac{n}{2}\big)-\tau\rho^{\tau-s}\big(\tau-1+\frac{n}{2}\big)\Big](1+c_0\rho^\alpha)\\
%&\qquad+\rho^{s-2}\Big[-\kappa-\frac{ns}{2}+\lambda\rho^2\Big]
%-\rho^{\tau-2}\Big[-\kappa-\frac{n\tau}{2}+\lambda\rho^2\Big]\\
%&= \rho^{s-2}\Big[s(s-1)-\kappa+\big(\lambda-\frac12s(n-2)\big)\rho^2\Big]\\
%&\qquad-\rho^{\tau-2}\Big[\tau(\tau-1)-\kappa+\big(\lambda-\frac12\tau(n-2)\big)\rho^2\Big]\\
%&\qquad +c_0\rho^{s-2+\alpha}\Big[\big(s(s-1)+\frac{ns}{2}\big)-\rho^{\tau-s}\big(\tau(\tau-1)+\frac{n\tau}{2}\big)\Big]\\
%&=
\rho^{\tau-2}\Big[-(\tau(\tau-1)-\kappa)
+(\lambda-s S)\rho^{s+2-\tau}-(\lambda-\tau S)\rho^2\\
&\qquad +c_0\rho^{s+\alpha-\tau}\big[\big(s(s-1)+\frac{ns}{2}\big)-\rho^{\tau-s}\big(\tau(\tau-1)+\frac{n\tau}{2}\big)\big]\Big],
\end{align*}
where we used $s(s-1)=\kappa$. 
Take $\tau\in (s, s+\alpha)$ with $\tau\le a$. 
Since $\tau>s$, we have $\tau(\tau-1)>\kappa$. 
(The requirement $a\ge \tau>s$ prohibits $a=s$!) 
By taking $\rho_0$ sufficiently small, we have 
$$L_\lambda(\rho^s-\rho^\tau)\le -\frac12\big(\tau(\tau-1)-\kappa\big)\rho^{a-2}\quad\text{in }\Sigma_0.$$

In the following, we take, for $a\in (0,s)$, 
$$w=M\rho^a,$$
and, for $a>s$, 
$$w=M(\rho^s-\rho^\tau).$$
For $M$ sufficiently large, we have 
$$L_\lambda w\le -A\rho^{a-2}\le L_\lambda(\pm u)\quad\text{in }\Sigma_0.$$
By taking $\rho_0$ small further, we assume 
that $w>0$ on $\partial\Sigma_0\cap\Sigma$. By \eqref{eq-interior-estimate-pre}, 
we choose $M>0$ large further such that 
$u<w$ on $\partial\Sigma_0\cap\Sigma$. 
Note that $u=w=0$ on $\partial\Sigma$. 
%In fact, $(u-w)^+$ is Lipschitz near $\partial\Sigma_0\cap \Sigma$ with $(u-w)^+=0$ on $\partial\Sigma_0\cap\Sigma$. 
By the maximum principle, we get 
$\pm u\le w$ in $\Sigma_0$, and hence $|u|\le w$ in $\Sigma_0$. 
We obtain the desired estimate of $u$ by combining with \eqref{eq-interior-estimate-pre}. 
\end{proof} 

By Lemma \ref{lemma-C1alpha-gradient} and Lemma \ref{lemma-Lip-negative-Laplace}, 
we conclude that Theorem \ref{thrm-regularity-boundary-Lip} holds for domains 
as in Lemma \ref{lemma-C1alpha-gradient} and Lemma \ref{lemma-Lip-negative-Laplace}. 
We point out that the power $b=\min\{a,s\}$ in \eqref{eq-estimate-decay-boundary} is optimal for both cases $a<s$
and $a>s$. 

%\begin{remark}\label{rmk-domain-singular}

%In the rest of paper, we always assume \eqref{eq-condition-gradient}
%for the solution $\rho$ of \eqref{eq-LN-domain-eq-rho-new}-\eqref{eq-LN-domain-rho-boundary-new}. 
%\end{remark}

\section{Estimates near Cylinderical Boundaries}\label{sec-boundary-regularity} 

In this section, we discuss the regularity of solutions of the Yamabe equation on cylinders and 
focus on the regularity near the boundary. 

Let $\Sigma\subsetneq\mathbb{S}^{n-1}$ be a Lipschitz domain, 
and $\rho\in C^\infty(\Sigma)\cap \mathrm{Lip}(\bar\Sigma)$ be the positive solution 
of \eqref{eq-LN-domain-eq-rho-new}-\eqref{eq-LN-domain-rho-boundary-new}. 
For some positive constants $\kappa$, $\beta$, and $T$, 
consider the equation 
\begin{equation}\label{eq-basic-equation-v-new}
\mathcal Lv= F(v)\quad\text{in }(T,\infty)\times\Sigma,\end{equation} 
where $\mathcal L$ and $F$ are given by 
\begin{equation}\label{eq-def-mathcal-L-new1}
\mathcal Lv=\partial_{tt}v+\Delta_\theta v-\frac{\kappa}{\rho^2}v
-\beta^2 v,\end{equation}
and, for some smooth function $h$  on $(-1,1)$ with $h(0)\neq 0$, 
\begin{equation}\label{eq-def-mathcal-F-new}
F(v)=\rho^{\beta-2}v^2
h\big(\rho^{\beta}v\big).\end{equation}
Here and hereafter, we always take 
\begin{align}\label{eq-definition-constants}
\kappa=\frac14n(n+2), \quad \beta=\frac12(n-2).\end{align} 
Then, \eqref{eq-basic-equation-v-new}, \eqref{eq-def-mathcal-L-new1}, and \eqref{eq-def-mathcal-F-new}
are \eqref{eq-basic-equation-v}, \eqref{eq-def-mathcal-L}, and \eqref{eq-def-mathcal-F}, respectively. 
We note that $\kappa=15/4$ for $n=3$. This fact is used in Theorem \ref{thrm-regularity-boundary-Lip-pre}.

For some positive constants $\gamma$ and $A$, 
let $v$ be a smooth solution of \eqref{eq-basic-equation-v-new} in $(T,\infty)\times\Sigma$ satisfying
\begin{equation}\label{eq-estimate-v-basic-gamma}|v|\le A e^{-\gamma t}\quad\text{in }(T,\infty)\times\Sigma.
\end{equation}
%\begin{equation}\label{eq-estimate-v-fundamental}
%|v|\le C_0 e^{-\beta t}\quad\text{in }(T,\infty)\times\Sigma,\end{equation}
%for some positive constant $C_0$. 
We note that \eqref{eq-v1-0} implies 
\eqref{eq-estimate-v-basic-gamma} for $\gamma=\beta$ and some  universal constant $A$ depending only on $n$.
In particular,  $v$ is bounded. 
%We now apply Lemma \ref{lemma-estimate-cylinderical-boundary-linear-Lip} to such a solution $v$. 
Now, we derive a decay estimate near boundary. 

\begin{lemma}\label{lemma-estimate-cylinderical-boundary} 
Let $\Sigma\subsetneq\mathbb{S}^{n-1}$ be a 
Lipschitz domain, %$\kappa$ be given by \eqref{eq-relation-kappa}, 
and $\rho\in C^\infty(\Sigma)\cap \mathrm{Lip}(\Sigma)$ be the positive solution of 
\eqref{eq-LN-domain-eq-rho-new}-\eqref{eq-LN-domain-rho-boundary-new}. 
Assume that $v$ is a smooth solution of \eqref{eq-basic-equation-v-new} in $(T,\infty)\times\Sigma$
satisfying \eqref{eq-estimate-v-basic-gamma}, for some constants $\gamma\ge \beta$ and $A>0$.  
Then, %there exists a positive constant $\nu$, depending only on $n$ and $\Sigma$, such that
\begin{equation}\label{eq-estimate-cylinderical-boundary-pre} 
|v|+\rho|\nabla_{(t,\theta)}v|
\le Ce^{-\gamma t}\rho^{\nu}
\quad\text{in }(T+1,\infty)\times\Sigma,\end{equation}
where  $\nu$ is the positive constant as in Theorem \ref{thrm-regularity-boundary-Lip-pre}, and 
$C$ is a positive constant  depending only on $n$, $\gamma$, $A$, and $\Sigma$.
Moreover, $v\in C^\nu((T,\infty)\times\bar\Sigma)$ if $\nu<1$ and 
$v\in \mathrm{Lip}((T,\infty)\times\bar\Sigma)$ if $\nu\ge 1$. 
\end{lemma}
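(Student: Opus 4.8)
The plan is to reduce the estimate \eqref{eq-estimate-cylinderical-boundary-pre} to an application of Theorem \ref{thrm-regularity-boundary-Lip-pre} on each time slice, after controlling the nonlinear term $F(v)$ and the $t$-derivatives. First I would observe that, by the bound \eqref{eq-estimate-v-basic-gamma} and the structure \eqref{eq-def-mathcal-F-new} of $F$ (with $h$ smooth near $0$ and $\rho^\beta v$ bounded since $\gamma\ge\beta$ and $\rho$ is bounded), one has the pointwise estimate
\begin{equation*}
|F(v)|\le C\rho^{\beta-2}v^2\le C\rho^{\beta-2}\big(Ae^{-\gamma t}\big)\,|v|\le CAe^{-\gamma t}\rho^{\beta-2}\,\|v\|_{L^\infty}
\end{equation*}
near $\partial\Sigma$; more importantly $|F(v)|\le C\rho^{\beta-2}|v|^2$, which we will feed back in after a first crude bound. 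The starting crude bound $|v|\le Ae^{-\gamma t}$ already gives $|F(v)|\le CA^2 e^{-2\gamma t}\rho^{\beta-2}$, and since $\beta=\tfrac12(n-2)\ge\nu$ would not in general hold, I instead treat $F(v)$ as a right-hand side $f$ on each slice with a growth $\rho^{\nu-2}$: because $|v|$ is merely bounded one only gets $|F(v)|\le C\rho^{\beta-2}$, and $\beta-2\ge\nu-2$ iff $\beta\ge\nu$, which \emph{is} arranged since $\nu$ in Theorem \ref{thrm-regularity-boundary-Lip-pre} can be taken small (in particular $\nu\le\beta$; for $n=3$, $\nu>1/2$ but still $\nu\le\beta$ fails — here one uses instead that $\kappa=15/4$ so $\nu>1/2$ and separately that $\beta=1/2$, so actually the relevant comparison is $F(v)=O(\rho^{\beta-2}|v|)$ and a bootstrap is needed). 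So the cleaner route: apply Theorem \ref{thrm-regularity-boundary-Lip-pre} with $\lambda=\beta^2+$(the $\partial_{tt}$ contribution handled as a frozen coefficient) — but $\partial_{tt}v$ is not a zeroth-order term, so this must be done more carefully.

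The correct approach is to treat the full cylindrical operator, not slice-by-slice. I would fix $t_0>T+1$, work on the slab $(t_0-1,t_0+1)\times\Sigma$, and apply a comparison argument directly in cylindrical coordinates: construct a barrier $w(t,\theta)=Me^{-\gamma t}\rho(\theta)^\nu$ and compute $\mathcal L w$. Using $L_\lambda\rho^\nu\le -c_0\kappa\rho^{\nu-2}$ from the proof of Theorem \ref{thrm-regularity-boundary-Lip-pre} (valid for $\rho<\rho_0$ once \eqref{eq-optimal-requirement-nu} holds) together with $\partial_{tt}(e^{-\gamma t})=\gamma^2 e^{-\gamma t}$, one finds
\begin{equation*}
\mathcal L w=e^{-\gamma t}\Big(\gamma^2 M\rho^\nu+M\,L_{\beta^2}\rho^\nu\Big)\le e^{-\gamma t}\big(\gamma^2 M\rho^\nu-c_0\kappa M\rho^{\nu-2}\big)\le -\tfrac12 c_0\kappa M e^{-\gamma t}\rho^{\nu-2}
\end{equation*}
for $\rho<\rho_0$ small. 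Since $|F(v)|\le C A e^{-\gamma t}\rho^{\beta-2}\le CAe^{-\gamma t}\rho^{\nu-2}$ (as $\beta\ge\nu$, which holds because $\nu$ is chosen small — for $n\ge 4$ this is clear; for $n=3$, $\beta=1/2<\nu$ is possible, so there I must use the sharper $|F(v)|\le C\rho^{\beta-2}|v|^2\le C\rho^{-3/2}A^2e^{-2\gamma t}$ and note $2\gamma\ge 2\beta=1$, combined with a preliminary decay $|v|\le CA e^{-\gamma t}\rho^{\nu'}$ for some small $\nu'$ obtained from Theorem \ref{thrm-regularity-boundary-Lip-pre} applied to $v$ on slices with $\partial_{tt}v$ absorbed — and then iterate $\nu'\mapsto\min\{\nu'+\text{something},\nu\}$), we get $\mathcal L w\le \pm\mathcal L v$ in $\{\rho<\rho_0\}\cap(t_0-1,t_0+1)\times\Sigma$ for $M$ large. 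On the lateral boundary $\{\rho=\rho_0\}$ interior estimates for the cylindrical equation give $|v|\le CAe^{-\gamma t}$, so $|v|<w$ there for $M$ large; on $\{\rho=0\}$ both vanish; on the top and bottom $t=t_0\pm 1$ we use the hypothesis $|v|\le Ae^{-\gamma t}$ and enlarge $M$ once more (absorbing the $e^{-\gamma}$ factor discrepancy). The maximum principle then yields $|v|\le w$ on the slab, hence at $t_0$; since $t_0$ was arbitrary, $|v|\le CAe^{-\gamma t}\rho^\nu$ throughout $(T+1,\infty)\times\Sigma$.

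For the gradient bound $\rho|\nabla_{(t,\theta)}v|\le Ce^{-\gamma t}\rho^\nu$, I would apply interior $C^1$ (Schauder) estimates to the cylindrical equation on balls $B_{d(\theta)/2}(t_0,\theta)\subset(T,\infty)\times\Sigma$, exactly as in the proof of Theorem \ref{thrm-regularity-boundary-Lip-pre}: writing $d=d(\theta)\sim\rho(\theta)$,
\begin{equation*}
d\,|\nabla_{(t,\theta)}v(t_0,\theta)|\le C\big(\|v\|_{L^\infty(B_{d/2})}+d^2\|F(v)\|_{L^\infty(B_{d/2})}\big)\le C\big(e^{-\gamma t_0}\rho^\nu+d^2\cdot e^{-\gamma t_0}\rho^{\nu-2}\big)\le Ce^{-\gamma t_0}\rho^\nu,
\end{equation*}
using the just-proved bound on $v$ and on $F(v)$. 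The Hölder/Lipschitz continuity up to $\bar\Sigma$ follows the same way: the interior $C^{1,\alpha}$ estimate on $B_{d/4}$ gives $d^\alpha[\nabla v]_{C^\alpha(B_{d/4})}\le C$, so if $\nu<1$ one takes $\alpha=\nu$ and patches the interior Hölder bounds to conclude $v\in C^\nu((T,\infty)\times\bar\Sigma)$, while if $\nu\ge 1$ the uniform gradient bound gives $v\in\mathrm{Lip}((T,\infty)\times\bar\Sigma)$.

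\textbf{Main obstacle.} The genuine difficulty is the interplay between the growth exponent $\beta-2$ of the nonlinearity $F(v)$ and the (possibly small) exponent $\nu-2$ that the barrier can absorb, specifically in low dimensions where $\beta=\tfrac12(n-2)$ is small (e.g. $n=3$, $\beta=1/2$) and can be smaller than $\nu$. There the crude bound $|F(v)|=O(\rho^{\beta-2})$ is not integrated into the barrier directly; one must exploit the extra factor $|v|^2$ (not just $|v|$) and the exponential decay $e^{-2\gamma t}$ versus $e^{-\gamma t}$, and run a finite bootstrap: starting from $|v|\le Ae^{-\gamma t}$, deduce $|v|\le CAe^{-\gamma t}\rho^{\nu_0}$ with a small $\nu_0$, then feed $|F(v)|\le C\rho^{\beta-2+2\nu_0}e^{-2\gamma t}$ back into the barrier to bump $\nu_0$ up, repeating until the exponent saturates at $\nu$. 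Making this iteration terminate cleanly at exactly the $\nu$ of Theorem \ref{thrm-regularity-boundary-Lip-pre}, while keeping all constants controlled only by $n,\gamma,A,\Sigma$, is the step requiring the most care; the role of the explicit lower bound $\nu>1/2$ for $n=3$ (from $\kappa=15/4$ in Lemma \ref{lemma-n3-gradient} and Theorem \ref{thrm-regularity-boundary-Lip-pre}) is precisely to guarantee the bootstrap closes in finitely many steps and that the resulting $F(v)$ lands in $L^2$, as anticipated in the remarks following Theorem \ref{thrm-regularity-boundary-Lip-pre}.
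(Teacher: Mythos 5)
Your overall plan (a barrier on a slab $\{|t-t_0|<1\}$, the maximum principle, then interior $C^1$-estimates for the gradient bound and the H\"older/Lipschitz assertion) is the same as the paper's, and the last part on the gradient and regularity is fine. But the comparison argument itself has a genuine gap at exactly the point the paper is designed to overcome: you assert that on $\{\rho=0\}$ ``both vanish.'' The hypothesis \eqref{eq-estimate-v-basic-gamma} only gives $|v|\le Ae^{-\gamma t}$; it is not known that $v$ extends continuously to $(T,\infty)\times\partial\Sigma$, let alone that it vanishes there --- that $v\to 0$ at $\partial\Sigma$ (at rate $\rho^\nu$) is precisely what the lemma is proving. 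Since your barrier $Me^{-\gamma t}\rho^\nu$ itself tends to $0$ on $\partial\Sigma$, the comparison $\pm v\le Mw$ cannot be closed on the $\partial\Sigma$-portion of $\partial\Omega_0$. The paper's proof hinges on a remedy you omit: it replaces $v$ by $\rho^\mu v$ with $\mu>0$, which does vanish on $\partial\Sigma$ because $v$ is merely bounded, and works with the conjugated operator $\mathcal L_*$ in \eqref{eq-operator-L-star0}, checking that for small $\mu$ the zeroth-order coefficient keeps the good sign. Some device of this type (or an exhaustion by $\{\rho>\epsilon\}$ together with an auxiliary supersolution blowing up near $\partial\Sigma$, sent to zero afterwards) is indispensable.

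A second, related failure: your barrier cannot dominate $v$ on the faces $t=t_0\pm1$. There $|v|\le Ae^{-\gamma t}$ while $w=Me^{-\gamma t}\rho^\nu\to0$ as $\rho\to0$, so no enlargement of $M$ gives $|v|\le Mw$ near the corners; ``absorbing the $e^{-\gamma}$ factor'' does not address the vanishing factor $\rho^\nu$. The paper's barrier \eqref{eq-barrier-case1-pre0} contains the extra term $\varepsilon(t-t_0)^2e^{-\gamma t}$ precisely to control the top and bottom of the slab uniformly down to $\partial\Sigma$ (with $\varepsilon$ small so as not to ruin the supersolution inequality). Finally, your $n=3$ bootstrap is only sketched: the paper avoids the issue $\beta<\nu$ in this lemma by producing the estimate with an exponent $\nu\le\min\{2,\beta\}$ satisfying \eqref{eq-optimal-requirement-nu} (this is the constraint $b\le\beta+\mu$ in its proof), whereas the iteration you propose to reach a larger exponent is the content of Lemma \ref{lemma-estimate-cylinderical-boundary-improved} and uses the additional gradient hypothesis \eqref{eq-condition-gradient}; it is not available in the merely Lipschitz setting of this lemma.
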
 

\begin{proof} 
By \eqref{eq-def-mathcal-F-new} and \eqref{eq-estimate-v-basic-gamma}, we have 
\begin{equation}\label{eq-estimate-initial-linear-a-pre0}
|F(v)|\le Ce^{-2\gamma t}\rho^{\beta-2}
\le Ce^{-\gamma t}\rho^{\beta-2}.\end{equation}
We point out that the assumption on $v$ in \eqref{eq-estimate-v-basic-gamma} implies that $v$ is bounded. 
It is not known whether $v$ is  continuous in $(T,\infty)\times\bar\Sigma$, 
with $v=0$ on $(T,\infty)\times\partial\Sigma$. 
For a remedy, we consider 
$\rho^\mu v$, for some $\mu>0$. A straightforward computation yields 
$$\mathcal L_*(\rho^\mu v)=\rho^\mu F,$$
where 
\begin{align}\label{eq-operator-L-star0}
\mathcal L_*u=\partial_{tt}u+\Delta_\theta u
-\frac{2\mu}{\rho}\nabla_\theta\rho\cdot\nabla_\theta u+\frac{c}{\rho^2} u-\beta^2u,\end{align}
and 
\begin{align*} 
c=-\kappa+\mu\Big(-\big(\frac n2-(\mu+1)\big)|\nabla_\theta\rho|^2+\frac n2+S\rho^2\Big).
\end{align*}
Here, $S$ is the constant given by \eqref{eq-constant-S}. Fix a $t_0>T+1$, and set 
$$\Omega_0=\{(t, \theta); |t-t_0|<1, 0<\rho<\rho_0\}.$$
Note that 
\begin{align*} 
c\le -\kappa+\mu\big(\frac n2+S\rho^2\big)\le -\frac12\kappa \quad\text{in }\Omega_0,
\end{align*} 
if $\mu+1\le n/2$, $n\mu<\kappa/2$, and $\rho_0$ is small.  

Consider  
\begin{equation}\label{eq-barrier-case1-pre0}
w=e^{-\gamma t}\big[\varepsilon(t-t_0)^2+\rho^b\big].\end{equation}
A straightforward calculation, with the help of \eqref{eq-LN-domain-eq-rho-new}, yields
\begin{align*} 
\mathcal L_* w=I_1+I_2,\end{align*} 
where 
\begin{align*}
I_1=\varepsilon e^{-\gamma t}
\Big\{\big[(\gamma^2-\beta^2)(t-t_0)^2
-4\gamma(t-t_0)+2\big] +c\rho^{-2}(t-t_0)^2\Big\},\end{align*}
and 
\begin{align*}
I_2=e^{-\gamma t}\rho^{b-2}
\Big\{(\gamma^2-\beta^2)\rho^2
-\kappa+(b-\mu)\big[\big(b-\mu-1+\frac12n\big)|\nabla_\theta\rho|^2
-\frac12n-S\rho^2\big]\Big\}.
\end{align*}
Choose $b>\mu$. Then, the coefficient of $|\nabla_\theta\rho|^2$ in $I_2$ is positive. 
%Since $\rho$ is Lipschitz in $\Sigma$, we assume 
%$$|\nabla_\theta\rho|^2\le C_*\quad\text{in }\Omega_0.$$
%We consider $|t-t_0|<1$. 
By $c\le 0$, we have 
\begin{align*}I_1\le C_1 \varepsilon e^{-\gamma t},\end{align*}
and 
\begin{align*}
I_2&\le e^{-\gamma t}\rho^{b-2}
\big\{C_2\rho^2-\kappa+(b-\mu)\big(b-\mu-1+\frac12n\big)\sup_{\Omega_0}|\nabla\rho|^2
-\frac12n(b-\mu)\big\}.
\end{align*}
By taking $b$, $\varepsilon$, and $\rho_0$ small, with $b\le 2$ in particular,  we obtain 
\begin{align*}\mathcal L_* w=I_1+I_2\le -c_0\kappa 
e^{-\gamma t}\rho^{b-2}\quad\text{in }\Omega_0,
\end{align*}
for some small positive constant $c_0$. By \eqref{eq-estimate-initial-linear-a-pre0}, we have 
$$|\rho^\mu F|\le  
Ce^{-\gamma t}\rho^{\beta+\mu-2}.$$
By taking $M$ large, we obtain 
$$\mathcal L_*(Mw)\le -c_0M\kappa 
e^{-\gamma t}\rho^{b-2}\le
-Ae^{-\gamma t}\rho^{\beta+\mu-2}\le \mathcal L_*(\pm \rho^\mu v)\quad\text{in }\Omega_0,$$
if $b\le \beta+\mu$. 
Next, we claim, by choosing $M$ large further if necessary, 
\begin{align}\label{eq-boundary-comparison-pre0}
\pm \rho^\mu v\le Mw \quad\text{on }\partial\Omega_0.\end{align}
First,  $\rho^\mu v=0$ on $(T,\infty)\times\partial\Sigma$. 
(Here, we used $\mu>0$.)
Next, by \eqref{eq-estimate-v-basic-gamma}, we have,  
$\pm \rho^\mu v\le Ce^{-\gamma t}$  on $|t-t_0|=1$ or $\rho=\rho_0$. 
On the other hand, $w=0$ on $(T,\infty)\times\partial\Sigma$, 
$w\ge e^{-\gamma t}$
on $|t-t_0|=1$, and 
$w\ge e^{-\gamma t}\rho_0^b$ on $\rho=\rho_0$. 
This proves \eqref{eq-boundary-comparison-pre0} for some $M$ sufficiently large. 
We emphasize that $M$ is independent of $t_0$. 
By the maximum principle, we obtain 
$\pm (\rho^\mu v)\le Mw$ in $\Omega_0$, and in particular at $t=t_0$, 
$$|v(t_0, \theta)|\le Me^{-\gamma t_0}\rho^{b-\mu}.$$
This holds for any $t_0>T+1$ and hence proves the estimate of $v$ in \eqref{eq-estimate-cylinderical-boundary-pre} 
for $\nu=b-\mu>0$. In fact, we can choose $\nu$ to satisfy $\nu\le \min\{2,\beta\}$ and 
\eqref{eq-optimal-requirement-nu}.

The assertion concerning the H\"older continuity or the Lipschitz continuity of $v$ 
follows from the interior $C^{1}$-estimate, as 
in the proof of Theorem \ref{thrm-regularity-boundary-Lip-pre}. 
\end{proof} 

If $v$ is  continuous in $(T,\infty)\times\bar\Sigma$
with $v=0$ on $(T,\infty)\times\partial\Sigma$, 
there is no need to introduce $\mathcal L_*$ in \eqref{eq-operator-L-star0}. 
We can simply apply $\mathcal L$  to $w$ in \eqref{eq-barrier-case1-pre0}. 
In other words, we can take $\mu=0$ in the proof above.

In the next result, we discuss derivatives of solutions with respect to $t$. 
As we see in the proof, it is more helpful to view 
$F(v)$ in the equation \eqref{eq-basic-equation-v-new} 
as a perturbative term of the operator $\mathcal Lv$, instead of a nonhomogeneous term. 

\begin{lemma}\label{lemma-regularity-v-derivatives-boundary} 
Let $\Sigma\subsetneq\mathbb{S}^{n-1}$ be a 
Lipschitz domain, %$\kappa$ be given by \eqref{eq-relation-kappa}, 
and $\rho\in C^\infty(\Sigma)\cap \mathrm{Lip}(\Sigma)$ be the positive solution of 
\eqref{eq-LN-domain-eq-rho-new}-\eqref{eq-LN-domain-rho-boundary-new}. 
Assume that $v$ is a smooth solution of \eqref{eq-basic-equation-v-new} in $(T,\infty)\times\Sigma$
satisfying \eqref{eq-estimate-v-basic-gamma}, for some constants $\gamma\ge \beta$ and $A>0$. 
%\eqref{eq-estimate-v-fundamental}. 
Then, 
\begin{equation}\label{eq-estimate-derivative-cylinderical-boundary-basic} 
|\partial_tv|+|\partial_{tt}v|\le Ce^{-\gamma t}\rho^{\nu}
\quad\text{in }(T+1,\infty)\times\Sigma,\end{equation}
where $\nu$ is the constant as in  Theorem \ref{thrm-regularity-boundary-Lip-pre}, 
and $C$ is a positive constant  depending only on $n$, $\gamma$, $A$, and $\Sigma$. 
%, and $C_0$ in \eqref{eq-estimate-v-basic}.
In particular, $\partial_tv, \partial_{tt}v\in C((T+1, \infty)\times\bar\Sigma)$ with $\partial_tv=\partial_{tt}v=0$ 
on $(T+1, \infty)\times \partial\Sigma$. 
\end{lemma}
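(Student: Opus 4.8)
The idea is to differentiate the equation \eqref{eq-basic-equation-v-new} in the $t$-direction and treat the result as an equation of the same structural type, so that Lemma \ref{lemma-estimate-cylinderical-boundary} (and its proof) applies again. Since the coefficients of $\mathcal L$ in \eqref{eq-def-mathcal-L-new1} do not depend on $t$, the function $w_1=\partial_t v$ satisfies $\mathcal L w_1 = \partial_t F(v) = F'(v)\,\partial_t v$, where $F'(v)$ is, by \eqref{eq-def-mathcal-F-new}, of the form $\rho^{\beta-2}v\,\tilde h(\rho^\beta v)$ for a smooth function $\tilde h$; using the bound $|v|\le Ae^{-\gamma t}$ from \eqref{eq-estimate-v-basic-gamma} together with the boundary decay $|v|\le Ce^{-\gamma t}\rho^\nu$ already established in \eqref{eq-estimate-cylinderical-boundary-pre}, the right-hand side is bounded by $Ce^{-\gamma t}\rho^{\beta+\nu-2}$, i.e.\ it has the same $\rho^{-2}$-type singularity as the potential term with an extra decaying factor. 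Thus $w_1$ solves an inequality $|\mathcal L w_1|\le Ce^{-\gamma t}\rho^{\beta-2}$ of exactly the form treated in the proof of Lemma \ref{lemma-estimate-cylinderical-boundary}.

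**Key steps.** First I would establish the interior estimate $|\partial_t v|+|\partial_{tt}v|\le Ce^{-\gamma t}$ on $(T+1,\infty)\times\Sigma'$ for $\Sigma'\subset\subset\Sigma$: apply interior Schauder/gradient estimates to \eqref{eq-basic-equation-v-new} on unit $t$-cylinders, using \eqref{eq-estimate-v-basic-gamma} and the smoothness of $\rho$ away from $\partial\Sigma$; the exponential factor is preserved because translating $t\mapsto t+t_0$ rescales both sides of the equation consistently. Second, I would run the barrier argument of Lemma \ref{lemma-estimate-cylinderical-boundary} verbatim with $v$ replaced by $w_1=\partial_t v$: introduce $\rho^\mu w_1$, compute $\mathcal L_*(\rho^\mu w_1)=\rho^\mu F'(v)\partial_t v$ with the same operator $\mathcal L_*$ from \eqref{eq-operator-L-star0}, and use the supersolution $w=e^{-\gamma t}[\varepsilon(t-t_0)^2+\rho^b]$ from \eqref{eq-barrier-case1-pre0}. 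The comparison $\pm\rho^\mu w_1\le Mw$ on $\partial\Omega_0$ holds on $(T,\infty)\times\partial\Sigma$ because $\mu>0$, on $|t-t_0|=1$ and $\rho=\rho_0$ because of the interior bound from the first step. The maximum principle then yields $|\partial_t v(t_0,\theta)|\le Me^{-\gamma t_0}\rho^{\nu}$ with $\nu=b-\mu$, as before. Third, for $\partial_{tt}v$, I would differentiate once more: $w_2=\partial_{tt}v$ satisfies $\mathcal L w_2 = F''(v)(\partial_t v)^2+F'(v)\partial_{tt}v$, and both terms are controlled by $Ce^{-\gamma t}\rho^{\beta-2}$ using the bound on $\partial_t v$ just obtained together with \eqref{eq-estimate-v-basic-gamma} and \eqref{eq-estimate-cylinderical-boundary-pre}; repeating the barrier argument gives the estimate for $\partial_{tt}v$. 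Finally, the continuity up to $(T+1,\infty)\times\partial\Sigma$ with vanishing trace follows from the decay $\rho^\nu$ as in the last paragraph of the proof of Theorem \ref{thrm-regularity-boundary-Lip-pre}, using interior $C^1$-estimates on shrinking balls.

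**Main obstacle.** The delicate point is the same structural issue flagged after \eqref{eq-operator-L-star0}: a priori $v$ is only known to be bounded and not continuous up to $(T,\infty)\times\bar\Sigma$, so one cannot differentiate the boundary condition directly, and one must again work with $\rho^\mu\partial_t v$ rather than $\partial_t v$ itself to make the maximum principle applicable; this forces the bookkeeping $\nu=b-\mu$ with the constraints $\mu+1\le n/2$, $n\mu<\kappa/2$, $b>\mu$, $b\le\min\{2,\beta+\mu\}$, and \eqref{eq-optimal-requirement-nu} on $b-\mu$, exactly as in Lemma \ref{lemma-estimate-cylinderical-boundary}. Once one observes that $\partial_t F(v)$ and $\partial_{tt}F(v)$ inherit the factor $\rho^{\beta-2}$ with the full decay $e^{-\gamma t}$ (here the previously proved boundary estimate \eqref{eq-estimate-cylinderical-boundary-pre} for $v$ is essential to upgrade $\rho^{\beta-2}$ rather than a worse power), the argument is a routine repetition of the preceding lemma's proof, so I expect no genuinely new difficulty beyond careful tracking of these exponents.
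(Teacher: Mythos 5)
Your proposal correctly identifies the general strategy (differentiate in $t$, exploit the $t$-independence of the coefficients, and rerun a barrier argument near $\partial\Sigma$), but it has a genuine gap at the decisive step: the boundary comparison for $w_1=\partial_t v$. In the proof of Lemma \ref{lemma-estimate-cylinderical-boundary}, the inequality $\pm\rho^\mu v\le Mw$ on the lateral boundary $(T,\infty)\times\partial\Sigma$ is legitimate only because $v$ is \emph{bounded} up to $\partial\Sigma$ by \eqref{eq-estimate-v-basic-gamma}, so that $\rho^\mu v\to 0$ as $\rho\to 0$. For $\partial_t v$ no such a priori boundedness is available: the best one knows from \eqref{eq-estimate-cylinderical-boundary-pre} is $|\partial_t v|\le Ce^{-\gamma t}\rho^{\nu-1}$, which blows up when $\nu<1$. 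Your assertion that the comparison holds on $(T,\infty)\times\partial\Sigma$ ``because $\mu>0$'' is therefore unjustified; one would need $\mu>1-\nu$, and this is in general incompatible with the constraints you yourself list ($\mu+1\le n/2$, $n\mu<\kappa/2$, i.e.\ $\mu<(n+2)/8$), since for a merely Lipschitz $\Sigma$ with $n\ge 4$ the exponent $\nu$ has no universal lower bound (the paper only guarantees $\nu>1/2$ for $n=3$). This is precisely the difficulty the paper's proof is built to avoid: it never applies the maximum principle to $\partial_t v$ directly, but to the reflected difference $v_1(t)=v(t)-v(2t_0-t)$, which vanishes on $(t_0,t_0+1)\times\partial\Sigma$ (because $|v|\le Ce^{-\gamma t}\rho^\nu$ is already known) and at $t=t_0$ by construction; comparing $v_1$ with the barrier $(t-t_0)e^{-\gamma t}\rho^\nu$, dividing by $t-t_0$ and letting $t\to t_0^+$ yields the bound on $\partial_t v$. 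The estimate for $\partial_{tt}v$ is then obtained by differentiating the equation once and applying the same reflection device to $v_t$, which by the first step now vanishes on the lateral boundary. Your proposal contains no substitute for this reflection (or any other mechanism giving control of $\partial_t v$ at the lateral boundary), so as written the maximum-principle step cannot start.

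A secondary, fixable issue: you bound the source $\partial_t F(v)=F'(v)\partial_t v$ by $Ce^{-\gamma t}\rho^{\beta+\nu-2}$ using only bounds on $v$, but the factor $\partial_t v$ is the unknown being estimated, so this bound is circular (with the available $|\partial_t v|\le Ce^{-\gamma t}\rho^{\nu-1}$ one only gets $\rho^{\beta+2\nu-3}$, worse than claimed when $\nu<1$). The correct bookkeeping, which the paper uses, is to absorb $F'(v)\partial_t v$ into the zeroth-order coefficient, writing the differentiated equation as $\partial_{tt}v_t+\Delta_\theta v_t-\rho^{-2}\bigl(\kappa+\epsilon_1(\rho^\beta v)\bigr)v_t-\beta^2 v_t=0$ with $|\epsilon_1(\rho^\beta v)|\le C\rho^\beta|v|$ small; similarly for $\partial_{tt}v$, where only $F''$-type terms quadratic in $\partial_t v$ remain as a genuine source. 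With that correction the structural part of your argument is fine, but the missing boundary control of $\partial_t v$ described above remains the essential gap.
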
 

\begin{proof} We first derive the estimate of $\partial_tv$. Set 
\begin{equation}\label{eq-definition-c}\epsilon(s)=sh(s).\end{equation}
Then, $\epsilon$ is a smooth function on $(-1,1)$ with $\epsilon(0)=0$, and 
$$F(v)=\rho^{-2}v\epsilon(\rho^\beta v).$$
We now write \eqref{eq-basic-equation-v-new} 
as 
\begin{equation}\label{eq-equation-v-homo} 
\partial_{tt}v+\Delta_\theta v-\frac{1}{\rho^2}\big(\kappa+\epsilon(\rho^\beta v)\big)v-\beta^2v=0.\end{equation}
Then, \eqref{eq-estimate-v-basic-gamma} implies 
$$|\epsilon(\rho^\beta v)|\le C\rho^\beta |v|\le C.$$
Consider any $\Sigma'\subset\subset \Sigma$. For any $t>T+1$ and $\theta\in \Sigma'$, 
by the interior $C^1$-estimates and \eqref{eq-estimate-v-basic-gamma}, we have, with $r=\min\{ d(\theta)/2,1\}$,
\begin{equation}\label{eq-estimate-der-v-interior}
|\partial_tv(t,\theta)|\le C\sup_{(t-r, t+r)\times B_{r}(\theta)}|v|\le Ce^{-\gamma t}.\end{equation}
To estimate $\partial_tv$ near $\partial\Sigma$, we fix an arbitrary $t_0>T+2$ 
and consider, for some $\rho_0>0$ to be determined, 
$$\Omega_0=\{(t,\theta);\, t_0<t< t_0+1,\, 0<\rho(\theta)<\rho_0\}.$$
Set
\begin{equation*}%\label{eq-equation-def-v1}
v_1(t)=v(t)-v(2t_0-t).\end{equation*} 
We now evaluate the equation \eqref{eq-equation-v-homo} at $t$ and $2t_0-t$, respectively, and take a difference. 
A straightforward computation yields 
\begin{equation}\label{eq-equation-v-difference}
\partial_{tt}v_1+\Delta_\theta v_1-\frac{1}{\rho^2}\big(\kappa+\widetilde \epsilon\big)v_1-\beta^2v_1=0,\end{equation}
where $\widetilde c$ is given by 
$$\widetilde \epsilon(t, \theta)
=\epsilon(\rho^\beta v(t))+\rho^\beta v(2t_0-t)\int_0^1\epsilon'\big(s\rho^\beta v(t)+(1-s)\rho^\beta v(2t_0-t)\big)ds.$$
For any $t_0<t<t_0+1$, we have 
$$|\widetilde \epsilon(t, \theta)|
\le C\rho^\beta (|v(t)|+ |v(2t_0-t)|)\le Ce^{-\gamma t_0}\rho^\beta.$$
Set 
$$\mathcal L_1w=\partial_{tt}w+\Delta_\theta w-\frac{1}{\rho^2}\big(\kappa+\widetilde \epsilon\big)w-\beta^2w.$$ 
Then, $\mathcal L_1v_1=0$. To construct a supersolution, set 
\begin{equation}\label{eq-definition-supersolution-w}
w=(t-t_0)e^{-\gamma t}\rho^\nu.\end{equation} 
A straightforward computation yields 
\begin{align*}\mathcal L_1w&=
(t-t_0)e^{-\gamma t}\rho^{\nu-2}
\big[(\gamma^2-\beta^2)\rho^2
-\kappa+\nu(\nu-1+\frac n2)|\nabla_\theta\rho|^2-\frac12n\nu-\nu S\rho^2+\widetilde \epsilon\big]\\
&\qquad -2\gamma e^{-\gamma t}\rho^\nu.
\end{align*}
We now take $\nu$ to satisfy \eqref{eq-optimal-requirement-nu}. 
%Since $\rho$ is Lipschitz in $\Sigma$, we assume 
%$$|\nabla_\theta\rho|^2\le C_*\quad\text{in }\{0<\rho<\rho_0\}.$$
%We choose $\nu$ and $\rho_0$ small such that 
%$$\nu(\nu-1+\frac n2)C_*<\frac14\kappa,\quad \widetilde \epsilon<\frac14\kappa.$$ 
%Note that $\nu$ is the same as that in  Lemma \ref{lemma-estimate-cylinderical-boundary}. Compare with 
%\eqref{eq-choice-nu}. 
%Then, we have
%$$(\gamma^2-\beta^2)\rho^2+\nu(\nu-1+\frac n2)|\nabla_\theta\rho|^2-\frac12n\nu-\nu S\rho^2+\widetilde \epsilon
%\le \frac12\kappa\quad\text{in }\Omega_0,$$
%and hence 
By taking $\rho_0$ sufficiently small, we have 
$$\mathcal L_1w\le -c_0\kappa (t-t_0)e^{-\gamma t}\rho^{\nu-2}\le 0\quad\text{in }\Omega_0,$$
for some small positive constant $c_0$. 
Next, we compare $\pm v_1$ and $w$ on $\partial\Omega_0$. First, $v_1=0$ on $(t_0, t_0+1)\times \partial\Sigma$ 
and on $\{t=t_0\}\times \Sigma$. Next, we get, by \eqref{eq-estimate-cylinderical-boundary-pre}, 
$$|v_1|\le Ce^{-\gamma t_0}\rho^\nu\quad\text{on }\{t_0+1\}\times\Sigma,$$
and, by \eqref{eq-estimate-der-v-interior}, 
$$|v_1|\le C(t-t_0)\sup_{(t_0-1, t_0+1)\times\{\rho=\rho_0\}}|\partial_tv|\le C(t-t_0)e^{-\gamma t}
\quad\text{on }(t_0, t_0+1)\times\{\rho=\rho_0\}.$$
For $w$, we have $w=0$ on $(t_0, t_0+1)\times \partial\Sigma$ 
and on $\{t=t_0\}\times \Sigma$, $w=e^{-\gamma}e^{-\gamma t_0}\rho^\nu$ on $\{t=t_0+1\}\times \Sigma$ and 
$w= (t-t_0)e^{-\gamma t_0}\rho_0^\nu$ on $(t_0, t_0+1)\times\{\rho=\rho_0\}$. By choosing $M$ sufficiently large, 
independent of $t_0$, we have $\pm v_1\le Mw$ on $\partial\Omega_0$. By the maximum principle, we obtain 
$\pm v_1\le Mw$ in $\Omega_0$, and hence 
$$|v(t)-v(2t_0-t)|\le M(t-t_0)e^{-\gamma t}\rho^\nu\quad\text{in }\Omega_0.$$
By dividing by $t-t_0$ and taking the limit as $t\to t_0+$, we get, for any $\rho\in (0,\rho_0)$, 
$$|\partial_t v(t_0, \theta)|\le Me^{-\gamma t_0}\rho^\nu.$$
This holds for any $t_0>T+2$. By combining with \eqref{eq-estimate-der-v-interior}
for $\Sigma'=\{\rho>\rho_0\}$, we obtain 
\begin{equation}\label{eq-estimate-v-derivative-cylinderical-boundary-basic} 
|\partial_tv|\le Ce^{-\gamma t}\rho^{\nu}
\quad\text{in }(T+2,\infty)\times\Sigma.\end{equation}
This is the estimate of $\partial_tv$ in \eqref{eq-estimate-derivative-cylinderical-boundary-basic}.

The derivation of the estimate of $\partial_{tt}v$ is similar. We point out some key steps. 
By differentiating \eqref{eq-equation-v-homo} with respect to $t$, we have 
\begin{equation}\label{eq-equation-v-deri-homo} 
\partial_{tt}v_t+\Delta_\theta v_t-\frac{1}{\rho^2}\big(\kappa+\epsilon_1(\rho^\beta v)\big)v_t-\beta^2v_t=0,\end{equation}
where 
$$\epsilon_1(s)=\epsilon(s)+s\epsilon'(s).$$
Note that $\epsilon_1$ is a smooth function on $(-1,1)$ with $\epsilon_1(0)=0$. 
Similar as \eqref{eq-estimate-der-v-interior}, 
we have, for any $t>T+3$ and any $\theta\in\Sigma'\subset\subset\Sigma$, 
\begin{equation}\label{eq-estimate-der-v-der-interior}
|\partial_{tt}v(t,\theta)|\le  Ce^{-\gamma t}.\end{equation}
Set
\begin{equation*}%\label{eq-equation-def-v2}
v_2(t)=v_t(t)-v_t(2t_0-t).\end{equation*} 
Similarly, we have 
\begin{equation}\label{eq-equation-v-deri-difference}
\partial_{tt}v_2+\Delta_\theta v_2
-\frac{1}{\rho^2}\big(\kappa+\epsilon_1(\rho^\beta v)\big)v_2-\beta^2v_2=f_1,\end{equation}
where $f_1$ is given by 
$$f_1=\rho^{-2}\big[\epsilon_1(\rho^\beta v(t))-\epsilon_1(\rho^\beta v(2t_0-t))\big] v_t(2t_0-t).$$
By \eqref{eq-estimate-v-derivative-cylinderical-boundary-basic}, we have
\begin{align*}|f_1|&\le C\rho^{\beta-2}|v(t) -v(2t_0-t)| |v_t(2t_0-t)|\\
&\le C(t-t_0)e^{-2\gamma t}\rho^{\beta+2\nu-2}
\le C(t-t_0)e^{-\gamma t}\rho^{\nu-2}.\end{align*}
The rest of the proof is almost identical as the derivation of $\partial_tv$, and hence will be omitted. 
We point out that the equation for $v_1$ in \eqref{eq-equation-v-difference} 
can be regarded as a homogeneous equation and that 
the equation for $v_2$ in \eqref{eq-equation-v-deri-difference}
is not homogeneous. The function $w$ in \eqref{eq-definition-supersolution-w} still serves 
as a supersolution. 
\end{proof}

We can derive estimates similar as \eqref{eq-estimate-derivative-cylinderical-boundary-basic} 
for higher derivatives of $v$ with respect to $t$. However, the estimate of the second derivative 
is sufficient for our applications. 

\begin{cor}\label{cor-regularity-v-H10} 
Let $\Sigma\subsetneq\mathbb{S}^{n-1}$ be a 
Lipschitz domain, %$\kappa$ be given by \eqref{eq-relation-kappa}, 
and $\rho\in C^\infty(\Sigma)\cap \mathrm{Lip}(\Sigma)$ be the positive solution of 
\eqref{eq-LN-domain-eq-rho-new}-\eqref{eq-LN-domain-rho-boundary-new}. 
Assume that $v$ is a smooth solution of \eqref{eq-basic-equation-v-new} in $(T,\infty)\times\Sigma$
satisfying \eqref{eq-estimate-v-basic-gamma},  for some constants $\gamma\ge \beta$ and $A>0$. 
%\eqref{eq-estimate-v-fundamental}. 
Then, for any $t>T+1$, 
$v(t,\cdot)\in H_0^1(\Sigma)$, and 
\begin{align}\label{eq-estimate-gradient-v1}
\|\nabla_\theta v(t, \cdot)\|_{L^2(\Sigma)}+\|\rho^{-1}v(t, \cdot)\|_{L^2(\Sigma)}
\le C
e^{-\gamma t},\end{align}
where $C$ is a positive constant  depending only on $n$, $\gamma$, $A$, and $\Sigma$. 
\end{cor}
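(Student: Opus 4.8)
The plan is to prove the estimate \eqref{eq-estimate-gradient-v1} slice-by-slice, using the ODE-in-$t$ structure of the equation \eqref{eq-basic-equation-v-new} together with the elliptic theory on $\Sigma$ developed in Section \ref{sec-spherical-operators}. First I would fix $t>T+1$ and rewrite \eqref{eq-basic-equation-v-new} as a linear singular elliptic equation on the single slice $\Sigma$:
\[
\Delta_\theta v(t,\cdot)-\frac{\kappa}{\rho^2}v(t,\cdot)=\beta^2 v(t,\cdot)-\partial_{tt}v(t,\cdot)+F(v)(t,\cdot)\equiv g_t\quad\text{in }\Sigma.
\]
By Lemma \ref{lemma-regularity-v-derivatives-boundary}, $\partial_{tt}v(t,\cdot)$ is continuous up to $\bar\Sigma$, vanishes on $\partial\Sigma$, and satisfies $|\partial_{tt}v|\le Ce^{-\gamma t}\rho^\nu$; by Lemma \ref{lemma-estimate-cylinderical-boundary}, $|v(t,\cdot)|\le Ce^{-\gamma t}\rho^\nu$; and by \eqref{eq-def-mathcal-F-new} together with \eqref{eq-estimate-v-basic-gamma} one has $|F(v)(t,\cdot)|\le Ce^{-2\gamma t}\rho^{\beta-2}$. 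Since $\nu>0$ and $\beta-2>-2$, the right-hand side $g_t$ is in $L^2(\Sigma)$ with $\|g_t\|_{L^2(\Sigma)}\le Ce^{-\gamma t}$ (the $F(v)$ term requires $\beta-2>-1$ when $n=3$, i.e. $\rho^{\beta-2}\in L^2$; for $n=3$ this is $\rho^{-1/2}$, which is square-integrable since $\partial\Sigma$ is $(n-2)$-dimensional, and for $n\ge 4$, $\beta-2\ge 0$). I would also record that $v(t,\cdot)\in C(\bar\Sigma)$ with $v(t,\cdot)=0$ on $\partial\Sigma$.

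Next I would invoke Corollary \ref{cor-regularity-boundary-Lip-weak-pre}: it says precisely that a function in $C(\bar\Sigma)\cap C^\infty(\Sigma)$ vanishing on $\partial\Sigma$ and solving $-Lu=f$ with $f\in C^\infty(\Sigma)\cap L^2(\Sigma)$ satisfying the growth bound \eqref{eq-assumption-growth-f} must lie in $H^1_0(\Sigma)$. Here $u=v(t,\cdot)$ solves $-Lu=-g_t$ with $-g_t\in C^\infty(\Sigma)\cap L^2(\Sigma)$, and the growth condition $|g_t|\le A_t\rho^{\nu-2}$ holds because each of the three pieces of $g_t$ is bounded by $Ce^{-\gamma t}\rho^{\nu-2}$ (using $\nu\le\beta$ and $\nu\le 2$, which we may assume by the last sentence of the proof of Lemma \ref{lemma-estimate-cylinderical-boundary}, so that $\rho^\nu\le C\rho^{\nu-2}$ and $\rho^{\beta-2}\le C\rho^{\nu-2}$ near $\partial\Sigma$). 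This gives $v(t,\cdot)\in H^1_0(\Sigma)$.

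For the quantitative bound \eqref{eq-estimate-gradient-v1}, I would test the weak formulation of $-Lv(t,\cdot)=-g_t$ against $v(t,\cdot)\in H^1_0(\Sigma)$ itself, obtaining
\[
\int_\Sigma\Big(|\nabla_\theta v(t,\cdot)|^2+\frac{\kappa}{\rho^2}v(t,\cdot)^2\Big)\,d\theta=-\int_\Sigma g_t\,v(t,\cdot)\,d\theta\le\|g_t\|_{L^2(\Sigma)}\|v(t,\cdot)\|_{L^2(\Sigma)}.
\]
Since $\|v(t,\cdot)\|_{L^2(\Sigma)}\le Ce^{-\gamma t}$ from \eqref{eq-estimate-v-basic-gamma} (or from the pointwise bound) and $\|g_t\|_{L^2(\Sigma)}\le Ce^{-\gamma t}$, the left-hand side is bounded by $Ce^{-2\gamma t}$, which yields both $\|\nabla_\theta v(t,\cdot)\|_{L^2(\Sigma)}\le Ce^{-\gamma t}$ and, since $\kappa>0$, $\|\rho^{-1}v(t,\cdot)\|_{L^2(\Sigma)}\le Ce^{-\gamma t}$. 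The constants depend only on $n$, $\gamma$, $A$, and $\Sigma$, as required.

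The main obstacle is the honest verification that $g_t\in L^2(\Sigma)$ and satisfies the growth bound \eqref{eq-assumption-growth-f}; this is exactly where the work of Section \ref{sec-boundary-regularity} is used, since a priori $v(t,\cdot)$ is only known to be bounded, not continuous up to $\partial\Sigma$, and $F(v)$ carries the dangerous factor $\rho^{\beta-2}$ which is only square-integrable in low dimensions because $\partial\Sigma$ has the right codimension (this is where the hypothesis that $\partial\Sigma$ is $(n-2)$-dimensional, and implicitly the improved lower bound $\nu>1/2$ for $n=3$ from Theorem \ref{thrm-regularity-boundary-Lip-pre}, enter). Once the integrability and growth of $g_t$ are in hand, the application of Corollary \ref{cor-regularity-boundary-Lip-weak-pre} and the energy identity are routine.
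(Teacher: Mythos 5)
The overall strategy (restrict to a slice, use Corollary \ref{cor-regularity-boundary-Lip-weak-pre} to get $H_0^1$, then test with $v$) matches the paper, but there is a genuine gap in how you handle the nonlinear term. You move $F(v)$ onto the right-hand side and need $F(v)(t,\cdot)\in L^2(\Sigma)$ with $\|F(v)(t,\cdot)\|_{L^2}\le Ce^{-\gamma t}$, but your verification rests on wrong arithmetic: for $n=3$, $\beta-2=-\tfrac32$, so the crude bound is $\rho^{-3/2}$, not $\rho^{-1/2}$, and $\rho^{-3/2}$ is not square integrable near the codimension-one set $\partial\Sigma$; likewise $\beta-2\ge 0$ only for $n\ge 6$, not $n\ge 4$ (for $n=4$, $\beta-2=-1$; for $n=5$, $\beta-2=-\tfrac12$, both failing $L^2$). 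Even the sharper bound $|F(v)|\le Ce^{-2\gamma t}\rho^{\beta+2\nu-2}$ from Lemma \ref{lemma-estimate-cylinderical-boundary} only gives $L^2$ when $\beta+2\nu-2>-\tfrac12$, which holds for $n=3$ (thanks to $\nu>\tfrac12$), $n=5$, and $n\ge 6$, but for $n=4$ it requires $\nu>\tfrac14$, and no such lower bound on $\nu$ is available at this stage. In fact the $L^p$-integrability of $F(v)$ is precisely the content of Corollary \ref{cor-F(v)-integrability}, whose proof for $3\le n\le 5$ uses the estimate \eqref{eq-estimate-gradient-v1} you are trying to prove, so invoking it here would be circular.

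The paper avoids this entirely by not treating $F(v)$ as an inhomogeneous term: it writes $F(v)=\rho^{-2}\epsilon(\rho^\beta v)\,v$ with $\epsilon(s)=sh(s)$ and absorbs it into the singular potential, so the slice equation reads $\Delta_\theta v-\rho^{-2}\big(\kappa+\epsilon(\rho^\beta v)\big)v=-\partial_{tt}v+\beta^2 v$, where $|\epsilon(\rho^\beta v)|<\kappa/2$ for $t$ large by \eqref{eq-estimate-v-basic-gamma}, and the right-hand side is bounded (and $O(e^{-\gamma t}\rho^\nu)$) by Lemma \ref{lemma-regularity-v-derivatives-boundary}; then Corollary \ref{cor-regularity-boundary-Lip-weak-pre} applies and, in the energy identity, the $\epsilon$-term stays on the coercive side and is absorbed since $\kappa+\epsilon\ge\kappa/2>0$. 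If you rewrite your argument this way — keeping $\rho^{-2}\epsilon(\rho^\beta v)v$ in the coefficient rather than in $g_t$ — the rest of your slice-by-slice testing argument goes through and yields \eqref{eq-estimate-gradient-v1}; as written, the $n=4$ case (and your stated justifications for $n=3,5$) do not.
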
 

\begin{proof} 
We write the equation \eqref{eq-basic-equation-v-new} or 
\eqref{eq-equation-v-homo} as 
\begin{equation}\label{eq-equation-v-homo-equiv} 
\Delta_\theta v-\frac{1}{\rho^2}\big(\kappa+\epsilon(\rho^\beta v)\big)v=-\partial_{tt}v+\beta^2v.\end{equation}
By $\epsilon(0)=0$ and \eqref{eq-estimate-v-basic-gamma}, we take $T_*$ large such that 
$|\epsilon(\rho^\beta v)|<\kappa/2$ for $t>T_*$. By \eqref{eq-estimate-derivative-cylinderical-boundary-basic}, 
the right-hand side of \eqref{eq-equation-v-homo-equiv} is a bounded function. 
By restricting \eqref{eq-equation-v-homo-equiv} on $\{t\}\times\Sigma$ for each $t>T_*$, we conclude 
$v(t,\cdot)\in H_0^1(\Sigma)$, by Corollary \ref{cor-regularity-boundary-Lip-weak-pre}. 
By multiplying \eqref{eq-equation-v-homo-equiv} by $-v$ and integrating over $\{t\}\times\Sigma$, 
we have  
\begin{align}\label{eq-integral-identity}
\int_{\{t\}\times\Sigma}\big[|\nabla_\theta v|^2+\big(\kappa+\epsilon(\rho^\beta v)\big)\rho^{-2}v^2\big]d\theta
= \int_{\{t\}\times\Sigma}\big(\partial_{tt}v-\beta^2v)vd\theta. %\le C\int_{\{t\}\times\Sigma}|v|d\theta.
\end{align}
By \eqref{eq-estimate-v-basic-gamma}
and \eqref{eq-estimate-derivative-cylinderical-boundary-basic}, we get 
\begin{align}\label{eq-estimate-gradient-v}
\int_{\Sigma}\big(|\nabla_\theta v(t, \cdot)|^2+\rho^{-2}v^2(t,\cdot)\big)d\theta\le C
e^{-\gamma t}\|v(t, \cdot)\|_{L^2(\Sigma)}.\end{align}
This implies the desired result. 
\end{proof}

The property 
$v(t,\cdot)\in H_0^1(\Sigma)$ plays an essential role 
and permits us to integrate by parts on each slice $\{t\}\times\Sigma$, 
in deriving \eqref{eq-integral-identity}.

\begin{cor}\label{cor-F(v)-integrability} 
Let $\Sigma\subsetneq\mathbb{S}^{n-1}$ be a 
Lipschitz domain, %$\kappa$ be given by \eqref{eq-relation-kappa}, 
and $\rho\in C^\infty(\Sigma)\cap \mathrm{Lip}(\Sigma)$ be the positive solution of 
\eqref{eq-LN-domain-eq-rho-new}-\eqref{eq-LN-domain-rho-boundary-new}. 
Assume that $v$ is a smooth solution of \eqref{eq-basic-equation-v-new} in $(T,\infty)\times\Sigma$
satisfying \eqref{eq-estimate-v-basic-gamma}. Then for any $t>T$, 
$F(v)(t, \cdot)\in L^p(\Sigma)$ for some $p>\max\{2, n/2\}$, and, for any $q\in [1,p]$,  
\begin{equation}\label{eq-estimate-nonlinear-Lp}\|F(v)(t,\cdot)\|_{L^q(\Sigma)}
\le  Ce^{-2\gamma t},\end{equation}
where $C$ is a positive constant depending only on $n$, $\gamma$, $A$, and $\Sigma$.
%Moreover,  \eqref{eq-estimate-nonlinear-Lp} holds with $p$ replaced by any $q\in [1,p)$.
\end{cor}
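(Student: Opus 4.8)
The plan is to bound $F(v)$ pointwise near $\partial\Sigma$ using the boundary decay of Lemma~\ref{lemma-estimate-cylinderical-boundary}, and then to pass from integrability in $L^1$ to $L^p$ for a suitable $p>\max\{2,n/2\}$ by interpolating that pointwise bound against the weighted $L^2$ control $\|\rho^{-1}v(t,\cdot)\|_{L^2(\Sigma)}\le Ce^{-\gamma t}$ furnished by Corollary~\ref{cor-regularity-v-H10}.

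First I would record the two pointwise estimates I need, for $t>T+1$. Since $|v|\le Ae^{-\gamma t}$ and $\rho$ is bounded on $\Sigma$, the argument $\rho^\beta v$ of $h$ stays away from $\pm1$, so $h(\rho^\beta v)$ is bounded; hence, by \eqref{eq-def-mathcal-F-new},
\begin{equation*}
|F(v)|\le C\rho^{\beta-2}v^2=C\rho^{\beta}\big(\rho^{-1}v\big)^2 .
\end{equation*}
Moreover, by Lemma~\ref{lemma-estimate-cylinderical-boundary}, $|v|\le Ce^{-\gamma t}\rho^{\nu}$, so also $|\rho^{-1}v|\le Ce^{-\gamma t}\rho^{\nu-1}$, where $\nu>0$ is the exponent of Theorem~\ref{thrm-regularity-boundary-Lip-pre}; crucially, when $n=3$ that theorem permits the choice $\nu>1/2$, since there $\kappa=15/4$ and the explicit gradient bound of Lemma~\ref{lemma-n3-gradient} may be used in \eqref{eq-optimal-requirement-nu}.

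Next, for $p>1$ to be fixed below, I would split off one factor $(\rho^{-1}v)^2$ and bound the remaining $\big((\rho^{-1}v)^2\big)^{p-1}$ by its pointwise majorant, obtaining
\begin{equation*}
|F(v)|^p\le Ce^{-2\gamma(p-1)t}\,\rho^{\,\beta p+2(p-1)(\nu-1)}\,\big(\rho^{-1}v\big)^2 .
\end{equation*}
If $p$ is chosen so that the exponent $\beta p+2(p-1)(\nu-1)$ is nonnegative, then $\rho$ raised to that power is bounded on $\Sigma$, and integrating over $\{t\}\times\Sigma$ and invoking Corollary~\ref{cor-regularity-v-H10} yields
\begin{equation*}
\|F(v)(t,\cdot)\|_{L^p(\Sigma)}^p\le Ce^{-2\gamma(p-1)t}\,\|\rho^{-1}v(t,\cdot)\|_{L^2(\Sigma)}^2\le Ce^{-2\gamma pt},
\end{equation*}
that is, $\|F(v)(t,\cdot)\|_{L^p(\Sigma)}\le Ce^{-2\gamma t}$. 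The estimate for $q\in[1,p]$ then follows from H\"older's inequality on the finite-measure space $\Sigma$, since $\|F(v)\|_{L^q(\Sigma)}\le|\Sigma|^{1/q-1/p}\|F(v)\|_{L^p(\Sigma)}$.

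It remains to produce an admissible $p$, and this is the only place where the dimension enters. The requirement $\beta p+2(p-1)(\nu-1)\ge0$ is equivalent to $\nu\ge 1-\frac{\beta p}{2(p-1)}$, and the right-hand side is increasing in $p$; at $p=p_0:=\max\{2,n/2\}$ it equals $1-n/4\le0$ when $n\ge4$ and equals $1-\beta=\tfrac12$ when $n=3$. Hence for $n\ge4$ any $\nu>0$ works, taking $p$ slightly above $\max\{2,n/2\}$ (and for $n\ge6$ the claim is in any case immediate, since then $\beta-2\ge0$ and $F(v)$ is bounded), while for $n=3$ one needs exactly $\nu>1/2$. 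The main obstacle is thus this borderline dimension $n=3$: the crude estimate coming from the mere boundedness of $v$, namely $|F(v)|\le Ce^{-2\gamma t}\rho^{\beta-2}=Ce^{-2\gamma t}\rho^{-3/2}$, lies in no $L^p$ with $p>2$, so it is essential to combine the \emph{improved} decay exponent $\nu>1/2$ --- the last assertion of Theorem~\ref{thrm-regularity-boundary-Lip-pre} --- with the Hardy-type bound on $\rho^{-1}v$ along slices from Corollary~\ref{cor-regularity-v-H10}.
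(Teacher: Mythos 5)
Your proof is correct and takes essentially the same route as the paper: both interpolate the pointwise boundary decay $|v|\le Ce^{-\gamma t}\rho^{\nu}$ of Lemma \ref{lemma-estimate-cylinderical-boundary} against the slice-wise Hardy-type bound $\|\rho^{-1}v(t,\cdot)\|_{L^2(\Sigma)}\le Ce^{-\gamma t}$ of Corollary \ref{cor-regularity-v-H10}, with the improved exponent $\nu>1/2$ from Theorem \ref{thrm-regularity-boundary-Lip-pre} carrying the borderline case $n=3$. The only difference is presentational: you run a single unified computation with the admissibility condition $\nu\ge 1-\frac{\beta p}{2(p-1)}$ for $p$ slightly above $\max\{2,n/2\}$, whereas the paper chooses the interpolation exponents dimension by dimension ($n\ge 6$, $n=5$, $n=4$, $n=3$) so that the powers of $\rho$ cancel exactly.
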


\begin{proof} Recall that $\beta=\frac12(n-2)$ by \eqref{eq-definition-constants}. 
By \eqref{eq-def-mathcal-F-new}, we have 
$$|F(v)|\le C\rho^{\frac{n-6}2}v^2=C\rho^{\frac{n-2}2}(\rho^{-1}v)^2.$$
Then, $F(v)(t, \cdot)\in L^1(\Sigma)$ and \eqref{eq-estimate-nonlinear-Lp} holds for $q=1$, 
by \eqref{eq-estimate-gradient-v1}.

If $n\ge 6$, then $F(v)$ is bounded and $|F(v)|\le Cv^2$. Hence, for each $t>T$, 
$$\|F(v)(t,\cdot)\|_{L^\infty(\Sigma)}\le C\|v(t,\cdot)\|^2_{L^\infty(\Sigma)}.$$
For $n=5$, we have $\beta=3/2$ and then
$$|F(v)|\le C\rho^{-\frac{1}2}v^2= Cv^{\frac{3}2}\big(\rho^{-1}v\big)^{\frac{1}2}.$$
Hence, for each $t>T$, %$F(v)(t, \cdot)\in L^{\frac{4}{6-n}}(\Sigma)$ and 
$$\|F(v)(t,\cdot)\|_{L^{4}(\Sigma)}
\le  C\|v(t,\cdot)\|^{\frac{3}2}_{L^\infty(\Sigma)}\, \|\rho^{-1}v(t,\cdot)\|^{\frac{1}2}_{L^2(\Sigma)}.$$
We obtain the desired estimate for $n\ge 5$ by \eqref{eq-estimate-v-basic-gamma} and 
\eqref{eq-estimate-gradient-v1}.

For $n=4$, we have  $\beta=1$. If $\nu<1/2$,
then, 
\begin{align*}|F(v)|\le C\rho^{-1}v^2
=C(\rho^{-\nu}v)^{\frac{1}{1-\nu}}(\rho^{-1}v)^{\frac{1-2\nu}{1-\nu}},\end{align*}
and hence, by \eqref{eq-estimate-cylinderical-boundary-pre} and \eqref{eq-estimate-gradient-v1},
$$\|F(v)(t,\cdot)\|_{L^{\frac{2(1-\nu)}{1-2\nu}}(\Sigma)}
\le  C(e^{-\gamma t})^{\frac{1}{1-\nu}} \|\rho^{-1}v(t,\cdot)\|^{\frac{1-2\nu}{1-\nu}}_{L^2(\Sigma)}
\le Ce^{-2\gamma t}.$$
Note that $\frac{2(1-\nu)}{1-2\nu}>2$. If $\nu\ge 1$, we get a similar estimate of 
the $L^\infty$-norm.

For $n=3$, we have  $\beta=1/2$. 
If $\nu<3/4$, 
then, 
\begin{align*}|F(v)|\le C\rho^{-\frac32}v^2
=C(\rho^{-\nu}v)^{\frac{1}{2(1-\nu)}}(\rho^{-1}v)^{\frac{3-4\nu}{2(1-\nu)}},\end{align*}
and hence, by \eqref{eq-estimate-cylinderical-boundary-pre} and \eqref{eq-estimate-gradient-v1},
$$\|F(v)(t,\cdot)\|_{L^{\frac{4(1-\nu)}{3-4\nu}}(\Sigma)}
\le  C(e^{-\gamma t})^{\frac{1}{2(1-\nu)}} \|\rho^{-1}v(t,\cdot)\|^{\frac{3-4\nu}{2(1-\nu)}}_{L^2(\Sigma)}
\le Ce^{-2\gamma t}.$$
Note that $\frac{4(1-\nu)}{3-4\nu}>2$, since $\nu>1/2$ by Theorem \ref{thrm-regularity-boundary-Lip-pre}.
If $\nu\ge 3/4$, we get a similar estimate of 
the $L^\infty$-norm.
\end{proof} 

As we see in the proof, we can take $p=\infty$ for $n\ge 6$, $p=4$ for $n= 5$, 
and some universal $p>2$ for $n=3$. 
These choices of $p$ are universal, independent of 
specific domains $\Sigma$. 
The discussion of the case $n=3$ is quite subtle. It depends 
essentially on the estimate of the H\"older index that $\nu>1/2$.

\smallskip 

In the rest of the this section, we discuss optimal regularity near boundary. 
Recall that $\kappa$ and $\beta$ are given by \eqref{eq-definition-constants}. Then, 
the positive $s$ satisfying \eqref{eq-definition-s} is in fact given by 
\begin{equation}\label{eq-definition-s-explicit}
s=\frac12(n+2).\end{equation}
%Consider the linear equation \eqref{eq-linearization-PDE-spherical0} in $(T, \infty)\times \Sigma$. 
We now improve Lemma \ref{lemma-estimate-cylinderical-boundary}. 
%With enhanced assumptions of $\Sigma$, we can obtain 
%a better decay estimate near $\partial\Sigma$. 
%Note that $s$ is the positive root of $s(s-1)=\kappa$ and $s>1$. 

\begin{lemma}\label{lemma-estimate-cylinderical-boundary-improved} 
Let $\Sigma\subsetneq\mathbb{S}^{n-1}$ be a 
Lipschitz domain, %$\kappa$ be given by \eqref{eq-relation-kappa}, 
and $\rho\in C^\infty(\Sigma)\cap \mathrm{Lip}(\Sigma)$ be the positive solution of 
\eqref{eq-LN-domain-eq-rho-new}-\eqref{eq-LN-domain-rho-boundary-new}, 
satisfying \eqref{eq-condition-gradient}, 
for some constants $c_0\ge 0$ and $\alpha\in (0,1]$. 
Assume that $v$ is a smooth solution of \eqref{eq-basic-equation-v-new} in $(T,\infty)\times\Sigma$
satisfying \eqref{eq-estimate-v-basic-gamma}, 
for some constants $\gamma\ge\beta$ and $A>0$. 
Then, %for some positive constant $\nu$ depending only on $n$ and $\Sigma$, 
\begin{equation}\label{eq-estimate-cylinderical-boundary-pre-improved} 
|v|+\rho|\nabla_{(t,\theta)}v|
\le Ce^{-\gamma t}\rho^{s}
\quad\text{in }(T+1,\infty)\times\Sigma,\end{equation}
where $C$ is a positive constant  depending only on $n$, $\gamma$, $A$ and $\Sigma$.
Moreover, 
$v$ is Lipschitz in $(T,\infty)\times\Sigma$.
\end{lemma}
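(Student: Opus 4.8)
The plan is to follow the same barrier-function scheme as in the proof of Lemma~\ref{lemma-estimate-cylinderical-boundary}, but with the improved supersolution built from $\rho^s$ (and a correction $\rho^\tau$) instead of $\rho^\nu$, exactly as the passage from Theorem~\ref{thrm-regularity-boundary-Lip-pre} to Theorem~\ref{thrm-regularity-boundary-Lip}. The key input that was not available in Lemma~\ref{lemma-estimate-cylinderical-boundary} is the sharp gradient bound \eqref{eq-condition-gradient}, which lets us replace $|\nabla_\theta\rho|^2$ by $1+c_0\rho^\alpha$ in the computation of $\mathcal L_*\rho^s$, and the relation $s(s-1)=\kappa$ in \eqref{eq-definition-s}, so that the dangerous leading term $\rho^{s-2}[\,s(s-1)-\kappa\,]$ cancels. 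Before running the barrier argument, I would first use the bootstrap already in place: by Lemma~\ref{lemma-estimate-cylinderical-boundary} we have $|v|\le Ce^{-\gamma t}\rho^\nu$ for the small exponent $\nu$, hence by \eqref{eq-def-mathcal-F-new} the nonlinear term satisfies $|F(v)|\le C\rho^{\beta-2}v^2\le Ce^{-2\gamma t}\rho^{\beta+2\nu-2}\le Ce^{-\gamma t}\rho^{a-2}$ with $a=\beta+2\nu$ (taking $\nu\le\min\{2,\beta\}$ as in that proof), so the source has a genuinely better power than $\beta-2$; iterating this once or twice one reaches any $a$ below the target, and in fact it suffices to reach $a\ge s=\frac12(n+2)$, which happens after finitely many steps since $\beta+2\nu$ strictly increases the exponent as long as $\nu<s$. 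Thus we may assume $|F(v)|\le Ce^{-\gamma t}\rho^{s-2}$ at the start (and actually $|F(v)|\le Ce^{-\gamma t}\rho^{a-2}$ for some $a>s$ once the iteration overshoots, which is what makes $\rho^s$ the correct borderline power).

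Next I would fix $t_0>T+2$, set $\Omega_0=\{(t,\theta):|t-t_0|<1,\ 0<\rho<\rho_0\}$, and, since it is not known a priori that $v$ vanishes continuously on the boundary, work with $\rho^\mu v$ for a small $\mu>0$ as in Lemma~\ref{lemma-estimate-cylinderical-boundary}: a direct computation gives $\mathcal L_*(\rho^\mu v)=\rho^\mu F$ with $\mathcal L_*$ as in \eqref{eq-operator-L-star0} and zero-order coefficient $c\le-\kappa/2<0$ in $\Omega_0$ provided $\mu+1\le n/2$, $n\mu<\kappa/2$ and $\rho_0$ small. The barrier is
\[
w=e^{-\gamma t}\big[\varepsilon(t-t_0)^2+\rho^{s}-\rho^{\tau}\big]
\]
for a fixed $\tau\in(s,s+\alpha)$ (with $\tau\le a$, so that $\tau(\tau-1)>\kappa$); computing $\mathcal L_* w$, the $(t-t_0)^2$ piece contributes a term bounded by $C\varepsilon e^{-\gamma t}$ plus the negative piece $c\rho^{-2}(t-t_0)^2$, and the $\rho^s-\rho^\tau$ piece is handled exactly as in the proof of Theorem~\ref{thrm-regularity-boundary-Lip}: after replacing $|\nabla_\theta\rho|^2$ by $1+c_0\rho^\alpha$ and using $s(s-1)=\kappa$, the leading behavior is $-\tfrac12(\tau(\tau-1)-\kappa)e^{-\gamma t}\rho^{a-2}$, which dominates $|\rho^\mu F|\le Ce^{-\gamma t}\rho^{\beta+\mu-2}$ as long as one chooses $b\le\beta+\mu$ and $\mu$ small (one also needs $s\le 2$, which holds only for $n\le 2$, so strictly one should run the argument with the truncated exponent $b=\min\{s,2\}$ on the barrier and only afterwards upgrade; more cleanly, since the interesting range is $s-2$ possibly positive, one absorbs the $\rho^{s-2}$ into a lower-order perturbation as in \eqref{eq-barrier-case1-pre0} — the same "$b$ small, $b\le 2$" device used in Lemma~\ref{lemma-estimate-cylinderical-boundary}, applied after the iteration has pushed the source exponent past $s$). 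Choosing $M$ large, $\mathcal L_*(Mw)\le\mathcal L_*(\pm\rho^\mu v)$ in $\Omega_0$; on $\partial\Omega_0$ we have $\rho^\mu v=0=w$ on $(T,\infty)\times\partial\Sigma$, while on $|t-t_0|=1$ and on $\rho=\rho_0$ the estimate \eqref{eq-estimate-v-basic-gamma} and $w\ge c\,e^{-\gamma t}$ give $\pm\rho^\mu v\le Mw$ for $M$ large independent of $t_0$. The maximum principle then yields $|v(t_0,\theta)|\le Me^{-\gamma t_0}\rho^{s-\mu}\cdot\rho^{\mu}$... — more precisely $\pm\rho^\mu v\le Mw$ at $t=t_0$ gives $|v(t_0,\theta)|\le Me^{-\gamma t_0}\rho^{s}$, and since $t_0>T+2$ is arbitrary this is the bound \eqref{eq-estimate-cylinderical-boundary-pre-improved} for $v$ itself (with $T+2$ in place of $T+1$, which is harmless).

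Finally, the gradient bound $\rho|\nabla_{(t,\theta)}v|\le Ce^{-\gamma t}\rho^s$ and the Lipschitz continuity of $v$ in $(T,\infty)\times\Sigma$ follow from the interior $C^1$-estimate applied on balls $B_{d(\theta)/2}(\theta)$ (in the $\theta$-variable) times unit $t$-intervals, exactly as at the end of the proofs of Theorem~\ref{thrm-regularity-boundary-Lip-pre} and Theorem~\ref{thrm-regularity-boundary-Lip}: on such a ball $|v|\le Ce^{-\gamma t}d^s$, the source $F(v)$ is controlled by $Ce^{-\gamma t}d^{s-2}$ times $d^2$, and scaled Schauder/$C^1$ estimates give $d\,|\nabla v|\le Ce^{-\gamma t}d^s$; since $s\ge 1$ (indeed $s=\frac12(n+2)\ge\frac52$), this yields a uniform bound on $\nabla v$ up to the boundary, hence Lipschitz continuity. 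I expect the main obstacle to be purely bookkeeping: managing the sign of $s-2$ (so that $\rho^s$ is, or is not, an $H^1$ function and so that the "perturbation vs. source" comparison $b\le\beta+\mu$, $\tau\le a$ is internally consistent) and making the iteration in the first paragraph produce a source exponent $a>s$ cleanly, so that the borderline case $a=s$ — which, as in Theorem~\ref{thrm-regularity-boundary-Lip}, is genuinely excluded — never actually occurs. Everything else is a direct transcription of the two barrier arguments already carried out in the excerpt, with \eqref{eq-condition-gradient} supplying the improvement from $\nu$ to $s$.
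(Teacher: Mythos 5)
Your overall strategy---bootstrap the decay exponent using the quadratic nonlinearity, then run a barrier built from $\rho^s-\rho^\tau$ exploiting \eqref{eq-condition-gradient} and $s(s-1)=\kappa$---is the paper's strategy in spirit, but the barrier you actually write down, $w=e^{-\gamma t}\big[\varepsilon(t-t_0)^2+\rho^s-\rho^\tau\big]$, is not a supersolution where it is needed, and this is the heart of the matter. Near $t=t_0$ the $t$-derivatives of the $\varepsilon(t-t_0)^2$ piece leave a positive contribution of size $2\varepsilon e^{-\gamma t}$ that does not vanish as $\rho\to 0$, while the only negativity produced by the $\rho^s-\rho^\tau$ piece is of order $\rho^{\tau-2}$ with $\tau>s=\tfrac12(n+2)>2$ for $n\ge 3$; hence $\mathcal L w>0$ (likewise $\mathcal L_*w>0$) on the set $\{|t-t_0|$ small, $\rho$ small$\}$, no matter how $\varepsilon$, $\rho_0$, $M$ are chosen. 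This is precisely why the barrier \eqref{eq-barrier-case1-pre0} in Lemma \ref{lemma-estimate-cylinderical-boundary} had to take $b\le 2$, and your proposed remedy (``run with $b=\min\{s,2\}$ and upgrade afterwards'') never gets past the exponent $2$ and does not explain the upgrade. The paper's proof fixes this by weighting the time term with the currently known decay: the barriers are $e^{-\gamma t}[\varepsilon(t-t_0)^2\rho^a+\rho^b]$ with $b=a+\alpha$ when $a<a+\alpha<s$ (see \eqref{eq-barrier-case2}), and $e^{-\gamma t}[\varepsilon(t-t_0)^2\rho^a+\rho^s-\rho^\tau]$ with $\tau\in(s,s+\alpha)$ when $a<s<a+\alpha$ (see \eqref{eq-barrier-case3}), so the leftover from the $(t-t_0)^2$ piece is $O(\varepsilon e^{-\gamma t}\rho^{a})=O(\varepsilon e^{-\gamma t}\rho^{b-2})$ and can be absorbed. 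Correspondingly, the exponent is raised only by $\alpha$ per step---the paper deliberately weakens $|F(v)|\le Ce^{-2\gamma t}\rho^{\beta+2a-2}$ to $Ce^{-\gamma t}\rho^{a+\alpha-2}$---whereas your iteration step $a\mapsto \beta+2a$ is incompatible with the time-term constraint as soon as the jump exceeds what the barrier can carry, and in your unweighted form it cannot reach $s$ at all. The two-case structure ($b=a+\alpha$ versus $b=s$), together with adjusting the initial exponent so that $a+\alpha=s$ never occurs, is the missing mechanism.

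Two secondary points. First, the $\rho^\mu v$/$\mathcal L_*$ device is unnecessary here: after Lemma \ref{lemma-estimate-cylinderical-boundary}, $v$ is H\"older up to the boundary and vanishes on $(T,\infty)\times\partial\Sigma$, so one compares $\pm v$ with $Mw$ directly under $\mathcal L$, as the remark following that lemma indicates; moreover, as you use it, it does not deliver the stated bound---from $\rho^\mu|v|\le Mw\sim Me^{-\gamma t}\rho^s$ you only get $|v|\le Ce^{-\gamma t}\rho^{s-\mu}$, and the $\mu$-drift term in \eqref{eq-operator-L-star0} destroys the exact cancellation $s(s-1)=\kappa$ from \eqref{eq-definition-s} that makes $\rho^s$ the borderline power. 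Second, your final step (gradient bound and Lipschitz continuity via scaled interior $C^1$-estimates, using $s\ge 1$) is correct and is how the paper concludes.
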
 

\begin{proof} 
For some $a> 0$, assume 
\begin{equation}\label{eq-iteration-rho1-improved}|v|\le Ce^{-\gamma t}\rho^a.\end{equation}
Then, by \eqref{eq-def-mathcal-F-new}, we have 
\begin{equation}\label{eq-iteration-rho2-improved}
|F(v)|\le Ce^{-2\gamma t}\rho^{\beta+2a-2}
\le Ce^{-\gamma t}\rho^{a+\alpha-2},\end{equation}
where $\alpha$ above is the minimum of $\beta$ and $\alpha$ in \eqref{eq-condition-gradient}. 
Obviously, $\alpha\in (0,1]$. It will be apparent soon why we replace the power $\beta+2a$ by a smaller one 
$a+\alpha$. 
We now proceed to prove, for some $b>a$,  
\begin{equation}\label{eq-estimate-final-linear}|v|\le 
Ce^{-\gamma t}\rho^{b}\quad\text{in }(T+1,\infty)\times \Sigma.\end{equation}
We consider two cases: 

Case 1: If $0<a<a+\alpha<s$,  we can take $b=a+\alpha$; 

Case 2: If $a<s<a+\alpha$, we can take $b=s$. 

\noindent 
%After we prove \eqref{eq-estimate-final-linear} under the condition \eqref{eq-iteration-rho1-improved},
Suppose this is already done. 
We note that \eqref{eq-iteration-rho1-improved} holds for $a=\nu$ 
by  Lemma \ref{lemma-estimate-cylinderical-boundary}. 
We can start to iterate. After finitely many steps and by adjusting the initial $a$ if necessary, 
we obtain the estimate of $v$ in \eqref{eq-estimate-cylinderical-boundary-pre-improved}. 
Then, the estimate of the gradient $\nabla_{(t,\theta)} v$  in \eqref{eq-estimate-cylinderical-boundary-pre-improved}
follows from the interior $C^{1}$-estimate, as 
in the proof of Theorem \ref{thrm-regularity-boundary-Lip-pre}.

The proof of both Case 1 and Case 2 is similar as that 
of Lemma \ref{lemma-estimate-cylinderical-boundary}. 
Fix a $t_0>T+1$ and set, for some $\rho_0$,  
$$\Omega_0=\{(t, \theta); |t-t_0|<1, 0<\rho<\rho_0\}.$$

We first consider Case 1, $0<a<a+\alpha<s$ 
For some $b>a$ and 
some $\varepsilon>0$ 
to be determined, we set
\begin{equation}\label{eq-barrier-case2}
w=e^{-\gamma t}\big[\varepsilon (t-t_0)^2\rho^a+\rho^b\big].\end{equation}
If $a(a-1)<\kappa$ and $b(b-1)<\kappa$, 
a similar computation as in the proof of Lemma \ref{lemma-estimate-cylinderical-boundary} yields
\begin{align*} 
\mathcal L w
\le C_1\varepsilon e^{-\gamma t}\rho^{a+\alpha-2}+e^{-\gamma t}\rho^{b-2}\big[-[\kappa-b(b-1)]+C_2\rho^\alpha\big].
\end{align*}
We take $b=a+\alpha$. 
By taking $\varepsilon$ and $\rho_0$ 
small, we obtain 
\begin{align*} 
\mathcal L w\le 
-\frac12[\kappa-b(b-1)]e^{-\gamma t}\rho^{b-2}\quad\text{in }\Omega_0.
\end{align*}
By \eqref{eq-iteration-rho2-improved}, we get
$$|F|\le Ae^{-\gamma t}\rho^{b-2}.$$
By taking $M$ large, we obtain 
$$\mathcal L(Mw)\le -Ae^{-\gamma t}\rho^{b-2}\le \mathcal L(\pm v)\quad\text{in }\Omega_0.$$
Next, similarly as in \eqref{eq-boundary-comparison-pre0}, we have, for $M$ sufficiently large,  
\begin{align*}%\label{eq-boundary-comparison}
\pm v\le Mw \quad\text{on }\partial\Omega_0.\end{align*}
By the maximum principle, we obtain 
$\pm v\le Mw$ in $\Omega_0$, and in particular at $t=t_0$, 
$$|v(t_0, \theta)|\le Me^{-\gamma t_0}\rho^{b}.$$
This holds for any $t_0>T+1$ and hence proves the estimate \eqref{eq-estimate-final-linear}
for any $t>T+1$.

Next, we consider Case 2, $a<s<a+\alpha$. Instead of \eqref{eq-barrier-case2}, we
consider, for some $\tau>s$, 
\begin{equation}\label{eq-barrier-case3}
w=e^{-\gamma t}\big[\varepsilon (t-t_0)^2\rho^a+\rho^s-\rho^\tau\big].\end{equation}
The proof of the estimate \eqref{eq-estimate-final-linear} for $b=s$ can be modified easily, 
with a combination of some calculations in the proofs of Case 1 and Theorem 
\ref{thrm-regularity-boundary-Lip}. 
\end{proof} 

In Lemma \ref{lemma-estimate-cylinderical-boundary-improved}, the best decay estimate near $\partial\Sigma$ 
is given by $\rho^{s}$. This is consistent with the decay estimate of eigenfunctions near $\partial\Sigma$ 
as in Theorem \ref{thrm-regularity-boundary-Lip}. 
Lemma \ref{lemma-estimate-cylinderical-boundary-improved} simply says that 
Lemma \ref{lemma-estimate-cylinderical-boundary} holds for $\nu=s$
under the additional assumption \eqref{eq-condition-gradient}. 

We now make some remarks. There are two main issues in this section, integration by parts 
on each slice $\{t\}\times\Sigma$ and the boundedness or the integrability of the nonlinear term $F(v)$, 
which are essentially needed in the proof of Theorem \ref{thrm-Asymptotic-order-gamma1-optimal}
and Lemma \ref{lemma-nonhomogeneous-linearized-eq-Lip} later on. 
If $\Sigma$ is merely Lipschitz, Lemma \ref{lemma-estimate-cylinderical-boundary} 
asserts that $v$ is globally H\"older continuous, with possibly a small H\"older index $\nu$. 
Such smallness does not allow us to integrate by parts. In Corollary \ref{cor-regularity-v-H10}, 
we proved that $v$ is $H_0^1$ when restricted to each slice $\{t\}\times\Sigma$, and hence 
we are able to perform integration by parts. Next, concerning the function $F(v)$, 
by \eqref{eq-def-mathcal-F-new} and  \eqref{eq-estimate-cylinderical-boundary-pre}, we have 
\begin{equation}\label{eq-estimate-basic-v}
|F(v)|\le C\rho^{\beta-2}v^2\le Ce^{-2\gamma t}\rho^{\beta-2+2\nu}.\end{equation}
If $n\ge 6$, then $\beta\ge 2$, and hence $F(v)$ is bounded. 
In Corollary \ref{cor-F(v)-integrability}, we proved that $F(v)$ is actually $L^p$-integrable, for some 
$p>\max\{2,n/2\}$ 
in the case $3\le n\le 5$. The assertion $p>n/2$ allows us to apply a well-known $L^\infty$-estimates. 

We next examine the case when  \eqref{eq-condition-gradient} is assumed. 
First, Lemma \ref{lemma-estimate-cylinderical-boundary-improved} asserts that $v$ is Lipschitz, 
and then allows us to perform integration by parts. 
Next, with $\nu=s$ as in Lemma \ref{lemma-estimate-cylinderical-boundary-improved}, 
we have $\beta+2\nu-2>0$, and hence $F(v)$ is always bounded by \eqref{eq-estimate-basic-v}. 
In conclusion, when \eqref{eq-condition-gradient} is assumed, 
Lemma \ref{lemma-regularity-v-derivatives-boundary} 
and Corollaries \ref{cor-regularity-v-H10}-\ref{cor-F(v)-integrability} are not needed.

\section{An Optimal Estimate in the $t$-direction}
\label{section-optimal-t}

In this section and the next, we continue to study the Yamabe equation in cylinders
as introduced in the previous section, 
and concentrate on expansions as $t\to\infty$.   
We adopt notations in the previous section. 
%We first derive an optimal estimate in the $t$-direction. 

Let $v$ be a smooth solution of \eqref{eq-basic-equation-v-new} in $(T,\infty)\times\Sigma$
satisfying 
%\begin{equation}\label{eq-estimate-v-basic-gamma}|v|\le A e^{-\gamma t}\quad\text{in }(T,\infty)\times\Sigma.
%\end{equation}
\begin{equation}\label{eq-estimate-v-fundamental}
|v|\le C_0 e^{-\beta t}\quad\text{in }(T,\infty)\times\Sigma,\end{equation}
for some positive constant $C_0$. In other words, 
\eqref{eq-estimate-v-basic-gamma} holds for $\gamma=\beta$. 
Hence, all results proved in the previous section hold for $\gamma=\beta$. 
Recall that $\beta=\frac{n-2}{2}$ by \eqref{eq-definition-constants}. 
The assumption \eqref{eq-estimate-v-fundamental}
demonstrates that $v$ decays exponentially 
at the rate $\beta$ as $t\to\infty$. Next, we improve the exponential decay in the $t$-direction
and derive an optimal decay rate. 
In the following result, $\lambda_1$ is the first eigenvalue  
as in Theorem \ref{thrm-singular-eigenvalue}. 

\begin{theorem}\label{thrm-Asymptotic-order-gamma1-optimal}
Let $\Sigma\subsetneq\mathbb{S}^{n-1}$ be a 
Lipschitz domain, %$\kappa$ be given by \eqref{eq-relation-kappa}, 
and $\rho\in C^\infty(\Sigma)\cap \mathrm{Lip}(\Sigma)$ be the positive solution of 
\eqref{eq-LN-domain-eq-rho-new}-\eqref{eq-LN-domain-rho-boundary-new}. 
Assume that $v$ is a smooth solution of \eqref{eq-basic-equation-v-new} in $(T,\infty)\times\Sigma$
satisfying \eqref{eq-estimate-v-fundamental}. 
Then, 
\begin{equation}\label{eq-U2-0-optimal}
|v|\le Ce^{-\gamma_1 t}\rho^{\nu}
\quad\text{in } (T_*,\infty)\times\Sigma,\end{equation} 
where $\gamma_1=\sqrt{\lambda_1+\beta^2}$, 
$\nu$ is the positive constant as in Theorem \ref{thrm-regularity-boundary-Lip-pre}, and 
$T_*$ and $C$ are positive constants depending only on $n$, $C_0$, and $\Sigma$. 
\end{theorem}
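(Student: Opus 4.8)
The plan is to improve the exponential decay rate from $\beta$ to $\gamma_1 = \sqrt{\lambda_1 + \beta^2}$ by a separation-of-variables / ODE-on-each-Fourier-mode argument in the $t$-direction, combined with the boundary decay already established in Section \ref{sec-boundary-regularity}. First I would fix a large $T_*$ and, for each $t > T_*$, expand $v(t,\cdot)$ in the $L^2(\Sigma)$-orthonormal eigenbasis $\{\phi_i\}$ from Theorem \ref{thrm-singular-eigenvalue}, writing $v(t,\theta) = \sum_{i\ge 1} a_i(t)\phi_i(\theta)$ with $a_i(t) = \int_\Sigma v(t,\theta)\phi_i(\theta)\,d\theta$. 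Corollary \ref{cor-regularity-v-H10} guarantees $v(t,\cdot)\in H^1_0(\Sigma)$ with exponentially small norm, which is exactly what legitimizes integrating $\mathcal L v = F(v)$ against $\phi_i$ on each slice: since $L_\Sigma \phi_i = -\lambda_i\phi_i$ weakly and $v(t,\cdot)\in H^1_0(\Sigma)$, integration by parts gives
\begin{equation*}
a_i''(t) - (\lambda_i + \beta^2)\,a_i(t) = f_i(t), \qquad f_i(t) = \int_\Sigma F(v)(t,\theta)\,\phi_i(\theta)\,d\theta.
\end{equation*}
By Corollary \ref{cor-F(v)-integrability}, $\|F(v)(t,\cdot)\|_{L^q(\Sigma)} \le C e^{-2\beta t}$ for a suitable $q$, and since $\|\phi_i\|_{L^{q'}(\Sigma)}$ is finite (indeed $\phi_i$ is bounded by Theorem \ref{thrm-regularity-boundary-Lip-pre}), Hölder's inequality yields $|f_i(t)| \le C_i e^{-2\beta t}$. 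Because $2\beta > \beta$ and $2\beta$ will exceed $\gamma_1$ for the relevant $i$ after possibly enlarging $T_*$ (or, more carefully, one treats the forcing as a genuine perturbation of strictly faster decay), this is a nonhomogeneous linear second-order ODE with exponentially decaying right-hand side.

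Next I would solve these ODEs with the constraint that $a_i$ is bounded on $(T_*,\infty)$ — which follows from \eqref{eq-estimate-v-fundamental} together with $\|\phi_i\|_{L^2}=1$, giving $|a_i(t)|\le C_0 e^{-\beta t}$. For the homogeneous equation $a'' - (\lambda_i+\beta^2)a = 0$ the bounded solution is a multiple of $e^{-\sqrt{\lambda_i+\beta^2}\,t}$; Duhamel's formula then gives
\begin{equation*}
|a_i(t)| \le C\, e^{-\sqrt{\lambda_i+\beta^2}\,t} + C\, e^{-2\beta t} \le C\, e^{-\gamma_1 t},
\end{equation*}
using $\sqrt{\lambda_i+\beta^2}\ge \gamma_1$ for all $i\ge 1$ and $2\beta \ge \gamma_1$ (which holds since $\lambda_1 \le 3\beta^2$ would be needed — if not, one instead notes that the $2\beta$ term is harmless because it is dominated after finitely many iterations, or one bootstraps: the first pass improves the global bound to $e^{-\gamma_1' t}$ for some $\gamma_1' \in (\beta,\gamma_1]$, which improves the bound on $F(v)$ to $e^{-2\gamma_1' t}$, and iterating reaches $\gamma_1$). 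Summing over $i$ — using the $H^1_0$ bound of \eqref{eq-estimate-gradient-v1} to control the tail via $\lambda_i\to\infty$ and a Weyl-type estimate, or more simply by re-running the slice-wise energy estimate with the improved $t$-decay of the forcing — yields $\|v(t,\cdot)\|_{L^2(\Sigma)} \le C e^{-\gamma_1 t}$.

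Finally, to upgrade the $L^2$ bound in $t$ to the pointwise bound with the boundary weight $\rho^\nu$ claimed in \eqref{eq-U2-0-optimal}, I would feed $\|v(t,\cdot)\|_{L^2(\Sigma)} \le C e^{-\gamma_1 t}$ back into the barrier construction of Lemma \ref{lemma-estimate-cylinderical-boundary}: that lemma's proof produces, on each cube $\Omega_0 = \{|t-t_0|<1,\ 0<\rho<\rho_0\}$, a supersolution $w = e^{-\gamma t}[\varepsilon(t-t_0)^2 + \rho^b]$ dominating $\pm\rho^\mu v$, and it only uses an $L^\infty$ (hence also an $L^p$ for $p > n/2$) bound on the slices together with the bound on $F(v)$ — both of which now carry the decay rate $\gamma_1$. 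Running that argument verbatim with $\gamma_1$ in place of $\beta$ gives $|v| \le C e^{-\gamma_1 t}\rho^\nu$ on $(T_*,\infty)\times\Sigma$.

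\textbf{Main obstacle.} The delicate point is the low-dimensional case $n=3,4,5$: the separation-of-variables/Duhamel step requires integrating $F(v)\phi_i$ over each slice and controlling it, but $F(v)$ is not bounded and a priori not even $L^2$ near $\partial\Sigma$. Making this rigorous is precisely what Corollaries \ref{cor-regularity-v-H10} and \ref{cor-F(v)-integrability} are for — the former to justify integration by parts against $\phi_i\in H^1_0(\Sigma)$ on each slice, and the latter to secure $F(v)(t,\cdot)\in L^q$ with $\|F(v)(t,\cdot)\|_{L^q}\le Ce^{-2\beta t}$ for $q$ large enough that the Hölder pairing with $\phi_i$ converges. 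The second subtlety is whether $2\beta \ge \gamma_1$, i.e. whether the first eigenvalue is small enough; if not, the bootstrap must be iterated finitely many times, each pass raising the $t$-decay exponent until the homogeneous rate $\gamma_1$ is the binding constraint, and one must check the iteration terminates (it does, since the forcing exponent always strictly exceeds the solution exponent by a fixed gap until $\gamma_1$ is reached).
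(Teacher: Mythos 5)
Your overall architecture (slice-wise spectral information in $H^1_0(\Sigma)$ via Corollaries \ref{cor-regularity-v-H10} and \ref{cor-F(v)-integrability}, then a barrier to restore the weight $\rho^\nu$) is the right one, but two steps as written have genuine gaps. First, the mode-by-mode Duhamel argument does not close: the bounds $|f_i(t)|\le C_i e^{-2\beta t}$ carry constants $C_i$ depending on $\|\phi_i\|_{L^{q'}}$, and nothing in the paper (no Weyl-type growth estimate for the singular operator $L$) lets you sum $\sum_i|a_i(t)|^2$ to recover $\|v(t,\cdot)\|_{L^2}\le Ce^{-\gamma_1 t}$; your fallback ``re-run the slice-wise energy estimate'' is in fact the argument that must be carried out, and it is what the paper does. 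The paper avoids both the summation problem and your $2\beta$-versus-$\gamma_1$ bootstrap entirely by not treating $F(v)$ as an additive forcing: writing $F(v)=\rho^{-2}v\,\epsilon(\rho^\beta v)$ with $|\epsilon(\rho^\beta v)|\le \kappa C_*e^{-\beta t}$, the equation becomes homogeneous in $v$ with a multiplicatively perturbed potential, the Rayleigh-quotient bound gives $y\,y''\ge(\lambda_1+\beta^2)y^2-\lambda_1C_*e^{-\beta t}y^2$ for $y(t)=\|v(t,\cdot)\|_{L^2(\Sigma)}$, and comparison with $z(t)=Ce^{-\gamma_1 t}\arctan t$ yields the clean rate $\gamma_1$ in one pass (your additive-forcing route only reaches $\min\{\gamma_1,2\beta\}$ per pass, needs finitely many iterations, and risks a spurious factor of $t$ when a forcing rate hits $\gamma_1$ exactly).

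Second, the final step is circular as stated: Lemma \ref{lemma-estimate-cylinderical-boundary} takes the pointwise hypothesis \eqref{eq-estimate-v-basic-gamma}, i.e.\ a sup bound $|v|\le Ae^{-\gamma t}$, not an $L^2(\Sigma)$ bound, and at that stage you only have the sup bound at rate $\beta$ and the $L^2$ slice bound at rate $\gamma_1$. You are missing the intermediate $L^2\to L^\infty$ step: the paper applies a De Giorgi--Moser estimate to \eqref{eq-equation-v-homo-1} (with cutoffs in $t$ only, exploiting $v(t,\cdot)\in H_0^1(\Sigma)$ and the favorable sign of the $\rho^{-2}$ potential) to get $\sup_{\{t\}\times\Sigma}|v|\le C\|v\|_{L^2((t-1,t+1)\times\Sigma)}\le Ce^{-\gamma_1 t}$, and only then invokes Lemma \ref{lemma-estimate-cylinderical-boundary} with $\gamma=\gamma_1$ to produce the weight $\rho^\nu$. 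Add that local $L^\infty$ estimate (and replace the mode-wise Duhamel step by the energy/differential-inequality argument for $y(t)$, or justify the summation) and your proof goes through.
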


\begin{proof} 
We adopt notations in the proof of Lemma \ref{lemma-regularity-v-derivatives-boundary} and 
Corollary \ref{cor-regularity-v-H10}, and divide the proof into several steps. 

{\it Step 1.} We write \eqref{eq-basic-equation-v-new} as \eqref{eq-equation-v-homo}, i.e., 
\begin{equation}\label{eq-equation-v-homo-1} 
\partial_{tt}v+\Delta_\theta v-\frac{1}{\rho^2}\big(\kappa+\epsilon(\rho^\beta v)\big)v-\beta^2v=0.\end{equation}
For simplicity, we write $v(t)=v(t,\cdot)$.
For any $t\in (T,\infty)$, we have $v(t)\in H_0^1(\Sigma)$ by Corollary \ref{cor-regularity-v-H10}. 
We write 
\eqref{eq-integral-identity} as 
\begin{align}\label{eq-integral-identity1}\int_{\Sigma}\big[ v_{tt}(t) v(t)
-\beta^2 v^2(t)\big]d\theta
= \int_{\Sigma}\big[|\nabla_\theta v(t)|^2+\big(\kappa+\epsilon(\rho^\beta v(t))\big)\rho^{-2} v^2(t)\big]d\theta.
\end{align}

{\it Step 2.} We claim that there exists a $T_*>T$ such that, 
for any $t>T_*$,
\begin{equation}\label{eq-estimate-L2-hat-v-Lip1-optimal}\|v(t,\cdot)\|_{L^2(\Sigma)}\le Ce^{-\gamma_1 t}.
\end{equation}
To prove \eqref{eq-estimate-L2-hat-v-Lip1-optimal}, set
$$y(t)=\Big[\int_{\Sigma}v^2(t)d\theta\Big]^{1/2}.$$
Then, 
$$y(t)y'(t)=\int_{\Sigma} v(t) v_t(t)d\theta,$$
and 
$$y(t)y''(t)+[y'(t)]^2=\int_{\Sigma}
\big[ v(t) v_{tt}(t)+ v_t^2(t)\big]d\theta.$$
The Cauchy inequality implies that, if  $y(t)>0$, then
$$[y'(t)]^2\le \int_{\Sigma} v_t^2(t)d\theta,$$
and hence
$$y(t)y''(t)\ge \int_{\Sigma} v(t) v_{tt}(t)d\theta.$$
By \eqref{eq-definition-c} and \eqref{eq-estimate-v-fundamental}, we have 
$$|\epsilon(\rho^\beta v(t))|\le C\rho^\beta |v(t)|\le \kappa C_* e^{-\beta t}.$$
We take $T_*$ so that $C_*e^{-\beta T_*}<1/2$. 
Since $v(t)\in H_0^1(\Sigma)$, we have, for any $t>T_*$, 
\begin{align*}
&\int_{\Sigma}\big[|\nabla_\theta v(t)|^2+\big(\kappa+\epsilon(\rho^\beta v(t))\big)\rho^{-2} v^2(t)\big]d\theta\\
%&\qquad \ge \int_{\Sigma}\big[|\nabla_\theta v(t)|^2+\kappa(1-C_*e^{-\beta t})\rho^{-2} v^2(t)\big]d\theta\\
&\qquad \ge (1-C_*e^{-\beta t})\int_{\Sigma}\big[|\nabla_\theta v(t)|^2+\kappa\rho^{-2} v^2(t)\big]d\theta\\
&\qquad\ge  \lambda_{1}(1-C_*e^{-\beta t})\int_{\Sigma}v^2(t)d\theta.
\end{align*}
%Since $v(t)\in H_0^1(\Sigma)$ for  any $t>T$, then 
%$$\int_{\Sigma}\big[|\nabla_\theta v(t)|^2+\kappa\rho^{-2} v^2(t)\big]d\theta\ge \lambda_{1}
%\int_{\Sigma}v^2(t)d\theta.$$
By simple substitutions in \eqref{eq-integral-identity1}, we get 
$$y(t)y''(t)-\big(\lambda_{1}+\beta^2)y^2(t)+\lambda_{1}C_*e^{-\beta t}y^2(t)\ge 0.$$
With $\gamma_1=\sqrt{\lambda_{1}+\beta^2}$, set 
$$L_*w=w''-(\gamma^2_1-\lambda_{1}C_*e^{-\beta t})w.$$ 
If $y(t)>0$, we obtain 
$$L_{*}y\ge 0.$$
Note that $\lambda_{1}C_*e^{-\beta T_*}<\gamma^2_1/2$.

Set 
\begin{align}\label{eq-definition-comparison-z11-optimal}
z(t)=Ce^{-\gamma_1 t}\arctan t.\end{align}
A straightforward computation yields 
\begin{align*}
L_*z&=Ce^{-\gamma_1 t}\Big[-\frac{2t}{(1+t^2)^2}-\frac{2\gamma_1}{1+t^2}+\lambda_{1}C_*e^{-\beta t}\arctan t\Big]\\
&\le \frac{Ce^{-\gamma_1 t}}{1+t^2}\big[-2\gamma_1+\lambda_{1}C_*(1+t^2)e^{-\beta t}\arctan t\big]<0,
\end{align*}
if $t>T_*$ for  $T_*$ large further. 
Then, for some constant $C$ sufficiently large, we have $z(T_*)\ge y(T_*)$ and, if $y(t)>0$,  
\begin{equation}\label{eq-comparison-operators1-optimal}L_{*}(z-y)(t)\le 0.\end{equation}
Note that $y(t)\to 0$ as $t\to\infty$. If $z-y$ is negative somewhere on $(T_*,\infty)$, then the minimum of $z-y$ 
is negative and attained at some $t_*\in (T_*,\infty)$. Hence, $y(t_*)>z(t_*)>0$ 
and \eqref{eq-comparison-operators1-optimal}
holds at $t_*$. However, $(z-y)(t_*)<0$, $(z-y)'(t_*)=0$, 
and $(z-y)''(t_*)\ge0$, contradicting \eqref{eq-comparison-operators1-optimal}. Therefore, for any $t>T_*$,
$y(t)\le z(t)$, and hence \eqref{eq-estimate-L2-hat-v-Lip1-optimal} holds. 

{\it Step 3.} We now prove \eqref{eq-U2-0-optimal} for any $t>T_*+1$.
First, by applying the De Giorgi-Moser $L^\infty$-estimates to the equation \eqref{eq-equation-v-homo-1}, 
we obtain, for any $t>T_*+1$, 
\begin{equation}\label{eq-interior-estimate-cylinder-pre1-optimal}
\sup_{\{t\}\times\Sigma}|v|\le 
C\|v\|_{L^2((t-1,t+1)\times\Sigma)}
\le Ce^{-\gamma_1 t}.\end{equation}
To get estimate \eqref{eq-interior-estimate-cylinder-pre1-optimal}, 
we only need to introduce cutoff functions along the $t$-direction, 
since $v(t)\in H_0^1(\Sigma)$ for any $t>T_*$.  
Also note that the coefficient of the term $\rho^{-2}v$ in the equation \eqref{eq-equation-v-homo-1} has a good sign.
Hence, the corresponding term can be simply dropped in integration by parts.  
With \eqref{eq-interior-estimate-cylinder-pre1-optimal}, 
we have the desired results by applying Lemma \ref{lemma-estimate-cylinderical-boundary} 
with $\gamma=\gamma_1$. 
%We point out that we can obtain the $L^\infty$-estimate \eqref{eq-interior-estimate-cylinder-pre1-optimal}
%from the $L^2$-estimate \eqref{eq-estimate-L2-hat-v-Lip1-optimal} by combining 
%an interior $L^\infty$-estimate and an argument based on the maximum principle. 
%Refer to Step 3 in the proof of Lemma \ref{lemma-nonhomogeneous-linearized-eq-Lip}. 
\end{proof}

In the proof of Theorem \ref{thrm-Asymptotic-order-gamma1-optimal}, we performed integration by parts 
on each slice $\{t\}\times\Sigma$, the justification of which is provided by the assertion 
that $v(t,\cdot)\in H_0^1(\Sigma)$ by 
Corollary \ref{cor-regularity-v-H10}. 
We also point out that the exponential decay rate $\gamma_1$ in $t$ in \eqref{eq-U2-0-optimal} is optimal. 
%If $\nu=s$, this is an optimal estimate both in $t$ and near $\partial\Sigma$. 

Under the additional assumption that  
$\rho$ also satisfies \eqref{eq-condition-gradient}, 
Theorem \ref{thrm-Asymptotic-order-gamma1-optimal} holds for $\nu=s$. 
In this case, \eqref{eq-U2-0-optimal} has the form
\begin{equation*}%\label{eq-estimate-v-optimal}
\big|v|\le Ce^{-\gamma_1 t}\rho^{s}\quad\text{in }(T_*,\infty)\times\Sigma.\end{equation*} 
This is an optimal estimate both in $t$ and near $\partial\Sigma$.

\section{Asymptotic Expansions}\label{sec-iterations}

In this section, we continue to study the Yamabe equation in  cylinders.  
We first discuss the corresponding linear equations and then study 
the nonlinear equation \eqref{eq-basic-equation-v-new}.

Let $L$ be the operator given by \eqref{eq-def-L-Sigma}. 
By Theorem \ref{thrm-singular-eigenvalue}, 
there exist an increasing sequence of positive constants $\{\lambda_i\}_{i\ge 1}$, divergent to $\infty$, 
and an $L^2(\Sigma)$-orthonormal basis  
$\{\phi_i\}_{i\ge 1}$ 
such that, for $i\ge 1$,  $\phi_i\in C^\infty(\Sigma)\cap H_0^1(\Sigma)$ and 
$L \phi_i=-\lambda_i\phi_i.$
By Theorem \ref{thrm-regularity-boundary-Lip-pre}, $\phi_i\in C^\nu(\bar\Sigma)$ for some $\nu>0$. 
In the following, 
we fix such a sequence $\{\phi_i\}$. 

Let $\mathcal L$ be the operator given by \eqref{eq-def-mathcal-L-new1}. Then, 
\begin{equation}\label{eq-def-mathcal-L-new1a}
\mathcal Lv=\partial_{tt}v+Lv
-\beta^2 v.\end{equation}
For each $i\ge 1$ and any $\psi=\psi(t)\in C^2(\mathbb R)$, we write 
\begin{equation}\label{eq-U2-01a}\mathcal L(\psi \phi_i)=(L_i\psi)\phi_i,\end{equation} 
where $L_i$ is given by 
\begin{equation}\label{eq-U2-01b}
L_i\psi=\psi_{tt}-(\lambda_i+\beta^2)\psi.\end{equation}
Then, the kernel $\mathrm{Ker}(L_i)$ has a basis  
$e^{-\gamma_i t}$ and $e^{\gamma_i t}$, with 
\begin{equation}\label{eq-exponential-decay-order} 
\gamma_i=\sqrt{\lambda_i+\beta^2}.\end{equation} 
%and 
%\begin{equation}\label{eq-exponential-growth-order} 
%\tau_i=\frac12\big(\sqrt{(n-2)^2+4\lambda_i}+(n-2)\big).\end{equation} 
We note that $e^{-\gamma_i t}$ decays exponentially and $e^{\gamma_i t}$ grows exponentially as $t\to\infty$, 
for each $i\ge 1$,  
and that 
$\{\gamma_i\}_{i\ge 1}$ is increasing and divergent to $\infty$. 
%By Theorem \ref{thrm-regularity-boundary-Lip}, 
%if $\rho$ satisfies \eqref{eq-condition-gradient}, then 
%$\phi_i\in \mathrm{Lip}(\bar\Sigma)$ and 
%\begin{equation}\label{eq-eigenfunctions}|\phi_i|+\rho|\nabla_\theta\phi_i|\le C\rho^{s}
%\quad\text{in }\Sigma,\end{equation}
%where $C$ is a positive constant depending only on $n$, $\lambda_i$, and $\Sigma$.  

%Recall that $I=(T,\infty)$ for some $T>0$. 
For each fixed $i\ge 1$, we now analyze the linear equation 
\begin{equation}\label{eq-linearization-ODE-i}
L_i\psi=f\quad\text{on }(T,\infty).
\end{equation}
We study a class of solutions of \eqref{eq-linearization-ODE-i}, which converge to zero as $t\to\infty$. 

\begin{lemma}\label{lemma-decay-sol-Ljw=f-ODE}
Let 
$\gamma>0$ be a constant,  $m\geq 0$ be an integer, 
and $f$ be a smooth function on $(T,\infty)$, satisfying, 
for any $t>T$, 
$$|f(t)|\leq C t^me^{-\gamma t}.$$
Let $\psi$ be a smooth solution of \eqref{eq-linearization-ODE-i} on $(T,\infty)$  such that 
$\psi(t)\to 0$ as $t\to\infty$. 
Then, 
for any $t>T$,
\begin{equation*}
|\psi(t)|\leq\left\{
\begin{aligned}
&Ct^me^{-\gamma t}\hspace{1cm}\text{if } \gamma< \gamma_i,\\
&Ct^{m+1}e^{-\gamma t}\hspace{0.6cm}\text{if } \gamma=\gamma_i,
\end{aligned}
\right.
\end{equation*}
and there exists a constant $c$ such that
\begin{equation*}
|\psi(t)-ce^{-\gamma_it}|\leq
Ct^me^{-\gamma t}\hspace{0.6cm}\text{if } \gamma> \gamma_i.
\end{equation*}
\end{lemma}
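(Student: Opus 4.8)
The plan is to solve the second-order linear ODE \eqref{eq-linearization-ODE-i} explicitly by variation of parameters, using the fundamental solutions $e^{\pm\gamma_i t}$ of the homogeneous equation $L_i\psi=0$, and then estimate the resulting integrals against the hypothesis $|f(t)|\le Ct^m e^{-\gamma t}$. Writing the general solution as
\begin{equation*}
\psi(t)=a_+e^{\gamma_i t}+a_-e^{-\gamma_i t}+\psi_p(t),
\end{equation*}
where $\psi_p$ is a particular solution built from $f$, the decay condition $\psi(t)\to 0$ forces a relation on the constants. The cleanest choice of particular solution is
\begin{equation*}
\psi_p(t)=\frac{1}{2\gamma_i}\int_{T}^{t}\big(e^{\gamma_i(t-\sigma)}-e^{-\gamma_i(t-\sigma)}\big)f(\sigma)\,d\sigma,
\end{equation*}
but for the estimates it is better to split the kernel and integrate the growing exponential from $t$ to $\infty$ (which converges precisely because $f$ decays) and the decaying exponential from $T$ to $t$; after absorbing a homogeneous term this gives the representation
\begin{equation*}
\psi(t)=ce^{-\gamma_i t}-\frac{1}{2\gamma_i}\int_{t}^{\infty}e^{\gamma_i(t-\sigma)}f(\sigma)\,d\sigma
-\frac{1}{2\gamma_i}\int_{T}^{t}e^{-\gamma_i(t-\sigma)}f(\sigma)\,d\sigma
\end{equation*}
for a suitable constant $c$.

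Next I would estimate the two integrals. For the first (backward) integral $\int_t^\infty e^{\gamma_i(t-\sigma)}|f(\sigma)|\,d\sigma\le C\int_t^\infty e^{\gamma_i(t-\sigma)}\sigma^m e^{-\gamma\sigma}\,d\sigma$: since $e^{\gamma_i t}e^{-\gamma_i\sigma}e^{-\gamma\sigma}$ is integrable on $(t,\infty)$ for any $\gamma>0$, a standard substitution $\sigma=t+\tau$ and the bound $(t+\tau)^m\le C t^m(1+\tau^m)$ yield a bound $Ct^m e^{-\gamma t}$ in all cases. The second (forward) integral is where the trichotomy appears: $\int_T^t e^{-\gamma_i(t-\sigma)}\sigma^m e^{-\gamma\sigma}\,d\sigma=e^{-\gamma_i t}\int_T^t \sigma^m e^{(\gamma_i-\gamma)\sigma}\,d\sigma$. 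If $\gamma>\gamma_i$ the exponent $\gamma_i-\gamma$ is negative, the integral converges as $t\to\infty$, and one gets a contribution $\le Ce^{-\gamma_i t}$ plus a remainder of size $Ct^m e^{-\gamma t}$ (the tail from $t$ to $\infty$); the convergent part can be absorbed into the constant $c$, giving the claimed estimate $|\psi(t)-ce^{-\gamma_i t}|\le Ct^m e^{-\gamma t}$. If $\gamma=\gamma_i$ the integrand is $\sigma^m$, whose integral is $\sim t^{m+1}$, producing the factor $t^{m+1}e^{-\gamma t}$. If $\gamma<\gamma_i$ the exponent is positive and $\int_T^t\sigma^m e^{(\gamma_i-\gamma)\sigma}\,d\sigma\le Ct^m e^{(\gamma_i-\gamma)t}$ by an elementary estimate (integration by parts, or simply bounding $\sigma^m\le t^m$ on the dominant portion), giving the contribution $Ct^m e^{-\gamma t}$.

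Finally I would verify that the constant term $a_+e^{\gamma_i t}$ really is forced to vanish: since all the integral pieces above are $o(1)$ (indeed decay) and $e^{-\gamma_i t}\to 0$, the hypothesis $\psi(t)\to 0$ rules out any nonzero $e^{\gamma_i t}$ coefficient, legitimizing the representation used. I expect the only mildly delicate point to be the bookkeeping in the case $\gamma>\gamma_i$ — correctly identifying which part of the forward integral is a genuine multiple of $e^{-\gamma_i t}$ (to be merged into $c$) versus which part is the $O(t^m e^{-\gamma t})$ remainder — but this is just splitting $\int_T^t=\int_T^\infty-\int_t^\infty$ and estimating the tail. Everything else is routine one-dimensional calculus, so no serious obstacle is anticipated.
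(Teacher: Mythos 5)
Your proposal is correct and follows essentially the same route as the paper: an explicit variation-of-parameters representation with the kernel $e^{\pm\gamma_i t}$, estimation of the two convolution integrals against $t^m e^{-\gamma t}$, and elimination of the growing mode $e^{\gamma_i t}$ via the hypothesis $\psi(t)\to 0$. The only cosmetic difference is that the paper switches to a second particular solution (with $\int_t^\infty e^{\gamma_i s}f(s)\,ds$) when $\gamma>\gamma_i$, while you keep one representation and split $\int_T^t=\int_T^\infty-\int_t^\infty$, absorbing the convergent piece into $c$ — the bookkeeping is equivalent, and your integral estimates (which the paper omits as standard) are sound.
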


\begin{proof} We first construct a particular solution with a desired decay rate as $t\to\infty$. 
We claim that 
there exists a smooth solution $\psi_{*}$ of 
\eqref{eq-linearization-ODE-i}  on $(T,\infty)$  such that,  
for any $t>T$,
$$|\psi_{*}(t)|\le \begin{cases} Ct^me^{-\gamma t} &\text{if }\gamma\neq\gamma_i,\\ 
Ct^{m+1}e^{-\gamma t}&\text{if }\gamma=\gamma_i.\end{cases}$$
The proof is standard. Since the kernel $\mathrm{Ker}(L_i)$ has a basis  
$e^{-\gamma_i t}$ and $e^{\gamma_i t}$, 
a particular solution $\psi_*$ of \eqref{eq-linearization-ODE-i}  can be given by the following: 
for $\gamma\le\gamma_i$, 
$$\psi_*(t)=-\frac{e^{-\gamma_i t}}{2\gamma_i}\int_{T}^te^{\gamma_is}f(s)ds-
\frac{e^{\gamma_it}}{2\gamma_i}\int_{t}^\infty e^{-\gamma_i s}f(s)ds,$$
and, for $\gamma>\gamma_i$, 
$$\psi_*(t)=\frac{e^{-\gamma_i t}}{2\gamma_i}\int_{t}^\infty e^{\gamma_is}f(s)ds-
\frac{e^{\gamma_it}}{2\gamma_i}\int_{t}^\infty e^{-\gamma_i s}f(s)ds.$$
This particular solution satisfies the desired estimate. 

Next, any solution $\psi$ of \eqref{eq-linearization-ODE-i} can be written as 
$$\psi=c_1e^{-\gamma_it}+c_2e^{\gamma_it}+\psi_*,$$
for some constants $c_1$ and $c_2$, and the function $\psi_*$ just constructed. 
Since $\psi(t)\to0$ as $t\to\infty$, then $c_2=0$. 
Therefore, we have  the desired estimate.
\end{proof}

Now, we study the linear equation
\begin{equation}\label{eq-linearization-PDE-spherical}\mathcal Lv=f\quad\text{in }
(T,\infty)\times\Sigma.\end{equation}
We discuss asymptotic expansions of solutions of \eqref{eq-linearization-PDE-spherical}
as $t\to \infty$. 

\begin{lemma}\label{lemma-nonhomogeneous-linearized-eq-Lip}
Let $\Sigma\subsetneq\mathbb{S}^{n-1}$ be a 
Lipschitz domain, %$\kappa$ be given by \eqref{eq-relation-kappa}, 
$\rho\in C^\infty(\Sigma)\cap \mathrm{Lip}(\bar\Sigma)$ be the positive 
solution of \eqref{eq-LN-domain-eq-rho-new}-\eqref{eq-LN-domain-rho-boundary-new}, 
and  $p$ be a constant such that $p>\max\{2,n/2\}$. 
Let $\nu$ be the constant in Theorem \ref{thrm-regularity-boundary-Lip-pre}, 
and 
$f$ be a smooth function in $(T,\infty)\times\Sigma$, satisfying, 
for some positive constants $\gamma$, $A$, and some integer $m\geq 0$,  
\begin{align}\label{eq-assumption-bound-f} 
|f|\le At^me^{-\gamma t}\rho^{\nu-2}\quad\text{in } (T,\infty)\times\Sigma,\end{align}
and,  for any  $t\in (T,\infty)$, 
\begin{align}\label{eq-assumption-bound-f-integral} 
\|f(t,\cdot)\|_{L^{2}(\Sigma)}
+\|f(t,\cdot)\|_{L^{p}(\Sigma)}\le At^me^{-\gamma t}.\end{align}
Let $v\in C^\infty((T,\infty)\times\Sigma)\cap C^\nu((T,\infty)\times\bar\Sigma)$ 
be a solution of \eqref{eq-linearization-PDE-spherical} 
in $(T,\infty)\times\Sigma$ such that 
%$v_t\in C^\nu((T,\infty)\times\bar\Sigma)$, 
%$v=v_t=0$ on $(T,\infty)\times\partial\Sigma$, and 
$v=0$ on $(T,\infty)\times\partial\Sigma$, 
$v(t,\theta)\to 0$ as $t\to\infty$ uniformly for $\theta\in\Sigma$, 
and $v(t,\cdot)\in H_0^1(\Sigma)$ and $(Lv)(t,\cdot)\in L^2(\Sigma)$, for any $t>T$. 

$\mathrm{(i)}$ If $\gamma\le \gamma_1$, then, 
for any $(t,\theta)\in (T+1,\infty)\times\Sigma$, 
$$|v|\le \begin{cases} Ct^me^{-\gamma t}\rho^\nu &\text{if }\gamma<\gamma_1,\\ 
Ct^{m+1}e^{-\gamma t}\rho^\nu&\text{if }\gamma=\gamma_1.\end{cases}$$

$\mathrm{(ii)}$ If $\gamma_l<\gamma\le \gamma_{l+1}$ for some positive integer $l$, 
then, %there exist constants $a_1, \cdots, a_k$, such that, 
for any $(t,\theta)\in (T+1,\infty)\times\Sigma$, 
$$\Big|v-\sum_{i=1}^lc_ie^{-\gamma_i t}\phi_i\Big|\le 
\begin{cases} Ct^me^{-\gamma t}\rho^\nu &\text{if }\rho_l<\gamma<\rho_{l+1},\\ 
Ct^{m+1}e^{-\gamma t}\rho^\nu&\text{if }\gamma=\rho_{l+1},\end{cases}$$
for some constant $c_i$, for 
$i=1, \cdots, l$. 
\end{lemma}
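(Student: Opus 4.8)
The plan is to expand $v$ in the $L^2(\Sigma)$-orthonormal eigenbasis $\{\phi_i\}$ on each slice $\{t\}\times\Sigma$ and reduce the PDE \eqref{eq-linearization-PDE-spherical} to the family of ODEs \eqref{eq-linearization-ODE-i} treated in Lemma \ref{lemma-decay-sol-Ljw=f-ODE}. First I would set, for each $i\ge1$ and each $t>T$,
$$a_i(t)=\int_\Sigma v(t,\cdot)\phi_i\,d\theta,\qquad f_i(t)=\int_\Sigma f(t,\cdot)\phi_i\,d\theta.$$
Using $v(t,\cdot)\in H_0^1(\Sigma)$ and $(Lv)(t,\cdot)\in L^2(\Sigma)$, together with the fact that $L\phi_i=-\lambda_i\phi_i$ weakly and $\phi_i\in H_0^1(\Sigma)$, integration by parts on the slice gives $\int_\Sigma (Lv)\phi_i\,d\theta=\int_\Sigma v(L\phi_i)\,d\theta=-\lambda_i a_i(t)$; this is exactly where the hypothesis that $v$ restricts to $H_0^1(\Sigma)$ on each slice and vanishes on $(T,\infty)\times\partial\Sigma$ is used, so that no boundary term appears despite the singular coefficient $\kappa/\rho^2$. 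Differentiating under the integral sign in $t$ (justified by smoothness of $v$ in the interior and the decay hypotheses) and using \eqref{eq-def-mathcal-L-new1a}, I get $a_i''(t)-(\lambda_i+\beta^2)a_i(t)=f_i(t)$, i.e. $L_ia_i=f_i$ on $(T,\infty)$. From \eqref{eq-assumption-bound-f-integral} and the $L^2$-orthonormality, $|f_i(t)|\le\|f(t,\cdot)\|_{L^2(\Sigma)}\le At^me^{-\gamma t}$, and since $v(t,\theta)\to0$ uniformly we get $a_i(t)\to0$ as $t\to\infty$. Thus Lemma \ref{lemma-decay-sol-Ljw=f-ODE} applies to each $a_i$.

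Next I would assemble the slice estimate. For part (i), $\gamma\le\gamma_1\le\gamma_i$ for every $i$, so Lemma \ref{lemma-decay-sol-Ljw=f-ODE} gives $|a_i(t)|\le C_it^me^{-\gamma t}$ (or $C_it^{m+1}e^{-\gamma t}$ if $\gamma=\gamma_i$, which only happens for finitely many $i$). For part (ii), when $\gamma_l<\gamma\le\gamma_{l+1}$, for $i\le l$ we have $\gamma>\gamma_i$ so Lemma \ref{lemma-decay-sol-Ljw=f-ODE} yields a constant $c_i$ with $|a_i(t)-c_ie^{-\gamma_i t}|\le C_it^me^{-\gamma t}$, while for $i\ge l+1$ we have $\gamma\le\gamma_i$ and $|a_i(t)|\le C_it^me^{-\gamma t}$ (with the logarithmic loss only at $i=l+1$ when $\gamma=\gamma_{l+1}$). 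Summing, the function $w=v-\sum_{i=1}^l c_ie^{-\gamma_i t}\phi_i$ has slice-$L^2$ norm bounded by $Ct^me^{-\gamma t}$ (resp. $Ct^{m+1}e^{-\gamma t}$) — provided the constants $C_i$ can be controlled uniformly in $i$. This uniformity is the first delicate point: I would obtain it by noting that the particular-solution formula in Lemma \ref{lemma-decay-sol-Ljw=f-ODE} produces $C_i$ that depend on $\gamma_i$ in a controlled way (the prefactors $1/(2\gamma_i)$ and the gap $|\gamma-\gamma_i|$ are uniformly bounded away from the bad values once $i$ is large, since $\gamma_i\to\infty$), and the homogeneous part is governed by the initial condition $a_i(T_*)$, which is controlled by Bessel's inequality $\sum_i a_i(T_*)^2\le\|v(T_*,\cdot)\|_{L^2}^2<\infty$ — actually I would run the ODE comparison from a base point and use the $L^2$ bound on $v$ at that base point, together with $\gamma\le\gamma_i$, to absorb the homogeneous contribution into the stated bound.

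Finally, I would promote the slice-$L^2$ estimate on $w$ to the pointwise estimate with the $\rho^\nu$ boundary decay. The function $w$ solves $\mathcal Lw=f$ in $(T,\infty)\times\Sigma$ (since each $e^{-\gamma_i t}\phi_i$ is in the kernel of $\mathcal L$ by \eqref{eq-U2-01a}–\eqref{eq-U2-01b}), vanishes on the lateral boundary, and lies in $H_0^1(\Sigma)$ on each slice. By the De Giorgi–Nash–Moser $L^\infty$ estimate applied on $t$-slabs $(t-1,t+1)\times\Sigma$ — using $p>\max\{2,n/2\}$ for the $L^p$ hypothesis \eqref{eq-assumption-bound-f-integral} on $f$, and dropping the good-sign term $\kappa\rho^{-2}w$ in the energy inequality exactly as in Step 3 of the proof of Theorem \ref{thrm-Asymptotic-order-gamma1-optimal} — I get $\sup_{\{t\}\times\Sigma}|w|\le C\|w\|_{L^2((t-1,t+1)\times\Sigma)}+C\sup_{(t-1,t+1)}\|f(\cdot)\|_{L^p(\Sigma)}\le Ct^me^{-\gamma t}$ (resp. with $t^{m+1}$). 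Then I feed the $t$-decay rate $\gamma$ and the bound \eqref{eq-assumption-bound-f} on $f$ into Lemma \ref{lemma-estimate-cylinderical-boundary} applied to $w$, which upgrades the bound to $|w|\le Ct^me^{-\gamma t}\rho^\nu$ near $\partial\Sigma$ (the extra polynomial factor $t^m$ is harmless since Lemma \ref{lemma-estimate-cylinderical-boundary} is proved via barriers $e^{-\gamma t}\rho^b$ and the $(t-t_0)^2$ term, which tolerate a slowly varying prefactor — one just replaces $e^{-\gamma t}$ by $t^me^{-\gamma t}$ throughout, picking up only lower-order corrections absorbable by shrinking $\rho_0$). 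I expect the main obstacle to be precisely the uniform-in-$i$ control of the constants in the ODE step and the legitimacy of termwise differentiation and summation of the eigenfunction expansion; this is handled by the $L^2$-orthonormality, Bessel's inequality for the base-point data, and the divergence $\gamma_i\to\infty$, which makes all but finitely many terms decay strictly faster than $e^{-\gamma t}$.
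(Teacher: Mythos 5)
Your overall architecture matches the paper's at the beginning and the end: projecting onto the low eigenmodes and invoking Lemma \ref{lemma-decay-sol-Ljw=f-ODE} to extract the constants $c_i$ (using $v(t,\cdot)\in H_0^1(\Sigma)$ and $(Lv)(t,\cdot)\in L^2(\Sigma)$ to integrate by parts on slices), then upgrading a slice-$L^2$ bound on the remainder to an $L^\infty$ bound via De Giorgi--Moser with $p>\max\{2,n/2\}$, and finally obtaining the $\rho^\nu$ decay by rerunning the barrier argument of Lemma \ref{lemma-estimate-cylinderical-boundary}; those steps are fine. The genuine gap is in how you produce the slice-$L^2$ estimate for the remainder. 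You expand $v$ in the \emph{full} eigenbasis, bound each mode $a_i$ by Lemma \ref{lemma-decay-sol-Ljw=f-ODE}, and then ``sum, provided the constants $C_i$ can be controlled uniformly in $i$.'' Uniform-in-$i$ control is not the right condition: by Parseval, $\|v-\sum_{i\le l}c_ie^{-\gamma_i t}\phi_i\|_{L^2(\Sigma)}^2=\sum_{i\le l}|a_i(t)-c_ie^{-\gamma_i t}|^2+\sum_{i>l}|a_i(t)|^2$, and a bound $|a_i(t)|\le Ct^me^{-\gamma t}$ holding for every $i$ with the same $C$ says nothing about the infinite tail; you need square-summability of the mode constants. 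Your proposed source of summability --- the prefactors $1/(2\gamma_i)$ and the gaps $\gamma_i-\gamma$ --- only helps if you feed in the crude per-mode bound $|f_i(s)|\le\|f(s,\cdot)\|_{L^2(\Sigma)}$, and then convergence of $\sum_i\gamma_i^{-2}(\gamma_i-\gamma)^{-2}$ requires a growth estimate for the eigenvalues of the singular operator $-L$ on a Lipschitz spherical domain, which is not established in this paper (and, by Weyl-type heuristics, would in any case fail to give summability in higher dimensions). This is exactly the issue the paper flags after Lemma \ref{lemma-particular-solutions}, where it notes that the infinite-series route of \cite{jiang} rests on such eigenvalue growth estimates. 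The Bessel-inequality-at-a-base-point device does handle the homogeneous parts $c_1^{(i)}e^{-\gamma_i(t-T_*)}$, but not the particular-solution parts, and the remark that ``all but finitely many terms decay strictly faster than $e^{-\gamma t}$'' does not control their sum.

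The paper avoids this entirely: it subtracts only the finitely many projections $v_i\phi_i$, $i=1,\dots,l$, sets $\widehat v=v-\sum_{i\le l}v_i\phi_i$, and estimates $y(t)=\|\widehat v(t,\cdot)\|_{L^2(\Sigma)}$ directly, using the orthogonality $\widehat v(t)\perp\phi_1,\dots,\phi_l$ and the variational inequality $\int_\Sigma\big(|\nabla_\theta\widehat v|^2+\kappa\rho^{-2}\widehat v^2\big)\,d\theta\ge\lambda_{l+1}\int_\Sigma\widehat v^2\,d\theta$ to derive $L_{l+1}y\ge-C_0t^me^{-\gamma t}$ wherever $y>0$, and then a scalar comparison with $z=Ct^{m_*}e^{-\gamma t}$. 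If you want to keep your modewise expansion, the repair is to retain the Parseval structure of $f$: bound the Duhamel kernels uniformly by $e^{-\gamma_{l+1}|t-s|}$ for $i>l$ and apply Minkowski's integral inequality so that $\big(\sum_{i>l}|\psi_{*,i}(t)|^2\big)^{1/2}\le\int e^{-\gamma_{l+1}|t-s|}\|f(s,\cdot)\|_{L^2(\Sigma)}\,ds$, which is in effect a continuous version of the paper's energy argument. As written, the summation step is a genuine gap.
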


\begin{proof} The proof 
is similar as that of Theorem \ref{thrm-Asymptotic-order-gamma1-optimal}. 
We consider (ii) only,  $\gamma_l<\gamma\le \gamma_{l+1}$ for some positive integer $l$. 
The proof consists of several steps.

{\it Step 1.} For each $i= 1, \cdots, l$, and $t>T$,
set 
$$v_{i}(t)=\int_{\Sigma}v(t, \theta)\phi_{i}(\theta)d\theta,\quad 
f_{i}(t)=\int_{\Sigma}f(t, \theta)\phi_{i}(\theta)d\theta.$$
Then, by \eqref{eq-assumption-bound-f-integral}, 
\begin{equation}\label{eq-condition-f-i}|f_{i}(t)|\le A_it^me^{-\gamma t}.\end{equation}
By multiplying \eqref{eq-linearization-PDE-spherical} by $\phi_{i}$ and integrating over $\{t\}\times\Sigma$
for any $t>T$, we obtain 
$$L_iv_{i}= f_{i}\quad\text{on }(T,\infty).$$
We point out that the integration is performed on $\{t\}\times\Sigma$ for each $t>T$ and 
that the boundary integrals vanish by $v(t,\cdot), \phi_i\in H_0^1(\Sigma)$. 
%{\color{red} (We need $w(t, \cdot)\in H^1_0(\Sigma)$.)}
Since $v(t,\theta)\to 0$ as $t\to\infty$ uniformly for $\theta\in\Sigma$, 
then $v_{i}(t)\to 0$ as $t\to\infty$. 
Note that $\gamma> \gamma_i$ for $i=1, \cdots, l$. 
By applying Lemma \ref{lemma-decay-sol-Ljw=f-ODE} to $L_i$, we conclude that
there exists a constant $c_{i}$ for $i=1, \cdots, l$ such that, 
for any $t>T$,
%\begin{equation}\label{eq-U2-03-0c-Lip}|v_{i}(t)|\le  \begin{cases} C_it^me^{-\gamma t} &\text{if }\gamma<\gamma_i,\\ 
%C_it^{m+1}e^{-\gamma t}&\text{if }\gamma=\gamma_i,\end{cases}\end{equation} 
\begin{equation}\label{eq-U2-03-0b-Lip}
|v_{i}(t)-c_{i}e^{-\gamma_i t}|\leq C_it^me^{-\gamma t}.
\end{equation}
%We point out that $\psi_i^+$ is independent of $j$. 
%In particular, we have, for any $t\in(t_0,\infty)$,
%\begin{equation}\label{eq-U2-03-0c}|\varphi_{ij}(t)|\le  \begin{cases} Ct^me^{-\gamma t} &\text{if }\gamma<\rho_i,\\ 
%Ct^{m+1}e^{-\gamma t}&\text{if }\gamma=\rho_i,\\
%Ce^{-\rho_i t}&\text{if }\gamma>\rho_i.\end{cases}\end{equation}
Set 
\begin{align}\label{eq-def-hat-w-Lip}\begin{split}\widehat v(t,\theta)&=v(t,\theta)
-\sum_{i=1}^{l}v_{i}(t)\phi_{i}(\theta),\\
\widehat f(t,\theta)&=f(t,\theta)
-\sum_{i=1}^{l}f_{i}(t)\phi_{i}(\theta).\end{split}\end{align}
A simple calculation yields 
\begin{equation}\label{eq-U2-03-1-Lip}\mathcal L\widehat v=\widehat f\quad\text{in }(T,\infty)\times\Sigma.
\end{equation}
%The rest of the proof is similar as that of (i), with $v$ there replaced by $\widehat v$. 
For simplicity, we write $\widehat v(t)=\widehat v(t,\cdot)$ and $\widehat f(t)=\widehat f(t,\cdot)$.

{\it Step 2.} Set $m_*=m$ if $\gamma_l<\gamma<\gamma_{l+1}$ 
and $m_*=m+1$ if $\gamma=\gamma_{l+1}$. 
We proceed to prove that there exists a $T_*>T$ such that, for any $t>T_*$,  
\begin{equation}\label{eq-estimate-L2-hat-v-Lip}\|\widehat v(t,\cdot)\|_{L^2(\Sigma)}\le Ct^{m_*}e^{-\gamma t}.
\end{equation}
First, we note that $(L\widehat v)(t)\in L^2(\Sigma)$ and $\widehat v(t)\in H_0^1(\Sigma)$ for each $t>T$. 
By multiplying \eqref{eq-U2-03-1-Lip} by $\widehat v(t)$ and integrating over $\Sigma$, we obtain 
\begin{align*}&\int_{\Sigma}\big[\widehat v_{tt}(t)\widehat v(t)
-\beta^2\widehat v^2(t)\big]d\theta
- \int_{\Sigma}\big[|\nabla_\theta\widehat v(t)|^2+\kappa\rho^{-2}\widehat v^2(t)\big]d\theta
=\int_{\Sigma}\widehat f(t)\widehat v(t)d\theta.
\end{align*}
Set
$$y(t)=\Big[\int_{\Sigma}\widehat v^2(t)d\theta\Big]^{1/2}.$$
Then, 
$$y(t)y'(t)=\int_{\Sigma} \widehat v(t) \widehat v_t(t)d\theta,$$
and 
$$y(t)y''(t)+[y'(t)]^2=\int_{\Sigma}
\big[ \widehat v(t) \widehat v_{tt}(t)+ \widehat v_t^2(t)\big]d\theta.$$
The Cauchy inequality implies that, if  $y(t)>0$, then
$$[y'(t)]^2\le \int_{\Sigma} \widehat v_t^2(t)d\theta,$$
and hence
$$y(t)y''(t)\ge \int_{\Sigma} \widehat v(t) \widehat v_{tt}(t)d\theta.$$
Since $\widehat v(t)\perp \phi_i$ in $L^2(\Sigma)$ for each $i=1, \cdots, l$ and any $t>T$, then 
$$\int_{\Sigma}\big[|\nabla_\theta\widehat v(t)|^2+\kappa\rho^{-2}\widehat v^2(t)\big]d\theta\ge \lambda_{l+1}
\int_{\Sigma}\widehat v^2(t)d\theta.$$
Moreover, by \eqref{eq-assumption-bound-f-integral}, 
$$\Big|\int_\Sigma \widehat f(t)\widehat v(t)d\theta\Big|
=\Big|\int_\Sigma  f(t) \widehat v(t)d\theta\Big|
\le C_0t^me^{-\gamma t}\|\widehat v(t)\|_{L^2(\Sigma)}.$$
A simple substitution yields 
$$y(t)y''(t)-\big(\lambda_{l+1}+\beta^2\big)y^2(t)\ge -C_0t^me^{-\gamma t}y(t).$$
If $y(t)>0$, we obtain 
$$L_{l+1}y\ge -C_0t^me^{-\gamma t}.$$

Set 
\begin{align}\label{eq-definition-comparison-z11}
z(t)=Ct^{m_*}e^{-\gamma t}.\end{align}
If $\gamma_l<\gamma<\gamma_{l+1}$,  then 
\begin{align*}
L_{l+1}(t^me^{-\gamma t})%&
=t^me^{-\gamma t}\big[\gamma^2-\gamma^2_{l+1}-mt^{-1}(2\gamma 
-(m-1)t^{-1})\big] %\\&
\le -\frac12(\gamma^2_{l+1}-\gamma^2)t^me^{-\gamma t},
\end{align*}
if $t>T_*$ for some $T_*$ large. If $\gamma=\gamma_{l+1}$, then 
\begin{align*}
L_{l+1}(t^{m+1}e^{-\gamma t})=t^me^{-\gamma t}\big[-(m+1)(2\gamma
-mt^{-1})\big]\le -(m+1)\gamma t^me^{-\gamma t},
\end{align*}
if $t>T_*$ for some $T_*$ large. 
Hence, for some constant $C$ sufficiently large, we have $z(T_*)\ge y(T_*)$ and, if $y(t)>0$,  
\begin{equation}\label{eq-U2-03-2-Lip}L_{l+1}(z-y)(t)\le 0.\end{equation}
Note that $y(t)\to 0$ as $t\to\infty$. If $z-y$ is negative somewhere on $(T_*,\infty)$, then the minimum of $z-y$ 
is negative and attained at some $t_*\in (T_*,\infty)$. Hence, $y(t_*)>z(t_*)>0$ and \eqref{eq-U2-03-2-Lip}
holds at $t_*$. However, $(z-y)(t_*)<0$, $(z-y)'(t_*)=0$, 
and $(z-y)''(t_*)\ge0$, contradicting \eqref{eq-U2-03-2-Lip}. Therefore, for any $t>T_*$,
$y(t)\le z(t)$, and hence \eqref{eq-estimate-L2-hat-v-Lip} holds. 

{\it Step 3.} We now claim that, for any $(t,\theta)\in (T_*+2, \infty)\times\Sigma$, 
\begin{equation}\label{eq-U2-03-0d-Lip}|\widehat v(t,\theta)|\le Ct^{m_*}e^{-\gamma t}\rho^\nu.\end{equation}
By the definition of $\widehat f$, we have, for any $(t,\theta)\in (T_*, \infty)\times\Sigma$, 
\begin{equation}\label{eq-estimate-Lsup-hat-f-Lip}
|\widehat f(t,\theta)|\le Ct^me^{-\gamma t}\rho^{\nu-2},\end{equation}
and 
\begin{equation}\label{eq-estimate-L2-hat-f-Lip}
\|\widehat f(t,\cdot)\|_{L^p(\Sigma)}\le Ct^me^{-\gamma t}.\end{equation}
By $p>n/2$ and applying the $L^\infty$-estimates to the equation \eqref{eq-U2-03-1-Lip}, 
we obtain, for any $t>T_*+1$, 
\begin{equation}\label{eq-interior-estimate-cylinder-pre1-optimal-hat}
\sup_{\{t\}\times\Sigma}|\widehat v|\le 
C\big\{\|\widehat v\|_{L^2((t-1,t+1)\times\Sigma)}+\|\widehat f\|_{L^p((t-1,t+1)\times\Sigma)}\big\}
\le Ct^{m_*}e^{-\gamma t}.\end{equation}
%To get the estimate \eqref{eq-interior-estimate-cylinder-pre1-optimal-hat}, 
%we only need to introduce cutoff functions along the $t$-direction, 
%since $\widehat v(t)\in H_0^1(\Sigma)$ for any $t>T_*$.  
%Also note that the coefficient of the term $\rho^{-2}v$ in the equation \eqref{eq-U2-03-1-Lip} has a good sign.
%Hence, the corresponding term can be simply dropped in integration by parts.  
With \eqref{eq-interior-estimate-cylinder-pre1-optimal-hat}, 
we have \eqref{eq-U2-03-0d-Lip} by 
proceeding similarly as in the proof of Lemma \ref{lemma-estimate-cylinderical-boundary}.

%In the following, we assume $\nu'\in (\nu, \nu+2]$. 

{\it Step 4.} We now finish the proof. We write
\begin{align}\label{eq-expansion-order-l-Lip}
v(t,\theta)-\sum_{i=1}^lc_{i}e^{-\gamma_it}\phi_{i}(\theta)
=\sum_{i=1}^l\big(v_{i}(t)-c_{i}e^{-\gamma_it}\big)\phi_{i}(\theta)  
+\widehat v(t,\theta).\end{align}
By combining  %\eqref{eq-U2-03-0c-Lip}, 
\eqref{eq-U2-03-0b-Lip} and  
\eqref{eq-U2-03-0d-Lip}, 
we have the desired result. 
%We note that $\psi_i^+$ in the first summation is independent of $j$. 
%Hence, we can write the first sum as 
%\begin{equation}\label{eq-expansion-order-l}\sum_{i=0}^l\sum_{j=1}^{d_i}c_{ij}\psi_i^+(t)X_{ij}(\theta)
%=\sum_{i=0}^l\psi_i^+(t)\Big[\sum_{j=1}^{d_i}v_{ij}X_{ij}(\theta)\Big].\end{equation}
%For each fixed $i$, the expression in the bracket is a spherical harmonic of degree $i$. 
\end{proof}

We point out that the stated assumption $p>n/2$ is essentially needed in order to apply the 
$L^\infty$-estimates 
to get \eqref{eq-U2-03-0d-Lip} from the $L^2$-estimate   \eqref{eq-estimate-L2-hat-v-Lip}. 
Later on, we will apply Lemma \ref{lemma-nonhomogeneous-linearized-eq-Lip} to functions $f=F(v)$ 
as in Corollary \ref{cor-F(v)-integrability}.
%\end{remark} 

\smallskip 

Next, we return to the nonlinear equation \eqref{eq-basic-equation-v-new} 
and discuss higher order expansions of its solutions along the $t$-direction. 
We first describe our strategy. 
Let $v$ be a smooth solution of \eqref{eq-basic-equation-v-new} in $(T,\infty)\times\Sigma$
satisfying \eqref{eq-estimate-v-fundamental}. 
We will apply Lemma 
\ref{lemma-nonhomogeneous-linearized-eq-Lip} to the equation $\mathcal Lv=F(v)$. 
Since $F(v)$ is nonlinear in $v$, we need to apply Lemma 
\ref{lemma-nonhomogeneous-linearized-eq-Lip} successively. In each step, we aim to 
get a decay estimate of $F(v)$, with a decay rate better than that of $v$. 
Then, we can subtract expressions with the lower decay rates generated by Lemma 
\ref{lemma-nonhomogeneous-linearized-eq-Lip} to improve the decay rate of $v$.
To carry out this process, we make two preparations. 

As the first preparation, we introduce the {\it index set} $\mathcal I$ by defining
\begin{align}\label{eq-def-index-Lip}%\begin{split}
\mathcal I=\Big\{\sum_{i\ge 1} m_i\gamma_i;\, m_i\in \mathbb Z_+\text{ with finitely many }m_i>0\Big\}.%\end{split} 
\end{align}
In other words, $\mathcal I$ is the collection of linear combinations of finitely many $\gamma_1, \gamma_2, \cdots$ 
with positive integer coefficients. 
It is possible that some $\gamma_i$ can be written 
as a linear combination of some of $\gamma_1, \cdots, \gamma_{i-1}$ with positive integer coefficients, 
whose sum is at least two. 
%In the following, we denote $\mathcal I$ by $\{\mu_i\}$, a positive increasing sequence with $\mu_i\to\infty$. 

For the second preparation, we need to construct particular solutions. 
%Refer to Remark \ref{rmrk-solvability-Lp} for the range of $p$. 
%We first construct particular solutions with a desired decay rate 
%for special nonhomogeneous terms. 

\begin{lemma}\label{lemma-particular-solutions} 
Let $\nu$ be as in Theorem \ref{thrm-regularity-boundary-Lip-pre}, 
$m$ be a nonnegative integer, $\gamma$ be a positive constant, 
and $h_0, h_1, \cdots, h_m$ $\in L^2(\Sigma)$. 
Then, there exist $w_0, w_1, \cdots, w_m, w_{m+1}\in H_0^1(\Sigma)$ 
such that 
\begin{equation}\label{eq-particular-solutions} 
\mathcal L\Big(\sum_{j=0}^{m+1} t^je^{-\gamma t}w_j\Big)=\sum_{j=0}^{m} t^je^{-\gamma t}h_j
\quad\text{in }\mathbb R\times\Sigma.\end{equation}
Moreover, if $|h_j|\le A\rho^{\nu-2}$ for any $j=0, 1, \cdots, m$ and some constant $A>0$, 
then $w_j\in C^\nu(\bar\Sigma)$ with $w_j=0$ on $\partial\Sigma$ for $j=0, 1, \cdots, m+1$. 
\end{lemma}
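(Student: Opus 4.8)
The idea is to match coefficients of $t^j e^{-\gamma t}$ on both sides of \eqref{eq-particular-solutions} and solve the resulting finite system of elliptic equations on $\Sigma$ recursively, starting from the top power. Recall from \eqref{eq-def-mathcal-L-new1a} that $\mathcal L v=\partial_{tt}v+Lv-\beta^2 v$. Applying $\mathcal L$ to $t^j e^{-\gamma t}w_j$ and using $\partial_{tt}(t^j e^{-\gamma t})=\gamma^2 t^j e^{-\gamma t}-2\gamma j\, t^{j-1}e^{-\gamma t}+j(j-1)t^{j-2}e^{-\gamma t}$, one finds
\begin{equation*}
\mathcal L\Big(\sum_{j=0}^{m+1}t^je^{-\gamma t}w_j\Big)=\sum_{j=0}^{m+1}t^je^{-\gamma t}\Big[(L+\gamma^2-\beta^2)w_j-2\gamma(j+1)w_{j+1}+(j+2)(j+1)w_{j+2}\Big],
\end{equation*}
with the convention $w_{m+2}=w_{m+3}=0$. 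So \eqref{eq-particular-solutions} becomes the system, for $j=m+1,m,\dots,1,0$,
\begin{equation*}
(L+\gamma^2-\beta^2)w_j=h_j+2\gamma(j+1)w_{j+1}-(j+2)(j+1)w_{j+2},
\end{equation*}
where $h_{m+1}:=0$. First I would solve the top equation $(L+\gamma^2-\beta^2)w_{m+1}=0$; then, descending in $j$, each right-hand side is already determined by the previously constructed $w_{j+1},w_{j+2}\in L^2(\Sigma)$ together with $h_j\in L^2(\Sigma)$, hence lies in $L^2(\Sigma)$.

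The solvability of each equation $(L+\gamma^2-\beta^2)w_j=g_j$ with $g_j\in L^2(\Sigma)$ and $w_j\in H_0^1(\Sigma)$ is exactly the Fredholm alternative of Theorem \ref{thrm-singular-Fredholm}, applied with $\lambda=\gamma^2-\beta^2$. There are two cases. If $\gamma^2-\beta^2\notin\{\lambda_i\}$, part (i) of that theorem gives a unique solution for every right-hand side, so the recursion runs with no obstruction. If $\gamma^2-\beta^2=\lambda_k$ for some $k$ — i.e. $\gamma=\gamma_k$ — then part (ii) requires the right-hand side to be $L^2$-orthogonal to the eigenspace of $\lambda_k$; this is where care is needed. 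Taking $w_{m+1}$ to be any eigenfunction for $\lambda_k$ (or $0$), one descends and at each step must check the orthogonality condition on $h_j+2\gamma(j+1)w_{j+1}-(j+2)(j+1)w_{j+2}$. The resonant terms coming from $w_{j+1},w_{j+2}$ need not be orthogonal to the eigenspace, so in general one cannot simply solve with the same power of $t$: instead one absorbs the offending component into the solution at one higher power of $t$. Concretely, I would redo the ansatz allowing $t^{m+1}$ (already present) and, when the projection of the forcing onto the $\lambda_k$-eigenspace is nonzero at level $j$, shift that projection up to the coefficient $w_{j+1}$ using the identity $(L+\gamma^2-\beta^2)$ kills eigenfunctions while $-2\gamma(j+1)$ multiplies them — i.e. pick the eigenfunction-component of $w_{j+1}$ to cancel the bad projection of the level-$j$ forcing. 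Chasing this down the recursion from $j=m+1$ shows the system is consistent term by term; this bookkeeping in the resonant case is the main obstacle, though it is routine linear algebra once set up. (Alternatively, since the statement only asserts \emph{existence} of some $w_j$, one may note that $\gamma$ is at our disposal in applications via the index set $\mathcal I$ in \eqref{eq-def-index-Lip}, but for the lemma as stated the resonant case must be handled.)

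Finally, for the regularity claim: assume $|h_j|\le A\rho^{\nu-2}$ for all $j$. I would run the recursion top-down and check inductively that each constructed $w_j$ satisfies $|w_j|\le C\rho^{\nu}$ and hence the next right-hand side $g_{j-1}=h_{j-1}+2\gamma j\,w_j-(j+1)j\,w_{j+1}$ obeys $|g_{j-1}|\le C\rho^{\nu}\le C\rho^{\nu-2}$ (since $\rho$ is bounded). The base case $w_{m+1}$ is an eigenfunction or $0$, for which the bound $|w_{m+1}|\le C\rho^{\nu}$ is Theorem \ref{thrm-regularity-boundary-Lip-pre} (estimate \eqref{eq-eigenfunctions-Lip}). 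For the inductive step, $w_j\in H_0^1(\Sigma)$ solves $(L+\lambda)w_j=g_j$ weakly with $|g_j|\le C\rho^{\nu-2}$ and $g_j\in C^\infty(\Sigma)$ (smoothness of $h_j$ is assumed; the $w$'s are smooth in $\Sigma$ by elliptic interior regularity, Lemma \ref{lemma-singular-existnece-weak}), so Theorem \ref{thrm-regularity-boundary-Lip-pre} applies verbatim and gives $|w_j|\le C(\|w_j\|_{L^2(\Sigma)}+C)\rho^{\nu}$ together with $w_j\in C^\nu(\bar\Sigma)$ and $w_j=0$ on $\partial\Sigma$ (using $\nu<1$, which holds in the Lipschitz generality; if $\nu\ge1$ one gets $\mathrm{Lip}(\bar\Sigma)$, which also vanishes on the boundary). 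This completes the construction and the regularity statement.
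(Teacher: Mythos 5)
Your proposal is correct and follows essentially the same route as the paper: match coefficients of $t^je^{-\gamma t}$, solve the resulting descending system on $\Sigma$ via Theorem \ref{thrm-singular-Fredholm}, and obtain the boundary regularity from Theorem \ref{thrm-regularity-boundary-Lip-pre}. Your handling of the resonant case $\gamma^2-\beta^2=\lambda_k$ (choosing the $\mathcal E$-component of $w_{j+1}$ to cancel the $\mathcal E$-projection of the level-$j$ forcing, using that $L+\gamma^2-\beta^2$ annihilates eigenfunctions) is exactly the paper's construction, which it organizes as the explicit splitting $h_j=\widehat h_j+\widetilde h_j$ with the $\widehat{\ }$-components determined by a purely algebraic descending recursion.
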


\begin{proof} For any nonnegative integer $j$, a straightforward calculation yields 
\begin{align*}
\mathcal L(t^je^{-\gamma t}w_j)=t^je^{-\gamma t}\big(Lw_j+(\gamma^2-\beta^2)w_j\big)%\\&\qquad 
-2\gamma jt^{j-1}e^{-\gamma t}w_j+j(j-1)t^{j-2}e^{-\gamma t}w_j.
\end{align*}
We consider two cases. 

{\it Case 1: $\gamma^2-\beta^2\neq \lambda_l$ for any $l$.} 
For $w_0, w_1, \cdots, w_m$ to be determined, we consider 
\begin{equation*}
\mathcal L\Big(\sum_{j=0}^{m} t^je^{-\gamma t}w_j\Big)=\sum_{j=0}^{m} t^je^{-\gamma t}h_j
\quad\text{in }\mathbb R\times\Sigma.\end{equation*}
Note that 
\begin{align*}
\mathcal L\Big(\sum_{j=0}^{m} t^je^{-\gamma t}w_j\Big)
&=\sum_{j=0}^{m} t^je^{-\gamma t}\big(Lw_j+(\gamma^2-\beta^2)w_j\big)\\
&\qquad -\sum_{j=0}^{m-1}2(j+1)\gamma t^{j}e^{-\gamma t}w_{j+1}
%&\qquad 
+\sum_{j=0}^{m-2}(j+2)(j+1)t^{j}e^{-\gamma t}w_{j+2}.
\end{align*}
Hence, we take $w_{m+2}=w_{m+1}=0$ and solve for $w_m, \cdots, w_1, w_0$ 
inductively, such that, for $j=m, \cdots, 1, 0$, 
\begin{align*}
Lw_j+(\gamma^2-\beta^2)w_j
-2(j+1)\gamma w_{j+1}
+(j+2)(j+1)w_{j+2}=h_j.
\end{align*}
Theorem \ref{thrm-singular-Fredholm}(i) 
implies the existence of a unique $w_j\in H_0^1(\Sigma)$. 

{\it Case 2: $\gamma^2-\beta^2= \lambda_l$ for some $l$.} Let $\mathcal E$ be the eigenspace 
corresponding to $\lambda_l$. For each $j=0, 1, \cdots, m$, write 
$$h_j=\widehat h_j+\widetilde h_j,$$
with $\widehat h_j\in \mathcal E$ and $\widetilde h_j\in \mathcal E^{\perp}$. 
First, for $\widetilde w_0, \widetilde w_1, \cdots, \widetilde w_m$ to be determined, we consider 
\begin{equation*}
\mathcal L\Big(\sum_{j=0}^{m} t^je^{-\gamma t}\widetilde w_j\Big)=\sum_{j=0}^{m} t^je^{-\gamma t}\widetilde h_j
\quad\text{in }\mathbb R\times\Sigma.\end{equation*}
%To solve this equation, we take $\widetilde  w_{m+2}=\widetilde w_{m+1}=0$ 
%and solve for $\widetilde w_m, \cdots, \widetilde w_1, \widetilde w_0$ 
%inductively, such that, for $j=m, \cdots, 1, 0$, 
%\begin{align*}
%L\widetilde w_j+(\gamma^2-\beta^2)\widetilde w_j
%-2(j+1)\gamma\widetilde w_{j+1}
%+(j+2)(j+1)\widetilde w_{j+2}=\widetilde h_j.
%\end{align*}
%Since $\widetilde h_j\in \mathcal E^{\perp}$, 
%Theorem \ref{thrm-singular-Fredholm}(ii) and Remark \ref{rmrk-solvability-Lp}
%imply the existence of a unique $\widetilde w_j\in\mathcal E^{\perp}$. 
Since $\widetilde h_j\in \mathcal E^{\perp}$, we can find such $\widetilde w_j\in\mathcal E^{\perp}$
by Theorem \ref{thrm-singular-Fredholm}(ii), 
similarly as in Case 1.  
Next, for $\widehat w_1, \cdots, \widehat w_{m+1}$ to be determined, we consider 
\begin{equation*}
\mathcal L\Big(\sum_{j=1}^{m+1} t^je^{-\gamma t}\widehat w_j\Big)=\sum_{j=0}^{m} t^je^{-\gamma t}\widehat h_j
\quad\text{in }\mathbb R\times\Sigma.\end{equation*}
To this end, we take $\widehat w_{m+2}=0$ and solve for $\widehat w_{m+1}, \cdots, \widehat w_1$ 
inductively, such that, for $j=m, \cdots, 0$, 
\begin{align*}
-2(j+1)\gamma\widehat w_{j+1}
+(j+2)(j+1)\widehat w_{j+2}=\widehat h_j.
\end{align*}
Then, for each $j=1, \cdots, m+1$, $\widehat w_j\in \mathcal E$, and hence
$$L\widehat w_j+(\gamma^2-\beta^2)\widehat w_j=0.$$
In summary, we have 
\begin{align*}\mathcal L\Big(\sum_{j=0}^{m} t^je^{-\gamma t}\widetilde w_j
+\sum_{j=1}^{m+1} t^je^{-\gamma t}\widehat w_j\Big)
=\sum_{j=0}^{m} t^je^{-\gamma t}h_j.\end{align*}

By combining the two cases, we have the existence of the desired $w_j\in H_0^1(\Sigma)$. 
Theorem \ref{thrm-regularity-boundary-Lip-pre}  implies $w_j\in C^\nu(\bar\Sigma)$
and $w_j=0$ on $\partial\Sigma$, for $j=0, 1, \cdots, m+1$. \end{proof} 

Here, particular solutions were constructed as a simple application of the Fredholm alternative. 
In \cite{jiang}, similar particular solutions were constructed as infinite series 
and the convergence of such series is 
based on a growth estimate of eigenvalues. 

%As in Remark \ref{rmrk-solvability-Lp},  the integrability of $h_1, \cdots, h_m$ can be reduced. 

%\begin{remark}\label{rmrk-solvability-linear-Lp} Set $m=\mathrm{dim}(\Sigma)=n-1$. 
%As in Remark \ref{rmrk-solvability-Lp},  
%Lemma \ref{lemma-particular-solutions} still holds if $h_1, \cdots, h_m\in L^p(\Sigma)$, for 
%$p\ge \frac{2m}{m+2}$ if $m\ge 3$ and $p>1$ if $m=2$. 
%\end{remark} 

The main result in this section is the following expansions up to arbitrary orders.

\begin{theorem}\label{thrm-main-v-Lip} 
Let $\Sigma\subsetneq\mathbb{S}^{n-1}$ be a 
Lipschitz domain, %$\kappa$ be given by \eqref{eq-relation-kappa}, 
and $\rho\in C^\infty(\Sigma)\cap \mathrm{Lip}(\Sigma)$ be the positive solution of 
\eqref{eq-LN-domain-eq-rho-new}-\eqref{eq-LN-domain-rho-boundary-new}. 
Assume that $v$ is a smooth solution of \eqref{eq-basic-equation-v-new} in $(T,\infty)\times\Sigma$
satisfying \eqref{eq-estimate-v-fundamental}. 
Then, there exist functions $c_{ij}\in C^\infty(\Sigma)\cap C^\nu(\bar\Sigma)$ with 
$c_{ij}=0$ on $\partial\Sigma$ such that, for any $m\ge 1$,  
\begin{equation}\label{eq-U2-0-z-Lip}
\Big|v-\sum_{i=1}^{m}\sum_{j=0}^{i-1}c_{ij}t^je^{-\mu_it}\Big|\le Ct^me^{-\mu_{m+1}t}\rho^{\nu}
\quad\text{in } (T+1,\infty)\times\Sigma,\end{equation} 
where $\nu$ is the positive constant as in Theorem \ref{thrm-regularity-boundary-Lip-pre}, and 
$C$ is a positive constant depending only on $n$, $m$, $C_0$, and $\Sigma$. 
\end{theorem}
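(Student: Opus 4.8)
The plan is to prove \eqref{eq-U2-0-z-Lip} by induction on $m$, after enumerating the index set $\mathcal I$ of \eqref{eq-def-index-Lip} in strictly increasing order as $0<\mu_1<\mu_2<\cdots$. Since $\{\gamma_i\}$ is increasing and divergent and only finitely many $\gamma_i$ lie below any bound, this enumeration is well defined, $\mu_i\to\infty$, and $\mu_1=\gamma_1$. Three results drive the iteration: the optimal decay $|v|\le Ce^{-\gamma_1 t}\rho^\nu$ of Theorem \ref{thrm-Asymptotic-order-gamma1-optimal}, which starts it; Lemma \ref{lemma-nonhomogeneous-linearized-eq-Lip}, which upgrades a decay and $L^p$-bound on the right-hand side of $\mathcal Lv=f$ into an asymptotic expansion of $v$ in the $\phi_i$; and Lemma \ref{lemma-particular-solutions}, which produces particular solutions $\sum_j t^je^{-\mu t}w_j$ (with $w_j\in C^\nu(\bar\Sigma)$ vanishing on $\partial\Sigma$) of $\mathcal L(\cdot)=\sum_j t^je^{-\mu t}h_j$. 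The standing hypotheses needed to invoke Lemma \ref{lemma-nonhomogeneous-linearized-eq-Lip} at every stage — $v(t,\cdot)\in H^1_0(\Sigma)$, $(Lv)(t,\cdot)\in L^2(\Sigma)$, $v=0$ on $(T,\infty)\times\partial\Sigma$, and $v\to 0$ uniformly as $t\to\infty$ — are supplied once and for all by Lemma \ref{lemma-estimate-cylinderical-boundary}, Lemma \ref{lemma-regularity-v-derivatives-boundary}, and Corollary \ref{cor-regularity-v-H10}, while the $L^p$-integrability of the nonlinear term with $p>\max\{2,n/2\}$ is precisely Corollary \ref{cor-F(v)-integrability}.

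For the base case $m=1$, Theorem \ref{thrm-Asymptotic-order-gamma1-optimal} gives $|v|\le Ce^{-\gamma_1 t}\rho^\nu$, hence $|F(v)|\le C\rho^{\beta-2}v^2\le Ce^{-2\gamma_1 t}\rho^{\nu-2}$ (using $\beta+\nu>0$ and the boundedness of $h$ near $0$), together with $\|F(v)(t,\cdot)\|_{L^p(\Sigma)}\le Ce^{-2\gamma_1 t}$ from Corollary \ref{cor-F(v)-integrability}. Applying Lemma \ref{lemma-nonhomogeneous-linearized-eq-Lip} to $\mathcal Lv=F(v)$ with decay rate $\gamma=2\gamma_1$ and the integer $l$ determined by $\gamma_l<2\gamma_1\le\gamma_{l+1}$ yields $v=\sum_{i=1}^l c_ie^{-\gamma_i t}\phi_i+O(t\,e^{-2\gamma_1 t}\rho^\nu)$, and grouping the terms by exponent (each $\gamma_i$ with $i\le l$, together with $2\gamma_1$, lies in $\mathcal I$) gives $|v-c_{10}e^{-\mu_1 t}|\le Ct\,e^{-\mu_2 t}\rho^\nu$ with $c_{10}=c_1\phi_1$.

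For the inductive step, assume $v=P_m+R_m$ with $P_m=\sum_{i=1}^m\sum_{j=0}^{i-1}c_{ij}t^je^{-\mu_i t}$, the $c_{ij}\in C^\infty(\Sigma)\cap C^\nu(\bar\Sigma)$ vanishing on $\partial\Sigma$, and $|R_m|\le Ct^m e^{-\mu_{m+1}t}\rho^\nu$. Feeding this into $F(v)=\rho^{\beta-2}v^2h(\rho^\beta v)$ and expanding $h$ about $0$ (valid since $\rho^\beta v\to0$), products of terms $t^ae^{-\mu t}$ and $t^be^{-\mu't}$ yield $t^{a+b}e^{-(\mu+\mu')t}$ with $\mu+\mu'\in\mathcal I$, so
\[
F(v)=\sum_{\mu\in\mathcal I,\ \mu\le\mu_{m+1}}\ \sum_{j\ge 0} b_{\mu j}(\theta)\,t^je^{-\mu t}\ +\ g,
\]
where each $b_{\mu j}\in C^\infty(\Sigma)$ obeys $|b_{\mu j}|\le A\rho^{\nu-2}$ (from $|F(v)|\le C\rho^{\beta-2}v^2$ and the boundary vanishing of the $c_{ij}$ and the $\phi_i$), every exponent appearing is $\ge 2\gamma_1$ and lies in $\mathcal I$, and the remainder satisfies $|g|\le Ct^{N}e^{-\mu' t}\rho^{\nu-2}$ and $\|g(t,\cdot)\|_{L^p(\Sigma)}\le Ct^{N}e^{-\mu' t}$ for some $\mu'>\mu_{m+1}$ and integer $N$ — the point being that each multiplication loses a factor at least $e^{-\gamma_1 t}=e^{-\mu_1 t}$, so anything not already displayed has exponent past $\mu_{m+1}$. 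Using Lemma \ref{lemma-particular-solutions} to build a particular solution for each $\mu\le\mu_{m+1}$ and subtracting all of them from $v$ produces $\widetilde v$ with $\mathcal L\widetilde v=\widetilde f$, where $\widetilde f$ collects only $g$ and decays strictly faster than $e^{-\mu_{m+1}t}$ in both pointwise and $L^p$ senses. Lemma \ref{lemma-nonhomogeneous-linearized-eq-Lip} then gives $\widetilde v=\sum_{\gamma_i<\mu'}c_ie^{-\gamma_i t}\phi_i+O(t^{N}e^{-\mu' t}\rho^\nu)$, all of whose exponents again lie in $\mathcal I$; regrouping the terms with exponent $\le\mu_{m+1}$ into an updated $P_{m+1}$ and the rest into $R_{m+1}$ closes the induction. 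The polynomial-degree bound $j\le i-1$ at exponent $\mu_i$ is preserved because Lemma \ref{lemma-particular-solutions} raises the degree by at most one, and only in the resonant case $\gamma^2-\beta^2=\lambda_l$; an induction on $i$, as in the analogous singular-Yamabe computation of \cite{Han-Li-Li}, gives the precise count, and the coefficients inherit $C^\infty(\Sigma)\cap C^\nu(\bar\Sigma)$ and boundary vanishing from Lemma \ref{lemma-particular-solutions} and Theorem \ref{thrm-regularity-boundary-Lip-pre}.

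The main obstacle is the combinatorial bookkeeping in the inductive step: identifying exactly which exponents $\mu\in\mathcal I$ and which polynomial degrees $t^j$ are generated when the order-$m$ expansion passes through $F$, confirming that no degree $t^i$ or higher occurs at exponent $\mu_i$, and verifying that $g$ and $\widetilde f$ decay strictly past $\mu_{m+1}$ so the truncation is clean — at the borderline $\mu'=\mu_{m+1}$ one absorbs one extra power of $t$ exactly as in Lemmas \ref{lemma-decay-sol-Ljw=f-ODE} and \ref{lemma-nonhomogeneous-linearized-eq-Lip}. A secondary but essential point, relevant only for $3\le n\le5$ and genuinely delicate for $n=3$, is that the $L^p$-bound on $F(v)$ with $p>n/2$ in Corollary \ref{cor-F(v)-integrability} — which rests on the lower bound $\nu>1/2$ from Theorem \ref{thrm-regularity-boundary-Lip-pre} — is what allows Lemma \ref{lemma-nonhomogeneous-linearized-eq-Lip} to be applied at each stage; everything else is routine iteration.
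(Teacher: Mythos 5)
Your proposal is correct in substance and uses exactly the paper's toolkit — Theorem \ref{thrm-Asymptotic-order-gamma1-optimal} to start, Lemma \ref{lemma-nonhomogeneous-linearized-eq-Lip} to extract kernel terms, Lemma \ref{lemma-particular-solutions} to remove the nonlinear source terms, and Lemma \ref{lemma-estimate-cylinderical-boundary}, Lemma \ref{lemma-regularity-v-derivatives-boundary}, Corollaries \ref{cor-regularity-v-H10} and \ref{cor-F(v)-integrability} to justify the slice-wise integrations and the $L^p$ hypotheses — but it organizes the iteration differently. The paper advances in blocks: it alternates between subtracting all kernel rates up to the next element of $\mathcal I_{\widetilde\gamma}$ and then all nonlinear rates up to the next kernel rate, which forces the sub-case analysis in Step 2 (whether $\gamma_{k_1+1}$ is below or above $3\gamma_1$, with intermediate improvements of the remainder because $F$ is only quadratic), and it treats the resonant case $\mathcal I_\gamma\cap\mathcal I_{\widetilde\gamma}\neq\emptyset$ separately at the end. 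You instead advance one element of the enumerated index set at a time, which buys a uniform treatment of resonances through the resonant case of Lemma \ref{lemma-particular-solutions} and avoids the paper's case distinctions: the reason your single-step induction closes is the subadditivity of the enumeration, $\mu_{a+b}\le\mu_a+\mu_b$ (count the elements $\mu_1,\dots,\mu_b$ and $\mu_1+\mu_b,\dots,\mu_a+\mu_b$), which guarantees both that the cross terms $P_mR_m$ decay at rate $\mu_1+\mu_{m+1}\ge\mu_{m+2}$ and that a product landing at exponent $\mu_{m+1}$ has source degree at most $m-1$, so the degree bound $j\le i-1$ survives Lemma \ref{lemma-particular-solutions}; you should state and use this fact explicitly rather than deferring to \cite{Han-Li-Li}. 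Two further points deserve a sentence each in a complete write-up: the coefficients produced at stage $m+1$ must be shown to extend, not alter, those from stage $m$ (immediate from uniqueness of such expansions, since the sources at exponent $\mu\le\mu_m$ depend only on coefficients at strictly smaller exponents); and the $L^2$/$L^p$ bounds required by \eqref{eq-assumption-bound-f-integral} for the coefficient sources $b_{\mu j}$ and the remainder $g$ are not consequences of the pointwise bound $\rho^{\nu-2}$ alone when $n=4,5$ — they need the Hardy-factor interpolation of Corollary \ref{cor-F(v)-integrability} applied to products of functions with the pointwise $\rho^\nu$ bound and $\rho^{-1}(\cdot)\in L^2$, a point the paper itself passes over with ``similar estimates for $L^p$-norms also hold,'' so your sketch is at the paper's level of detail there but is the one place where the iteration genuinely requires more than bookkeeping.
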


We point out that there are two indices in the summation, $i$ for the decay rate 
in $e^{-\mu_i t}$ and $j$ for the polynomial growth 
rate in $t^j$. 

\begin{proof} 
Throughout the proof, we adopt the following notation: 
$f=O(h)$ if $|f|\le Ch$, for some positive constant $C$. 
All estimates in the following hold for 
any $t>T+1$ and any $\theta\in \mathbb S^{n-1}$. 
Take $p$ to be the constant as in Corollary \ref{cor-F(v)-integrability}.
%, and, in particular,  $p>n/2$.

Let $\{\phi_i\}$ be an orthonormal basis of $L^2(\Sigma)$, 
formed by eigenfunctions of $-L$, and $\{\lambda_i\}$ be the sequence of corresponding 
eigenvalues, arranged in an increasing order. 
Let $\mathcal L$ be the linear operator given by 
\eqref{eq-def-mathcal-L-new1a}, and $L_i$ be the projection of $\mathcal L$ given by \eqref{eq-U2-01b}. 
For each $i\ge 1$, 
there is an exponentially decaying solution $e^{-\gamma_i t}$ in $\mathrm{Ker}(L_i)$. 
Write 
$$F(v)=\rho^{-2-\beta}\sum_{i=2}^\infty b_i(\rho^\beta v)^i,$$
where $b_i$ is a constant, for each $i\ge 2$. We point out that we write 
the infinite sum just for convenience. We do not need the convergence of 
the infinite series and we always expand up to finite orders. 

Let $\mathcal I$ be the index set defined in \eqref{eq-def-index-Lip}. 
We decompose $\mathcal I$ by setting
$$\mathcal I_\gamma=\{\gamma_j:\, j\ge 1\},$$ 
and 
$$\mathcal I_{\widetilde \gamma}=\Big\{\sum_{i=1}^kn_i\gamma_i:\, n_i\in \mathbb Z_+, \sum_{i=1}^kn_i\ge 2\Big\}.$$
We assume $\mathcal I_{\widetilde \gamma}$ is given by a strictly increasing sequence 
$\{\widetilde\gamma_i\}_{i\ge 1}$, with 
$\widetilde \gamma_1=2\gamma_1$.

We first consider the case that 
\begin{equation}\label{eq-special}\mathcal I_\gamma\cap \mathcal I_{\widetilde \gamma}=\emptyset.\end{equation}
In other words, no $\gamma_i$ can be written 
as a linear combination of some of $\gamma_1, \cdots, \gamma_{i-1}$ with positive integer coefficients, except a single 
$\gamma_{i'}$ which is equal to $\gamma_i$. 
In this case, we arrange $\mathcal I$ as follows: 
\begin{equation}\label{eq-arrangement}
(\beta<)\,\gamma_1\le\cdots\le\gamma_{k_1}<\widetilde \gamma_1<\cdots<\widetilde\gamma_{l_1}<
\gamma_{k_1+1}\le\cdots\le\gamma_{k_2}<\widetilde \gamma_{l_1+1}<\cdots.\end{equation}
For each $\widetilde \gamma_i$, by the definition of $\mathcal I_{\widetilde \gamma}$, 
there are finitely many collections of 
nonnegative integers $n_1$, $\cdots$, $n_{k_1}$ satisfying
\begin{equation}\label{eq-requirement-m}
n_1+\cdots+n_{k_1}\ge 2, \quad n_1\gamma_1+\cdots+n_{k_1}\gamma_{k_1}= \widetilde \gamma_i.\end{equation}

By Theorem \ref{thrm-Asymptotic-order-gamma1-optimal}, 
we have 
\begin{equation}\label{eq-Asymptotic-Uk-02}v=O(e^{-\gamma_1 t}\rho^\nu).\end{equation}
We divide the proof for the case \eqref{eq-special} into several steps.

{\it Step 1.} Note $\gamma_{k_1}<\widetilde \gamma_1=2\gamma_1$. We claim that there exists 
a function $\eta_1$ such that 
$$v=\eta_1+O(e^{-\widetilde \gamma_1 t}\rho^{\nu}).$$
To prove this, we note that, by \eqref{eq-Asymptotic-Uk-02}, the explicit expression of 
$F$, and Corollary %\ref{cor-regularity-v-H10} and 
\ref{cor-F(v)-integrability}, %we have 
\begin{equation}\label{eq-Asymptotic-Uk-03}\mathcal Lv=O(e^{-2\gamma_1t}\rho^{\beta+2\nu-2})
=O(e^{-\widetilde \gamma_1 t}\rho^{\beta+2\nu-2}),\end{equation}
and 
\begin{equation}\label{eq-estimate-L2-F}
\|\mathcal Lv(t,\cdot)\|_{L^p(\Sigma)}=O(e^{-\widetilde \gamma_1 t}).
\end{equation} 
We point out that we need to verify \eqref{eq-estimate-L2-F} only for $3\le n\le 5$. For $n\ge 6$,  $\beta\ge 2$   
and hence \eqref{eq-estimate-L2-F} is implied by \eqref{eq-Asymptotic-Uk-03}. 
By \eqref{eq-Asymptotic-Uk-03}, \eqref{eq-estimate-L2-F}, and %\eqref{eq-Asymptotic-Uk-03} and  
Lemma \ref{lemma-nonhomogeneous-linearized-eq-Lip}(ii), we can take 
\begin{equation}\label{eq-Asymptotic-Uk-11}\eta_1(t,\theta)=\sum_{i=1}^{k_1}c_ie^{-\gamma_it}\phi_i(\theta),\end{equation}
for appropriate constant $c_i$. 
Set 
\begin{equation}\label{eq-Asymptotic-Uk-12}
v_1=v-\eta_1.\end{equation}
Then, $\mathcal L\eta_1=0$, $\mathcal Lv_1= F(v)$,  and 
\begin{equation}\label{eq-Asymptotic-Uk-13}v_1= O(e^{-\widetilde \gamma_1 t}\rho^{\nu}).\end{equation}
Note that \eqref{eq-Asymptotic-Uk-13} improves \eqref{eq-Asymptotic-Uk-02}. 

{\it Step 2.} We claim there exists an $\widetilde \eta_1$ such that, with  
\begin{equation}\label{eq-Asymptotic-Uk-21}
\widetilde v_1=v_1-\widetilde \eta_1=v-\eta_1-\widetilde \eta_1,\end{equation}
we have 
\begin{equation}\label{eq-Asymptotic-Uk-23}
\mathcal L\widetilde v_1= O(e^{-\widetilde \gamma_{l_1+1}t}\rho^{\beta+2\nu-2}),\end{equation}
and 
\begin{equation}\label{eq-Asymptotic-Uk-23-L2}
\mathcal \|(\mathcal L\widetilde v_1)(t, \cdot)\|_{L^p(\Sigma)}= O(e^{-\widetilde \gamma_{l_1+1}t}).\end{equation}
We will prove that $\widetilde \eta_1$ has the form 
\begin{equation}\label{eq-Asymptotic-Uk-22}
\widetilde\eta_{1}(t,\theta)=\sum_{i=1}^{l_1}
e^{-\widetilde \gamma_it}w_i(\theta),
\end{equation}
where $w_i\in C^\infty(\Sigma)\cap C^\nu(\bar\Sigma)\cap H_0^1(\Sigma)$ with $w_i=0$ on $\partial\Sigma$. 
Note that \eqref{eq-Asymptotic-Uk-23} improves \eqref{eq-Asymptotic-Uk-03}. 

To prove this, we take 
some function $\widetilde\eta_1$ 
to be determined, 
and then set $\widetilde v_1$ by \eqref{eq-Asymptotic-Uk-21}. 
Then, 
\begin{equation}\label{eq-Asymptotic-Uk-21a}
\mathcal L\widetilde v_1=F(v)-\mathcal L\widetilde\eta_1.\end{equation}
Note $3\gamma_1\in \mathcal I_{\widetilde \gamma}$. We discuss this step in several cases.

{\it Case 1. We assume $\gamma_{k_1+1}<3\gamma_1$.} Then, $\widetilde \gamma_{l_1}<\gamma_{k_1+1}<3\gamma_1$
and $\widetilde \gamma_{l_1+1}\le 3\gamma_1$. We now analyze $F(v)$ in \eqref{eq-Asymptotic-Uk-21a}. 
Note 
$$F(v)=F(v_1+\eta_1)=\rho^{-2-\beta}\sum_{i=2}^\infty b_i\rho^{i\beta}(v_1+\eta_1)^i.$$
It is worth mentioning again that we write the infinite sum just for convenience
and we always expand up to finite orders. 
For terms involving $v_1$, we have, by \eqref{eq-Asymptotic-Uk-13}, 
$$v_1^2\le Ce^{-4\gamma_1t}\rho^{2\nu},\quad |v_1\eta_1|\le Ce^{-3\gamma_1t}\rho^{2\nu}.$$
Note that $\eta_1$ is given by \eqref{eq-Asymptotic-Uk-11}. 
We write 
$$\sum_{i=2}^\infty b_i\rho^{i\beta}\eta_1^i=\sum_{n_1+\cdots+n_{k_1}\ge 2}a_{n_1\cdots n_{k_1}}
e^{-(n_1\gamma_1+\cdots+n_{k_1}\gamma_{k_1})t}\phi_1^{n_1}\cdots \phi_{k_1}^{n_{k_1}}
\rho^{(n_1+\cdots+n_{k_1})\beta},$$
where $n_1, \cdots, n_{k_1}$ are nonnegative integers, and $a_{n_1\cdots n_{k_1}}$ 
is a constant. By the definition of $\mathcal I_{\widetilde \gamma}$, 
$n_1\gamma_1+\cdots+n_{k_1}\gamma_{k_1}$ is some 
$\widetilde \gamma_i$. 
Hence, we can write 
\begin{equation}\label{eq-property-I}
\rho^{-2-\beta}\sum_{i=2}^\infty b_i\rho^{i\beta}\eta_1^i=\sum_{i=1}^{\infty}
e^{-\widetilde \gamma_it}h_i,\end{equation}
where $h_i$ is given by 
$$h_i=\rho^{-2-\beta}\sum_{(n_1, \cdots, n_{k_1})\in\mathcal N_{\widetilde \gamma_i}}
a_{n_1\cdots n_{k_1}}\phi_1^{n_1}\cdots \phi_{k_1}^{n_{k_1}}
\rho^{(n_1+\cdots+n_{k_1})\beta}.$$ 
Here, we denote by $\mathcal N_{\widetilde \gamma_i}$ the collection of all $(n_1, \cdots, n_{k_1})$ 
satisfying \eqref{eq-requirement-m}.
Then, 
$$|h_i|\le C\rho^{\beta+2\nu-2},$$
and $h_i$ has the same integrability as $F(v)$ in Corollary \ref{cor-F(v)-integrability}, i.e.,  
$h_i\in L^p(\Sigma)$ for some $p>n/2$. 
We now take the finite sum up to $l_1$ in the right-hand side of \eqref{eq-property-I} and denote it by 
$I_1$, i.e., 
\begin{equation}\label{eq-definitioni-I}I_1=
\sum_{i=1}^{l_1}
e^{-\widetilde \gamma_it}h_i.\end{equation}
Then, 
$$F(v)=I_1+O(e^{-\widetilde \gamma_{l_1+1}t}\rho^{\beta+2\nu-2}),$$
and hence, by \eqref{eq-Asymptotic-Uk-21a}, 
$$\mathcal L\widetilde v_1=I_1-\mathcal L\widetilde\eta_1+O(e^{-\widetilde \gamma_{l_1+1}t}\rho^{\beta+2\nu-2}).$$
Similar estimates for $L^p$-norms also hold. We now solve 
\begin{equation}\label{eq-solving-linear1}\mathcal L\widetilde\eta_1=I_1.\end{equation}
Note that $\gamma_m\neq\widetilde\gamma_i$ for any $m$ and $i$. 
By Lemma \ref{lemma-particular-solutions} with $m=0$ and $\gamma=\widetilde\gamma_i$ for $i=1, \cdots, l_1$,  
\eqref{eq-solving-linear1} admits a solution $\widetilde \eta_1$ 
of the form \eqref{eq-Asymptotic-Uk-22}. 
In conclusion, we obtain a function $\widetilde \eta_1$ in the form \eqref{eq-Asymptotic-Uk-22}, 
and $\widetilde v_1$ defined by \eqref{eq-Asymptotic-Uk-21} satisfies \eqref{eq-Asymptotic-Uk-23}. 
By \eqref{eq-Asymptotic-Uk-13} and \eqref{eq-Asymptotic-Uk-22}, we have 
\begin{equation}\label{eq-Asymptotic-Uk-23a}
\widetilde v_1=O(e^{-\widetilde\gamma_1t}\rho^{\nu}).
\end{equation}

{\it Case 2:  We now assume $\gamma_{k_1+1}>3\gamma_1$.} Then, $\widetilde \gamma_{l_1}\ge 3\gamma_1$. 

Let $n_1$ be the largest integer such that $\widetilde \gamma_{n_1}<3\gamma_1$. 
Then, $\widetilde \gamma_{n_1+1}=3\gamma_1$. 
We can repeat the argument in Case 1 with $n_1$ replacing $l_1$. 
In defining $I_1$ in \eqref{eq-definitioni-I}, the summation is from $i=1$ to $n_1$. 
Similarly for $\widetilde \eta_1$ in \eqref{eq-Asymptotic-Uk-22}, we define
\begin{equation}\label{eq-Asymptotic-Uk-24}\widetilde\eta_{11}(t,\theta)=
\sum_{i=1}^{n_1}e^{-\widetilde \gamma_it}w_i(\theta),\end{equation}
for appropriate  functions $w_i$, and then set
\begin{equation}\label{eq-Asymptotic-Uk-25}
\widetilde v_{11}=v_1-\widetilde\eta_{11}.
\end{equation}
A similar arguments yields
\begin{equation}\label{eq-Asymptotic-Uk-26}\mathcal L\widetilde v_{11}
=O(e^{-\widetilde \gamma_{n_1+1}t}\rho^{\beta+2\nu-2})= O(e^{-3\gamma_1t}\rho^{\beta+2\nu-2}),\end{equation}
and a similar estimate for the $L^p$-norm. 
Moreover, by \eqref{eq-Asymptotic-Uk-13} and \eqref{eq-Asymptotic-Uk-24}, 
$$\widetilde v_{11}= O(e^{-\widetilde \gamma_1t}\rho^\nu)= O(e^{-2\gamma_1t}\rho^\nu).$$
We point out there is no $\gamma_i$ between $\widetilde \gamma_1$ and 
$\widetilde\gamma_{n_1+1}$. 
Hence, by Lemma \ref{lemma-nonhomogeneous-linearized-eq-Lip}(ii), we have 
\begin{equation}\label{eq-Asymptotic-Uk-27}\widetilde v_{11}=O(e^{-3\gamma_1t}\rho^\nu).\end{equation}
Note that \eqref{eq-Asymptotic-Uk-27} improves \eqref{eq-Asymptotic-Uk-23a} and hence \eqref{eq-Asymptotic-Uk-13}. 

Now, we are in a similar situation as at the beginning of Step 2, 
with $\widetilde\gamma_{n_1+1}=3\gamma_1$ replacing $\widetilde \gamma_1=2\gamma_1$. 
If $\gamma_{k_1+1}<4\gamma_1$, we proceed as in Case 1. 
If $\gamma_{k_1+1}>4\gamma_1$, we proceed as at the beginning 
of Case 2 by taking the largest integer $n_2$ such that $\widetilde \gamma_{n_2}<4\gamma_1$. 
After finitely many steps, we reach $\widetilde \gamma_{l_1}$. 

In summary, we have $\widetilde\eta_1$ as in \eqref{eq-Asymptotic-Uk-22} and, 
by defining  
$\widetilde v_1$ by \eqref{eq-Asymptotic-Uk-21}, 
we conclude \eqref{eq-Asymptotic-Uk-23} and \eqref{eq-Asymptotic-Uk-23-L2}, 
as well as  \eqref{eq-Asymptotic-Uk-23a}. This finishes the discussion of Step 2. 

{\it Step 3.} Now we are in the same situation as in Step 1, 
with $\widetilde\gamma_{l_1+1}$ replacing $\widetilde \gamma_1$. We repeat the argument there with 
$k_1+1$, $k_2$ and $l_1+1$ replacing $1$, $k_1$ and $1$, respectively. 
Note $\gamma_{k_2}<\widetilde \gamma_{l_1+1}$. 
By \eqref{eq-Asymptotic-Uk-23}, \eqref{eq-Asymptotic-Uk-23-L2}, 
and Lemma \ref{lemma-nonhomogeneous-linearized-eq-Lip}(ii), we obtain 
$$\widetilde v_1(t,\theta)=\sum_{i=k_1+1}^{k_2}c_ie^{-\gamma_i t}\phi_i(\theta)
+O(e^{-\widetilde \gamma_{l_1+1} t}\rho^\nu),$$
where $c_i$ is a constant, for $i=k_1+1, \cdots, k_2$.  
By \eqref{eq-Asymptotic-Uk-23a}, there is no need to adjust by terms involving $e^{-\gamma_i t}$
corresponding to $i=1, \cdots, k_1$. 
Set 
\begin{equation}\label{eq-Asymptotic-Uk-31}\eta_2(t, \theta)=\sum_{i=k_1+1}^{k_2}c_ie^{-\gamma_i t}\phi_i(\theta),
\end{equation}
and 
\begin{equation}\label{eq-Asymptotic-Uk-32} 
v_2=\widetilde v_1-\eta_2.\end{equation}
Then, $\mathcal L\eta_2=0$, 
$v_2=v-\eta_1-\widetilde \eta_1-\eta_2,$  and 
\begin{equation}\label{eq-Asymptotic-Uk-33}v_2=O(e^{-\widetilde \gamma_{l_1+1} t}\rho^\nu).\end{equation}

{\it Step 4.} The discussion is similar as that in Step 2. For some $\widetilde \eta_2$ to be determined, set 
\begin{equation}\label{eq-Asymptotic-Uk-41} 
\widetilde v_2=v_2-\widetilde \eta_2.\end{equation}
Then, 
$$\mathcal L\widetilde v_2=F(v)-\mathcal L\widetilde\eta_1-\mathcal L\widetilde\eta_2.$$
Note
$$F(v)=F(v_2+\eta_1+\widetilde\eta_1+\eta_2)
=\rho^{-2-\beta}\sum_{i=2}^\infty b_i\rho^{i\beta}(v_2+\eta_1+\widetilde\eta_1+\eta_2)^i.$$
As in Step 2, we need to analyze 
$$\sum_{i=2}^\infty b_i\rho^{i\beta}(\eta_1+\widetilde\eta_1+\eta_2)^i.$$
In Step 2, by choosing $\widetilde\eta_1$ as in \eqref{eq-Asymptotic-Uk-22} appropriately, 
we used $\mathcal L\widetilde\eta_1$ to cancel the terms $e^{-\widetilde \gamma_it}$ 
in $F(v)$, for $i=1, \cdots, l_1$. 
Proceeding similarly, we can find $\widetilde \eta_2$ in the form 
\begin{equation}\label{eq-Asymptotic-Uk-42}
\widetilde\eta_2(t,\theta)=\sum_{i=l_1+1}^{l_2}
e^{-\widetilde \gamma_it}w_i(\theta)
\end{equation}
to cancel the terms $e^{-\widetilde \gamma_it}$ in $F(v)$, for $i=l_1+1, \cdots, l_2$.
By defining 
$\widetilde v_2$ by \eqref{eq-Asymptotic-Uk-41}, 
we conclude 
\begin{equation}\label{eq-Asymptotic-Uk-43}
\mathcal L\widetilde v_2=O(e^{-\widetilde \gamma_{l_2+1}t}\rho^{\beta+2\nu-2}),\end{equation}
and 
\begin{equation}\label{eq-Asymptotic-Uk-43-L2}
\|(\mathcal L\widetilde v_2)(t,\cdot)\|_{L^p(\Sigma)}=O(e^{-\widetilde \gamma_{l_2+1}t}).\end{equation}

We can continue these steps indefinitely and hence finish the proof for the case \eqref{eq-special}. 

Next, we consider the general case; namely, some $\gamma_i$ can be written 
as a linear combination of some of $\gamma_1, \cdots, \gamma_{i-1}$ with positive integer coefficients. 
We will modify discussion above to treat the general case. Whenever some $\gamma_i$ coincides 
some $\widetilde \gamma_{i'}$, an extra power of $t$ appears when solving the linear equation 
$\mathcal Lw=f$,
according to Lemma \ref{lemma-particular-solutions},  and such 
a power of $t$ will generate more powers of $t$ upon iteration. 

For an illustration, we consider $\gamma_{k_1}=\widetilde \gamma_1$ instead of the strict inequality in 
\eqref{eq-arrangement}. This is the first time that some $\gamma_i$ may coincide
some $\widetilde \gamma_{i'}$.
We start with \eqref{eq-Asymptotic-Uk-02}
and \eqref{eq-Asymptotic-Uk-03}, and proceed similarly as in Step 1. 
Take $k_*\in \{1, \cdots, k_1-1\}$ such that 
$$\gamma_{k_*}<\gamma_{k_*+1}=\cdots=\gamma_{k_1}=\widetilde \gamma_1=2\gamma_1.$$ By 
Lemma \ref{lemma-nonhomogeneous-linearized-eq-Lip}(ii), we obtain 
$$v(t, \theta)=\sum_{i=1}^{k_*}c_ie^{-\gamma_i t}\phi_i(\theta)+O(te^{-\widetilde \gamma_1 t}\rho^\nu),$$
where $c_i$ is a constant, for $i=1, \cdots, k_*$. Instead of \eqref{eq-Asymptotic-Uk-11}, we define 
\begin{equation}\label{eq-Asymptotic-Uk-11z}
\eta_1(t, \theta)=\sum_{i=1}^{k_*}c_ie^{-\gamma_i t}\phi_i(\theta),\end{equation}
and then define $v_1$ as in 
\eqref{eq-Asymptotic-Uk-12}. Then, 
\begin{equation}\label{eq-Asymptotic-Uk-13z}v_1=O(te^{-\widetilde \gamma_1 t}\rho^\nu).\end{equation}
Next, we proceed similarly as in Step 2. In the discussion of Case 1 in Step 2, we need to solve 
\eqref{eq-solving-linear1} and find $\widetilde \eta_1$, which is a linear combination of 
$e^{-\widetilde \gamma_1t}$, $\cdots$, $e^{-\widetilde \gamma_{l_1}t}$. 
For $i=2, \cdots, l_1$, the 
part corresponding to $e^{-\widetilde \gamma_it}$
is the same,  still given by $e^{-\widetilde \gamma_it} w_i(\theta)$. For $i=1$, 
the part corresponding to $e^{-\widetilde \gamma_1t}$ is given by 
\begin{equation}\label{eq-solving-linear2z}
te^{-\widetilde \gamma_1t}w_{11}(\theta)+e^{-\widetilde \gamma_1t}\phi_{k_1}(\theta),
\end{equation}
where $w_{11}\in C^\infty(\Sigma)\cap C^\nu(\bar\Sigma)\cap H_0^1(\Sigma)$ with $w_{11}=0$ on $\partial\Sigma$. 
Then, by defining  $\widetilde\eta_1$ by \eqref{eq-Asymptotic-Uk-22}, with the new  
expression given by 
\eqref{eq-solving-linear2z} for $e^{-\widetilde \gamma_1t}$, 
and defining $\widetilde v_1$ by \eqref{eq-Asymptotic-Uk-21}, we have \eqref{eq-Asymptotic-Uk-23}. 
We can modify the rest of the proof similarly. \end{proof}

According to the proof,  the summation in \eqref{eq-U2-0-z-Lip} has two sources, 
the kernel of the linearized equation and the nonlinearity. 
The kernel part is a linear combination of $e^{-\gamma_it}\phi_i$ as in 
Lemma \ref{lemma-nonhomogeneous-linearized-eq-Lip}(ii), with constant coefficients. 
The nonlinear part  consists of  
solutions constructed in Lemma \ref{lemma-particular-solutions} to eliminate nonlinear combinations 
of lower order terms in $F(v)$. 

Now we are ready to prove Theorem \ref{thrm-main1}. 

\begin{proof}[Proof of Theorem \ref{thrm-main1}]
Let $\xi\in C^\infty(\Sigma)$ be the solution of 
\eqref{eq-LN-spherical-domain-eq}-\eqref{eq-LN-spherical-domain-boundary}. 
By  \eqref{eq-U2-0-z-Lip}, 
we have 
\begin{equation*}
\Big|\xi^{-1}v-\sum_{i=1}^{m}\sum_{j=0}^{i-1}\xi^{-1}c_{ij}t^je^{-\mu_it}\Big|\le Ct^me^{-\mu_{m+1}t}\xi^{-1}\rho^{\nu}
\quad\text{in } (T+1,\infty)\times\Sigma.\end{equation*} 
By \eqref{eq-estimate-xi-upper-lower}, we get 
$$|\xi^{-1}c_{ij}|+|\xi^{-1}\rho^{\nu}|\le d^{\beta+\nu}\quad\text{in }\Sigma,$$
where $d$ is the distance function in $\Sigma$ to $\partial\Sigma$. 
We now have the desired result with $\tau=\beta+\nu$ by the definition of $v$ in \eqref{eq-def-v-spherical} 
and the change of coordinates \eqref{eq-change-coordinates}.
\end{proof}  

Under the additional assumption that  
$\rho$ also satisfies \eqref{eq-condition-gradient}, 
Theorem \ref{thrm-main-v-Lip} holds for $\tau=s$. 
Hence, we have 
Theorem \ref{thrm-main2}.

To end this paper, we make one final remark. 
%concerning expansions of $c_{ij}$ in \eqref{eq-U2-0-z-Lip} near $\partial\Sigma$. 
Let $\Sigma$ be a smooth domain and 
$\rho$ be the positive solution of 
\eqref{eq-LN-domain-eq-rho-new}-\eqref{eq-LN-domain-rho-boundary-new}. 
Recall that $d$ is the distance function in $\Sigma$ to $\partial\Sigma$. 
Then, for any $m\ge n+1$, $\alpha\in (0,1)$, and any $\theta\in\Sigma$ near $\partial\Sigma$, 
\begin{equation}\label{eq-expansion-rho} 
\Big|\rho(\theta)-\Big[\sum_{i=1}^{n}c_i(\theta')d^i
+\sum_{i=n+1}^m \sum_{j=0}^{N_i} c_{i, j}(\theta')d^i(\log d)^j\Big]\Big|\le Cd^{m+\alpha},\end{equation}
where $d=d(\theta)$, $\theta'\in\partial\Sigma$ is the unique point with 
$d(\theta)=\mathrm{dist}(\theta,\theta')$, $N_i$ is a positive integer depending only $n$ and $i$, 
$C$ is a positive constant depending only on $n$, $m$ and $\alpha$, 
and $c_i$ and $c_{i,j}$ are smooth functions on $\partial\Sigma$.  %We point out that $c_1=1$. 
Refer to \cite{ACF1982CMP} and \cite{Mazzeo1991} for details. 
Similarly, let $\phi_i$ 
be the eigenfunction established in Theorem \ref{thrm-singular-eigenvalue}.
Then, for any $m\ge n+1$, $\alpha\in (0,1)$, and any $\theta\in\Sigma$  near $\partial\Sigma$, 
\begin{equation}\label{eq-expansion-eigenfunction}
\Big|\phi_l(\theta)-d^s\Big[\sum_{i=0}^{n}c_{l,i}(\theta')d^i
+\sum_{i=n+1}^m \sum_{j=0}^{N_i} c_{l,i, j}(\theta')d^i(\log d)^j\Big]\Big|\le Cd^{m+s+\alpha}.\end{equation}
%where $d=d(\theta)$, $\theta'\in\partial\Sigma$ is the unique point with 
%$d(\theta)=\mathrm{dist}(\theta,\theta')$, and $C$ is a positive constant depending only on $n$, $k$, and $\alpha$, 
%and $c_{l,i}$ and $c_{l,i,j}$ are smooth functions on $\partial\Sigma$.  
Similar expansions hold for the coefficients $c_{ij}$ in \eqref{eq-U2-0-z-Lip} near $\partial\Sigma$.
As a consequence, we can expand $v$ as a series in terms of $t^je^{-\mu_it}\rho^{k+s}(\log\rho)^l$ 
with coefficients defined on $\partial\Sigma$, for positive integer $i$ and nonnegative integers 
$j$, $k$ and $l$. 
Refer to \cite{jiang} for details.

%\newpage

\end{document}